\title{Asymptotic behaviour of fractional seminorms}
\date{12 June 2025 (last modified)}
\theoremstyle{plain}
\newtheorem{thm}{Theorem}[section]
\newtheorem{lemma}[thm]{Lemma}
\newtheorem{cor}[thm]{Corollary}
\newtheorem{prop}[thm]{Proposition}
\newtheorem{ques}[thm]{Question}
\theoremstyle{definition}
\newtheorem{defn}[thm]{Definition}
\newtheorem{rmk}[thm]{Remark}
\numberwithin{equation}{section}
\newcommand{\R}{\mathbb{R}}
\newcommand{\SSS}{\mathscr{S}}
\def\Xint#1{\mathchoice
   {\XXint\displaystyle\textstyle{#1}}%
   {\XXint\textstyle\scriptstyle{#1}}%
   {\XXint\scriptstyle\scriptscriptstyle{#1}}%
   {\XXint\scriptscriptstyle\scriptscriptstyle{#1}}%
   \!\int}
\def\XXint#1#2#3{{\setbox0=\hbox{$#1{#2#3}{\int}$}
     \vcenter{\hbox{$#2#3$}}\kern-.5\wd0}}
\def\dashint{\Xint-}
\author{Ahmed Dughayshim}
\address{Department of Mathematics,
University of Pittsburgh,
417 Thackeray Hall,
Pittsburgh, PA 15260, USA.}
\email{aha80@pitt.edu}
\keywords{Fractional derivatives, Triebel spaces, BBM formula}
\subjclass[2020]{42B35}
\begin{document}

\begin{abstract}
We obtain sharp identification of fractional Sobolev spaces $ W^{s}_{p,q}$, extension spaces $E^{s}_{p,q}$, and Triebel-Lizorkin spaces $\dot{F}^s_{p,q}$. As a consequence of these sharp identifications, we obtain for $W^{s}_{p,q}$ and $E^{s}_{p,q}$ a stability theory a la Bourgain-Brezis-Mironescu as $s \to 1^{-}$, answering a question raised by Brazke--Schikorra--Yung. Part of the results are new even for $p=q$.
\end{abstract}
\maketitle

\section{Introduction}
To begin, define the Gagliardo semi-norm for $s \in (0,1)$, $\Omega \subset \R^{n}$ open 
$$
[f]_{W^{s}_{p}(\Omega)} = \left(\int_{\Omega} \int_{\Omega} \frac{ \vert f(x) - f(z) \vert^{p}}{\vert x- z \vert^{n+sp}} dz  dx\right)^{1/p}.
$$
If $ s = 1 $,  then one can show that the only smooth functions with the property $[ f]_{W^{s}_{p}(\Omega)} < \infty$ are locally constant functions on $\Omega$. However, 
in the celebrated work of Bourgain, Brezis and Mironescu \cite{BBM} they proved, among other things, the following two results 
\begin{thm}[BBM1]\label{BBM1}
Let $ p \in (1,\infty)$ and $ \Omega $ be a smooth bounded domain. Then there exists a constant $ C = C(n,p)$ so that 
$$
\lim_{s \to 1^{-}} (1-s)^{1/p} [f]_{W^{s}_{p}(\Omega)} =C \Vert \nabla f \Vert_{L^{p}(\Omega)}
$$
for all $ f \in W^{1,p}(\Omega)$.
\end{thm}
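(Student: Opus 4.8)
The plan is to prove the identity first for $f\in C^\infty(\overline\Omega)$ by an explicit local computation, and then to reach all of $W^{1,p}(\Omega)$ by density together with a uniform-in-$s$ bound; throughout I read $[f]^{p}_{W^{s}_{p}(\Omega)}=\int_\Omega\int_\Omega|f(x)-f(y)|^{p}|x-y|^{-n-sp}\,dx\,dy$. For smooth $f$, fix $\d>0$, put $\Omega_\d=\{x\in\Omega:\operatorname{dist}(x,\partial\Omega)>\d\}$, and after the substitution $y=x+z$ split $[f]^{p}_{W^{s}_{p}(\Omega)}$ into three pieces according to whether (i) $x\in\Omega_\d$ and $|z|<\d$; (ii) $x\in\Omega\setminus\Omega_\d$ and $|z|<\d$; or (iii) $|z|\ge\d$. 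Extending $f$ to some $\tilde f\in C^{1}_{c}(\R^n)$ gives $|f(x)-f(x+z)|\le\|\nabla\tilde f\|_\infty|z|$, whence the contribution of (ii) is $\lesssim\|\nabla\tilde f\|_\infty^{p}\,|\Omega\setminus\Omega_\d|\,\d^{p(1-s)}/(1-s)$ and that of (iii) is $\lesssim\|f\|_\infty^{p}\,|\Omega|\,\d^{-sp}$. Multiplying by $(1-s)$: the contribution of (iii) tends to $0$ as $s\to1^{-}$, while that of (ii) stays $\lesssim|\Omega\setminus\Omega_\d|=O(\d)$ uniformly for $s$ near $1$; here smoothness of $\partial\Omega$ enters through $|\Omega\setminus\Omega_\d|\lesssim\d$.

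For the dominant piece (i) I would insert the Taylor expansion $f(x+z)-f(x)=\nabla f(x)\cdot z+R(x,z)$ with $|R(x,z)|\lesssim\|D^{2}f\|_\infty|z|^{2}$, and use $\bigl||a|^{p}-|b|^{p}\bigr|\le p(|a|+|b|)^{p-1}|a-b|$ to replace $|f(x+z)-f(x)|^{p}$ by $|\nabla f(x)\cdot z|^{p}$ at the cost of an error whose integral over $\{x\in\Omega_\d,\ |z|<\d\}$, multiplied by $(1-s)$, is $\lesssim(1-s)\,\d^{p(1-s)+1}/(p(1-s)+1)\to0$ as $s\to1^{-}$. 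Passing to polar coordinates $z=r\omega$, the remaining main term equals
\[
(1-s)\int_{\Omega_\d}\int_{\mathbb{S}^{n-1}}|\nabla f(x)\cdot\omega|^{p}\,d\sigma(\omega)\int_{0}^{\d}r^{p-1-sp}\,dr\,dx=\frac{\d^{p(1-s)}}{p}\int_{\Omega_\d}\int_{\mathbb{S}^{n-1}}|\nabla f(x)\cdot\omega|^{p}\,d\sigma(\omega)\,dx,
\]
which converges, as $s\to1^{-}$, to $\tfrac1p\int_{\Omega_\d}\int_{\mathbb{S}^{n-1}}|\nabla f(x)\cdot\omega|^{p}\,d\sigma(\omega)\,dx=\tfrac{K_{n,p}}{p}\int_{\Omega_\d}|\nabla f|^{p}$, where $K_{n,p}:=\int_{\mathbb{S}^{n-1}}|\omega_{1}|^{p}\,d\sigma(\omega)$ and I used $\int_{\mathbb{S}^{n-1}}|v\cdot\omega|^{p}\,d\sigma(\omega)=K_{n,p}|v|^{p}$ (rotational symmetry). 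Collecting the three pieces, for every $\d>0$ we obtain $\liminf_{s\to1^{-}}(1-s)[f]^{p}_{W^{s}_{p}(\Omega)}\ge\tfrac{K_{n,p}}{p}\int_{\Omega_\d}|\nabla f|^{p}$ and $\limsup_{s\to1^{-}}(1-s)[f]^{p}_{W^{s}_{p}(\Omega)}\le\tfrac{K_{n,p}}{p}\|\nabla f\|^{p}_{L^{p}(\Omega)}+O(\d)$; letting $\d\to0^{+}$ yields $\lim_{s\to1^{-}}(1-s)[f]^{p}_{W^{s}_{p}(\Omega)}=\tfrac{K_{n,p}}{p}\|\nabla f\|^{p}_{L^{p}(\Omega)}$, which is the theorem for smooth $f$ with $C=(K_{n,p}/p)^{1/p}$.

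To pass to general $f\in W^{1,p}(\Omega)$ I would first record the uniform bound: using a bounded extension operator $E\colon W^{1,p}(\Omega)\to W^{1,p}(\R^n)$ (available because $\partial\Omega$ is smooth) and $\|g(\cdot+z)-g\|_{L^{p}(\R^n)}\le\min\{|z|\,\|\nabla g\|_{L^{p}(\R^n)},\,2\|g\|_{L^{p}(\R^n)}\}$, one obtains, by splitting the $z$-integral at $|z|=1$,
\[
(1-s)[f]^{p}_{W^{s}_{p}(\Omega)}\le(1-s)[Ef]^{p}_{W^{s}_{p}(\R^n)}\le C(n,p)\,\|Ef\|^{p}_{W^{1,p}(\R^n)}\le C(n,p,\Omega)\,\|f\|^{p}_{W^{1,p}(\Omega)}
\]
for all $s$ sufficiently close to $1$. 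Given $f$ and $\e>0$, choose $g\in C^\infty(\overline\Omega)$ with $\|f-g\|_{W^{1,p}(\Omega)}<\e$ (possible since $\partial\Omega$ is smooth); the triangle inequality for $[\cdot]_{W^{s}_{p}(\Omega)}$ and the uniform bound applied to $f-g$ give $\bigl|(1-s)^{1/p}[f]_{W^{s}_{p}(\Omega)}-(1-s)^{1/p}[g]_{W^{s}_{p}(\Omega)}\bigr|\le C(n,p,\Omega)^{1/p}\e$, and combining this with the smooth case for $g$ and $\bigl|\,\|\nabla g\|_{L^{p}(\Omega)}-\|\nabla f\|_{L^{p}(\Omega)}\,\bigr|<\e$ shows that $\limsup_{s\to1^{-}}$ and $\liminf_{s\to1^{-}}$ of $(1-s)^{1/p}[f]_{W^{s}_{p}(\Omega)}$ both lie within a fixed multiple of $\e$ of $(K_{n,p}/p)^{1/p}\|\nabla f\|_{L^{p}(\Omega)}$. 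Letting $\e\to0^{+}$ finishes the proof.

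I expect the main obstacle to be the uniform-in-$s$ upper bound on the domain $\Omega$: this is where smoothness of $\partial\Omega$ genuinely enters, through the extension operator, and it is exactly what legitimizes the density step; close behind is the bookkeeping that forces the Taylor remainder and the boundary layer to be negligible in the iterated limit $\lim_{\d\to0^{+}}\lim_{s\to1^{-}}$. By contrast the algebraic heart is merely the identity $(1-s)\int_{0}^{\d}r^{p-1-sp}\,dr=\d^{p(1-s)}/p\to1/p$, from which both the normalizing power $(1-s)^{1/p}$ and the $\Omega$-independence of $C$ emerge.
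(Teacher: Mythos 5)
This theorem is not proved in the paper at all: it is quoted verbatim from Bourgain--Brezis--Mironescu \cite{BBM} as background, so there is no in-paper argument to compare against. Your proof is correct and is essentially the classical BBM argument specialized to the Gagliardo kernel: the three-region decomposition, the Taylor expansion plus polar coordinates yielding the constant $K_{n,p}/p$ via $(1-s)\int_0^\d r^{p(1-s)-1}\,dr\to 1/p$, and the density step justified by the uniform-in-$s$ bound $(1-s)[f]^p_{W^s_p(\Omega)}\lesssim_{n,p,\Omega}\Vert f\Vert^p_{W^{1,p}(\Omega)}$ obtained through the extension operator are exactly the ingredients of the original proof, and each of your estimates (the $O(\d)$ boundary layer, the vanishing of the far-field and Taylor-remainder contributions after multiplication by $(1-s)$, and the triangle-inequality transfer from smooth $g$ to general $f$) checks out.
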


\begin{thm}[BBM2]\label{BBM2}
Let $ p \in (1,\infty)$, and $ \Omega $ be a bounded domain in $ \mathbb{R}^{n}$ with smooth boundary. Let  $ f_{k} \in C^{\infty}_{0}(\Omega)$. Assume $f_{k} \to f $ weakly in $L^{p}(\Omega)$, and 
$$
\Lambda= \sup_{k} ( \Vert f_{k} \Vert_{L^{p}(\Omega)} +  (1-s_{k})^{1/p} [f_{k}]_{W^{s_{k}}_{p}(\Omega)} ) < \infty.
$$
Where $ s_{k} \in (0,1)$ is some sequence converging to $ 1 $ from below. Then $ f \in W^{1,p}(\Omega)$ and we have that $ \Vert f \Vert_{W^{1,p}(\Omega)} \leq C \Lambda.$
\end{thm}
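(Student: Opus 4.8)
The plan is to prove that the extension of $f$ by zero belongs to $W^{1,p}(\mathbb{R}^n)$ with $\|\nabla f\|_{L^p(\mathbb{R}^n)}\le C(n,p)\Lambda$; combined with the weak lower semicontinuity bound $\|f\|_{L^p}\le\liminf_k\|f_k\|_{L^p}\le\Lambda$ this yields $f\in W^{1,p}(\Omega)$ with $\|f\|_{W^{1,p}(\Omega)}\le C\Lambda$. Since the $f_k$ are compactly supported in $\Omega$ and $[\,\cdot\,]_{W^s_p}$ is the global Gagliardo seminorm, there is no loss in working on $\mathbb{R}^n$ (and $f_k\rightharpoonup f$ in $L^p(\mathbb{R}^n)$ after extension by zero). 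One point must be respected throughout: one should \emph{not} try to bound $\|\nabla f_k\|_{L^p}$ uniformly in $k$ — one readily builds examples where $\|\nabla f_k\|_{L^p}\to\infty$ while all hypotheses persist (a tiny high-frequency perturbation of a fixed bump), because such perturbations converge weakly to $0$. Hence all quantitative information has to be transported to the limit function scale by scale.

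Write $G(z):=\|\tau_z f-f\|_{L^p(\mathbb{R}^n)}$ and $G_k(z):=\|\tau_z f_k-f_k\|_{L^p(\mathbb{R}^n)}$, set $A_j:=\{2^{-j-1}<|z|\le 2^{-j}\}$ for $j\in\mathbb{Z}$, and put $b_k(j):=\big(2^{j(n+p)}\int_{A_j}G_k(z)^p\,dz\big)^{1/p}$. I would first record two facts. (i) Since $\tau_z f_k-f_k\rightharpoonup\tau_z f-f$ weakly in $L^p$, lower semicontinuity of the norm gives $G(z)\le\liminf_k G_k(z)$ pointwise, so by Fatou $\int_{A_j}G(z)^p\,dz\le\liminf_k\int_{A_j}G_k(z)^p\,dz$ for every $j$. (ii) The subadditivity $G_k(2z)\le 2G_k(z)$ (triangle inequality and translation invariance of the $L^p(\mathbb{R}^n)$ norm) forces $\int_{A_{j-1}}G_k^p\le 2^{n+p}\int_{A_j}G_k^p$, i.e.\ $b_k(j)$ is \emph{non-decreasing in $j$}.

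The core is a dyadic pigeonhole. Decomposing the Gagliardo integral over the $A_j$ and using $|z|^{-(n+s_kp)}\ge 2^{j(n+s_kp)}$ on $A_j$ gives $[f_k]_{W^{s_k}_p}^p\ge\sum_{j\in\mathbb{Z}}2^{-jp(1-s_k)}b_k(j)^p$, so the hypothesis reads $(1-s_k)\sum_{j}2^{-jp(1-s_k)}b_k(j)^p\le\Lambda^p$. Now fix $j_0\in\mathbb{Z}$ and restrict the sum to $j\in\{j_0,\dots,j_0+N_k\}$ with $N_k:=\lceil 1/(p(1-s_k))\rceil$. On this block, for all large $k$ the weights $2^{-jp(1-s_k)}$ are bounded below by an absolute constant (because $j_0p(1-s_k)\to 0$ and $N_kp(1-s_k)$ stays bounded), monotonicity gives $b_k(j)\ge b_k(j_0)$, and $(1-s_k)(N_k+1)\ge 1/p$; combining these yields $b_k(j_0)^p\le C(p)\Lambda^p$ for all large $k$. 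By (i), $2^{j_0(n+p)}\int_{A_{j_0}}G(z)^p\,dz\le\liminf_k b_k(j_0)^p\le C(p)\Lambda^p$, and since $j_0$ was arbitrary, $\int_{A_j}G(z)^p\,dz\le C(p)\Lambda^p\,2^{-j(n+p)}$ for every $j\in\mathbb{Z}$.

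To finish I would convert this into a gradient bound. Summing the annular bounds (with Hölder's inequality, $|A_j|\simeq 2^{-jn}$) over all $j$ with $2^{-j}\lesssim\e$ gives $\int_{|y|\le\e}G(y)\,dy\le C(n,p)\Lambda\,\e^{n+1}$. Applying this with a standard mollifier $\rho_\e$, the identity $\nabla(f*\rho_\e)(x)=\int\nabla\rho_\e(y)\big(f(x-y)-f(x)\big)\,dy$, and Minkowski's integral inequality gives $\|\nabla(f*\rho_\e)\|_{L^p}\le C\e^{-n-1}\int_{|y|\le\e}G(y)\,dy\le C(n,p)\Lambda$, uniformly in $\e$. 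Since $f*\rho_\e\to f$ in $L^p$ and $p>1$, weak compactness of $L^p(\mathbb{R}^n)$ identifies the weak limit of $\nabla(f*\rho_\e)$ as $\nabla f$, so $\nabla f\in L^p$ with $\|\nabla f\|_{L^p}\le C(n,p)\Lambda$, and the theorem follows. The main obstacle is the third step: one must arrange the pigeonhole so that the divergent factor $(1-s_k)^{-1}$ in the seminorm bound is "spent" across the roughly $(1-s_k)^{-1}$ dyadic scales on which $b_k$ is comparable, and the monotonicity of $b_k(j)$ coming from subadditivity is precisely the structural input that makes this possible.
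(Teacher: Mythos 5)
Your proof is correct, and it is worth noting that the paper itself never proves this statement: Theorem \ref{BBM2} is quoted from Bourgain--Brezis--Mironescu, and the paper's own BBM2-type results (the Corollary for $E^{s}_{p,q}$ after Theorem 1.6, and Corollary \ref{BWs} for $W^{s}_{p,q}$) are derived by a completely different, Fourier-analytic route: lower Sobolev-type bounds $[f]_{\dot F^{\sigma}_{p,2}}\lesssim \Vert f\Vert_{L^p}+(1-s)^{1/q}[f]_{\cdot}$ obtained via Littlewood--Paley theory and duality (Theorems \ref{LowerS} and \ref{LWS}), followed by the weak-limit lemma for Triebel--Lizorkin seminorms (Lemma \ref{WTL}). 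Your argument is instead purely real-variable: Fubini reduces the seminorm to the radial profile $G_k(z)=\Vert \tau_z f_k-f_k\Vert_{L^p}$, the subadditivity $G_k(2z)\le 2G_k(z)$ gives monotonicity of the normalized annular averages $b_k(j)$, and the pigeonhole spreads the factor $(1-s_k)^{-1}$ over the $\sim (1-s_k)^{-1}$ comparable dyadic scales to produce a scale-by-scale bound that survives the weak limit; mollification then converts $\int_{|y|\le\epsilon}G(y)\,dy\lesssim\Lambda\,\epsilon^{n+1}$ into $\Vert\nabla(f*\rho_\epsilon)\Vert_{L^p}\lesssim\Lambda$ uniformly in $\epsilon$. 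I checked the quantitative steps (the weight lower bound $2^{-jp(1-s_k)}\gtrsim_p 1$ on the block $j_0\le j\le j_0+N_k$ for $k$ large, the count $(1-s_k)(N_k+1)\ge 1/p$, and the H\"older summation giving $2^{-j(n+1)}$ per annulus) and they are all sound; your observation that one must not try to bound $\Vert\nabla f_k\Vert_{L^p}$ uniformly is also the right caution. The trade-off between the two approaches: yours is elementary, self-contained, and needs no duality hypothesis, but it is wedded to $p=q$ --- the Fubini step writing $[f]^p_{W^{s}_{p}}=\int G(z)^p|z|^{-n-sp}\,dz$ has no analogue for $W^{s}_{p,q}$ with $p\neq q$ --- whereas the paper's machinery is built precisely to handle $p\neq q$ (at the cost of conditions such as $p'>nq'/(n+q')$) and to give the sharper scale-localized information in the $\dot F^{\sigma}_{p,2}$ estimates.
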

In \cite{P}, Brazke, Schikorra, and Yung give a different proof of Theorem \ref{BBM2} above for when $ \Omega = \mathbb{R}^{n}$. Their approach is via the homogeneous Triebel-Lizorkin spaces $ \dot{F}^{s}_{p,q}$. Namely, let $W^{s}_{p}:= W^{s}_{p}(\R^{n}) $ , then they prove inequalities of the form 
\begin{equation}\label{AP1}
[f]_{W^{s}_{p}} \lesssim_{n,p} (\min \{s, (1-s) \})^{-1/\sigma}[f]_{\dot{F}^{s}_{p,p}},
\end{equation}
where $ \sigma = \min \{ 2,p \}$ and $ s \in (0,1)$. And from below they proved for $ p \in (1,2]$
\begin{equation}\label{AP2}
(\min \{ s , (1-s)\})^{-1/p} [f]_{\dot{F}^{s}_{p,2}} \lesssim_{n,p} [f]_{W^{s}_{p}}.
\end{equation}
They used these inequalities, and other variants of them, to prove a global version of Theorem \ref{BBM2}. Then posed the question of whether the estimates they obtained would still hold true for the space $W^{s}_{p,q}$. Where $ W^{s}_{p,q}$ is the natural generalization of $ W^{s}_{p}$. That is,
$$
[f]^{p}_{W^{s}_{p,q}} =\int_{\mathbb{R}^{n}} \left( \int_{\mathbb{R}^{n}} \frac{\vert f(x+z) - f(x) \vert^{q}}{\vert z \vert^{n+sq}} dz \right)^{p/q} dx.
$$
In that regard, Mohanta, in \cite{Mo}, extended some of the results obtained in \cite{BBM} for $ p \neq q $. Namely, he proved \Cref{BBM1} for $ W^{s}_{p,q}$ for a certain range of $ q $. In light of the results obtained in \cite{Mo} it is natural to guess that identifications similar to \eqref{AP1}, and \eqref{AP2} above would still hold true for $W^{s}_{p,q}$ when $ p \neq q$.
 We show that this is indeed the case when $ p$ is not too "far" from $ q $. Our argument is similar in spirit to the one introduced in \cite{P}. In fact, we show that a variant of their argument implies various sharp estimates for another natural fractional spaces, based on the harmonic extension which  for fractional PDE was popularized by Caffarelli-Silvestre \cite{Caf}.
\subsection{Harmonic extension seminorm} for $ p,q \in [1,\infty)$ and $ s \in (0,1)$ define the following seminorm 
$$
[f]_{E^{s}_{p,q}} = \left( \int_{\mathbb{R}^{n}} \left( \int_{0}^{\infty} t^{q(1-s)-1} \vert \partial_{t} P_{t} * f(x) \vert^{q} dt \right)^{p/q} dx \right)^{1/p}.
$$
Where $P_{t}$ is the Poisson kernel. The function $ F(x,t) = P_{t}*f(x)$ is sometimes called the harmonic extension of $f$
\[
\begin{cases}
\Delta_{x,t} F(x,t) = 0 \quad &\text{in $\R^n \times (0,\infty)$}\\
F(x,0) = f(x). \quad 
\end{cases}
\]
It is known that $ [f]_{E^{s}_{p,q}} \approx_{s,n,p,q} [f]_{\dot{F}^{s}_{p,q}}$, for an overview see \cite{C} and \cite{HT}, see also \cite{wang} for other similar identifications of the Triebel-Lizorkin space. Similarly if $ p > nq/(n+sq)$ then $ [f]_{\dot{F}^{s}_{p,q}} \approx_{s,n,p,q} [f]_{W^{s}_{p,q}}$, see \cite{Prats} for a proof. In both of the equivalence the dependency on $ s $ is not known. And it is one of the goals of this paper to understand the dependency on $ s $ (at least when $ s  $ is not near $0$) and establish sharp inequalities between these spaces. We do so by first relating $ E_{p,q}^{s} $ to $ \dot{F}^{s}_{p,q}$. This seems to be new, even for $p=q$.

\begin{thm}\label{q12}
Let $ f \in \mathscr{S}(\mathbb{R}^{n}) $ and $ p,q \in (1,\infty)$, and $ s \in(0,1)$ then one has 
\begin{enumerate}
    \item if $ q\in (1,2]$ then 
    $$
   {(1-s)^{-1/2}} [f]_{\dot{F}^{s}_{p,q}} \lesssim_{n,p,q} [f]_{E_{p,q}^{s}} \lesssim_{n,p,q} {(1-s)^{-1/q}} [f]_{\dot{F}^{s}_{p,q}}.
    $$
    \item If $ q \in [2, \infty)$ then we have 
    $$ 
{(1-s)^{-1/q}} [f]_{\dot{F}^{s}_{p,q}} \lesssim_{n,p,q} [f]_{E_{p,q}^{s}} \lesssim_{n,p,q} {(1-s)^{-1/2}}[f]_{\dot{F}^{s}_{p,q}}.
     $$
\end{enumerate}
\end{thm}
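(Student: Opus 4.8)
The plan is to move to the Fourier side, recognise $[f]_{E^{s}_{p,q}}$ as a continuous Littlewood--Paley square function, discretise it, and then read off all four inequalities from elementary interpolation between $\ell^{2}$ and $\ell^{q}$ over $\mathbb{Z}$.

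\emph{Step 1: reduction to a discrete square function.} Since $\widehat{P_{t}*f}(\xi)=e^{-t|\xi|}\hat f(\xi)$, one has $t^{1-s}\partial_{t}(P_{t}*f)=t^{-s}G_{t}f$, where $\widehat{G_{t}f}(\xi)=g(t|\xi|)\hat f(\xi)$ with $g(r)=-re^{-r}$; thus
\[
[f]_{E^{s}_{p,q}}^{\,p}=\int_{\R^{n}}\Big(\int_{0}^{\infty}t^{-sq}\,|G_{t}f(x)|^{q}\,\tfrac{dt}{t}\Big)^{p/q}\,dx .
\]
The features of $g$ that matter are that it is smooth and nowhere zero on $(0,\infty)$, that $|g(r)|\lesssim e^{-r/2}$ for $r$ large, and that it vanishes at the origin \emph{only to first order}, $g(r)=-r+O(r^{2})$. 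Writing $f=\sum_{j}\Delta_{j}f$ for the standard Littlewood--Paley decomposition, using Littlewood--Paley theory in the vector-valued form appropriate to $L^{p}(\ell^{q})$ (which holds for $1<p,q<\infty$) to pass from $|G_{t}f|$ to the square function of its frequency-localised pieces, and discretising the $t$-integral, I would obtain --- crucially \emph{with constants depending only on $n,p,q$ and not degenerating as $s\to1$} ---
\[
[f]_{E^{s}_{p,q}}^{\,p}\ \asymp_{n,p,q}\ \int_{\R^{n}}\Big(\sum_{k\in\mathbb{Z}}2^{ksq}\Big(\sum_{j\in\mathbb{Z}}|g(2^{\,j-k})|^{2}\,|\Delta_{j}f(x)|^{2}\Big)^{q/2}\Big)^{p/q}\,dx .
\]
Setting $\gamma_{j}(x):=2^{js}|\Delta_{j}f(x)|\ge0$ (so $[f]_{\dot{F}^{s}_{p,q}}^{\,p}=\int_{\R^{n}}(\sum_{j}\gamma_{j}^{q})^{p/q}\,dx$) and $v_{\ell}:=2^{-\ell(1-s)}$, and noting that $|g(2^{\,j-k})|\,2^{(k-j)s}=2^{-(k-j)(1-s)}e^{-2^{\,j-k}}$, which is $\asymp v_{k-j}$ when $k\ge j$ and decays super-exponentially in $j-k$ when $j>k$ (the latter range being harmless), the theorem becomes
\[
[f]_{E^{s}_{p,q}}\ \asymp_{n,p,q}\ \Big\|\Big(\sum_{k\in\mathbb{Z}}H_{k}^{q}\Big)^{1/q}\Big\|_{L^{p}(\R^{n})},\qquad H_{k}:=\Big(\sum_{\ell\ge0}v_{\ell}^{\,2}\,\gamma_{k-\ell}^{2}\Big)^{1/2}.
\]
I expect Step 1 to be the main obstacle: it is exactly the first-order (not infinite-order) vanishing of $g$ at $0$ that keeps $G_{t}f$ from being concentrated at frequencies $|\xi|\sim t^{-1}$ and forces the slowly decaying tail $\sum_{\ell\ge1}v_{\ell}\Delta_{k-\ell}f$ to be retained, and carrying out the square-function reduction while keeping every constant uniform in $s$ is the delicate point --- it is what quarantines all the $(1-s)$-dependence into the explicit weights $v_{\ell}$.

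\emph{Step 2: the lower bounds.} These will use only $\sum_{\ell\ge0}v_{\ell}^{\,r}\asymp_{r}(1-s)^{-1}$ for fixed $r>0$ together with Fubini on $\mathbb{Z}$, and the estimates are pointwise in $x$ before the $p$-th power is integrated. If $q\in[2,\infty)$, the embedding $\ell^{2}\hookrightarrow\ell^{q}$ gives $H_{k}^{q}\ge\sum_{\ell\ge0}v_{\ell}^{\,q}\gamma_{k-\ell}^{q}$, so $\sum_{k}H_{k}^{q}\ge\big(\sum_{\ell}v_{\ell}^{\,q}\big)\sum_{k}\gamma_{k}^{q}\asymp(1-s)^{-1}\sum_{k}\gamma_{k}^{q}$ and therefore $[f]_{E^{s}_{p,q}}\gtrsim(1-s)^{-1/q}[f]_{\dot{F}^{s}_{p,q}}$. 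If $q\in(1,2]$, fix $L\asymp(1-s)^{-1}$ so that $v_{\ell}\asymp1$ for $0\le\ell\le L$; then $H_{k}^{2}\gtrsim\sum_{\ell=0}^{L}\gamma_{k-\ell}^{2}\ge(L+1)^{\,1-2/q}\big(\sum_{\ell=0}^{L}\gamma_{k-\ell}^{q}\big)^{2/q}$ by the power-mean inequality, whence $H_{k}^{q}\gtrsim(L+1)^{q/2-1}\sum_{\ell=0}^{L}\gamma_{k-\ell}^{q}$ and summing in $k$ gives $\sum_{k}H_{k}^{q}\gtrsim(L+1)^{q/2}\sum_{k}\gamma_{k}^{q}\asymp(1-s)^{-q/2}\sum_{k}\gamma_{k}^{q}$, i.e. $[f]_{E^{s}_{p,q}}\gtrsim(1-s)^{-1/2}[f]_{\dot{F}^{s}_{p,q}}$.

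\emph{Step 3: the upper bounds.} If $q\in(1,2]$, the embedding $\ell^{q}\hookrightarrow\ell^{2}$ gives $H_{k}^{q}\le\sum_{\ell\ge0}v_{\ell}^{\,q}\gamma_{k-\ell}^{q}$, so $\sum_{k}H_{k}^{q}\le\big(\sum_{\ell}v_{\ell}^{\,q}\big)\sum_{k}\gamma_{k}^{q}\asymp(1-s)^{-1}\sum_{k}\gamma_{k}^{q}$ and $[f]_{E^{s}_{p,q}}\lesssim(1-s)^{-1/q}[f]_{\dot{F}^{s}_{p,q}}$. If $q\in[2,\infty)$, apply Hölder's inequality in $\ell$ with exponents $\big(\tfrac{q}{q-2},\tfrac q2\big)$ to $H_{k}^{2}=\sum_{\ell\ge0}v_{\ell}\cdot\big(v_{\ell}\gamma_{k-\ell}^{2}\big)$:
\[
H_{k}^{q}\ \le\ \Big(\sum_{\ell\ge0}v_{\ell}^{\,q/(q-2)}\Big)^{(q-2)/2}\ \sum_{\ell\ge0}v_{\ell}^{\,q/2}\,\gamma_{k-\ell}^{q}\ \lesssim_{q}\ (1-s)^{-(q-2)/2}\sum_{\ell\ge0}v_{\ell}^{\,q/2}\,\gamma_{k-\ell}^{q};
\]
summing in $k$ and using $\sum_{\ell}v_{\ell}^{\,q/2}\asymp_{q}(1-s)^{-1}$ gives $\sum_{k}H_{k}^{q}\lesssim_{q}(1-s)^{-q/2}\sum_{k}\gamma_{k}^{q}$, i.e. $[f]_{E^{s}_{p,q}}\lesssim(1-s)^{-1/2}[f]_{\dot{F}^{s}_{p,q}}$. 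The super-exponentially small $j>k$ tail set aside in Step 1 contributes at most $\lesssim[f]_{\dot{F}^{s}_{p,q}}$ and is absorbed in every case, which gives the four stated inequalities.
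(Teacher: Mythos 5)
Your Steps 2 and 3 are correct and are, in fact, exact discrete analogues of what the paper does after its own reduction: your weighted square function $H_k^2=\sum_{\ell\ge0}v_\ell^2\gamma_{k-\ell}^2$ is the dyadic version of the paper's iterated integral $\int_r^\infty t\,|\partial_t^2P_t*f(x)|^2\,dt$ (Proposition \ref{M} and the display \eqref{EQS4}), your $\ell^q\hookrightarrow\ell^2$ embedding for $q\le2$ is Proposition \ref{geq1}, and your H\"older-in-$\ell$ argument for $q\ge2$ plays the role of the Minkowski step in Proposition \ref{geq2}. (For the lower bounds the paper actually argues by duality, Proposition \ref{DualityE}, but it remarks explicitly that the direct route through Proposition \ref{M} — which is your route — also works.)

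The genuine gap is Step 1, which you correctly identify as the crux but do not prove, and the tool you invoke for it is not the right one. The "vector-valued Littlewood--Paley theory appropriate to $L^p(\ell^q)$" (Fefferman--Stein / standard two-exponent theory) lets you compare $\|(\sum_k 2^{ksq}|T_kf|^q)^{1/q}\|_{L^p}$ with $[f]_{\dot F^s_{p,q}}$ when the $T_k$ are frequency-localized at $|\xi|\approx 2^k$ — which, as you yourself observe, $G_{2^{-k}}$ is not. To establish your two-sided equivalence $[f]_{E^s_{p,q}}\asymp\|(\sum_kH_k^q)^{1/q}\|_{L^p}$ with constants uniform in $s$, you must run a square-function (Littlewood--Paley) theorem in $x$ \emph{inside} the outer $\ell^q$ (or $L^q(dr/r)$) norm, i.e.\ you need a three-exponent $L^p(L^q(L^2))$ statement, uniformly over the continuous parameter, and since $p\ne q$ you cannot reach it by Fubini from the scalar case. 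This is precisely the content of the paper's Section 2.3: a Banach-space-valued Calder\'on--Zygmund extrapolation (Lemma \ref{Singular}) applied to the operator-valued kernel $K^\epsilon_l(x)=\partial_t^lP_{t+\epsilon}(x)$ (Lemmas \ref{PKK}, \ref{K}, \ref{Kformixed}, Corollary \ref{NK1}), which is then applied to $G(x,r)=r^{1-s}\partial_rP_r*f(x)$ to yield Proposition \ref{M}. Without this ingredient your Step 1 is an assertion, and both directions of it are needed (the $\lesssim$ direction for your upper bounds, the harder $\gtrsim$ direction for your lower bounds). A secondary, fixable imprecision: obtaining the \emph{upper} bound of Step 1 by triangle inequality plus Cauchy--Schwarz over $j$ naturally produces the weight $|g(2^{j-k})|$ to the first power rather than squared, and for $s>1/2$ the first-power weight is not summable against $2^{2(k-j)s}$; you must either distribute the weight more carefully or, as the paper does, derive the squared weight directly from the $L^2(t^{3}dt)$ structure of Proposition \ref{M}. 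If you import the paper's Corollary \ref{NK1} (or prove an equivalent vector-valued square-function theorem), the rest of your argument goes through.
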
 
As in \cite{P} we see that as we increase the secondary parameter of the norm, the dependency on $s$ improves.

\begin{thm}\label{QA1} let $ p,q \in (1,\infty)$ and $ f \in \SSS(\R^{n})$, then 
\begin{enumerate}
    \item  if $ q \in (1,2]$ then we have
$$
{(1-s)^{-1/q}} [f]_{\dot{F}^{s}_{p,2}} \lesssim_{n,p,q} [f]_{E^{s}_{p,q}}.
$$
 \item  If $ q \in [2, \infty) $ then we have 
     $$
     [f]_{E_{p,q}^{s}}  \lesssim_{n,p,q} {(1-s)^{-1/q}}[f]_{\dot{F}^{s}_{p,2}}.
     $$     
\end{enumerate}
\end{thm}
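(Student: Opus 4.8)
The plan is to deduce the theorem from the previous one specialised to secondary index $2$, together with one new ingredient: a comparison of the harmonic-extension seminorms with different secondary indices, carrying the sharp power of $1-s$. Taking $q=2$ in the previous theorem gives
\[
[f]_{E^s_{p,2}}\approx_{n,p}(1-s)^{-1/2}[f]_{\dot F^s_{p,2}},
\]
the defect $(1-s)^{-1/2}$ being visible already for $p=2$, where a Plancherel computation yields $[f]_{E^s_{2,2}}^2=c\,\Gamma\!\big(2(1-s)\big)[f]_{\dot H^s}^2$ with $\Gamma(2(1-s))\sim(2(1-s))^{-1}$ as $s\to1^-$. Reading $F^s_{p,2}$ as the homogeneous seminorm, Part~(2) then follows from $[f]_{E^s_{p,q}}\lesssim_{n,p,q}(1-s)^{1/2-1/q}[f]_{E^s_{p,2}}$ and Part~(1) from $[f]_{E^s_{p,2}}\lesssim_{n,p,q}(1-s)^{1/q-1/2}[f]_{E^s_{p,q}}$, both being the instance $\{a,b\}=\{q,2\}$ of
\[
[f]_{E^s_{p,a}}\lesssim_{n,p,a,b}(1-s)^{\,1/b-1/a}[f]_{E^s_{p,b}},\qquad 1<b\le a<\infty .
\]

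To prove this comparison I would set $\phi(x,t):=t^{1-s}\,\partial_tP_t*f(x)$ and $\phi^*(x):=\sup_{t>0}|\phi(x,t)|$. Because $a\ge b$, the trivial bound $\int_0^\infty|\phi(x,t)|^a\tfrac{dt}{t}\le\phi^*(x)^{a-b}\int_0^\infty|\phi(x,t)|^b\tfrac{dt}{t}$ together with Hölder in $x$ (exponents $\tfrac{a}{a-b},\tfrac{a}{b}$) gives
\[
[f]_{E^s_{p,a}}\le\|\phi^*\|_{L^p}^{(a-b)/a}\,[f]_{E^s_{p,b}}^{b/a},
\]
so everything reduces to the single maximal estimate $\|\phi^*\|_{L^p}\lesssim_{n,p,b}(1-s)^{1/b}[f]_{E^s_{p,b}}$.

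For this, the structural input is that $u(\cdot,t):=\partial_tP_t*f$ obeys the semigroup relation $u(\cdot,t)=P_{t-r}*u(\cdot,r)$ for $0<r<t$; since the Poisson kernel at any height is dominated by a fixed radially decreasing $L^1$ profile, one gets $|u(x,t)|\le P_{t-r}*|u(\cdot,r)|(x)\lesssim M(|u(\cdot,r)|)(x)$ with $M$ the Hardy--Littlewood maximal operator and the implied constant independent of $r,t$, hence $|\phi(x,t)|\lesssim(t/r)^{1-s}M(|\phi(\cdot,r)|)(x)$ for all $0<r<t$. Raising this to the $b$-th power and integrating against the probability density $\tfrac{b(1-s)}{t^{b(1-s)}}\,r^{b(1-s)-1}\,dr$ on $(0,t)$ — whose weight cancels the factor $(t/r)^{b(1-s)}$ exactly — leaves $|\phi(x,t)|^b\lesssim b(1-s)\int_0^\infty M(|\phi(\cdot,r)|)(x)^b\,\tfrac{dr}{r}$; taking the supremum over $t$, then the $L^p$ norm in $x$, and finally the Fefferman--Stein vector-valued maximal inequality in $L^p(\ell^b)$ — available precisely because $p,b\in(1,\infty)$, which is why the hypothesis asks $p,q\in(1,\infty)$ — yields $\|\phi^*\|_{L^p}\lesssim(b(1-s))^{1/b}[f]_{E^s_{p,b}}$, and $b^{1/b}\le e^{1/e}$ is absorbed.

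The step I expect to need the most care is the maximal bound $|u(x,t)|\lesssim M(|u(\cdot,r)|)(x)$ uniform in both heights $0<r<t$ (and in $s$): this is where the concrete structure of the Poisson semigroup is essential, and it is what excludes the otherwise fatal scenario in which $t^{1-s}|\partial_tP_t*f(x)|$ concentrates at a single large scale — a generic approximate identity with a Littlewood--Paley-type spectral localisation would not do. The remaining pieces (interpolation, Hölder, Fefferman--Stein) are routine; and since $f\in C^\infty_0$ only the homogeneous form of the right-hand side is relevant, so the homogeneous/inhomogeneous ambiguity in $F^s_{p,2}$ is immaterial.
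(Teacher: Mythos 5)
Your proposal is correct, and it takes a genuinely different route from the paper. The paper proves part (2) directly: it rewrites $[f]_{E^{s}_{p,q}}$ via Proposition \ref{M} as an iterated integral involving $\partial^{2}_{t}P_{t}*f$, then (Proposition \ref{geq3}) uses the substitution $t\mapsto rt$ and the splitting $A^{q/2}=A\cdot A^{q/2-1}$ to extract one factor of $(1-s)^{-1}$, concluding with the square-function identification of Lemma \ref{DTC}; part (1) is then obtained by duality (Proposition \ref{DualityE} and Theorem \ref{L}(3)), estimating the dual pairing with the same scaling trick applied to $q'\in[2,\infty)$. You instead isolate a single ``change of secondary index'' inequality $[f]_{E^{s}_{p,a}}\lesssim(1-s)^{1/b-1/a}[f]_{E^{s}_{p,b}}$ for $b\le a$, proved from the semigroup identity $\partial_{t}P_{t}*f=P_{t-r}*(\partial_{r}P_{r}*f)$, the uniform domination of Poisson averages by $\mathcal{M}$, an exact averaging in $r$ that produces the factor $b(1-s)$, H\"older in $x$, and the continuous-parameter Fefferman--Stein inequality (which is available in the paper as Lemma \ref{CSF}); both parts then follow symmetrically from the $q=2$ case of the preceding theorem, $[f]_{E^{s}_{p,2}}\approx_{n,p}(1-s)^{-1/2}[f]_{\dot F^{s}_{p,2}}$, by taking $\{a,b\}=\{q,2\}$. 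Your argument avoids duality entirely and is arguably more elementary and more unified; what the paper's duality machinery buys, beyond this theorem, is the off-diagonal Sobolev-type lower bounds (Theorem \ref{LS}) needed for the BBM2 statement, so the two approaches are complementary rather than interchangeable in the paper's overall scheme. All the individual steps you flag as delicate (the uniform maximal bound in both heights, the probability-density cancellation, the interpolation exponents) check out.
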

From the above it is easy to deduce a version of Theorem \ref{BBM2}  for $ E^{s}_{p,q}$ when $ q \in (1,2]$. However, the general result for $ q \in (1,\infty)$ follows from the Sobolev estimates below.
\begin{thm}\label{SUU}
Let $ s \in (0,1)$ and assume $ \sigma , v \in [0,1]$ with 
$$
v < s < \sigma.
$$
Then we have for any $ f \in \SSS(\R^{n})$ the following
$$
[f]_{E^{s}_{p,q}} \lesssim_{n,p,q} (\sigma-s)^{-1/q} [f]_{\dot{F}^{\sigma}_{p,2}} + (s-v)^{-1/q} [f]_{\dot{F}^{v}_{p,2}}.
$$
\end{thm}
And similar estimates from below hold
\begin{thm}\label{LowerS}
Let $ \Theta > 1 $ and $ s \in (1-\frac{1}{2\Theta}, 1)$. Then for $ \sigma \in (0, \bar{\sigma})$, where $ \bar{\sigma}$ is taken so that $ (1- \bar{\sigma}) = \Theta (1-s)$, then we have for any $ f \in \SSS(\R^{n})$ the following
$$
[f]_{\dot{F}^{\sigma}_{p,2}} \leq C(\Theta, n, p ,q) \left( \Vert f \Vert_{L^{p}} + {(1-s)^{1/q}} [f]_{E^{s}_{p,q}} \right).
$$
\end{thm}
As a consequence of the estimates above we obtain the following BBM theorems.
\begin{thm}\label{BBM1PR}
Let $ p,q \in (1,\infty)$. Then 
$$
\lim_{s \to 1^{-}}(1-s)^{1/q} [f]_{E^{s}_{p,q}} \approx_{n,p,q} \Vert \nabla f \Vert_{L^{p}}
$$
for all $ f\in \SSS(\R^{n}).$
\end{thm}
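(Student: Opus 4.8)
The plan is to prove the two one-sided bounds $\limsup_{s\to1^-}(1-s)^{1/q}[f]_{E^s_{p,q}}\lesssim_{n,p,q}\|\nabla f\|_{L^p}$ and $\liminf_{s\to1^-}(1-s)^{1/q}[f]_{E^s_{p,q}}\gtrsim_{n,p,q}\|\nabla f\|_{L^p}$, which together give the claimed asymptotic equivalence. Three auxiliary facts will be used. (i) For $f\in C^\infty_0(\R^n)$ and $1<p<\infty$, $[f]_{F^1_{p,2}}\approx_{n,p}\|\nabla f\|_{L^p}$ — the Littlewood--Paley characterisation of $\dot W^{1,p}$, i.e.\ $L^p$-boundedness of the Riesz transforms. (ii) For a fixed $f\in C^\infty_0$ the function $\sigma\mapsto[f]_{F^\sigma_{p,2}}$ is continuous at $\sigma=1$: apply dominated convergence to the square function $\big(\sum_j2^{2j\sigma}|\Delta_jf|^2\big)^{1/2}$, whose tails are controlled uniformly for $\sigma$ near $1$ since $\Delta_jf$ decays faster than any power of $2^j$ as $j\to+\infty$ and $|\Delta_jf|\lesssim2^{jn}$ as $j\to-\infty$. (iii) For the dilate $f_\lambda(x):=f(\lambda x)$, whose harmonic extension is $F(\lambda x,\lambda t)$, a change of variables in $t$ followed by one in $x$ gives $[f_\lambda]_{E^s_{p,q}}=\lambda^{s-n/p}[f]_{E^s_{p,q}}$, while $\|\nabla f_\lambda\|_{L^p}=\lambda^{1-n/p}\|\nabla f\|_{L^p}$ and $\|f_\lambda\|_{L^p}=\lambda^{-n/p}\|f\|_{L^p}$.

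For the upper bound I would apply Theorem \ref{SUU} with $\sigma=1$ and $v=\tfrac12$ (admissible once $s>\tfrac12$), giving
\[
(1-s)^{1/q}[f]_{E^s_{p,q}}\lesssim_{n,p,q}[f]_{F^1_{p,2}}+(1-s)^{1/q}\big(s-\tfrac12\big)^{-1/q}[f]_{F^{1/2}_{p,2}}.
\]
As $f\in C^\infty_0$ the seminorm $[f]_{F^{1/2}_{p,2}}$ is finite and $(1-s)^{1/q}(s-\tfrac12)^{-1/q}\to0$, so letting $s\to1^-$ and invoking (i) gives $\limsup_{s\to1^-}(1-s)^{1/q}[f]_{E^s_{p,q}}\lesssim_{n,p,q}\|\nabla f\|_{L^p}$.

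For the lower bound, fix $\Theta>1$ and let $s\to1^-$ along $s\in(1-\tfrac1{2\Theta},1)$; since the associated threshold $\bar\sigma(s)$ (defined by $1-\bar\sigma(s)=\Theta(1-s)$) tends to $1$ as $s\to1^-$, choose $\sigma(s)\in(0,\bar\sigma(s))$ with $\sigma(s)\to1$. Theorem \ref{LowerS} gives $[f]_{F^{\sigma(s)}_{p,2}}\le C(\Theta,n,p,q)\big(\|f\|_{L^p}+(1-s)^{1/q}[f]_{E^s_{p,q}}\big)$; letting $s\to1^-$ and using (ii) and (i),
\[
\|\nabla f\|_{L^p}\lesssim_{\Theta,n,p,q}\|f\|_{L^p}+\liminf_{s\to1^-}(1-s)^{1/q}[f]_{E^s_{p,q}}.
\]
The remaining issue is the lower-order term $\|f\|_{L^p}$, harmless for the compactness statement Theorem \ref{LowerS} was designed for but not here. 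I would remove it by applying the last display to $f_\lambda$, inserting the scaling identities of (iii) — note $\lambda^{s-n/p}\to\lambda^{1-n/p}$, so $\liminf_{s\to1^-}(1-s)^{1/q}[f_\lambda]_{E^s_{p,q}}=\lambda^{1-n/p}\liminf_{s\to1^-}(1-s)^{1/q}[f]_{E^s_{p,q}}$ — then dividing by $\lambda^{1-n/p}$ and letting $\lambda\to\infty$. Because $\|f_\lambda\|_{L^p}=\lambda^{-n/p}\|f\|_{L^p}$ carries an extra factor $\lambda^{-1}$ relative to the other two terms, it drops out, leaving $\|\nabla f\|_{L^p}\lesssim_{n,p,q}\liminf_{s\to1^-}(1-s)^{1/q}[f]_{E^s_{p,q}}$.

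I expect the scaling manoeuvre to be the crux: it works only because $[\cdot]_{E^s_{p,q}}$, $\|\nabla\cdot\|_{L^p}$, $\|\cdot\|_{L^p}$ scale with exponents $s-n/p$, $1-n/p$, $-n/p$, so that as $s\to1$ the first two coincide while the third trails by exactly one power of $\lambda$; this mismatch is what allows the $\|f\|_{L^p}$ term to be absorbed. The subsidiary facts — continuity in $\sigma$ of the $\sigma$-th Triebel seminorm on a fixed test function and the quantitative identification $[f]_{F^1_{p,2}}\approx_{n,p}\|\nabla f\|_{L^p}$ — are routine, but deserve to be stated precisely, since the whole thrust of the paper is that such equivalences usually come with $s$-dependent constants.
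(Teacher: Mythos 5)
Your argument is correct, and it splits naturally into two halves of different character relative to the paper. The upper bound is essentially the paper's own proof: both apply Theorem \ref{SUU} with $\sigma=1$; the paper takes the moving lower exponent $v=2s-1$ (so that $(1-s)^{1/q}(s-v)^{-1/q}=1$ and the error term $[f]_{\dot F^{2s-1}_{p,2}}$ converges to $[f]_{\dot F^{1}_{p,2}}\approx\Vert\nabla f\Vert_{L^p}$), whereas your fixed choice $v=1/2$ makes the error term vanish outright and avoids even needing continuity of $\sigma\mapsto[f]_{\dot F^\sigma_{p,2}}$ at $\sigma=1$ in this half — a small simplification. The lower bound is where you genuinely diverge. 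The paper only invokes its Theorem \ref{LowerS}-type machinery for $q\in(1,2]$; for general $q$ it argues directly: it restricts to $|x|<R$ and $t\in(0,1-s)$, writes $(1-s)t^{q(1-s)-1}$ as $\frac1q\partial_t(t^{q(1-s)})$, integrates by parts, and identifies the boundary term in the limit with $C|\Delta^{1/2}f(x)|^{q}$ via the pointwise Caffarelli--Silvestre limit of Lemma \ref{PC}, concluding by dominated convergence and $R\to\infty$. You instead run Theorem \ref{LowerS} for all $q$ and then eliminate the parasitic $\Vert f\Vert_{L^p}$ term by dilation, exploiting that $[f_\lambda]_{E^s_{p,q}}=\lambda^{s-n/p}[f]_{E^s_{p,q}}$ while $\Vert f_\lambda\Vert_{L^p}$ trails by a factor $\lambda^{-1}$; your scaling identities and the factorization of the $\liminf$ (since $\lambda^{s-n/p}\to\lambda^{1-n/p}>0$) check out, and the constant in Theorem \ref{LowerS} is uniform in $\lambda$ once $\Theta$ is fixed, so the manoeuvre is legitimate. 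What each route buys: yours reuses the duality-based Sobolev lower bound already proved for BBM2 and needs no pointwise limit of $\partial_tP_t*f$, but does require the continuity of $\sigma\mapsto[f]_{\dot F^{\sigma}_{p,2}}$ at $\sigma=1$ on test functions (routine, as you say, but it should be recorded); the paper's direct computation is self-contained at that point and produces the limiting object $\Delta^{1/2}f$ explicitly. Neither proof establishes existence of the limit in $s$ — both really bound $\limsup$ from above and $\liminf$ from below by multiples of $\Vert\nabla f\Vert_{L^p}$ — so you are at the same level of precision as the paper.
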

Also, we have BBM2
\begin{cor}\label{BBM2PR}
Let $ p,q \in (1,\infty)$. Let  $ f_{k} \in \SSS (\mathbb{R}^{n})$, and assume $f_{k} \to f $ weakly in $L^{p}(\mathbb{R}^{n})$ with 
$$
\Lambda= \sup_{k} ( \Vert f_{k} \Vert_{L^{p}} +  (1-s_{k})^{1/q} [f_{k}]_{E^{s}_{p,q}} ) < \infty,
$$
where $ s_{k} \in (0,1)$ is some sequence converging to $ 1 $ from below. Then $ f \in W^{1,p}(\mathbb{R}^{n})$ and we have that $ \Vert f \Vert_{W^{1,p}} \lesssim_{n,p,q}  \Lambda.$
\end{cor}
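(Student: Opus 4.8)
The plan is to deduce the corollary from \Cref{LowerS} by a standard three-step limiting argument: first extract from the hypothesis a bound on $[f_{k}]_{F^{\sigma}_{p,2}}$ that is uniform in $k$ for a \emph{fixed} exponent $\sigma$ below $1$; then transfer this bound to the weak limit $f$ by lower semicontinuity; and finally let $\sigma\to1$ and invoke the classical identification $[f]_{\dot{F}^{1}_{p,2}}\approx_{n,p}\Vert\nabla f\Vert_{L^{p}}$. The bound $\Vert f\Vert_{L^{p}}\le\liminf_{k}\Vert f_{k}\Vert_{L^{p}}\le\Lambda$ is free from weak lower semicontinuity of the $L^{p}$-norm, so only the gradient term requires work.

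For the first step I would fix $\Theta=2$. Since $s_{k}\to1^{-}$, eventually $s_{k}\in(1-\tfrac14,1)$, so \Cref{LowerS} applies to each $f_{k}$ and gives, for every $\sigma\in(0,\bar\sigma_{k})$ with $1-\bar\sigma_{k}=2(1-s_{k})$,
$$
[f_{k}]_{F^{\sigma}_{p,2}}\le C(n,p,q)\bigl(\Vert f_{k}\Vert_{L^{p}}+(1-s_{k})^{1/q}[f_{k}]_{E^{s}_{p,q}}\bigr)\le C(n,p,q)\,\Lambda .
$$
Now I would fix an arbitrary $\sigma_{0}\in(0,1)$: for all $k$ large enough one has $1-s_{k}<\tfrac12(1-\sigma_{0})$, hence $\sigma_{0}<\bar\sigma_{k}$ and $[f_{k}]_{F^{\sigma_{0}}_{p,2}}\le C(n,p,q)\,\Lambda$, with the constant independent of $\sigma_{0}$ and $k$.

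The second step is lower semicontinuity under weak $L^{p}$ convergence. Writing $[g]_{F^{\sigma_{0}}_{p,2}}=\bigl\Vert(\sum_{j\in\mathbb{Z}}2^{2j\sigma_{0}}|\Delta_{j}g|^{2})^{1/2}\bigr\Vert_{L^{p}}$ for a Littlewood--Paley decomposition $\{\Delta_{j}\}$, and using that each $\Delta_{j}$ is convolution against a fixed Schwartz function, weak $L^{p}$ convergence of $f_{k}$ forces $\Delta_{j}f_{k}(x)\to\Delta_{j}f(x)$ for every $j$ and $x$; two applications of Fatou's lemma (over the counting measure in $j$, then over $\R^{n}$, using that $t\mapsto t^{p/2}$ is continuous and increasing) yield $[f]_{F^{\sigma_{0}}_{p,2}}\le\liminf_{k}[f_{k}]_{F^{\sigma_{0}}_{p,2}}\le C(n,p,q)\,\Lambda$. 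Since $\sigma_{0}\in(0,1)$ was arbitrary and $C$ does not depend on it, $\sup_{\sigma_{0}<1}[f]_{F^{\sigma_{0}}_{p,2}}\le C(n,p,q)\,\Lambda$. Letting $\sigma_{0}\to1^{-}$ and applying Fatou once more — here $\liminf_{\sigma_{0}\to1}\sum_{j}2^{2j\sigma_{0}}|\Delta_{j}f(x)|^{2}\ge\sum_{j}2^{2j}|\Delta_{j}f(x)|^{2}$ needs only nonnegativity, not monotonicity — gives $[f]_{\dot{F}^{1}_{p,2}}\le C(n,p,q)\,\Lambda$. The classical Triebel--Lizorkin description of $\dot{W}^{1,p}$ for $1<p<\infty$ then turns this into $\Vert\nabla f\Vert_{L^{p}}\lesssim_{n,p,q}\Lambda$, and together with $\Vert f\Vert_{L^{p}}\le\Lambda$ this proves $f\in W^{1,p}(\R^{n})$ with $\Vert f\Vert_{W^{1,p}}\lesssim_{n,p,q}\Lambda$.

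I expect the only delicate points — none of them serious — to be the two limiting arguments: justifying weak-$L^{p}$ lower semicontinuity of $[\cdot]_{F^{\sigma_{0}}_{p,2}}$ at the level of the Littlewood--Paley pieces (since homogeneous Triebel--Lizorkin spaces are defined only modulo polynomials, one cannot merely quote weak lower semicontinuity of a norm), and the $\sigma_{0}\to1$ limit, where the low-frequency block of the square function is decreasing rather than increasing in $\sigma_{0}$, so one must invoke Fatou rather than monotone convergence. It would be cleanest to isolate the lower-semicontinuity statement as a short lemma. The genuine content of the corollary is entirely in \Cref{LowerS}.
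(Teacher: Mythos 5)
Your proof is correct and follows essentially the same route as the paper: apply \Cref{LowerS} with $\Theta=2$ to get a uniform bound $[f_{k}]_{\dot{F}^{\sigma}_{p,2}}\lesssim\Lambda$ for exponents $\sigma$ approaching $1$, then pass to the weak limit. The only difference is cosmetic: the paper outsources your lower-semicontinuity/limiting step to \Cref{WTL} (applied with $\sigma_{k}=1-2(1-s_{k})-\tfrac{1}{10k}\to1$), whereas you reprove that lemma by hand via Fatou on the Littlewood--Paley pieces.
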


\subsection{ Estimates for $W^{s}_{p,q}$}
Everything we have above is true for $ W^{s}_{p,q}$ if any of the following conditions hold true 
\begin{enumerate}
    \item $ q < \frac{n}{s} $, and $ p \in (\frac{nq}{n+sq}, \frac{nq}{n-sq})$
    \item $ q \geq n/s$ and $ p \in (nq/(n+sq) , \infty)$.
\end{enumerate}
The main difference is that, unlike $E^{s}_{p,q}$, $W^{s}_{p,q}$ is not stable as $s \to 0$. Therefore, in all of the results we will require that $s$ stays away from $0$. 
More precisely we have 
\begin{thm}\label{1.9}
Let $p,q \in (1,\infty)$ and $ p > \frac{nq}{n+\theta q}$ for some $ \theta \in (0,1)$. And $ p' > \frac{nq'}{n+\theta q'}$. Then for $ s \in (\theta,1) $ we have
\begin{enumerate}
    \item If $ q \in (1,2]$ then  
    $$
   (1-s)^{-1/2}[f]_{\dot{F}^{s}_{p,q}} \lesssim_{n,p,q,\theta} [f]_{W^{s}_{p,q}} \lesssim_{n,p,q,\theta} (1-s)^{-1/q}[f]_{\dot{F}^{s}_{p,q}}.
    $$
    \item If $ q \in [2,\infty)$ then 
    $$
 (1-s)^{-1/q} [f]_{\dot{F}^{s}_{p,q}}  \lesssim_{n,p,q,\theta} [f]_{W^{s}_{p,q}} \lesssim_{n,p,q,\theta} (1-s)^{-1/2}[f]_{\dot{F}^{s}_{p,q}} .
    $$
\end{enumerate}
\end{thm}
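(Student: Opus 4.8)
The plan is to reduce Theorem~\ref{1.9} to the already-established harmonic-extension estimates (the theorems on $E^s_{p,q}$ above), by showing that under hypotheses (1)--(2) the Gagliardo seminorm $[f]_{W^s_{p,q}}$ is comparable to $[f]_{E^s_{p,q}}$ \emph{with constants independent of $s$} on the range $s\in(\theta,1)$. Once such an $s$-uniform equivalence $[f]_{W^s_{p,q}}\approx_{n,p,q,\theta}[f]_{E^s_{p,q}}$ is in hand, conclusions (1) and (2) follow immediately by inserting it into the two-sided bounds of the first theorem of the paper (the one comparing $[f]_{E^s_{p,q}}$ and $[f]_{\dot F^s_{p,q}}$), and nothing further is needed. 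So the entire content is: \emph{make the classical identification $W^s_{p,q}\approx \dot F^s_{p,q}$ (equivalently $\approx E^s_{p,q}$) quantitative in $s$ away from $s=0$.}

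First I would recall the standard route to $[f]_{W^s_{p,q}}\approx[f]_{\dot F^s_{p,q}}$ via a Littlewood--Paley / difference-characterization argument, e.g.\ the one in \cite{Prats}, and track every constant. Writing the Gagliardo double integral in terms of the first difference $\Delta_z f = f(\cdot+z)-f(\cdot)$ and decomposing the $z$-integral dyadically, one compares the $L^p(\ell^q)$-norm of $2^{js}\|\Delta_{2^{-j}\cdot}f\|$ with the $L^p(\ell^q)$-norm of $2^{js}\|\Delta_j f\|$ (Littlewood--Paley pieces). The passage between these uses (i) the Fefferman--Stein vector-valued maximal inequality and (ii) a geometric-series summation in the ``off-diagonal'' frequency interactions, of the schematic form $\sum_{k} 2^{-|k|(1-s)}$ or $\sum_k 2^{-|k|s}$. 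The first sum is bounded by $C/(1-s)$ and the second by $C/s$; combined, the total constant is $\lesssim (\min\{s,1-s\})^{-1}$. The condition $p>\tfrac{nq}{n+\theta q}$ (and the dual condition on $p'$) is exactly what is needed to stay in the range where the Fefferman--Stein inequality and the required Sobolev-type embeddings hold uniformly for $s>\theta$, so the maximal-function constants depend only on $n,p,q,\theta$ and not on $s$. Crucially, the seemingly dangerous factor $(\min\{s,1-s\})^{-1}$ is harmless here: on $s\in(\theta,1)$ we have $\min\{s,1-s\}\geq\min\{\theta,1-s\}$, and the extra $(1-s)^{-1}$ growth as $s\to1$ is precisely absorbed when we then invoke Theorem~1 (the $E^s_{p,q}$ vs.\ $\dot F^s_{p,q}$ theorem), since that theorem already carries the matching $(1-s)^{-1/q}$ and $(1-s)^{-1/2}$ weights. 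Hence one should be careful to prove the \emph{sharp} one-sided bounds $[f]_{\dot F^s_{p,q}}\lesssim (\text{power of }(1-s))\,[f]_{W^s_{p,q}}$ and its reverse separately, matching the exponents in (1) and (2) exactly, rather than a crude two-sided comparison.

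Concretely, the steps in order: (a) fix $\theta\in(0,1)$ and $s\in(\theta,1)$, and expand $[f]_{W^s_{p,q}}^p$ dyadically in $z$; (b) for the upper bound on $[f]_{W^s_{p,q}}$, estimate each difference $\Delta_z f$ by telescoping over Littlewood--Paley projections $P_j f$ and using $|\Delta_z P_j f|\lesssim \min\{1,(2^j|z|)\}\,M(P_j f)$ together with Fefferman--Stein in $L^p(\ell^\sigma)$; sum the resulting geometric series, producing a constant $\lesssim (1-s)^{-1}$ (and a harmless $s^{-1}\le\theta^{-1}$ factor), then combine with the right half of Theorem~1 which gives $[f]_{E^s_{p,q}}\lesssim (1-s)^{-1/\sigma'}[f]_{\dot F^s_{p,q}}$ — organising the exponents so that the two $(1-s)$-powers multiply to the claimed ones in (1)--(2) [note: since we ultimately want $[f]_{W^s_{p,q}}\approx[f]_{E^s_{p,q}}$ uniformly in $s$, I must show the $W$-to-$\dot F$ comparison is itself $s$-uniform, i.e.\ the geometric sums must actually contribute only $O_\theta(1)$ on $(\theta,1)$, which they do because $1-s<1-\theta$ bounds nothing — rather, one uses the \emph{sharp} exponent bookkeeping: the correct statement is $[f]_{W^s_{p,q}}\approx_{n,p,q,\theta}[f]_{E^s_{p,q}}$ with genuinely $s$-free constants, obtained by a more careful split as in \cite{Prats,P} where the $(1-s)^{-1}$ never appears in the $W\leftrightarrow E$ comparison]; (c) for the lower bound, run the dual argument, testing against the Littlewood--Paley square function and using the band-limited nature of $P_j f$ to see $[f]_{\dot F^s_{p,q}}$ controls $[f]_{W^s_{p,q}}$ from below with an $s$-uniform constant on $(\theta,1)$; (d) assemble (b)--(c) into $[f]_{W^s_{p,q}}\approx_{n,p,q,\theta}[f]_{E^s_{p,q}}$ and substitute into the first theorem of the paper to read off (1) and (2). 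The main obstacle is step (b)/(c) done \emph{sharply}: one must verify that the constant in $W^s_{p,q}\approx E^s_{p,q}$ is truly independent of $s$ throughout $(\theta,1)$ — in particular that no spurious $(1-s)^{-\epsilon}$ sneaks in from the dyadic off-diagonal sums — and that the restriction $p>\tfrac{nq}{n+\theta q}$ together with its dual is genuinely enough (not just for $s>\theta$ fixed but uniformly) to license the vector-valued maximal and embedding estimates; the conditions (1)--(2) in the statement of the paper are the translation of "$p>\tfrac{nq}{n+sq}$ for all $s$ under consideration" into a clean hypothesis, and checking that they suffice is the delicate bookkeeping point.
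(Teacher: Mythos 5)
There is a genuine gap, and it sits in the lower bounds. Your whole plan rests on upgrading the comparison between $W^{s}_{p,q}$ and $E^{s}_{p,q}$ to a two-sided, $s$-uniform equivalence and then reading off both halves of (1)--(2) from the $E^{s}_{p,q}$ versus $\dot F^{s}_{p,q}$ theorem. But only one direction of that equivalence is available: the paper proves $[f]_{W^{s}_{p,q}}\lesssim_{n,p,q,\theta}[f]_{E^{s}_{p,q}}$ (Theorem \ref{A2}), and explicitly lists the reverse inequality $[f]_{E^{s}_{p,q}}\lesssim[f]_{W^{s}_{p,q}}$ as an open problem for $q\neq 2$. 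Without that reverse direction your route to $(1-s)^{-1/2}[f]_{\dot F^{s}_{p,q}}\lesssim[f]_{W^{s}_{p,q}}$ collapses. The paper's actual mechanism for the lower bounds is a duality argument (Lemma \ref{DualityForW}): one writes $[f]_{\dot F^{s}_{p,q}}\lesssim(1-s)\,[f]_{W^{s}_{p,q}}\sup_{g}[g]_{W^{s}_{p',q'}}$, where the supremum runs over $g$ with $[g]_{\dot F^{s}_{p',q'}}\leq 1$, and then applies the \emph{upper} bound in the dual exponents to get $\sup_g[g]_{W^{s}_{p',q'}}\lesssim(1-s)^{-1/\min\{2,q'\}}$; the product $(1-s)\cdot(1-s)^{-1/\min\{2,q'\}}$ is exactly what produces the exponents $1/2$ and $1/q$ in (1) and (2). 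This is also the only place the hypothesis $p'>\frac{nq'}{n+\theta q'}$ is used, a hypothesis whose role your proposal never accounts for — a sign that the dual step is missing.

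On the upper bounds your outline does match the paper's architecture ($W\lesssim E$ with $s$-free constants on $(\theta,1)$, then invoke $E\lesssim(1-s)^{-1/\sigma}\dot F$), but the concrete method you sketch would not deliver it. Telescoping $\Delta_z f$ over Littlewood--Paley pieces with $|\Delta_z P_j f|\lesssim\min\{1,2^{j}|z|\}\,\mathcal M(P_jf)$ and summing the geometric series yields the crude $W\lesssim(1-s)^{-1}\dot F$, not the sharp $(1-s)^{-1/q}$ or $(1-s)^{-1/2}$, and there is no way to "multiply" this with the $E$-versus-$\dot F$ bound to recover sharpness; you flag this danger yourself but resolve it only by asserting that "a more careful split" avoids it. The paper's careful split is substantive: it applies the vector-valued Littlewood--Paley theorem (Corollary \ref{NK2}) to $|z|^{-s}\delta_zf(x)$ as a function of $(x,z)$, decomposes into three terms $I,II,III$, controls $I$ by Cauchy estimates for the harmonic extension plus the continuous-parameter Fefferman--Stein inequality (Lemma \ref{CSF}), and controls $III$ with the Peetre maximal function — and it is precisely in the term $III$ that the hypothesis $p>\frac{nq}{n+\theta q}$ is consumed. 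Supplying that argument, and replacing your step (c) by the duality scheme above, would close the proof.
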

We also have results for $\dot{F}^{s}_{p,2}$. 
\begin{thm}\label{1.10}
Let $ p,q \in (1,\infty)$ and $ \theta \in (0,1)$. Assume further that $ p > \frac{nq}{n+\theta q}$ and $ p' > \frac{nq'}{n+\theta q'}$. Then we have for $ s \in (\theta,1)$ the following 
\begin{enumerate}
    \item if $ q \in (1,2]$ then we have 
    $$
  (1-s)^{-1/q}  [f]_{\dot{F}^{s}_{p,2}} \lesssim_{n,p,q,\theta} [f]_{W^{s}_{p,q}}.
    $$
    \item If $ q \in [2,\infty)$ then we have
    $$
    [f]_{W^{s}_{p,q}} \lesssim_{n,p,q,\theta} (1-s)^{-1/q}[f]_{\dot{F}^{s}_{p,2}}.
    $$
\end{enumerate}
\end{thm}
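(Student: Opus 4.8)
\medskip
\noindent\textit{Proof idea.} This is the $W^{s}_{p,q}$--counterpart of the comparison between $E^{s}_{p,q}$ and $\dot F^{s}_{p,2}$ proved above, so the plan is to reduce it to that comparison by establishing the two--sided bound
$$
[f]_{W^{s}_{p,q}}\ \approx_{n,p,q,\theta}\ [f]_{E^{s}_{p,q}}\qquad\text{for }s\in(\theta,1),
$$
\emph{with an implied constant independent of }$s$. The hypotheses $p>\tfrac{nq}{n+\theta q}$ and $p'>\tfrac{nq'}{n+\theta q'}$ are exactly what is needed for this; once it is in hand, part (2) follows from $[f]_{W^{s}_{p,q}}\lesssim[f]_{E^{s}_{p,q}}\lesssim(1-s)^{-1/q}[f]_{\dot F^{s}_{p,2}}$ and part (1) from $[f]_{W^{s}_{p,q}}\gtrsim[f]_{E^{s}_{p,q}}\gtrsim(1-s)^{-1/q}[f]_{\dot F^{s}_{p,2}}$. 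Note that \Cref{1.9} by itself is not enough: inserting the elementary embeddings between $\dot F^{s}_{p,q}$ and $\dot F^{s}_{p,2}$ costs a factor $(1-s)^{\pm(1/2-1/q)}$ in the unfavourable direction, so the content is genuinely in the $s$--uniform equivalence with $E^{s}_{p,q}$.

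\medskip
\noindent\textit{Proving the equivalence with $E^{s}_{p,q}$.} One cannot route this through $\dot F^{s}_{p,q}$, since $[f]_{W^{s}_{p,q}}\approx[f]_{\dot F^{s}_{p,q}}$ and $[f]_{E^{s}_{p,q}}\approx[f]_{\dot F^{s}_{p,q}}$ each degenerate as $s\to1$, and (when $q\neq2$) not by a single scalar factor that would cancel. Instead I would compare the two seminorms directly, scale by scale. Dyadically decompose the inner integral of $[f]_{W^{s}_{p,q}}$ over the shells $|z|\sim 2^{-k}$ and that of $[f]_{E^{s}_{p,q}}$ over $t\sim 2^{-k}$; since $\int_{t\sim 2^{-k}}t^{q(1-s)-1}\,dt\approx 2^{-kq(1-s)}$ with constants uniform in $s\in(0,1)$, both are comparable, uniformly in $s$, to $\|(\sum_{k}2^{ksq}g_{k}^{q})^{1/q}\|_{L^{p}}$, where $g_{k}(x)=(\dashint_{|z|\sim 2^{-k}}|f(x+z)-f(x)|^{q}\,dz)^{1/q}$ in one case and $g_{k}(x)=2^{-k}|\partial_{t}P_{t}*f(x)|$ at $t\sim 2^{-k}$ in the other. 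Writing $f=\sum_{j}\Delta_{j}f$, a first--order Taylor estimate together with Bernstein's inequality for $j\le k$ and the triangle inequality for $j>k$ bound either $g_{k}$ by $\sum_{j}\min\{1,2^{j-k}\}\,\mathcal M\Delta_{j}f(x)$ ($\mathcal M$ a Peetre maximal function; on the $E^{s}_{p,q}$--side one uses the analogous symbol estimate for $-|\xi|e^{-t|\xi|}$), and after multiplying by $2^{ks}$ this is a discrete convolution of $(2^{js}\mathcal M\Delta_{j}f(x))_{j}$ against the \emph{same} profile in both cases, up to rapidly decaying errors; the matching upper bounds then come from the Fefferman--Stein vector--valued maximal inequality, available since $p,q\in(1,\infty)$. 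The matching \emph{lower} bounds --- reconstructing the Littlewood--Paley square function from the first differences, respectively from $\partial_{t}P_{t}*f$ --- are the content of \cite{Prats} on the $W^{s}_{p,q}$--side, and this is where the hypotheses are used: $p>\tfrac{nq}{n+\theta q}$ gives one direction and $p'>\tfrac{nq'}{n+\theta q'}$ the other, by duality on $\dot F^{-s}_{p',q'}$, with $\theta<s$ ensuring the estimates hold with $s$--independent constants.

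\medskip
\noindent\textit{Main difficulty.} The crux is the uniformity in $s$. The equivalence $[f]_{W^{s}_{p,q}}\approx[f]_{\dot F^{s}_{p,q}}$ in \cite{Prats}, and likewise $E^{s}_{p,q}\approx\dot F^{s}_{p,q}$, come with constants that are not controlled as $s\to1$; the whole point is that both blow--ups originate in the same slowly--summable tail $\sum_{l\ge 0}2^{-lq(1-s)}\approx(1-s)^{-1}$, so that in the direct scale--by--scale comparison of $W^{s}_{p,q}$ with $E^{s}_{p,q}$ the two discrete kernels one meets differ only by an $\ell^{1}$--bounded, $s$--uniform factor. Making this precise forces one to revisit the Peetre maximal function estimates and check that they survive as $s\to1$ in the borderline regimes --- $p$ or $p'$ close to the critical exponent and $q$ close to $1$ or $\infty$ --- which is exactly why the hypotheses are imposed at the level $\theta$ rather than $s$, buying the slack needed for uniformity. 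Alternatively one can bypass $E^{s}_{p,q}$ and repeat the proof of the $E^{s}_{p,q}$--versus--$\dot F^{s}_{p,2}$ comparison verbatim with $\partial_{t}P_{t}*f$ replaced by the first--difference operator; the H\"older step that trades part of the $\ell^{q}$ summation for an $\ell^{2}$ one, producing the sharp exponent $1/q$ rather than $1/2$, is then the step requiring the most care.
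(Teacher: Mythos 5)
Your part (2) follows the paper's route: Theorem \ref{A2} gives $[f]_{W^{s}_{p,q}}\lesssim_{n,p,q,\theta}[f]_{E^{s}_{p,q}}$ under the single hypothesis $p>\frac{nq}{n+\theta q}$, and composing with the bound $[f]_{E^{s}_{p,q}}\lesssim_{n,p,q}(1-s)^{-1/q}[f]_{\dot F^{s}_{p,2}}$ for $q\in[2,\infty)$ (Theorem \ref{U}, part (3)) gives the claim. That half is fine.

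The gap is in part (1). You deduce it from the reverse inequality $[f]_{E^{s}_{p,q}}\lesssim[f]_{W^{s}_{p,q}}$ with an $s$-uniform constant, but this is exactly the direction the paper cannot prove: it is stated explicitly as an open question (the equivalence $W^{s}_{p,q}\approx E^{s}_{p,q}$ is established only for $q=2$, Theorem \ref{A1}, while Theorem \ref{A2} gives only the direction you already consumed in part (2)). Your justification --- that the blow-ups in the two Peetre-maximal reconstructions ``originate in the same slowly summable tail'' and therefore cancel in a direct scale-by-scale comparison --- is a heuristic, not an argument: the reconstruction of the square function from first differences in \cite{Prats} comes with constants whose dependence on $s$ is precisely what is unknown, and nothing in your sketch removes it. The paper proves part (1) instead by duality. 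Lemma \ref{DualityForW} (with $t_{1}=t_{2}=s$) gives
$$
[f]_{\dot F^{s}_{p,2}}\lesssim_{n,p,q}(1-s)\,[f]_{W^{s}_{p,q}}\sup_{g}[g]_{W^{s}_{p',q'}},
$$
the supremum over $g\in C^{\infty}_{0}$ with $[g]_{\dot F^{s}_{p',q'}}\le1$; since $q\in(1,2]$ forces $q'\in[2,\infty)$, the already-established upper bounds of Proposition \ref{UWs} applied with $(p',q')$ --- and this is exactly where the hypothesis $p'>\frac{nq'}{n+\theta q'}$ enters --- yield $[g]_{W^{s}_{p',q'}}\lesssim(1-s)^{-1/q'}$, and $(1-s)^{1-1/q'}=(1-s)^{1/q}$ closes the estimate. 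Your passing remark about ``duality on $\dot F^{-s}_{p',q'}$'' gestures at this but is deployed only to justify the Prats-type equivalence, not as the mechanism of the proof. To repair the write-up, drop the $s$-uniform equivalence with $E^{s}_{p,q}$ and run this duality step instead.
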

The Sobolev estimates also hold in this setting. 
\begin{thm}\label{T1.11}
Assume $ p,q \in (1,\infty)$ with $ p > \frac{nq}{n+\theta q}$ for some $ \theta \in (0,1)$. Then for $ s \in (\theta,1)$ and $ v,\sigma \in [0,1]$ satisfying 
$$
v < s < \sigma.
$$
We get 
$$
[f]_{W^{s}_{p,q}} \lesssim_{n,p,q,\theta} (\sigma - s)^{-1/q} [f]_{\dot{F}^{\sigma}_{p,2}} + ( s-v)^{-1/q} [f]_{\dot{F}^{v}_{p,2}}.
$$
\end{thm}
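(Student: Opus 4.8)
The plan is to reduce Theorem~\ref{T1.11} to the harmonic-extension Sobolev estimate of Theorem~\ref{SUU}. Concretely, I would first prove the one-sided, $s$-uniform comparison
$$
[f]_{W^{s}_{p,q}} \lesssim_{n,p,q,\theta} [f]_{E^{s}_{p,q}} \qquad (s\in(\theta,1)),
$$
valid under the single hypothesis $p>\frac{nq}{n+\theta q}$, and then apply Theorem~\ref{SUU} with the given $v<s<\sigma$ to conclude
$$
[f]_{W^{s}_{p,q}} \lesssim_{n,p,q,\theta} (\sigma-s)^{-1/q}[f]_{\dot F^{\sigma}_{p,2}} + (s-v)^{-1/q}[f]_{\dot F^{v}_{p,2}}.
$$
The reason only $p>\frac{nq}{n+\theta q}$ (and not also the dual condition $p'>\frac{nq'}{n+\theta q'}$, which is needed for the full equivalence $W^{s}_{p,q}\approx E^{s}_{p,q}$ underlying Theorem~\ref{1.9}) is required is that here we only need the upper bound on $[f]_{W^{s}_{p,q}}$; moreover $p>\frac{nq}{n+\theta q}$ sits uniformly above the Prats--Triebel threshold $\frac{nq}{n+sq}$ for all $s>\theta$, cf.\ \cite{Prats}, which is exactly the regime in which the Gagliardo integrand is dominated in $L^{p}$ by a Littlewood--Paley square function. (Equivalently, one may run the proof of Theorem~\ref{SUU} verbatim with the difference quotient $\frac{f(x+z)-f(x)}{|z|^{s}}$ in place of $t^{1-s}\partial_{t}P_{t}*f$; the two routes are the same argument.)

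For the comparison I would follow the scheme of \cite{P}. For $f\in C^{\infty}_{0}$ use the Calder\'on reproducing formula $f(x)=-\int_{0}^{\infty}\partial_{t}P_{t}*f(x)\,dt$ (legitimate since $P_{t}*f\to f$ as $t\to0^{+}$ and $P_{t}*f\to0$ as $t\to\infty$), so that
$$
f(x+z)-f(x) = -\int_{0}^{\infty}\bigl(\partial_{t}P_{t}*f(x+z)-\partial_{t}P_{t}*f(x)\bigr)\,dt,
$$
and split the $t$-integral at $t=|z|$. On $t\ge|z|$ differentiate the Poisson kernel to gain a factor $t^{-1}$, giving $|\partial_{t}P_{t}*f(x+z)-\partial_{t}P_{t}*f(x)|\lesssim \frac{|z|}{t}\,(P^{*}_{t}f)(x)$ with $P^{*}_{t}f$ a Peetre-type maximal function of the extension at scale $t$; on $t\le|z|$ bound the difference by $|\partial_{t}P_{t}*f(x+z)|+|\partial_{t}P_{t}*f(x)|$. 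Substitute into the Gagliardo $z$-integral, decompose the $z$-annuli dyadically, and use Fubini together with the one-dimensional Hardy inequality in $t$ to reassemble the weight $t^{q(1-s)-1}$; the Hardy constant is $\sim s^{-q}\le\theta^{-q}$, and every geometric series in the dyadic parameter has ratio bounded away from $1$ by a function of $\theta$ alone (because $s\ge\theta>0$), so no spurious power of $(1-s)$ appears and the comparison is genuinely $s$-uniform. The terms carrying the translate $x+z$ are handled, after the dyadic splitting, by a $\rho$-maximal operator $\mathcal{M}_{\rho}=(\mathcal{M}(|\cdot|^{\rho}))^{1/\rho}$ with $\frac{nq}{n+\theta q}<\rho<p$ together with the Fefferman--Stein vector-valued inequality applied to $\{t\mapsto \partial_{t}P_{t}*f\}$ in $L^{p}(\ell^{q})$; this is the only place $p>\frac{nq}{n+\theta q}$ enters, and it must be arranged so that the Fefferman--Stein constant does not depend on $s$.

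The hard part will be exactly that last point: making the maximal-function step uniform in $s$ (equivalently, in $t$ after rescaling). One cannot simply quote the known identification $W^{s}_{p,q}=\dot F^{s}_{p,q}$ as a black box, since those seminorms are only one-sidedly comparable with $s$-independent constants; the Peetre/Fefferman--Stein argument has to be rerun with the subcritical exponent $\rho$ chosen relative to $\theta$ rather than $s$, and with all dyadic summations controlled by $\theta$ — which is precisely why the statement is confined to $s\in(\theta,1)$. Granting the comparison, Theorem~\ref{SUU} closes the argument. As a remark, the reverse comparison $[f]_{E^{s}_{p,q}}\lesssim_{n,p,q,\theta}[f]_{W^{s}_{p,q}}$, needed to transplant the lower bound of Theorem~\ref{LowerS} and thereby obtain the BBM2 statement for $W^{s}_{p,q}$, is where the second hypothesis $p'>\frac{nq'}{n+\theta q'}$ would be required; it plays no role in Theorem~\ref{T1.11}.
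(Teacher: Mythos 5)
Your high-level strategy is exactly the paper's: the paper deduces Theorem~\ref{T1.11} by first proving the $s$-uniform comparison $[f]_{W^{s}_{p,q}}\lesssim_{n,p,q,\theta}[f]_{E^{s}_{p,q}}$ for $s\in(\theta,1)$ under the single hypothesis $p>\frac{nq}{n+\theta q}$ (Theorem~\ref{A2}, via Propositions~\ref{P1}--\ref{P3}) and then invoking the Sobolev estimate for $E^{s}_{p,q}$ (Theorem~\ref{SUU}). Your remarks about why the dual condition $p'>\frac{nq'}{n+\theta q'}$ is not needed, and about the Peetre/Fefferman--Stein step with a subcritical exponent $\rho\in(\frac{nq}{n+\theta q},\min\{p,q\})$ chosen relative to $\theta$, also match the paper (Proposition~\ref{P3}).

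However, your sketch of the comparison lemma itself has a genuine gap in the far-field term. Starting from the first-order Calder\'on formula and the bound $|\partial_tP_t*f(x+z)-\partial_tP_t*f(x)|\lesssim\frac{|z|}{t}P^*_tf(x)$ for $t\ge|z|$, the Gagliardo integral of that piece becomes
\begin{equation*}
\int_0^\infty r^{q(1-s)}\Bigl(\int_r^\infty P^*_tf(x)\,\frac{dt}{t}\Bigr)^q\,\frac{dr}{r},
\end{equation*}
and the Hardy (or Minkowski) inequality needed to reassemble the weight here carries the constant $\bigl(q(1-s)\bigr)^{-q}$, not $s^{-q}$: the exponent $s^{-q}\le\theta^{-q}$ you quote is correct only for the near-field piece $t\le|z|$, where the weight is $r^{-sq-1}$ and the integral runs over $(0,r)$. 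The loss is structural --- an $L^1$-in-$t$ integral over $[r,\infty)$ of a quantity with weight $t^{-(1-s)}$ genuinely costs $(1-s)^{-1}$ --- so your route yields only $[f]_{W^{s}_{p,q}}\lesssim(1-s)^{-1}[f]_{E^{s}_{p,q}}$, which destroys the theorem near $s=1$. The paper avoids this by never taking an $L^1$ integral in $t$: it applies the vector-valued Littlewood--Paley theorem (Corollary~\ref{NK2}) to the difference quotient, so the far-field piece is controlled by $\int_0^\infty r^{q(1-s)-1}\bigl[\int_r^\infty t\,|\partial^2_tP_t*f|^2\,dt\bigr]^{q/2}dr$, and Proposition~\ref{M} identifies this second-order square-function expression with $[f]^q_{E^{s}_{p,q}}$ (after the $x$-integration) with constants independent of $s$. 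Your parenthetical alternative --- running the proof of Theorem~\ref{SUU} with the difference quotient in place of $t^{1-s}\partial_tP_t*f$ --- is in fact the viable route, but it is not equivalent to your main sketch precisely because it replaces the $L^1_t$ reproducing formula by the $L^2_t$ square function; to make the argument work you must commit to that version.
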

As a consequence of these results we get BBM theorems for $ W^{s}_{p,q}$. BBM1 was already proven in \cite{Mo}. BBM2 seems new
\begin{cor}\label{BWs}
Let $p,q \in (1,\infty)$ with $ p' > \frac{nq'}{n+q'}$. Then assume $ \{ f_{k}\} \in \SSS(\R^{n})$ converges weakly to $f$ in $L^{p}$ and satisfies, for some sequence $\{ s_{k} \}$ that converges to $ 1$ from below, the following
$$
\Lambda = \sup_{k} \left( \Vert f_{k} \Vert_{L^{p}} + (1-s_{k})^{1/q} [f]_{W^{s}_{p,q}} \right) <\infty.
$$
Then we have that $ f \in W^{1,p}$ and 
$$
\Vert f \Vert_{W^{1,p}} \lesssim_{n,p,q} \Lambda.
$$
\end{cor}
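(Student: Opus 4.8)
The plan is to feed the functions $f_k$ into the lower-bound estimate stated just above, extract from it a uniform $\dot F^{\sigma}_{p,2}$-bound for every $\sigma<1$, and then pass to the limit $\sigma\to1^-$ using the classical identification $\dot F^{1}_{p,2}=\dot W^{1,p}$ together with weak lower semicontinuity. The first step is to fix the parameters. Since $p'>\frac{nq'}{n+q'}$ and $\theta\mapsto\frac{nq'}{n+\theta q'}$ decreases to $\frac{nq'}{n+q'}$ as $\theta\uparrow1$, one can choose $\gamma>1$ so large that, writing $\theta:=1-\tfrac1\gamma$, one still has $p'>\frac{nq'}{n+\theta q'}$; fix also $\Theta:=2$. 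Because $s_k\to1^-$, for all sufficiently large $k$ we have $s_k\in(1-\tfrac1{2\gamma\Theta},1)$, and if $\bar\sigma_k\in(0,1)$ is defined by $1-\bar\sigma_k=\Theta(1-s_k)$, then $\bar\sigma_k\to1^-$.

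Next I would apply the lower-bound theorem (which is stated for $C_0^\infty$ functions) to each $f_k$: for every $\sigma\in(0,\bar\sigma_k)$,
\[
[f_k]_{\dot F^{\sigma}_{p,2}}\le C(\Theta,\theta,n,p,q)\Big(\|f_k\|_{L^p}+(1-s_k)^{1/q}[f_k]_{W^{s_k}_{p,q}}\Big)\le C\Lambda ,
\]
with $C$ independent of both $\sigma$ and $k$. Consequently, for each fixed $\sigma\in(0,1)$ the tail $\{f_k\}_{k\ge k_0(\sigma)}$ is bounded in $\dot F^{\sigma}_{p,2}$; together with $\|f_k\|_{L^p}\le\Lambda$ this bounds it in the inhomogeneous space $F^{\sigma}_{p,2}$, with implied constants uniform for $\sigma\in[\tfrac12,1)$.

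For the functional-analytic step, fix $\sigma\in[\tfrac12,1)$. Since $F^{\sigma}_{p,2}$ is reflexive (as $1<p,2<\infty$), a subsequence of $\{f_k\}$ converges weakly in $F^{\sigma}_{p,2}$, and because $F^{\sigma}_{p,2}\hookrightarrow L^p$ continuously and $f_k\rightharpoonup f$ in $L^p$, the weak limit must equal $f$. Hence $f\in F^{\sigma}_{p,2}$, and by weak lower semicontinuity of the norm $[f]_{\dot F^{\sigma}_{p,2}}\le\liminf_k[f_k]_{\dot F^{\sigma}_{p,2}}\le C\Lambda$ (and $\|f\|_{L^p}\le\Lambda$ likewise). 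Writing the seminorm through the Littlewood--Paley square function, $[f]_{\dot F^{\sigma}_{p,2}}=\big\|\big(\sum_j 2^{2j\sigma}|\Delta_j f|^2\big)^{1/2}\big\|_{L^p}$, and letting $\sigma\to1^-$, two applications of Fatou's lemma (first to the series in $j$, then to the $L^p$ integral) give $[f]_{\dot F^{1}_{p,2}}\le C\Lambda$. Since $\dot F^{1}_{p,2}=\dot W^{1,p}$ with equivalent norms for $1<p<\infty$ (see \cite{C}), this yields $\|\nabla f\|_{L^p}\lesssim_{n,p,q}\Lambda$, and combining with $\|f\|_{L^p}\le\Lambda$ we conclude $f\in W^{1,p}$ with $\|f\|_{W^{1,p}}\lesssim_{n,p,q}\Lambda$.

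The substantive analysis is already packaged in the lower-bound theorem, so the delicate points in the argument above are essentially of a bookkeeping nature: one must check that the hypothesis $p'>\frac{nq'}{n+q'}$ is exactly what makes an admissible choice of $\gamma$ (and hence of $\theta$) possible, and that the constant $C$ is genuinely independent of $\sigma$ so that the limit $\sigma\to1^-$ is not destroyed. The identification of the weak $F^{\sigma}_{p,2}$-limit with $f$ and the passage $\dot F^{1}_{p,2}=\dot W^{1,p}$ are routine, as is, by comparison, the analogous deduction of the $E^{s}_{p,q}$ version of BBM2 from Theorem~\ref{LowerS}.
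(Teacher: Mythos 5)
Your argument is correct and follows essentially the same route as the paper: the same choice of $\theta$ and $\gamma$ from the hypothesis $p'>\frac{nq'}{n+q'}$, the same application of Theorem \ref{LWS} with $\Theta=2$ to get the uniform $\dot F^{\sigma}_{p,2}$ bound, and the same passage to the limit. The only difference is presentational: the paper delegates the final weak-limit step to Lemma \ref{WTL} (applied with exponents $\sigma_k\to 1$), whereas you re-derive it inline via reflexivity, weak lower semicontinuity, and Fatou in $\sigma\to 1^-$.
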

\subsection{Relation between $W^{s}_{p,q}$ and $E^{s}_{p,q}$.} In light of the results above, one could be lead to believe that $ E^{s}_{p,q} \approx_{n,p,q} W^{s}_{p,q}$ when $ s \in (1/2,1)$. And $ p $ is "close" to $q$. We show that this is indeed the case when $ q = 2 $. We also show one direction is true when $ p > \frac{qn}{n+sq} $. 
\begin{thm}\label{A1}
Let $ \theta \in (0,1)$ and $ p > \frac{2n}{n+2\theta}$ and $ p' > \frac{2n}{n+2 \theta}$. Then we have
$$
[f]_{W^{s}_{p,2}} \approx_{n,p,q,\theta} [f]_{E^{s}_{p,2}},
$$
for $ s \in (\theta ,1 )$.
\end{thm}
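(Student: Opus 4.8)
The plan is to route both seminorms through the homogeneous Triebel--Lizorkin seminorm $[f]_{\dot F^s_{p,2}}$ and to use the fact that, when the secondary index equals $2$, the two-sided estimates established earlier in the paper lose nothing. First I would specialize the first comparison theorem of the Introduction to $q=2$. Since $2\in(1,2]$, case (1) applies, and the two displayed bounds carry the same power $(1-s)^{-1/2}$ on both sides; they therefore collapse to the genuine equivalence
\[
[f]_{E^s_{p,2}}\;\approx_{n,p}\;(1-s)^{-1/2}\,[f]_{\dot F^s_{p,2}},
\]
valid for every $f\in C^\infty_0$, every $p\in(1,\infty)$ and every $s\in(0,1)$, with implied constants independent of $s$. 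This is exactly where the restriction $q=2$ is used: it is the value for which the comparison between the Poisson (semigroup) square function and the Littlewood--Paley square function is lossless in both directions, so the gap $(1-s)^{-1/q}$ versus $(1-s)^{-1/2}$ seen for general $q$ disappears.

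Next I would apply \Cref{1.9} with $q=2$. Since then $q'=2$ as well, the hypotheses $p>\frac{nq}{n+\theta q}=\frac{2n}{n+2\theta}$ and $p'>\frac{nq'}{n+\theta q'}=\frac{2n}{n+2\theta}$ of \Cref{1.9} are precisely the hypotheses assumed in the present statement, and again (via case (1), with matching powers on both sides) that theorem reduces to
\[
[f]_{W^s_{p,2}}\;\approx_{n,p,\theta}\;(1-s)^{-1/2}\,[f]_{\dot F^s_{p,2}}
\]
for all $s\in(\theta,1)$, with a constant depending only on $n,p,\theta$ and, crucially, not on $s$. Chaining the two equivalences gives $[f]_{W^s_{p,2}}\approx_{n,p,\theta}[f]_{E^s_{p,2}}$ on $(\theta,1)$, which is the claim; the factors $(1-s)^{-1/2}$ cancel identically, so no restriction of the form ``$s$ bounded away from $1$'' is needed, and the constraint $s>\theta$ enters only through \Cref{1.9}. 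In the degenerate case $[f]_{\dot F^s_{p,2}}=0$ the function $f$ is homogeneously constant, hence $f\equiv 0$ because $f\in C^\infty_0$, and all three seminorms vanish, so the equivalence holds trivially.

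I do not anticipate a real obstacle: the analytic content has all been absorbed into the earlier results, and what remains is bookkeeping — namely verifying that the two cited estimates are the $q=2$ endpoints of their respective $q\in(1,2]$ / $q\in[2,\infty)$ dichotomies and that they are already stated with constants independent of $s$. The only point that genuinely requires care is precisely this uniformity: one must be sure the suppressed constants in the first comparison theorem and in \Cref{1.9} remain bounded both as $s\uparrow 1$ and as $s\downarrow\theta$, which is exactly what the phrasing $\lesssim_{n,p,q,\theta}$ in those statements guarantees.
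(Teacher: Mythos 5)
Your proposal is correct and is essentially the paper's argument: both routes compare each seminorm to $(1-s)^{-1/2}[f]_{\dot F^{s}_{p,2}}$, and the lower bound in \Cref{1.9} that you invoke is itself proved by exactly the duality step the paper carries out inline in its proof of \Cref{A1}. Your version is a clean shortcut — at $q=2$ the two dichotomies in the comparison theorems coincide, so chaining the stated $q=2$ cases (whose hypotheses match those of \Cref{A1}) immediately gives the equivalence with constants uniform in $s\in(\theta,1)$.
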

We also have
\begin{thm}\label{A2}
Let $ p,q \in (1,\infty)$ and $ \theta \in (0,1)$ and $ p > \frac{nq}{n+\theta q}$. Then we have
$$
[f]_{W^{s}_{p,q}} \lesssim_{n,p,q,\theta} [f]_{E^{s}_{p,q}},
$$
for any $ s \in (\theta ,1)$.
\end{thm}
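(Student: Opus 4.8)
The strategy is to compare $[f]_{W^s_{p,q}}$ with $[f]_{E^s_{p,q}}$ directly at the level of the harmonic extension $F(x,t)=P_t*f(x)$, without routing through $\dot F^\sigma_{p,2}$. Writing the Gagliardo seminorm in polar coordinates,
\[
[f]^p_{W^s_{p,q}}=\int_{\R^n}\Big(\int_0^\infty t^{-sq}\int_{S^{n-1}}|f(x+t\omega)-f(x)|^q\,d\omega\,\tfrac{dt}{t}\Big)^{p/q}dx,
\]
I would insert $F(\cdot,t)$ at height $t=|z|$ and split, by the triangle inequality,
\[
f(x+z)-f(x)=\big(f(x+z)-F(x+z,|z|)\big)+\big(F(x+z,|z|)-F(x,|z|)\big)+\big(F(x,|z|)-f(x)\big)=:\mathrm I+\mathrm{II}+\mathrm{III}.
\]
It is convenient to use tent--space notation: for $H\ge0$ on $\R^n\times(0,\infty)$ set $\|H\|_{\mathcal T^p_q}:=\big\|\big(\int_0^\infty H(\cdot,t)^q\,\tfrac{dt}{t}\big)^{1/q}\big\|_{L^p(\R^n)}$, so that $[f]_{E^s_{p,q}}=\|t^{1-s}\partial_tF\|_{\mathcal T^p_q}$ and one must bound the $\mathcal T^p_q$--norm of each of the three spherically averaged pieces by $[f]_{E^s_{p,q}}$.

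The piece $\mathrm{III}=-\int_0^{|z|}\partial_\tau F(x,\tau)\,d\tau$ is handled first: pointwise in $x$ one has $|\mathrm{III}|\le\int_0^{|z|}|\partial_\tau F(x,\tau)|\,d\tau$, and since $sq>0$ a one--variable Hardy inequality (with constant $\le1/s\le1/\theta$, hence uniform in $s\in(\theta,1)$) gives $\big\|t^{-s}\int_0^t|\partial_\tau F(\cdot,\tau)|\,d\tau\big\|_{\mathcal T^p_q}\lesssim_\theta[f]_{E^s_{p,q}}$. For the horizontal piece $\mathrm{II}$ I would use $|\mathrm{II}|\le|z|\,\sup_{|y-x|\le|z|}|\nabla_xF(y,|z|)|$; because $\nabla_xF(\cdot,t)=\nabla_x(P_t*f)$ is, up to rapidly decaying tails, frequency--localised at scale $t^{-1}$, the supremum is dominated by a Peetre maximal function and hence by $[\mathcal M(|\nabla_xF(\cdot,t)|^r)]^{1/r}$ for any fixed $r<\min\{p,q\}$; the vector--valued (Fefferman--Stein) maximal inequality on $L^{p/r}(\ell^{q/r})$ removes the supremum, and finally the identity $\nabla_xF(\cdot,t)=R\big(\partial_tF(\cdot,t)\big)$, with $R$ the vector of Riesz transforms, together with the $\mathcal T^p_q$--boundedness of $R$ replaces $\nabla_xF$ by $\partial_tF$. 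After discretising the integral defining $[f]_{E^s_{p,q}}$ (slices of $\partial_tF$ at adjacent dyadic heights being comparable by harmonicity, uniformly in $s$) this gives $\|\mathrm{II}\|_{\mathcal T^p_q}\lesssim_{n,p,q}[f]_{E^s_{p,q}}$ for all $p,q\in(1,\infty)$; the hypothesis on $p$ is not used in these two pieces.

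The heart of the matter is $\mathrm I=-\int_0^{|z|}\partial_\tau F(x+z,\tau)\,d\tau$, where the extension is sampled at the off--diagonal point $x+z$. Bounding crudely $|\mathrm I|\le g(x+z,|z|)$ with $g(w,R)=\int_0^R|\partial_\tau F(w,\tau)|\,d\tau$ and changing variables $y=x+z$, the contribution of $\mathrm I$ to $[f]^p_{W^s_{p,q}}$ is $\int_{\R^n}\big(\int_{\R^n}g(y,|y-x|)^q|y-x|^{-n-sq}\,dy\big)^{p/q}dx$. Decomposing both $|y-x|$ and the vertical variable dyadically and applying Hölder in the dyadic index of the vertical variable with a parameter $\gamma:=s-\mu/q$, one can bound the inner integral, for any $\mu\in(0,qs)$, by
\[
\mathcal K_\mu(x):=\iint_{\{|w-x|\gtrsim\tau\}}\Big(\frac{\tau}{|w-x|}\Big)^{\mu}\,\frac{\tau^{q(1-s)-1}\,|\partial_\tau F(w,\tau)|^q}{|w-x|^n}\,dw\,d\tau,
\]
with implied constant depending only on $n,q$ and on positive lower bounds for $\gamma$ and for $s-\gamma=\mu/q$. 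This is where the hypothesis enters: $p>\tfrac{nq}{n+\theta q}$ is exactly $n(\tfrac1p-\tfrac1q)<\theta$, so one may fix $\mu$ with $nq\big(\tfrac1p-\tfrac1q\big)^+<\mu<q\theta$; then $p>\tfrac{nq}{n+\mu}$, while $\gamma=s-\mu/q\ge\theta-\mu/q>0$ and $s-\gamma=\mu/q$ are bounded away from $0$ uniformly in $s\in(\theta,1)$, so all constants stay uniform in $s$. It remains to prove $\|\mathcal K_\mu\|_{L^{p/q}(\R^n)}\lesssim_{n,p,q,\theta}[f]^{q}_{E^s_{p,q}}$. Writing $H(w,\tau)=\tau^{q(1-s)}|\partial_\tau F(w,\tau)|^q$, one has $\tau^{q(1-s)-1}|\partial_\tau F|^q=H(w,\tau)/\tau$ and $[f]^q_{E^s_{p,q}}=\|NH\|_{L^{p/q}}$ with $NH(x)=\int_0^\infty H(x,\tau)\,\tfrac{d\tau}{\tau}$. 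Since $\partial_\tau F$ is harmonic, $H$ is regular on Whitney cubes; using $H(w,\tau)\lesssim\dashint_{|y-w|<\tau/2}A_\tau H(y)\,dy$, where $A_\tau H$ is the part of the conical square function $AH$ concentrated at heights $\approx\tau$, and interchanging the order of integration, $\mathcal K_\mu(x)$ is controlled by the polynomial--aperture conical functional $\widetilde A_\mu H(x)=\iint H(w,\tau)\,\Psi_\tau(x-w)\,\tfrac{dw\,d\tau}{\tau}$ with $\Psi_\tau(u)=\tau^{\mu}/(\tau+|u|)^{n+\mu}$. The decay of $\Psi_\tau$ being of order exactly $\mu$, the standard comparison $\|\widetilde A_\mu H\|_{L^r}\approx\|AH\|_{L^r}$ holds precisely for $r>n/(n+\mu)$, which with $r=p/q$ and the above choice of $\mu$ is guaranteed; combined with the $s$--uniform equivalence $\|AH\|_{L^{p/q}}\approx\|NH\|_{L^{p/q}}$ of conical and vertical square functions for a harmonic extension, this yields the claim.

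I expect the last step --- the reduction of $\mathcal K_\mu$ to $\widetilde A_\mu H$ via Whitney regularity together with the sharp range $r>n/(n+\mu)$ for the polynomial--aperture comparison --- to be the main obstacle: it is where the hypothesis $p>\frac{nq}{n+\theta q}$ is used essentially sharply (it is, after all, the exact threshold for $[f]_{W^s_{p,q}}$ to be finite on $C^\infty_0$), and where the uniformity in $s$ forces the careful choice of the parameter $\mu$. Summing the contributions of $\mathrm I,\mathrm{II},\mathrm{III}$ then gives $[f]_{W^s_{p,q}}\lesssim_{n,p,q,\theta}[f]_{E^s_{p,q}}$ for all $s\in(\theta,1)$, as required.
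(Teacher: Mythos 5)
Your strategy is viable and genuinely different from the paper's. The paper first applies its vector-valued Littlewood--Paley theorem (Corollary \ref{NK2}) to the difference $f(\cdot+z)-f(\cdot)$, obtaining a square function in an auxiliary height variable, and only then splits at $t\approx|z|$ into a far-field piece (controlled by $[f]_{E^s_{p,q}}$ via Cauchy estimates for harmonic functions) and two near-field pieces; the near-field pieces are bounded by $[f]_{\dot F^s_{p,q}}$ (Propositions \ref{P2}--\ref{P3}) and the proof closes by invoking $[f]_{\dot F^s_{p,q}}\lesssim[f]_{E^s_{p,q}}$ from Theorem \ref{L}. You instead split $f(x+z)-f(x)$ at the outset by inserting the harmonic extension at height $|z|$, and bound each piece directly by $[f]_{E^s_{p,q}}$: Hardy's inequality for the vertical piece at $x$, maximal functions plus Riesz transforms for the horizontal piece, and a $g^*_\lambda$-type tent-space argument for the off-diagonally sampled vertical piece at $x+z$. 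Your parameter bookkeeping is right: the H\"older step producing $\mathcal K_\mu$ requires $\mu<sq$ with constant controlled by $(s-\mu/q)^{-1}$, the aperture comparison requires $p/q>n/(n+\mu)$, and the window $nq(\tfrac1p-\tfrac1q)^+<\mu<q\theta$ is nonempty exactly under $p>\frac{nq}{n+\theta q}$ --- the same place the hypothesis enters the paper's Proposition \ref{P3}, there through the Peetre maximal exponent $n/r$ with $r\in(\frac{nq}{n+\beta\theta q},\min\{p,q\})$. Your route has the merit of bypassing $\dot F^s_{p,q}$ entirely and of making the role of the hypothesis transparent as an off-diagonal decay threshold; the paper's route has the merit of reusing machinery (Peetre and Fefferman--Stein maximal inequalities) already set up for its other theorems.

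One caution: the two facts you invoke for piece $\mathrm I$ --- the comparison $\Vert\widetilde A_\mu H\Vert_{L^r}\lesssim\Vert AH\Vert_{L^r}$ for $r>n/(n+\mu)$, and the conical-versus-vertical equivalence $\Vert AH\Vert_{L^{p/q}}\approx\Vert NH\Vert_{L^{p/q}}$ --- carry essentially the entire weight of the theorem in the regime $p<q$ (when $p\ge q$ the hypothesis is vacuous), and neither follows from the soft argument one might first try. Writing $\widetilde A_\mu H(x)\lesssim\int_0^\infty\mathcal M\bigl(H(\cdot,\sigma)\bigr)(x)\,\tfrac{d\sigma}{\sigma}$ and appealing to Fefferman--Stein fails, since the inner exponent is $1$ and the outer exponent $p/q$ may be $\le 1$. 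The threshold $r>n/(n+\mu)$ requires the \emph{sharp} change-of-aperture estimate for $T^r_1$-type functionals in the quasi-Banach range (the unnormalized dependence $\alpha^{n/r}$; the lossy bound $\alpha^{n/r}$ on the normalized cone only yields $r>n/\mu$), and the bound $\Vert AH\Vert_{L^{p/q}}\lesssim\Vert NH\Vert_{L^{p/q}}$ for $p/q<1$ is false for general $H\ge0$ and must exploit harmonicity --- e.g.\ by putting the supremum over the cone cross-section inside first, using the mean value property to dominate it by $\dashint_{3\sigma/4}^{5\sigma/4}\mathcal M(|\partial_\rho F(\cdot,\rho)|)(x)\,d\rho$, applying Jensen in $\rho$, and only then using Lemma \ref{CSF} with inner exponent $q>1$. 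These steps are true and available in the literature, but as written they are asserted rather than proved, and they are precisely the content that the paper's Peetre-maximal argument supplies; a complete write-up would need to either prove them or give exact references valid in the quasi-Banach range with $s$-uniform constants.
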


\subsection*{Some remarks and future directions} We mention a few remarks regarding the above results, and some open questions. 
\begin{rmk}[Sharpness of the constants]
The inequalities in Theorem \ref{q12} are, in some sense, sharp in $s$. To make this precise, let $ q \in (1,2)$ then an inequality of the form 
\begin{equation}\label{ed1}
(1-s)^{-1/q} [f]_{\dot{F}^{s}_{p,q}} \lesssim_{n,p,q} [f]_{E^{s}_{p,q}}
\end{equation}
for $s\in(0,1)$ and any $ f \in \SSS(\mathbb{R}^{n})$ does not hold. Indeed, this is a consequence of Theorem \ref{BBM1PR}. Assume inequality \eqref{ed1} is true, then by taking the limit $s \to 1^{-}$ and using Theorem \ref{BBM1PR} we have the following
$$
[f]_{\dot{F}^{1}_{p,q}} \lesssim_{n,p,q} [f]_{\dot{F}^{1}_{p,2}}
$$
Which is not true since $q \in (1,2)$. Similarly, for $ q \in (2,\infty)$ we cannot have an inequality of the form 
\begin{equation}\label{ed2}
[f]_{E^{s}_{p,q}} \lesssim_{n,p,q} (1-s)^{-1/q} [f]_{\dot{F}^{s}_{p,q}}
\end{equation}
for all $ s \in (0,1)$ and $ f \in \SSS(\R^{n})$. Indeed, if \eqref{ed2} is true, then upon taking the limit $ s \to 1^{-}$ and using Theorem \ref{BBM1PR} we obtain 
$$
[f]_{\dot{F}^{1}_{p,2}} \lesssim_{n,p,q} [f]_{\dot{F}^{1}_{p,q}}
$$
which is again, not true because $ q \in (2,\infty)$. Also it is clear that an inequality of the form
\begin{equation}\label{ed3}
[f]_{E^{s}_{p,q}} \lesssim_{n,p,q} (1-s)^{-\beta} [f]_{\dot{F}^{s}_{p,q}}
\end{equation}
for all $ s \in (0,1)$, $ q \in (1,2)$, and some $ \beta < 1/q$ does not hold true. We again multiply both sides by $ (1-s)^{1/q}$ and take the limit as $s\to 1^{-}$ to obtain, using Theorem \ref{BBM1PR}, 
$$
[f]_{\dot{F}^{1}_{p,2}} \leq 0
$$
which is false. Similar ideas can be used to show that the estimates in Theorem \ref{QA1} are sharp in $s$. 
\end{rmk}
\begin{rmk}
The condition $ p > \frac{nq}{n+sq}$ in Theorem \ref{1.9} is necessary because as it was recently shown by Agarwal, Koskela, and Mohanta, see \cite[Corollary 1.4]{FKM}, the space $W^{s}_{p,q}$ consists only of constant functions when $ p \leq \frac{nq}{n+sq}$, and $p,q \in (1,\infty)$. On the other hand, the condition that $ p' > \frac{nq'}{n+sq'}$ was needed to run a duality argument to obtain the lower bounds in Theorem \ref{1.9}. Therefore, it is not clear to us if the condition $ p ' > \frac{nq'}{n+sq'}$ is actually necessary. 
\end{rmk}

Another remark is about Theorem \ref{BBM1PR}.
In Theorem \ref{BBM1PR} we have $\lim_{s \to 1^{-}} [f]_{E^{s}_{p,q}} \approx_{n,p,q} \Vert \nabla f \Vert_{L^{p}}$. It is not clear if we can make this an equality, this leads to the following question. 
\begin{ques}
For $ q,p \in (1,\infty)$, is there a constant $ C = C(n,p,q)$ so that
$$
\lim_{s \to 1^{-}} (1-s)^{1/q} [f]_{E^{s}_{p,q}} = C(n,p,q) \Vert \nabla f \Vert_{L^{p}}.
$$
\end{ques}
Lastly,   
in light of the results obtained for $ W^{s}_{p,q}$ and $ E^{s}_{p,q}$ it seems reasonable to hope that the following identification is true
$$
[f]_{W^{s}_{p,q}} \approx_{n,p,q} [f]_{E^{s}_{p,q}},
$$
when $ s \in (1/2,1)$ and $ p > \frac{nq}{n+ qs}$ and $ p' > \frac{ nq'}{n+ sq'}$. Theorem \ref{A2} gives us one direction, but it is not clear to us how one can prove the other direction. 
\begin{ques}
Is it true that for $ s \in (1/2,1)$, and $ p,q \in(1,\infty)$ satisfying  $p > \frac{nq}{n+ qs} $, and $ p' >\frac{ nq'}{n+ sq'} $ one has 
$$
[f]_{E^{s}_{p,q}} \approx_{n,p,q} [f]_{W^{s}_{p,q}}. 
$$
\end{ques}

\subsection*{Plan of paper} In Section 2 we recall the basic notions and lemmata needed to prove the main results. In section 3 we prove the results regarding $E^{s}_{p,q}$, and in section 4 we prove the same results for $ W^{s}_{p,q}$ by proving Theorem \ref{A2} and then using the results we obtained for $E^{s}_{p,q}$, this will give us the upper bounds for $W^{s}_{p,q}$. The lower bounds will be proven by duality, similar to what has been done in \cite{P}. 
\subsection*{Acknowledgement} I owe a special gratitude to A.Schikorra for introducing me to the paper \cite{P}. And I would like to thank him for sharing his ideas and expertise on how one might generalize the results there for $W^{s}_{p,q}.$ 

The research that has led to these results was funded by the National Science Foundation (NSF), Career DMS-2044898 (PI: Schikorra), and the NSF grant DMS-2055171 (PI:Haj{\l}asz).

\section{Preliminaries}
\subsection*{Notation}
The notation is fairly standard. We denote by $C^{\infty}_{0}(\Omega)$ the space of all smooth functions on $\Omega$ with compact support and $ \SSS(\R^{n})$ the class of Schwartz functions. Also we denote by $L^{p}(\Omega)$ the space of measurable functions with
$$
\int_{\Omega} \vert f \vert^{p} < \infty.
$$
And $W^{1,p}(\Omega)$ is the space of all $L^{p}(\Omega)$ functions with distributional derivatives of order $ 1$ being also in $L^{p}(\Omega)$. When $ \Omega = \mathbb{R}^{n}$ then we will always write $L^{p}$ and $W^{1,p}$ and $C^{\infty}_{0}$. The average value of a function $ f \in L^{1}_{loc}$ over some measurable set $ A $ is written by 
$$
\dashint_{A} f(x) dx. 
$$
For an exponent $ p \in (1,\infty)$ we write $ p'$ to mean the  H\"{o}lder conjugate, that is the number satisfying $ \frac{1}{p} + \frac{1}{p'} = 1$. The Fourier transform of a function $ f $ is denoted by either $ \hat{f}$ or $ \mathcal{F}(f)$.
Lastly, whenever we write $ A \lesssim B $ we mean there is a constant $ C $ so that $ A \leq C B$. To indicate the dependence of $ C $ we will usually include the parameters in which $C$ depends on, for example if $ C = C (n,p)$ then we write $ A \lesssim_{n,p} B$. If we do not include the parameters then that would mean $C$ only depends on the dimension $n$, or other irrelevant parameters. If $ A \lesssim B $ and $ B \lesssim A $ then we will write $ A \approx B$.

\subsection{Function spaces} 
In this subsection we recall the basic definitions needed for the spaces under consideration. First, take $ \varphi$ to be a Schwartz function with the property that $ \mathcal{F}(\varphi)$ is supported in the annulus $ B(0,2) \setminus B(0,1)$ and $ \mathcal{F}(\varphi) =1 $ in $B(0,7/4) \setminus B(0,5/4)$. Then denote by $ \varphi_{j}$ to mean 
$$
\varphi_{j}(x) = 2^{-jn} \varphi(x2^{j}).
$$
Then for a tempered distribution $f$, we define the Littlewood-Paley projections as 
$$
\Delta_{j}f(x) = \varphi_{j}* f(x).
$$
With this notation we can define the homogeneous Triebel-Lizorkin space.
\begin{defn}\label{DF}
Let $ s \in \mathbb{R}$ and $ p,q \in [1, \infty)$. Then we define $ \dot{F}^{s}_{p,q}$ as the space of all tempered distributions $f$ so that 
$$
[f]_{\dot{F}^{s}_{p,q}} = \left(\int_{\mathbb{R}^{n}} \left(\sum_{j \in \mathbb{Z}} \vert \Delta_{j} f(x) \vert^{q} \right)^{p/q} dx \right)^{1/p} < \infty.
$$
\end{defn}
Next, we define the inhomogeneous Triebel-Lizorkin space. First let $ \eta $ be a function with the property that its Fourier transform is supported in the ball $B(0,1)$ with $ \hat{\eta} = 1 $ in $B(0,1/2)$. Then we define 
$$
\Delta_{\leq 0} f(x) = \eta * f (x). 
$$
\begin{defn}
Let $ s \in \mathbb{R}$. And $ p,q \in [1,\infty)$ then we define the inhomogeneous Triebel-Lizorkin space as the space of all tempered distributions $ f $ so that 
$$
\Vert f \Vert^{p}_{F^{s}_{p,q}} = \int_{\mathbb{R}^{n}}\left( \vert \Delta_{\leq 0 } f \vert^{q} + \sum_{j \geq 1} 2^{jsq} \vert \Delta_{j} f \vert^{q} \right)^{p/q} dx < \infty.
$$
\end{defn}
For further references regarding the Triebel-Lizorkin space, see \cite{GMF} and \cite{HT}. 
It would be useful to replace $ \varphi_{j}$ by a more explicit kernel, like the Poisson kernel. Fortunately, this has already been done in \cite{CB}. Before we state their result, let us recall the definition and properties of the Poisson kernel.
\begin{defn}
Let $ t > 0$. Then we define the Poisson kernel $P_{t}$ as 
$$
P_{t}(x)= c_{n}\frac{t}{(t^{2} +\vert x \vert^{2})^{\frac{n+1}{2}}}.
$$
Where $ c_{n}$ is chosen so that $P_{t}$ has integral $ 1 $. 
\end{defn}

\begin{lemma}\label{Psn}
let $ f \in L^{p}$ for some $p \in[1,\infty)$. Then the following is true
\begin{enumerate}
    \item $P_{t} * f$ is a harmonic function in $ \mathbb{R}^{n+1}_{+}$, as a function of $ (x,t)$;
    \item $P_{t} * f \to f $ as $ t \to 0$ in $L^{p}$;
    \item $ \mathcal{F}_{x} (P_{t})(\xi) = c_{n} e^{- 2 t \pi \vert \xi \vert} $;
    \item $ \mathcal{F}_{x}(\partial^{m}_{t} P_t)(\xi) = c \vert \xi \vert^{m}e^{- 2t\pi \vert \xi \vert} $;
    \item for $ t,r > 0 $ we have $ P_{r} * P_{t} *f (x) = P_{t+r}* f(x)$;
    \item Take $ f \in \SSS(\R^{n})$. And $ r > 0 $, then for $ t \in (r,2r)$ we have  
    $$
   \vert \partial^{2}_{t} P_{t} * f (x) \vert \lesssim_{n} \mathcal{M} ( \vert F(.,r) \vert ).
    $$
    Where $ F(x,r) = \partial^{2}_{t} P_{r} * f(x)$.
\end{enumerate}
\end{lemma}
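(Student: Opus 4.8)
The plan is to treat items (1)--(5) as the classical theory of the Poisson semigroup and to reserve the real work for item (6). For (1), differentiate under the integral sign: since $\Delta_{x,t}P_t(x)=0$ on $\R^{n+1}_{+}$, and the polynomial decay of $P_t$ and of all its $(x,t)$-derivatives makes the interchange legitimate (for $f\in L^p$ one checks $\partial^{\alpha}_{x,t}P_t\in L^{p'}$ locally uniformly in $t>0$), one moves all derivatives onto the kernel and gets $\Delta_{x,t}(P_t*f)=0$. For (2), observe that $\{P_t\}_{t>0}$ is an approximate identity: $P_t\ge 0$, $\int_{\R^n}P_t=1$, and $\int_{|y|>\delta}P_t(y)\,dy\to 0$ as $t\to 0^+$ for each $\delta>0$ by the scaling $P_t(y)=t^{-n}P_1(y/t)$; this is precisely the hypothesis of the standard lemma giving $\phi_t*f\to f$ in $L^p$ for $p\in[1,\infty)$.

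For (3) the cleanest route is to verify directly that $\xi\mapsto e^{-2\pi t|\xi|}$ has inverse Fourier transform $P_t$. One can use the subordination identity $e^{-a}=\pi^{-1/2}\int_0^\infty u^{-1/2}e^{-u}e^{-a^2/(4u)}\,du$ with $a=2\pi t|\xi|$, interchange the $u$- and $\xi$-integrals, recognise the resulting Gaussian Fourier transforms, and evaluate the elementary $u$-integral; this is classical. Item (4) then follows by differentiating (3) $m$ times in $t$ under the integral sign — legitimate since the $\xi$-integrals converge absolutely and uniformly for $t$ in compact subsets of $(0,\infty)$ — which yields $\mathcal F_x(\partial_t^m P_t)(\xi)=(-2\pi|\xi|)^m e^{-2\pi t|\xi|}$. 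Item (5) is then immediate: $\mathcal F_x(P_r*P_t)(\xi)=e^{-2\pi r|\xi|}e^{-2\pi t|\xi|}=e^{-2\pi(r+t)|\xi|}=\mathcal F_x(P_{r+t})(\xi)$, so $P_r*P_t=P_{r+t}$ as functions, hence $P_r*P_t*f=P_{r+t}*f$.

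The substance is in (6). Fix $f\in C^\infty_0$ and $r>0$, set $u:=t-r$ (so $u\in(0,r)$ when $t\in(r,2r)$), and write $F(\cdot,r)=\partial_t^2P_t*f\big|_{t=r}$, which is a bounded integrable function since $r>0$ and $f\in C^\infty_0$. The key point is that, by (4) and the semigroup property (5),
\[
\mathcal F_x\big(\partial_t^2 P_t*f\big)(\xi)=(2\pi|\xi|)^2e^{-2\pi t|\xi|}\hat f(\xi)=e^{-2\pi u|\xi|}\cdot(2\pi|\xi|)^2e^{-2\pi r|\xi|}\hat f(\xi)=\mathcal F_x\big(P_{u}*F(\cdot,r)\big)(\xi),
\]
and since both sides are Fourier transforms of honest functions we conclude $\partial_t^2P_t*f=P_{u}*F(\cdot,r)$. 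It remains to invoke the standard pointwise domination of convolution against a radially decreasing approximate identity by the Hardy--Littlewood maximal function: writing $P_u(y)=u^{-n}P_1(y/u)$ with $P_1$ nonnegative, radial, decreasing and integrable, one has $|P_u*g(x)|\le C_n\,\mathcal M(|g|)(x)$ uniformly in $u>0$ (layer-cake decomposition of $P_1$ into dilated balls). Applying this with $g=F(\cdot,r)$ gives $|\partial_t^2P_t*f(x)|=|P_{u}*F(\cdot,r)(x)|\lesssim_n \mathcal M(|F(\cdot,r)|)(x)$, which is the claim; note the argument in fact works for every $t>r$, the restriction $t<2r$ being irrelevant here.

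The only genuine obstacle is bookkeeping in (6): one must make sure the identity $\partial_t^2P_t*f=P_{t-r}*F(\cdot,r)$ is established as an equality of \emph{functions} (not merely of tempered distributions) before feeding it to the maximal-function bound. This is harmless, because once $r>0$ is fixed all objects in sight are bounded and integrable in $x$, so the Fourier-side manipulation is fully rigorous; everything else reduces to well-documented classical facts about the Poisson kernel and the maximal function.
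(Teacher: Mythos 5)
Your proposal is correct and follows essentially the same route as the paper: items (1)--(5) are quoted from the classical theory, and for (6) you factor the Fourier multiplier as $e^{-2\pi(t-r)|\xi|}\cdot(2\pi|\xi|)^2e^{-2\pi r|\xi|}$ to identify $\partial_t^2P_t*f=P_{t-r}*F(\cdot,r)$ and then apply the standard maximal-function domination for convolution with the Poisson kernel, exactly as the paper does. Your extra remarks on justifying the identity at the level of functions and on the layer-cake argument are fine but add nothing beyond the paper's argument.
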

\begin{proof}
(1),(2),(3),(4) and (5) can be found in \cite[Page 62]{StS}. We prove (6), by definition of the Poisson integral we have for $ t \in (r,2r)$
\begin{equation*}
\begin{split}
& \partial^{2}_{t}P_{t}*f(x) = 4 \pi^{2} \int_{\mathbb{R}^{n}} e^{2 \pi i \xi . x} \vert \xi \vert^{2} e^{- 2 \pi t \vert \xi \vert} \hat{f}(\xi ) d\xi
\\
& = 4 \pi^{2} \int_{\mathbb{R}^{n}} e^{2 \pi i \xi . x} e^{- 2 \pi (t-r) \vert \xi \vert} ( \vert \xi \vert^{2} e^{-2 \pi r \vert \xi \vert}
\hat{f}(\xi )) d\xi
\\
& = P_{(t-r)} * ( \partial^{2}_{t} P_{r} * f)(x).
\end{split}
\end{equation*}
Therefore, 
$$
\vert \partial^{2}_{t}P_{t}*f(x)\vert \lesssim \mathcal{M}( \partial^{2}_{t} P_{r} * f )(x). 
$$

\end{proof}

Denote by $ \phi_{j}$ the function with Fourier transform $ \vert 2^{-j} \xi \vert^{2} e^{-2 \pi 2^{-j} \vert \xi \vert}$. In other words $ \phi_{j} = c 2^{-2j} \partial^{2}_{t} P_{2^{-j}}$. Then if one tracks the proof of \cite[Thoerem 1.4]{CB} we get the following
\begin{lemma}\label{PT}
Let $ f \in \SSS(\R^{n})$, $p,q \in(1,\infty)$ and $ s \in[0,1]$. Then
$$
[f]_{\dot{F}^{s}_{p,q}} \approx_{n,p,q} \left(\int_{\mathbb{R}^{n}} \left(\sum_{j \in \mathbb{Z}} \vert 2^{js} \phi_{j}*f(x) \vert^{q} \right)^{p/q} dx \right)^{1/p}.
$$
\end{lemma}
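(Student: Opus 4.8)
The two sides of the claimed equivalence are Littlewood--Paley square functions built from two different analyzing kernels: the standard frequency-localized $\varphi_{j}$, and the Poisson-type $\phi_{j}$, whose Fourier transform is $m(2^{-j}\xi)$ with $m(\eta)=|\eta|^{2}e^{-2\pi|\eta|}$ (this is just the definition of $\phi_{j}$ together with \ref{Psn}(4)). The plan is to run the classical change-of-analyzing-kernel argument for Triebel--Lizorkin spaces, but to keep every constant independent of $s\in[0,1]$. First I would record the three features of $m$ that drive everything: (i) $m$ and all of its derivatives are smooth on $\mathbb{R}^{n}\setminus\{0\}$ and decay exponentially at infinity; (ii) $m$ vanishes to order $2$ at the origin, i.e.\ $|\partial^{\alpha}m(\eta)|\lesssim|\eta|^{2-|\alpha|}$ for $|\eta|\le 1$; (iii) $m$ is bounded below by a dimensional positive constant on the annulus $\{1\le|\eta|\le 2\}$. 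Then I would prove the two inequalities separately.

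For $\|(\sum_{j}2^{jsq}|\phi_{j}*f|^{q})^{1/q}\|_{L^{p}}\lesssim[f]_{\dot F^{s}_{p,q}}$ I would fix a homogeneous Littlewood--Paley partition $\psi$ (Fourier support in an annulus, $\sum_{k}\widehat{\psi}(2^{-k}\xi)=1$ off the origin), write $f=\sum_{k}\psi_{k}*f$, and expand $\phi_{j}*f=\sum_{k}\phi_{j}*\psi_{k}*f$. Properties (i)--(ii) give the almost-orthogonality estimate $|\phi_{j}*\psi_{k}*f(x)|\lesssim c_{jk}\,\mathcal{M}(\psi_{k}*f)(x)$, where $\mathcal{M}$ is the Hardy--Littlewood maximal operator, $c_{jk}=2^{-2(j-k)}$ when $j\ge k$, and $c_{jk}$ is super-exponentially small in $k-j$ when $k>j$. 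The decisive point is that the $s$-weighted matrix $d_{jk}:=2^{(j-k)s}c_{jk}$ passes Schur's test \emph{uniformly for $s\in[0,1]$}: for $j\ge k$ one has $d_{jk}=2^{(j-k)(s-2)}\le 2^{-(j-k)}$ since $s\le 1$, and for $k>j$ the extra factor $2^{-(k-j)s}\le 1$ is harmless since $s\ge 0$; hence $\sup_{j}\sum_{k}d_{jk}+\sup_{k}\sum_{j}d_{jk}\le C(n)$. So $(a_{k})\mapsto(\sum_{k}d_{jk}a_{k})_{j}$ is bounded on $\ell^{q}$; applying this pointwise in $x$ with $a_{k}=2^{ks}\mathcal{M}(\psi_{k}*f)(x)$, then the Fefferman--Stein vector-valued maximal inequality (valid because $p,q\in(1,\infty)$), and finally the $s$-uniform comparison of the square function of $(\psi_{k})$ with that of $(\varphi_{k})$ (which costs only finitely many neighbouring scales), yields the bound.

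For the reverse inequality I would invert $m$ on the relevant frequencies: by (iii), $m(2^{-j}\cdot)$ is bounded below on the frequency support of $\Delta_{j}$, so $\mu:=\widehat{\varphi}/m$ is smooth with the same annular support as $\widehat{\varphi}$, and $\Theta_{j}:=\mathcal{F}^{-1}[\mu(2^{-j}\cdot)]$ gives the exact identity $\Delta_{j}f=\varphi_{j}*f=\Theta_{j}*(\phi_{j}*f)$. Since $\Theta_{j}$ is an $L^{1}$-normalized Schwartz bump at scale $2^{-j}$ that is independent of $s$, $|\Delta_{j}f(x)|\lesssim\mathcal{M}(\phi_{j}*f)(x)$, hence $2^{js}|\Delta_{j}f|\lesssim\mathcal{M}(2^{js}\phi_{j}*f)$, and one further application of Fefferman--Stein closes the argument. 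Combining the two directions proves the lemma.

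The only genuine obstacle I anticipate is in the first inequality: producing the almost-orthogonality estimate with the sharp gain $2^{-2(j-k)}$ on the low-frequency side and verifying Schur's test uniformly in $s$; everything else is a routine assembly of standard Littlewood--Paley tools, which is exactly what ``tracking the proof of \cite[Theorem 1.4]{CB}'' amounts to. One point deserves a sentence of care: $m$ is only $C^{2}$ at the origin, so $\phi_{j}$ cannot be given arbitrarily many vanishing moments; but its order-$2$ cancellation already dominates the weight $2^{js}$ for every $s\le 1$ --- precisely the range in the statement --- so this causes no loss.
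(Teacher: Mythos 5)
Your proof is correct, and for the substantive direction it takes a genuinely different route from the paper. The paper's Appendix follows the Bui--Candy machinery: a pointwise bound on $\varphi_j*\phi$ (Lemma~\ref{6.1}), a transfer inequality $2^{js}\phi_j^*f\lesssim\sum_k a_{j-k}\varphi_k^*f$ for \emph{Peetre} maximal functions with $a_j\lesssim 2^{j(2-s)}$ for $j<0$, summation giving the uniformly bounded constant $\tfrac{1}{2-s}+\tfrac{1}{m-\lambda-s}$, and, for the converse, the sub-mean-value inequality of Lemma~\ref{6.3} together with the Peetre characterization and Fefferman--Stein. You replace the Peetre maximal functions by the Hardy--Littlewood maximal function throughout: almost-orthogonality plus Schur's test for the bound $\|(2^{js}\phi_j*f)_j\|_{L^p(\ell^q)}\lesssim[f]_{\dot F^s_{p,q}}$, and the exact reproducing identity $\Delta_jf=\Theta_j*(\phi_j*f)$ (available because $m(\eta)=|\eta|^2e^{-2\pi|\eta|}$ is bounded below on the annular support of $\widehat\varphi(2^{-j}\cdot)$) for the reverse. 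The mechanism controlling the $s$-dependence is identical in both arguments --- the order-two vanishing of $m$ at the origin yields the low-frequency gain $2^{-2(j-k)}$, which dominates $2^{(j-k)s}$ uniformly precisely because $s\le 1<2$, mirroring the paper's factor $\tfrac1{2-s}$ --- and your closing caveat about $m$ being only $C^2$ at the origin is the right one (the relevant derivative bounds $|\partial^\alpha m(\eta)|\lesssim|\eta|^{2-|\alpha|}$ hold for all $\alpha$ away from $\eta=0$, which is all the annular Fourier supports require). Your inversion argument is cleaner than Lemma~\ref{6.3} because it exploits the compact Fourier support of $\varphi$; what it does not buy you is the Peetre maximal estimate of Lemma~\ref{PetreeM}, which the paper's heavier route produces as a by-product and which is needed independently in the proof of Proposition~\ref{P3} --- but that is outside the statement you were asked to prove, so your argument stands as a complete and somewhat more elementary proof of Lemma~\ref{PT}.
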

The point here is that there is no dependency on $ s$ when $ s \in [0,1]$. In fact, Candy and Bui proved more in \cite{CB}. They proved the lemma above is valid for all $ s \in [0,2)$ but the constant blows up as $ s \to 2$. Loosely, speaking, the more derivatives we take on the Poisson kernel the more Triebel-Lizorkin spaces we cover. For example, if we took our kernel to be $\phi_{j}^{*}$ so that $ \mathcal{F}(\phi_{j}^{*}) = \vert 2^{-j} \xi \vert^{3} e^{-2^{-j}\vert \xi \vert}$, then the results in \cite{CB} tell us that we can use this as the kernel in the definition of Triebel space, and this will be equivalent to the usual Triebel Lizorkin space with constant $C_{s}$ that blows up only if $ s \to 3$. In other words, there is no dependency on $ s$ when $s \in [0,2]$ and $ p,q \in(1,\infty)$. In our case, we only care about $ s \in (0,1)$ so $ \phi_{j}$ is enough. For the convenience of the reader we include a sketch of the proof of 
Lemma \ref{PT} in the Appendix.

Another fractional space that we need is the following fractional homogeneous Sobolev space
\begin{defn}
let $ s \in \mathbb{R}$, and $ p\in (1,\infty)$ then we define the space $ \dot{H}^{s}_{p}$ as the space of all measurable functions $ f  $ so that
$$
\Vert  \mathcal{F}^{-1}(\vert \cdot \vert^{s} \hat{f}) \Vert_{L^{p}} <\infty. 
$$

\end{defn}
The next lemma connects $\dot{F}^{s}_{p,q}$ to various other spaces. For a reference, see \cite{C}, \cite{GC}, \cite{GMF} and \cite{HT}. 
\begin{lemma}
Let $ s \in (0,1)$ and $ p,q \in (1,\infty)$ then we have for any $ f \in \SSS(\R^{n})$
\begin{enumerate}
    \item $$
[f]_{E^{s}_{p,q}} \approx_{n,p,q,s} [f]_{\dot{F}^{s}_{p,q}} 
$$
\item 
$$
\Vert f \Vert_{L^{p}} \approx_{p,n} [f]_{\dot{F}^{0}_{p,2}}
$$
\item 
$$
\Vert f \Vert_{\dot{H}^{s}_{p}} \approx_{n,p,s} [f]_{\dot{F}^{s}_{p,2}} 
$$
\item  if $ p > \frac{ nq}{n+sq}$, in particular if $ q=p$ then
$$
[f]_{W^{s}_{p,q}} \approx_{s,p,q,n} [f]_{\dot{F}^{s}_{p,q}}.
$$
\end{enumerate}

\end{lemma}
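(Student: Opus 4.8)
\emph{Overall strategy.} The statement collects four standard Littlewood--Paley identifications, so I would prove them in the order (2), (3), (1), (4), using Lemma~\ref{PT} together with the maximal-function and multiplier machinery freely and \emph{not} tracking the dependence of the constants on $s$ (sharpening that dependence is the business of the rest of the paper). The hypothesis $f\in C^\infty_0$ removes the homogeneous-space subtleties: $\hat f$ is smooth and rapidly decreasing, so the low-frequency sums $\sum_{j\le 0}\Delta_j f$ converge in every reasonable sense and no modulo-polynomials ambiguity arises.

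\emph{Items (2) and (3).} Item (2) is the classical Littlewood--Paley square function theorem, $\Vert f\Vert_{L^p}\approx_{n,p}\Vert(\sum_{j\in\mathbb{Z}}|\Delta_j f|^2)^{1/2}\Vert_{L^p}$ for $1<p<\infty$, which I would deduce from the Mikhlin--H\"ormander multiplier theorem together with a Khinchine randomization of the operators $\sum_j\pm\Delta_j$, or simply cite \cite{GMF}. For item (3), apply (2) to $g:=\mathcal F^{-1}(|\cdot|^s\hat f)$, which for $f\in C^\infty_0$ is again an honest $L^p$ function, so that $\Vert f\Vert_{\dot{H}^s_p}=\Vert g\Vert_{L^p}\approx[g]_{\dot{F}^0_{p,2}}=\Vert(\sum_j|\Delta_j g|^2)^{1/2}\Vert_{L^p}$. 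Since $\widehat{\Delta_j g}(\xi)=|\xi|^s\widehat{\varphi_j}(\xi)\hat f(\xi)$ and on the support of $\widehat{\varphi_j}$ one can factor $|\xi|^s\widehat{\varphi_j}(\xi)=2^{js}\,m(2^{-j}\xi)\,\widehat{\tilde\varphi_j}(\xi)$ with $\tilde\varphi$ a fattened Littlewood--Paley bump and $m$ a fixed symbol satisfying Mikhlin bounds, the Fefferman--Stein vector-valued inequality gives $\Vert(\sum_j|\Delta_j g|^2)^{1/2}\Vert_{L^p}\approx\Vert(\sum_j 2^{2js}|\Delta_j f|^2)^{1/2}\Vert_{L^p}=[f]_{\dot{F}^s_{p,2}}$, the opposite factorization being identical.

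\emph{Item (1).} By Lemma~\ref{PT} it is enough to compare $[f]_{E^s_{p,q}}$ with the $\phi_j$-square function, where $\widehat{\phi_j}(\xi)=|2^{-j}\xi|^2e^{-2\pi 2^{-j}|\xi|}$. I would discretize the $t$-integral over dyadic rings $t\in(2^{-j},2^{-j+1})$; writing $\partial_t P_t=t^{-1}(t\partial_t P_t)$ and noting that $t^{q(1-s)-1-q}\approx 2^{jsq}\,t^{-1}$ on such a ring, one gets $\int_{2^{-j}}^{2^{-j+1}}t^{q(1-s)-1}|\partial_t P_t*f|^q\,dt\approx 2^{jsq}\int_{2^{-j}}^{2^{-j+1}}|t\partial_t P_t*f|^q\,\frac{dt}{t}$. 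By Lemma~\ref{Psn}(4), $\widehat{t\partial_t P_t}(\xi)=c\,t|\xi|e^{-2\pi t|\xi|}$ is a Littlewood--Paley bump at scale $2^j$ when $t\approx 2^{-j}$, and the nesting argument behind Lemma~\ref{Psn}(6), applied to the first $t$-derivative, lets one dominate the ring average pointwise by $\mathcal M(\phi_j*f)(x)^q$ and, conversely, bound $\mathcal M(\phi_j*f)(x)^q$ by a slightly fattened ring average; summing in $j$, raising to the power $p/q$, integrating in $x$, and applying the Fefferman--Stein maximal inequality reconstructs the $\phi_j$-square function up to an ($s$-dependent) constant. Alternatively, I would quote \cite{C}.

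\emph{Item (4), and the main obstacle.} The inclusion $[f]_{\dot{F}^s_{p,q}}\lesssim[f]_{W^s_{p,q}}$ holds for all $p,q\in(1,\infty)$ and $s\in(0,1)$: since $\widehat{\varphi_j}(0)=0$ we may write $\Delta_j f(x)=\int\varphi_j(z)(f(x-z)-f(x))\,dz$, and H\"older against a weight comparable to $|z|^{n+sq}$ gives a pointwise bound $2^{jsq}|\Delta_j f(x)|^q\lesssim\int_{\R^n}\frac{|f(x-z)-f(x)|^q}{|z|^{n+sq}}\,b_j(z)\,dz$ with $\sum_j b_j\lesssim 1$ uniformly in $z$; summing in $j$, translating, and taking the $L^{p/q}$ norm in $x$ closes it. The reverse inequality $[f]_{W^s_{p,q}}\lesssim[f]_{\dot{F}^s_{p,q}}$ is the delicate one, and it genuinely requires $p>\frac{nq}{n+sq}$: this is already the threshold for $[f]_{W^s_{p,q}}$ to be finite on $C^\infty_0$, because for $|z|\to\infty$ the Gagliardo integrand is comparable to $|f(x)|^q|z|^{-n-sq}$, which for $x$ far from the support of $f$ produces a tail $\sim\Vert f\Vert_{L^q}^q|x|^{-n-sq}$ that is $L^{p/q}$-integrable precisely when $p(n+sq)>nq$. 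Rewriting the $z$-integral as a ball-means-of-differences quantity $\int_0^\infty t^{-sq}\bigl(\dashint_{|h|<t}|f(x+h)-f(x)|^q\,dh\bigr)\frac{dt}{t}$ and splitting $f$ into Littlewood--Paley pieces adapted to each scale $t$, the high frequencies are handled by Fefferman--Stein and the low frequencies by a mean-value estimate, and the summation of the low modes is exactly where the condition on $(p,q,s)$ is spent; I would present this following \cite{Prats}. This last step is the only genuine difficulty in the lemma: items (1)--(3) are soft consequences of multiplier theory and the vector-valued maximal inequality, and since the lemma is needed here only qualitatively it is legitimate to quote \cite{C}, \cite{GC}, \cite{GMF}, \cite{Prats} for the remaining details.
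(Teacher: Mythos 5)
Your sketch is correct, and in fact the paper offers no proof of this lemma at all: it simply cites \cite{C}, \cite{GC}, \cite{GMF} (and \cite{Prats} for item (4)), exactly the references you fall back on. Your outline of the standard arguments behind those citations is accurate --- in particular you correctly identify that the constants here may depend on $s$ (so the Bui--Candy characterization with the first Poisson derivative suffices for item (1) on $s\in(0,1)$), and your computation showing that $p>\frac{nq}{n+sq}$ is exactly the finiteness threshold for $[f]_{W^{s}_{p,q}}$ on $C^{\infty}_{0}$ is the right way to see why that hypothesis is needed in item (4).
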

In the proof of BBM2 we will need the following lemma, which is \cite[Lemma 2.7]{P}
\begin{lemma}\label{WTL}
Let $ p,q \in (1,\infty)$. Then assume $ \{ f_{k} \} \in L^{p}$ converges weakly to some $ f \in L^{p}$, and $ s_{k} \to t \in (0,\infty)$ from below. Assume further that 
$$
\sup_{k} [f_{k}]_{\dot{F}^{s_{k}}_{p,q}} \leq A < \infty.
$$
Then one has 
$$
[f]_{\dot{F}^{t}_{p,q}} \leq A.
$$
\end{lemma}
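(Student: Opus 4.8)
The plan is to treat this as a lower-semicontinuity statement for the Triebel--Lizorkin functional, exploiting that the Littlewood--Paley projections $\Delta_{j}$ are fixed convolution operators which do not depend on $s$: the parameter $s$ enters $[\,\cdot\,]_{\dot{F}^{s}_{p,q}}$ only through the scalar weights $2^{js}$ attached to $|\Delta_{j} f|^{q}$. First I would record pointwise convergence of the individual frequency pieces. Since $\varphi$ is a Schwartz function, for every fixed $x \in \R^{n}$ and $j \in \mathbb{Z}$ the function $y \mapsto \varphi_{j}(x - y)$ lies in $L^{p'}(\R^{n})$, so the weak convergence $f_{k} \rightharpoonup f$ in $L^{p}$ gives
$$
\Delta_{j} f_{k}(x) \;=\; \int_{\R^{n}} \varphi_{j}(x - y)\, f_{k}(y)\, dy \;\longrightarrow\; \int_{\R^{n}} \varphi_{j}(x - y)\, f(y)\, dy \;=\; \Delta_{j} f(x)
$$
for every $x$ and $j$. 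Combined with $2^{j s_{k}} \to 2^{j t}$, this shows that for any fixed \emph{finite} index set $J \subset \mathbb{Z}$ the functions $x \mapsto \big( \sum_{j \in J} 2^{j s_{k} q}\, |\Delta_{j} f_{k}(x)|^{q} \big)^{p/q}$ converge pointwise, as $k \to \infty$, to $x \mapsto \big( \sum_{j \in J} 2^{j t q}\, |\Delta_{j} f(x)|^{q} \big)^{p/q}$.

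Next I would truncate and apply Fatou's lemma. For a finite $J$, discarding the frequencies outside $J$ can only decrease the integrand, so
$$
\int_{\R^{n}} \Big( \sum_{j \in J} 2^{j s_{k} q}\, |\Delta_{j} f_{k}(x)|^{q} \Big)^{p/q} dx \;\leq\; [f_{k}]_{\dot{F}^{s_{k}}_{p,q}}^{p} \;\leq\; A^{p}
$$
for every $k$. Since these integrands are nonnegative and converge pointwise by the previous step, Fatou's lemma yields
$$
\int_{\R^{n}} \Big( \sum_{j \in J} 2^{j t q}\, |\Delta_{j} f(x)|^{q} \Big)^{p/q} dx \;\leq\; \liminf_{k \to \infty} \int_{\R^{n}} \Big( \sum_{j \in J} 2^{j s_{k} q}\, |\Delta_{j} f_{k}(x)|^{q} \Big)^{p/q} dx \;\leq\; A^{p},
$$
and this bound is uniform over all finite $J$. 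Finally, taking $J = \{ j \in \mathbb{Z} : |j| \leq N \}$ and letting $N \to \infty$, the integrands increase to $\big( \sum_{j \in \mathbb{Z}} 2^{j t q}\, |\Delta_{j} f(x)|^{q} \big)^{p/q}$, so the monotone convergence theorem gives $[f]_{\dot{F}^{t}_{p,q}}^{p} \leq A^{p}$, that is, $[f]_{\dot{F}^{t}_{p,q}} \leq A$.

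I do not expect a genuine analytic obstacle here: the whole argument is soft — weak $L^{p}$ convergence of the $f_{k}$ forces pointwise convergence of the mollifications $\Delta_{j} f_{k}$, and thereafter one only needs Fatou's lemma and monotone convergence. The single point requiring a little care is that the two limits $s_{k} \to t$ and $f_{k} \rightharpoonup f$ are indexed by the same $k$, so one cannot simply take the limits separately; this is exactly why one first freezes a finite band of frequencies $J$ (making $2^{j s_{k}} \to 2^{j t}$ innocuous and allowing the weak limit to be taken term by term) and only afterwards lets $J$ exhaust $\mathbb{Z}$. I would also remark that only $s_{k} \to t$ is used; the hypothesis that this convergence is from below plays no role.
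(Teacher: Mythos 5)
Your proof is correct, and it is the standard lower-semicontinuity argument (pointwise convergence of each $\Delta_j f_k$ from weak $L^p$ convergence, then Fatou on a finite frequency band, then monotone convergence) that underlies this statement; the paper itself gives no proof but simply cites \cite[Lemma 2.7]{P}, which argues in essentially this way. Your side remark that the convergence $s_k\to t$ need not be from below is also accurate — that hypothesis is only relevant in the applications, not in this lemma.
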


Another spaces that will play an important part in proving the lower bounds for $ E^{s}_{p,q}$ are the mixed $L^{p}$ spaces
\begin{defn}
Let $ p,q \in [1,\infty]$. Then define $ L^{p}(l^{q})$ as the space of all sequences of functions $ \{ f_{j} \}$ so that 
$$
\int_{\mathbb{R}^{n}} \Vert f_{j}(x) \Vert^{p}_{l^{q}} dx < \infty.
$$
\end{defn}
The following lemma is contained in \cite{MLP}
\begin{lemma}
let $ p,q \in [1,\infty)$. Then $L^{p}(l^{q})$ is a Banach space with dual given by $ L^{p'}(l^{q'})$. And 
$$
\Vert f_{j} \Vert_{L^{p}(l^{q}))} = \sup_{ \substack{ g_{j} \in L^{p'}(l^{q'})  \\  \Vert g \Vert_{L^{p'}(l^{q'}) \leq 1}}} \int_{\mathbb{R}^{n}} \sum_{j} f_{j} g_{j}.
$$
\end{lemma}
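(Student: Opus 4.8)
The statement has two parts — completeness of $L^{p}(l^{q})$ and the identification $(L^{p}(l^{q}))^{*}=L^{p'}(l^{q'})$ — and the displayed formula for $\Vert f\Vert_{L^{p}(l^{q})}$ is a formal consequence of the second part (a Banach space embeds isometrically into its bidual, and the absolute value in the pairing is removed by replacing $g_{j}$ with $\operatorname{sgn}(f_{j})\,\vert g_{j}\vert$). So the plan is: prove completeness, then the easy inclusion $L^{p'}(l^{q'})\hookrightarrow (L^{p}(l^{q}))^{*}$ via H\"older, then surjectivity of that inclusion. For completeness, the quickest route is to recognise $L^{p}(l^{q})$ as the Bochner space $L^{p}(\R^{n};l^{q})$: since $q<\infty$ the space $l^{q}$ is separable, so a coordinatewise-measurable sequence $(f_{j}(x))_{j}$ is strongly measurable as an $l^{q}$-valued map, and $\Vert f\Vert_{L^{p}(l^{q})}=(\int_{\R^{n}}\Vert f(x)\Vert_{l^{q}}^{p}\,dx)^{1/p}$ is exactly its Bochner norm; the triangle inequality is Minkowski in $L^{p}$ applied to $x\mapsto\Vert f(x)\Vert_{l^{q}}$, and Bochner spaces over a Banach space are complete. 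Concretely one passes to a subsequence with $\sum_{k}\Vert f^{(k+1)}-f^{(k)}\Vert_{L^{p}(l^{q})}<\infty$, sums the telescoping series pointwise a.e.\ using completeness of $l^{q}$, and invokes dominated convergence.

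For the easy inclusion, if $(g_{j})\in L^{p'}(l^{q'})$ then H\"older in $l^{q}$ gives $\sum_{j}\vert f_{j}(x)g_{j}(x)\vert\le \Vert f(x)\Vert_{l^{q}}\Vert g(x)\Vert_{l^{q'}}$ for a.e.\ $x$, and a further application of H\"older in $L^{p}$ yields $\vert\int_{\R^{n}}\sum_{j}f_{j}g_{j}\vert\le\Vert f\Vert_{L^{p}(l^{q})}\Vert g\Vert_{L^{p'}(l^{q'})}$. Thus $(g_{j})\mapsto(f\mapsto\int\sum_{j}f_{j}g_{j})$ is a map $L^{p'}(l^{q'})\to (L^{p}(l^{q}))^{*}$ of operator norm at most $1$. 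That it is in fact isometric is seen by fixing $g$, choosing (by a measurable selection) coordinatewise near-maximisers $a_{j}(x)$ with $\Vert a(x)\Vert_{l^{q}}\le 1$ and $\sum_{j}a_{j}(x)g_{j}(x)\ge(1-\e)\Vert g(x)\Vert_{l^{q'}}$, then an $L^{p}$ near-maximiser $\phi\ge0$, $\Vert\phi\Vert_{L^{p}}\le1$, against $x\mapsto\Vert g(x)\Vert_{l^{q'}}$, and testing the functional against $f_{j}=\phi\,a_{j}$ (note $\Vert f(x)\Vert_{l^{q}}=\phi(x)\Vert a(x)\Vert_{l^{q}}\le\phi(x)$, so $\Vert f\Vert_{L^{p}(l^{q})}\le 1$).

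For surjectivity, let $\Phi\in (L^{p}(l^{q}))^{*}$, and let $e_{j}\colon L^{p}\to L^{p}(l^{q})$ be the inclusion of a function into the $j$-th coordinate. Then $h\mapsto\Phi(e_{j}h)$ is a bounded functional on $L^{p}$, hence $\Phi(e_{j}h)=\int h\,g_{j}$ for a unique $g_{j}\in L^{p'}$. Since $q<\infty$, the finitely supported sequences are dense in $L^{p}(l^{q})$ (by dominated convergence applied to the tails $\sum_{j>N}\vert f_{j}(x)\vert^{q}\to0$), and on them $\Phi$ agrees with $f\mapsto\int\sum_{j}f_{j}g_{j}$ by linearity. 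It remains to bound $\Vert(g_{j})\Vert_{L^{p'}(l^{q'})}$: for each $N$, run the isometry computation above with the truncation $(g_{j})_{j\le N}$ to obtain $f$ supported in slots $\le N$ with $\Vert f\Vert_{L^{p}(l^{q})}\le1$ and $\int\sum_{j\le N}f_{j}g_{j}\ge(1-\e)^{2}\Vert(g_{j})_{j\le N}\Vert_{L^{p'}(l^{q'})}$; the left side is $\Phi(f)\le\Vert\Phi\Vert$, so letting $N\to\infty$ and $\e\to0$ gives $\Vert(g_{j})\Vert_{L^{p'}(l^{q'})}\le\Vert\Phi\Vert$. Hence $(g_{j})\in L^{p'}(l^{q'})$ represents $\Phi$, and together with the previous paragraph this gives $(L^{p}(l^{q}))^{*}=L^{p'}(l^{q'})$ isometrically, whence the displayed formula.

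The only step that is not pure bookkeeping is the measurable selection of the coordinatewise near-maximisers $a_{j}(x)$ (and, in the case $q=1$, of a near-maximising single coordinate) used both for the isometry and for the estimate on $\Vert(g_{j})\Vert_{L^{p'}(l^{q'})}$; this is routine since only countably many measurable functions are involved, but it should be stated with care. Everything else reduces to H\"older, Minkowski, and the scalar duality $(L^{p})^{*}=L^{p'}$. The paper only ever invokes this lemma with $p,q\in(1,\infty)$, where all of these points are standard; the general range $p,q\in[1,\infty)$ is the content of the cited \cite{MLP}.
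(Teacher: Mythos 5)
Your argument is correct, but there is nothing in the paper to compare it against: the author does not prove this lemma, and simply cites \cite{MLP} (the Benedek--Panzone theory of mixed-norm $L^{p}$ spaces) for it. What you have written is the standard self-contained proof of that classical fact, and all of its components check out: completeness via the identification with the Bochner space $L^{p}(\R^{n};l^{q})$ (strong measurability follows from coordinatewise measurability because $l^{q}$ is separable for $q<\infty$); the embedding $L^{p'}(l^{q'})\hookrightarrow (L^{p}(l^{q}))^{*}$ by two applications of H\"older; isometry by testing against near-maximisers; and surjectivity by restricting a functional to each coordinate slot, using density of finitely supported sequences (which needs $q<\infty$, as you note), and passing to the limit in the truncated norm bound. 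Two small points worth making explicit if you were to write this out in full: for $q\in(1,\infty)$ no abstract measurable selection is needed, since the exact maximiser $a_{j}(x)=\operatorname{sgn}(g_{j}(x))\,\vert g_{j}(x)\vert^{q'-1}\Vert g(x)\Vert_{l^{q'}}^{1-q'}$ (on the set where $g(x)\neq 0$) is manifestly measurable, so the only genuinely selective step is the $q=1$ case you already flag; and at the endpoint $p=1$ the scalar duality you invoke is $(L^{1})^{*}=L^{\infty}$ on the $\sigma$-finite space $\R^{n}$, which is fine, but the resulting statement $(L^{1}(l^{q}))^{*}=L^{\infty}(l^{q'})$ is exactly where one normally worries about the Radon--Nikodym property; your coordinatewise construction of $(g_{j})$ sidesteps that abstract issue, which is the right move. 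Since the paper only ever uses the lemma for $p,q\in(1,\infty)$, none of these endpoint subtleties affect anything downstream.
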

We also consider other mixed $L^{p}$ spaces
\begin{defn}\label{MixedLpS}
Let $ p,q \in (1,\infty)$ and $ \Omega \subset \mathbb{R}^{m}$ be open, then for $ \alpha : \Omega \to \mathbb{R}$ positive and continuous, we define $L_{\alpha}^{p,q}(\mathbb{R}^{n} \times \Omega)$ as follows
$$
L^{p,q}_{\alpha} (\mathbb{R}^{n} \times \Omega) = \{ f \in L^{1}_{loc}(\mathbb{R}^{n} \times \Omega) : \int_{\mathbb{R}^{n}} \left( \int_{\Omega} \vert f(x,r) \vert^{q} \alpha(r) dr \right)^{p/q} dx < \infty \}.$$
\end{defn}
The following lemma can be found \cite{MLP}
\begin{lemma}\label{Lastdual}
Let $ p,q \in (1,\infty)$, $\Omega \subset \mathbb{R}^{m}$ open and $ \alpha : \Omega \to \mathbb{R}$. Then $L^{p,q}_{\alpha}(\mathbb{R}^{n} \times \Omega)$ is a Banach space under the norm 
$$
\Vert f \Vert^{p}_{L^{p,q}_{\alpha}} = \int_{\mathbb{R}^{n}} \left( \int_{\Omega} \vert f(x,r) \vert^{q} \alpha(r) dr \right)^{p/q} dx,
$$
with dual given by $L^{p',q'}_{\alpha}(\mathbb{R}^{n} \times \Omega).$
\end{lemma}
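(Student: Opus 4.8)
The plan is to recognize Lemma \ref{Lastdual} as the weighted incarnation of the classical theory of mixed-norm Lebesgue spaces and to reduce it to that theory. The first move is a change of measure: since $\alpha$ is positive and continuous on the open set $\Omega$, the Borel measure $d\mu(r) := \alpha(r)\,dr$ is $\sigma$-finite on $\Omega$ and mutually absolutely continuous with Lebesgue measure, so the ambient class $L^{1}_{loc}(\mathbb{R}^{n}\times\Omega,\,dx\,dr)$ is unchanged if we replace $dr$ by $d\mu$. With this identification, $L^{p,q}_{\alpha}(\mathbb{R}^{n}\times\Omega)$ is exactly the mixed-norm space $L^{p}\big(\mathbb{R}^{n},dx;\,L^{q}(\Omega,d\mu)\big)$ over the two $\sigma$-finite measure spaces $(\mathbb{R}^{n},dx)$ and $(\Omega,d\mu)$, normed by $\|f\|_{L^{p,q}_{\alpha}} = \big\|\,\|f(x,\cdot)\|_{L^{q}(\Omega,\mu)}\,\big\|_{L^{p}(\mathbb{R}^{n})}$. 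After this reduction the statement is precisely the Benedek--Panzone theorem, so in the final write-up one may simply invoke \cite{MLP}; I describe below the steps one would otherwise carry out.

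For completeness one runs the Riesz--Fischer argument in the mixed setting. Given a Cauchy sequence $(f_{k})$, pass to a subsequence with $\|f_{k_{j+1}}-f_{k_{j}}\|_{L^{p,q}_{\alpha}}\le 2^{-j}$; then $g:=\sum_{j}|f_{k_{j+1}}-f_{k_{j}}|$ has finite $L^{p,q}_{\alpha}$-norm by Minkowski's inequality applied first in $L^{q}_{r}$ and then in $L^{p}_{x}$, hence $g$ is finite $dx\,d\mu$-a.e., so the telescoping series converges pointwise a.e.\ to a function $f$. Two applications of Fatou's lemma, in $r$ and then in $x$, give $f\in L^{p,q}_{\alpha}$ together with $f_{k_{j}}\to f$ in norm, and a Cauchy sequence with a norm-convergent subsequence converges.

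For the duality, Hölder's inequality in $r$ (with respect to $\mu$) followed by Hölder in $x$ applied to $\langle f,g\rangle=\int_{\mathbb{R}^{n}}\int_{\Omega}f(x,r)g(x,r)\,\alpha(r)\,dr\,dx$ gives $|\langle f,g\rangle|\le\|f\|_{L^{p,q}_{\alpha}}\|g\|_{L^{p',q'}_{\alpha}}$, so $L^{p',q'}_{\alpha}\hookrightarrow (L^{p,q}_{\alpha})^{\ast}$ contractively; isometry follows by feeding in, for fixed $g$, the near-extremal $f(x,r)=F(x)\,\|g(x,\cdot)\|_{L^{q'}_{\mu}}^{1-q'}\,|g(x,r)|^{q'-2}\overline{g(x,r)}$ with $F$ the $L^{p}$-extremizer for $x\mapsto\|g(x,\cdot)\|_{L^{q'}_{\mu}}$, and checking the norms match. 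Surjectivity uses $\sigma$-finiteness of $dx\,d\mu$: exhaust $\mathbb{R}^{n}\times\Omega$ by sets of finite measure, on each of which $\Lambda\in(L^{p,q}_{\alpha})^{\ast}$ is absolutely continuous with respect to $dx\,d\mu$, so Radon--Nikodym yields a locally integrable density $g$; testing $\Lambda$ against the extremal functions from the contractive direction and letting the exhausting sets increase (monotone convergence) shows $g\in L^{p',q'}_{\alpha}$ with $\|g\|_{L^{p',q'}_{\alpha}}\le\|\Lambda\|$, and $\Lambda(f)=\langle f,g\rangle$ first on simple functions, then on all of $L^{p,q}_{\alpha}$ by density.

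The only genuinely delicate point is the surjectivity step: one must promote the Radon--Nikodym density obtained on finite-measure pieces to a global function with a controlled mixed norm, i.e.\ show that the supremum over finite-measure subsets of the $L^{p',q'}_{\alpha}$-norm of $g$ is bounded by $\|\Lambda\|$. This requires the careful pairing of $\Lambda$ with the near-extremizers and a monotone-convergence argument performed consistently in the nested two-parameter structure; every other step is routine once the space is recast as a mixed-norm $L^{p}(L^{q})$ over $\sigma$-finite measure spaces.
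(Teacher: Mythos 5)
Your reduction of $L^{p,q}_{\alpha}$ to the mixed-norm space $L^{p}(\mathbb{R}^{n};L^{q}(\Omega,\alpha(r)\,dr))$ and appeal to the Benedek--Panzone theorem is exactly what the paper does: it gives no proof and simply cites \cite{MLP}. Your additional sketch of completeness and of the duality (including the delicate surjectivity step) is correct and consistent with the standard argument, so there is nothing to object to.
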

We will also need a duality characterization of the Triebel-Lizorkin space, but first we need to recall the definition of the fractional Laplace.
\begin{defn}
let $ s \in \mathbb{R}$. Then we define $ \Delta^{s/2}$ as the operator with symbol $ \vert \xi \vert^{s}$. That is 
$$
\Delta^{s/2}(f) = \mathcal{F}^{-1}(\vert \cdot \vert^{s} \hat{f}).
$$
\end{defn}

A more useful representation is the following, see for example \cite[Lemma 2.2]{P}. 
\begin{lemma}\label{2.9}
Let $ s \in (0,2)$ and $ f \in \SSS(\R^{n})$. Then
$$
\Delta^{s/2} f(x) \approx C(n,s) \int_{\mathbb{R}^{n}} \frac{ 2 f(x)- f(x+z) -f(x-z) }{\vert z \vert^{s+n}} dz,
$$    
where $ C(n,s) \approx_{n} \min\{s,1-s\}$.
\end{lemma}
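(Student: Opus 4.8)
The plan is to verify the identity after the Fourier transform, where $\Delta^{s/2}$ acts as multiplication by $|\xi|^{s}$, and then to track the constant that appears. Write $L_{s}f(x):=\int_{\mathbb{R}^{n}}\frac{2f(x)-f(x+z)-f(x-z)}{|z|^{n+s}}\,dz$ for the operator on the right-hand side. For $f\in C^{\infty}_{0}$ a second-order Taylor expansion gives $2f(x)-f(x+z)-f(x-z)=O(|z|^{2})$ near $z=0$, while the numerator is globally bounded; hence the integrand is $O(|z|^{2-n-s})$ near the origin and $O(|z|^{-n-s})$ at infinity, so it is absolutely integrable precisely because $s\in(0,2)$, with bounds uniform in $x$ and integrable tails. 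In particular $L_{s}f$ is a bounded $L^{1}$ function and one computes its Fourier transform by Fubini: using $\widehat{f(\cdot\pm z)}(\xi)=e^{\pm2\pi i z\cdot\xi}\,\widehat f(\xi)$,
\[
\widehat{L_{s}f}(\xi)=\widehat f(\xi)\int_{\mathbb{R}^{n}}\frac{2-2\cos(2\pi z\cdot\xi)}{|z|^{n+s}}\,dz .
\]
For $\xi\neq0$ the substitution $z=w/|\xi|$, followed by the rotational invariance of $|w|^{-n-s}\,dw$, turns this multiplier into $\kappa(n,s)\,|\xi|^{s}$ with $\kappa(n,s):=2\int_{\mathbb{R}^{n}}\frac{1-\cos(2\pi w_{1})}{|w|^{n+s}}\,dw>0$ depending only on $n,s$. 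Thus $\widehat{L_{s}f}=\kappa(n,s)\,|\xi|^{s}\widehat f=\kappa(n,s)\,\widehat{\Delta^{s/2}f}$, i.e. $\Delta^{s/2}f=\kappa(n,s)^{-1}L_{s}f$, which is the asserted identity with the $s$-dependent constant equal to $\kappa(n,s)^{-1}$.

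It then remains to show $\kappa(n,s)^{-1}\approx_{n}\min\{s,2-s\}$, equivalently $\kappa(n,s)\approx_{n}\frac1s+\frac1{2-s}$, and here I would split the integral at $|w|=1$. On $\{|w|\le1\}$ the elementary bounds $c_{0}w_{1}^{2}\le1-\cos(2\pi w_{1})\le 2\pi^{2}w_{1}^{2}$ (the lower one applied on $\{|w|\le\tfrac12\}$) together with polar coordinates give $\int_{|w|\le1}\frac{1-\cos(2\pi w_{1})}{|w|^{n+s}}\,dw\approx_{n}\int_{0}^{1}r^{1-s}\,dr=\frac1{2-s}$. On $\{|w|\ge1\}$, using $0\le1-\cos\le2$ gives the upper bound $\lesssim_{n}\int_{1}^{\infty}r^{-1-s}\,dr=\frac1s$; for the matching lower bound I would decompose into dyadic shells $A_{k}=\{2^{k}\le|w|<2^{k+1}\}$ and write $\int_{A_{k}}(1-\cos(2\pi w_{1}))\,dw=|A_{k}|-\int_{A_{k}}\cos(2\pi w_{1})\,dw$, where the oscillatory term is $O(2^{k(n-1)/2})$ by the classical decay of $\widehat{\mathbf 1_{B_{R}}}$ (Bessel asymptotics / stationary phase) and hence negligible against $|A_{k}|\approx2^{kn}$ once $k\ge k_{0}(n)$; summing $\sum_{k\ge k_{0}}2^{-ks}\approx\frac1s$ yields $\kappa(n,s)\gtrsim_{n}\frac1s$. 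Alternatively one can simply invoke the classical evaluation $\int_{\mathbb{R}^{n}}\frac{1-\cos(w\cdot\xi)}{|w|^{n+s}}\,dw=\frac{\pi^{n/2}\,|\Gamma(-s/2)|}{4^{s/2}\,\Gamma(\frac{n+s}{2})}\,|\xi|^{s}$ and read off the behaviour of the Gamma factors as $s\to0^{+}$ and $s\to2^{-}$. Either way $\kappa(n,s)\approx_{n}\frac1{\min\{s,2-s\}}$, which completes the argument.

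The Fourier-side computation and the two upper estimates are routine bookkeeping in $s\in(0,2)$. The one genuinely delicate point is the lower bound on $\kappa(n,s)$ as $s\to0^{+}$: one must rule out cancellation of $1-\cos(2\pi w_{1})$ over large shells, and this is exactly where either the decay estimate for $\widehat{\mathbf 1_{B_{R}}}$ or the closed-form Gamma expression does the work.
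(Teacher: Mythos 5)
Your argument is correct and is essentially the standard proof that the paper itself points to (it does not prove \Cref{2.9} but cites \cite[Lemma 2.2]{P}, whose proof is the same Fourier-multiplier computation: the second difference has symbol $\kappa(n,s)\vert \xi\vert^{s}$ with $\kappa(n,s)\approx_{n} s^{-1}+(2-s)^{-1}$, read off either from the split at $\vert w\vert =1$ or from the closed-form Gamma expression). One remark: what you actually prove is the constant $\min\{s,2-s\}$, which is the correct one for the full range $s\in(0,2)$; the factor $\min\{s,1-s\}$ printed in the statement is a typo (it is negative for $s>1$ and not comparable to $\kappa(n,s)^{-1}$ near $s=1$), so your version is the one that should be, and in \cite{P} is, asserted.
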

The following duality characterization will play a crucial part in proving the results for $W^{s}_{p,q}$. For a proof see \cite[Theorem 2.8]{P}
\begin{lemma}\label{2.10}
Let $ f \in \SSS(\R^{n})$, $p,q \in(1,\infty)$ and $ s\in(0,1)$ then there exists $ g $ so that $ [g]_{\dot{F}^{s}_{p',q'}} \lesssim 1$ and 
$$
[f]_{\dot{F}^{s}_{p,q}} \approx_{n,p,q} \vert \int_{\mathbb{R}^{n}} \Delta^{s/2} f(x) \space\ \Delta^{s/2}g(x) dx \vert.
$$
\end{lemma}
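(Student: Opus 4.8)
\textbf{Proof proposal for Lemma~\ref{2.10}.}
The estimate ``$\lesssim$'' holds for \emph{any} $g$ with $[g]_{\dot{F}^{s}_{p',q'}}\le 1$, and it is the easy half. Using a Calder\'on-type reproducing identity $\sum_{j}\widetilde\Delta_{j}\Delta_{j}=\mathrm{Id}$ (with fattened projections $\widetilde\Delta_{j}$, each convolution with the same dyadic dilate of a fixed Schwartz kernel), self-adjointness of $\Delta_{j}$, H\"older in $x$ followed by the $\ell^{q}$--$\ell^{q'}$ H\"older inequality, and the equivalence $\|(\widetilde\Delta_{j}h)_{j}\|_{L^{p'}(\ell^{q'})}\approx_{n,p,q}[h]_{\dot{F}^{0}_{p',q'}}$,
\[
\Bigl|\int_{\R^{n}}\Delta^{s/2}f\;\Delta^{s/2}g\Bigr|
=\Bigl|\sum_{j}\int_{\R^{n}}\Delta_{j}\Delta^{s/2}f\;\widetilde\Delta_{j}\Delta^{s/2}g\Bigr|
\lesssim_{n,p,q}[\Delta^{s/2}f]_{\dot{F}^{0}_{p,q}}\,[\Delta^{s/2}g]_{\dot{F}^{0}_{p',q'}}.
\]
Hence the whole lemma reduces to two points: \textbf{(i)} an $s$-uniform lifting $[\Delta^{\sigma/2}h]_{\dot{F}^{0}_{p,q}}\approx_{n,p,q}[h]_{\dot{F}^{\sigma}_{p,q}}$ (and the same with $p',q'$) valid uniformly for $\sigma\in[-1,1]$; and \textbf{(ii)} the construction of a normalized $g$ achieving $|\int\Delta^{s/2}f\,\Delta^{s/2}g|\gtrsim_{n,p,q}[f]_{\dot{F}^{s}_{p,q}}$.

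For \textbf{(i)}, write $\Delta_{j}\Delta^{\sigma/2}h=2^{j\sigma}\,\psi^{\sigma}_{j}*h$, where $\psi^{\sigma}$ is the fixed Schwartz function with $\widehat{\psi^{\sigma}}(\xi)=|\xi|^{\sigma}\,\widetilde\varphi(\xi)$, $\widetilde\varphi$ being a bump equal to $1$ on the support of $\widehat\varphi$ and supported in a fixed slightly larger annulus, and $\psi^{\sigma}_{j}$ its dyadic dilate at scale $2^{-j}$. Since that annulus has bounded eccentricity, $|\xi|^{\sigma}\approx 2^{j\sigma}$ on it with constants independent of $\sigma$ for $|\sigma|\le 1$, and $\{\psi^{\sigma}\}_{|\sigma|\le 1}$ is bounded in the Schwartz class; a vector-valued Mihlin/Fefferman--Stein estimate then yields $\|(\Delta_{j}\Delta^{\sigma/2}h)_{j}\|_{L^{p}(\ell^{q})}\approx_{n,p,q}\|(2^{j\sigma}\Delta_{j}h)_{j}\|_{L^{p}(\ell^{q})}=[h]_{\dot{F}^{\sigma}_{p,q}}$, uniformly in $\sigma\in[-1,1]$ (for $\sigma\in[0,1]$ this is also consistent with \Cref{PT}, which is $s$-free there). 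In particular $[\Delta^{s/2}f]_{\dot{F}^{0}_{p,q}}\approx_{n,p,q}[f]_{\dot{F}^{s}_{p,q}}$ and the analogous statement with $(p',q')$.

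For \textbf{(ii)}, set $F:=\Delta^{s/2}f$, which lies in $\dot{F}^{0}_{p,q}$ by \textbf{(i)}, so $[F]_{\dot{F}^{0}_{p,q}}=\|(\Delta_{j}F)_{j}\|_{L^{p}(\ell^{q})}$. By the duality $L^{p}(\ell^{q})^{*}=L^{p'}(\ell^{q'})$ recalled above there is a sequence $(G_{j})_{j}$ with $\|(G_{j})_{j}\|_{L^{p'}(\ell^{q'})}\le 1$ and $\sum_{j}\int_{\R^{n}}\Delta_{j}F\;G_{j}\approx_{n,p,q}[F]_{\dot{F}^{0}_{p,q}}$; by density we may take only finitely many $G_{j}$ nonzero, each smooth, so all sums below are finite. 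Put $\widetilde G:=\sum_{j}\Delta_{j}G_{j}$. Since $\Delta_{k}\Delta_{j}=0$ once $|k-j|$ exceeds a fixed constant, and each $\Delta_{k}\Delta_{j}$ is a convolution operator bounded on $L^{p'}(\ell^{q'})$, almost orthogonality gives $[\widetilde G]_{\dot{F}^{0}_{p',q'}}\lesssim_{n,p,q}\|(G_{j})_{j}\|_{L^{p'}(\ell^{q'})}\le 1$, while self-adjointness of the $\Delta_{j}$ yields $\int_{\R^{n}}F\,\widetilde G=\sum_{j}\int_{\R^{n}}\Delta_{j}F\;G_{j}\approx_{n,p,q}[F]_{\dot{F}^{0}_{p,q}}$. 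Finally let $g:=\Delta^{-s/2}\widetilde G$ (the multiplier $|\xi|^{-s}$); then $\Delta^{s/2}g=\widetilde G$ and, by \textbf{(i)}, $[g]_{\dot{F}^{s}_{p',q'}}\approx_{n,p,q}[\widetilde G]_{\dot{F}^{0}_{p',q'}}\lesssim_{n,p,q}1$, so after dividing $g$ by the implied constant we may assume $[g]_{\dot{F}^{s}_{p',q'}}\le 1$, and
\[
\Bigl|\int_{\R^{n}}\Delta^{s/2}f\;\Delta^{s/2}g\Bigr|=\Bigl|\int_{\R^{n}}F\,\widetilde G\Bigr|\approx_{n,p,q}[F]_{\dot{F}^{0}_{p,q}}\approx_{n,p,q}[f]_{\dot{F}^{s}_{p,q}}.
\]
Together with the ``$\lesssim$'' bound above, this is the claim.

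\emph{Main obstacle.} The only genuinely delicate part is keeping all constants independent of $s$: the Littlewood--Paley description of $\dot{F}^{s}_{p,q}$ is already $s$-uniform on $[0,1]$, but one must verify that the lifting multiplier $|\xi|^{s}$ does not deteriorate as $s\to 0$ or $s\to 1$ -- this is exactly the point that the dyadic annuli have bounded eccentricity, so $|\xi|^{s}\approx 2^{js}$ on them with $s$-free constants, and that the Schwartz seminorms of the family $\{\psi^{\sigma}\}_{|\sigma|\le 1}$ stay bounded. Everything else -- convergence of $\sum_{j}$, the usual ``modulo polynomials'' bookkeeping in homogeneous spaces, and the precise reproducing formula -- is routine once $f\in C^{\infty}_{0}$ and $(G_{j})_{j}$ is first taken with finitely many smooth entries.
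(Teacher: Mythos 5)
Your proposal is correct, and it matches the intended argument: the paper gives no proof of Lemma~\ref{2.10}, deferring to \cite[Theorem 2.8]{P}, and the proof there is exactly this combination of the $L^{p}(\ell^{q})$--$L^{p'}(\ell^{q'})$ duality for the Littlewood--Paley square function with an $s$-uniform lifting $[\Delta^{s/2}h]_{\dot F^{0}_{p,q}}\approx_{n,p,q}[h]_{\dot F^{s}_{p,q}}$. The one cosmetic slip is the identity $\Delta_{j}\Delta^{\sigma/2}h=2^{j\sigma}\psi^{\sigma}_{j}*h$, which should read $2^{j\sigma}\psi^{\sigma}_{j}*\Delta_{j}h$ with your fattened $\widetilde\varphi$ (or else take $\widehat{\psi^{\sigma}}=|\xi|^{\sigma}\widehat{\varphi}$ directly); this does not affect the uniformity argument.
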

A consequence of Lemma \ref{2.10} and Lemma \ref{2.9} is the following result, the proof is in \cite[Proposition 6.1]{P}.
\begin{lemma}\label{DualityForW}
Let $ s \in (0,1)$ and $ p,q \in (1,\infty)$ denote by $ p'$ and $ q' $ the H\"{o}lder conjugates of $ p,q $ respectively, and take $ t_{1},t_{2}$ so that $ t_{1} + t_{2} = 2s$. Then we have for $ f\in  \SSS(\R^{n})$
$$
[f]_{\dot{F}^{s}_{p,q}} \lesssim_{p,q,n} \min\{s,1-s\} [f]_{W^{t_{1}}_{p,q}} \sup_{ \substack{ g \in C^{\infty}_{0} \\ [g]_{\dot{F}^{s}_{p',q'}} \leq 1 }} [g]_{W^{t_{2}}_{p'.q'}}.
$$
Further, we have 
$$
[f]_{\dot{F}^{s}_{p,q}} \lesssim_{p,q,n} [f]_{\dot{F}^{0}_{p,q}}+  \min\{s,1-s\} [f]_{W^{t_{1}}_{p,q}}  \sup_{\substack{g \in C^{\infty}_{0}\\ \text{supp}(\mathcal{F}(g) ) \subset \{ \vert \xi \vert >1/4 \}\\ [g]_{\dot{F}^{s}_{p',q'}} \leq 1}}[g]_{W^{t_{2}}_{p'.q'}}.
$$
Moreover, both inequalities hold if we replace the left hand side by $\dot{F}^{s}_{p,2}$. Namely, 
$$
[f]_{\dot{F}^{s}_{p,2}} \lesssim_{p,q,n} \min\{s,1-s\} [f]_{W^{t_{1}}_{p,q}}  \sup_{ \substack{ g \in C^{\infty}_{0} \\ [g]_{\dot{F}^{s}_{p',q'}} \leq 1 }} [g]_{W^{t_{2}}_{p'.q'}}.
$$
And 
$$
[f]_{\dot{F}^{s}_{p,2}} \lesssim_{p,q,n} [f]_{\dot{F}^{0}_{p,q}}+  \min\{s,1-s\} [f]_{W^{t_{1}}_{p,q}}  \sup_{\substack{g \in C^{\infty}_{0}\\ \text{supp}(\mathcal{F}(g) ) \subset \{ \vert \xi \vert >1/4 \}\\ [g]_{\dot{F}^{s}_{p',q'}} \leq 1}}[g]_{W^{t_{2}}_{p'.q'}}.
$$

\end{lemma}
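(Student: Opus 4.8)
The plan is to run the duality argument of \cite{P}. First I would invoke Lemma~\ref{2.10} to choose $g\in C^{\infty}_{0}$ with $[g]_{\dot{F}^{s}_{p',q'}}\lesssim 1$ and
\[
[f]_{\dot{F}^{s}_{p,q}}\approx_{n,p,q}\Bigl|\int_{\R^{n}}\Delta^{s/2}f(x)\,\Delta^{s/2}g(x)\,dx\Bigr|.
\]
Since $\Delta^{\sigma/2}$ is the Fourier multiplier with symbol $|\xi|^{\sigma}$ and $t_{1}+t_{2}=2s$, Plancherel's theorem lets me redistribute the derivatives,
\[
\int_{\R^{n}}\Delta^{s/2}f\,\Delta^{s/2}g=\int_{\R^{n}}\Delta^{t_{1}/2}f\,\Delta^{t_{2}/2}g ,
\]
so the problem reduces to the bilinear estimate
\[
\Bigl|\int_{\R^{n}}\Delta^{t_{1}/2}u\,\Delta^{t_{2}/2}v\,dx\Bigr|\lesssim_{n,p,q}\min\{s,1-s\}\,[u]_{W^{t_{1}}_{p,q}}\,[v]_{W^{t_{2}}_{p',q'}},\qquad u,v\in C^{\infty}_{0}.
\]
Applying this with $u=f$, $v=g$ and then taking the supremum over the admissible $g$ would give the first inequality.

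To prove this bilinear estimate I would bring in Lemma~\ref{2.9} (with exponents $t_{1},t_{2}\in(0,1)$). Rewriting $\Delta^{t_{1}/2}u$ and $\Delta^{t_{2}/2}v$ as weighted symmetric second differences via Lemma~\ref{2.9} produces the two constants $\min\{t_{1},1-t_{1}\}$ and $\min\{t_{2},1-t_{2}\}$, and since $t_{1}+t_{2}=2s$ one has the elementary bound
\[
\min\{t_{1},1-t_{1}\}\,\min\{t_{2},1-t_{2}\}\le\bigl(\min\{s,1-s\}\bigr)^{2}\le\min\{s,1-s\},
\]
which accounts for the prefactor. After that I would translate the $x$-variable so that $u$ and $v$ each enter only through first order differences $u(x+z)-u(x)$ and $v(x+z)-v(x)$ evaluated at a \emph{common} increment $z$; this should reduce the left-hand side to an integral of the schematic shape
\[
\int_{\R^{n}}\int_{\R^{n}}\frac{|u(x+z)-u(x)|}{|z|^{\,n/q+t_{1}}}\cdot\frac{|v(x+z)-v(x)|}{|z|^{\,n/q'+t_{2}}}\,dz\,dx ,
\]
in which the two weights multiply to $|z|^{-(n+t_{1}+t_{2})}=|z|^{-(n+2s)}$ precisely because $t_{1}+t_{2}=2s$. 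Hölder's inequality, first in $z$ with exponents $(q,q')$ and then in $x$ with exponents $(p,p')$, bounds this by $[u]_{W^{t_{1}}_{p,q}}[v]_{W^{t_{2}}_{p',q'}}$, finishing the bilinear estimate.

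For the second inequality I would first peel off the low frequencies of $f$: writing $f=\Delta_{\le0}f+(f-\Delta_{\le0}f)$, the single low-frequency piece $\Delta_{\le0}f$ contributes the term $[f]_{\dot{F}^{0}_{p,q}}$, while on the high-frequency remainder the duality pairing only involves test functions whose Fourier transform may be taken supported in $\{|\xi|>1/4\}$ (replacing $g$ by a slightly fattened high-frequency projection alters $[g]_{\dot{F}^{s}_{p',q'}}$ only by a constant), so that restricting the supremum to such $g$ costs nothing. The two versions with $\dot{F}^{s}_{p,2}$ on the left I would obtain by running exactly the same scheme, using the duality characterisation $[f]_{\dot{F}^{s}_{p,2}}=\|\Delta^{s/2}f\|_{L^{p}}\approx\sup\{|\int_{\R^{n}}\Delta^{s/2}f\,\Delta^{s/2}h|:[h]_{\dot{F}^{s}_{p',2}}\le1\}$ in place of Lemma~\ref{2.10}, and then comparing the resulting supremum over $\dot{F}^{s}_{p',2}$-balls with the one over $\dot{F}^{s}_{p',q'}$-balls via the standard embeddings among the scales $\dot{F}^{s}_{p',\cdot}$.

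The step I expect to be the main obstacle is the bilinear estimate. Turning \emph{both} $\Delta^{t_{i}/2}$ into second differences in the naive way introduces two \emph{independent} increment variables, which destroys the $L^{q}_{z}$--$L^{q'}_{z}$ duality one needs in order to recognise the Gagliardo seminorms; turning only one of them into a difference leaves the other factor as an ill-behaved $W^{0}$-type object whose increment integral diverges. The resolution has to be to carry out the symmetrisation so that a \emph{single} increment is shared by $u$ and $v$, and to arrange that the non-convergent low-frequency tail is absorbed either by the $[f]_{\dot{F}^{0}_{p,q}}$ term or by the frequency support of the test function; keeping track of the homogeneities---which is exactly what forces the constraint $t_{1}+t_{2}=2s$---and verifying that all implied constants stay bounded as $s\to0^{+}$ and as $s\to1^{-}$ is where the real work lies. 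These details are carried out in \cite[Proposition~6.1]{P}.
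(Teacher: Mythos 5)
Your proposal follows the same route as the paper: the paper gives no independent proof of this lemma, presenting it as a consequence of Lemmas \ref{2.9} and \ref{2.10} and citing \cite[Proposition 6.1]{P}, which is precisely the duality-plus-symmetrization argument you outline (pair $\Delta^{t_1/2}f$ with $\Delta^{t_2/2}g$, convert to first-order differences at a shared increment, and apply H\"older in $z$ and then in $x$). One small correction to your bookkeeping: the factor $\min\{s,1-s\}$ comes from a \emph{single} application of Lemma \ref{2.9} to $\Delta^{(t_1+t_2)/2}=\Delta^{s}$ (yielding $\min\{2s,2-2s\}$) after all derivatives are moved onto one function by Plancherel, not from multiplying the two constants of separate applications---but you correctly flag and resolve exactly this point in your discussion of the shared-increment symmetrization.
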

The advantage of the second inequality is the support of the Fourier transform of $ g$. 

We close this subsection with the following relation between the Poisson integral and the fractional Laplace. This result is a special case of \cite{Caf}.
\begin{lemma}\label{PC}
Let $P_{t}$ denote the Poisson kernel, and let $ f \in C^{\infty}_{0}.$ Then there exists a constant $C= C(n)$ so that for any $ x \in \mathbb{R}^{n}$ we have
$$
\lim_{t\to 0} \partial_{t} P_{t}*f(x) = C \Delta^{1/2} f(x).
$$
\end{lemma}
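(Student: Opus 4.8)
The plan is to pass to the Fourier side, where the statement becomes an elementary dominated--convergence argument that uses only that $f\in C^\infty_0$ forces $\hat f$ to be Schwartz.

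First I would record $\partial_t P_t*f$ explicitly in frequency. Since $f\in C^\infty_0$, part (3) of Lemma \ref{Psn} gives $P_t*f=\mathcal{F}_{x}^{-1}\big(e^{-2\pi t|\xi|}\hat f(\xi)\big)$, and because $\hat f$ decays faster than any polynomial, the integrand $e^{-2\pi t|\xi|}\hat f(\xi)$ together with its $t$-derivative $-2\pi|\xi|\,e^{-2\pi t|\xi|}\hat f(\xi)$ is dominated, locally uniformly for $t\in(0,\infty)$, by the fixed $L^1(\mathbb{R}^{n})$ function $(1+2\pi|\xi|)\,|\hat f(\xi)|$. Hence one may differentiate under the integral sign, obtaining for every $x$ and every $t>0$
$$
\partial_t P_t*f(x)=-2\pi\int_{\mathbb{R}^{n}}e^{2\pi i x\cdot\xi}\,|\xi|\,e^{-2\pi t|\xi|}\,\hat f(\xi)\,d\xi,
$$
which is consistent with part (4) of Lemma \ref{Psn} for $m=1$.

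Next I would send $t\to 0^+$ (recall $P_t$ is defined only for $t>0$). For each fixed $\xi$ we have $e^{-2\pi t|\xi|}\to 1$, while the integrand is bounded in modulus, uniformly in $t$, by $2\pi|\xi|\,|\hat f(\xi)|\in L^1(\mathbb{R}^{n})$. The dominated convergence theorem then yields, for every $x\in\mathbb{R}^{n}$,
$$
\lim_{t\to 0^+}\partial_t P_t*f(x)=-2\pi\int_{\mathbb{R}^{n}}e^{2\pi i x\cdot\xi}\,|\xi|\,\hat f(\xi)\,d\xi=-2\pi\,\mathcal{F}^{-1}\big(|\cdot|\,\hat f\big)(x)=-2\pi\,\Delta^{1/2}f(x),
$$
by the definition of $\Delta^{1/2}$. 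This proves the claim with $C=-2\pi$ (in the normalization of Lemma \ref{Psn}); in fact, since $|\cdot|\,\hat f\in L^1(\mathbb{R}^{n})$ the function $\Delta^{1/2}f$ is bounded and continuous, so the convergence is pointwise at every $x$ and even uniform.

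There is no real obstacle here: the only things to check are the differentiation under the integral sign and the applicability of dominated convergence, and both reduce to the single observation that $f\in C^\infty_0$ makes $\hat f$ Schwartz, so that $|\xi|\,\hat f(\xi)$ and $|\xi|^2\,\hat f(\xi)$ are integrable. One could instead quote the classical fact that the Dirichlet-to-Neumann operator of the half-space harmonic extension is $(-\Delta)^{1/2}$, but the short Fourier computation above is self-contained.
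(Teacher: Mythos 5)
Your argument is correct. The only caveats are cosmetic: the paper does not actually prove Lemma \ref{PC} but simply quotes it as a special case of the Caffarelli--Silvestre extension result \cite{Caf}, so there is no in-paper proof to compare against line by line. Your route --- write $\partial_t P_t*f(x)=-2\pi\int e^{2\pi i x\cdot\xi}|\xi|e^{-2\pi t|\xi|}\hat f(\xi)\,d\xi$ by differentiating under the integral (justified since $\hat f$ is Schwartz, so $(1+|\xi|)|\hat f(\xi)|\in L^1$), then apply dominated convergence as $t\to0^+$ --- is the natural direct verification, and every step checks out: the domination by $2\pi|\xi|\,|\hat f(\xi)|$ is uniform in $t>0$, the limit $-2\pi\,\mathcal F^{-1}(|\cdot|\hat f)(x)$ matches the paper's definition of $\Delta^{1/2}$, and the convergence is indeed uniform in $x$. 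What the citation to \cite{Caf} buys is generality: the Dirichlet-to-Neumann characterization there covers $(-\Delta)^{s}$ for all $s\in(0,1)$ via a degenerate-elliptic extension with weight $t^{1-2s}$, whereas your computation exploits that only $s=1/2$ (the genuinely harmonic extension with multiplier $e^{-2\pi t|\xi|}$) is needed here. For the purposes of this paper your elementary, self-contained argument is entirely adequate and arguably preferable, since the constant $C=-2\pi$ is made explicit (note it is negative and dimension-independent, which is still consistent with the statement ``there exists a constant $C=C(n)$'').
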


\subsection{Littlewood-Paley for mixed $L^{p}$ spaces} The key lemma for proving the upper bounds between $E^{s}_{p,q}$ and $\dot{F}^{s}_{p,q}$ is a Littlewood-Paley inequality for mixed $L^{p}$ spaces. First, let us recall the standard Littlewood-Paley theorem, see \cite[Page 82]{StS} for a proof.
\begin{lemma}\label{LP}
Let $p\in (1,\infty)$ and $ l \in \mathbb{N}$, then for any $ f \in L^{p}(\mathbb{R}^{n})$ we have
$$
\Vert f \Vert_{L^{p}} \approx_{n,p,l} \left( \int_{\mathbb{R}^{n}} \left( \int_{0}^{\infty} t^{2l-1} \vert \partial^{l}_{t} P_{t} * f (x) \vert^{2} dt \right)^{p/2} dx \right)^{1/p}.
$$
\end{lemma}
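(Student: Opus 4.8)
The statement to prove is Lemma~\ref{LP}: the Littlewood--Paley characterization of $L^p$ via $t$-derivatives of the Poisson integral,
$$
\Vert f \Vert_{L^{p}} \approx_{n,p,l} \left( \int_{\mathbb{R}^{n}} \left( \int_{0}^{\infty} t^{2l-1} \vert \partial^{l}_{t} P_{t} * f (x) \vert^{2} dt \right)^{p/2} dx \right)^{1/p}.
$$

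The plan is to recognize this as a vector-valued (Hilbert-space-valued) singular integral estimate and invoke the Calder\'on--Zygmund machinery already recorded in Lemma~\ref{Singular}, together with a Plancherel computation to settle the $L^2$ case. First I would define the square function $g_l f(x) = \bigl(\int_0^\infty t^{2l-1}|\partial_t^l P_t * f(x)|^2\,dt\bigr)^{1/2}$ and reinterpret it: let $B_2 = B^2_{(0,\infty),1,\alpha}$ with weight $\alpha(t) = t^{2l-1}$, and view $G f(x) := (t \mapsto \partial_t^l P_t * f(x))$ as a function $\mathbb{R}^n \to B_2$. Then $g_l f(x) = |Gf(x)|_{B_2}$ and the right-hand side is $\|Gf\|_{L^p(B_2)}$. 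The operator $G$ has convolution kernel $K(x) = (t \mapsto \partial_t^l P_t(x)) \in B_2$, acting as a bounded operator $\mathbb{C} \to B_2$, so Lemma~\ref{Singular} (with $B_1 = \mathbb{C}$) applies once we check (i) the $L^r$ boundedness for some single exponent and (ii) the H\"ormander-type kernel conditions.

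The $L^2$ bound is the anchor. By Lemma~\ref{Psn}(4), $\mathcal{F}_x(\partial_t^l P_t)(\xi) = c\,|\xi|^l e^{-2\pi t|\xi|}$, so by Plancherel in $x$ and Fubini,
$$
\int_{\mathbb{R}^n} g_l f(x)^2\,dx = c^2 \int_{\mathbb{R}^n} |\widehat f(\xi)|^2 |\xi|^{2l} \left(\int_0^\infty t^{2l-1} e^{-4\pi t|\xi|}\,dt\right) d\xi.
$$
The inner integral is $(4\pi|\xi|)^{-2l}\Gamma(2l)$, which cancels the $|\xi|^{2l}$ exactly and leaves a dimensional constant, giving $\|g_l f\|_{L^2} \approx_{n,l} \|f\|_{L^2}$. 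This gives both directions at $p=2$. For the kernel conditions, since $K$ depends only on $x-y$, both smoothness conditions in Lemma~\ref{Singular} reduce to the standard estimate $\int_{|x| > 2|h|} |K(x-h) - K(x)|_{B_2}\,dx \lesssim 1$; this follows from the pointwise bound $|\nabla_x \partial_t^l P_t(x)|_{B_2} \lesssim |x|^{-n-1}$, which in turn comes from differentiating the explicit Poisson kernel and computing the $t$-integral of $t^{2l-1}|\partial_t^l \nabla_x P_t(x)|^2$ — a homogeneity/scaling argument shows this is $\approx |x|^{-2n-2}$, so its square root is $|x|^{-n-1}$, and then the mean value theorem plus integration over $|x|>2|h|$ gives the bound. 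By Lemma~\ref{Singular}, $g_l$ is then bounded on $L^p$ for all $p \in (1,\infty)$, establishing $\lesssim$.

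For the reverse inequality $\|f\|_{L^p} \lesssim_{n,p,l} \|g_l f\|_{L^p}$, I would use a polarization/duality argument: pick a companion operator — e.g. a lower-order square function or a Calder\'on reproducing formula of the form $f = c\int_0^\infty t^{2l-1}\partial_t^l P_t * (\partial_t^l P_t * f)\,dt$ (valid modulo the harmonic-at-infinity subtlety, but fine for $f \in C^\infty_0$ as in the intended applications, or on $L^p$ by density) — pair against $h \in L^{p'}$, apply Cauchy--Schwarz in $t$ pointwise, then H\"older in $x$ to get $|\langle f, h\rangle| \lesssim \|g_l f\|_{L^p}\|\tilde g_l h\|_{L^{p'}} \lesssim \|g_l f\|_{L^p}\|h\|_{L^{p'}}$, where $\tilde g_l$ is another square function already shown bounded on $L^{p'}$ by the forward direction. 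Taking the supremum over $\|h\|_{L^{p'}} \le 1$ finishes it. The main obstacle is bookkeeping the constants' dependence: the claim asserts the implied constants depend only on $n,p,l$ and \emph{not} on any fractional parameter, so I must make sure the Plancherel computation's $\Gamma(2l)/(4\pi)^{2l}$ factor and the kernel estimates are tracked as genuine constants in $n,l$, and that the reproducing formula's normalization constant is likewise harmless; none of the steps secretly introduce an $s$-dependence. A secondary technical point is justifying the reproducing formula / the convergence of the $t$-integral at $t\to\infty$ for general $L^p$ functions, which is handled by restricting to a dense class and using the already-established two-sided bound to pass to the limit.
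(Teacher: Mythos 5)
Your proposal is correct, but note that the paper does not actually prove Lemma~\ref{LP}: it is quoted as the classical Littlewood--Paley $g$-function theorem (the text says ``let us recall the standard Littlewood--Paley theorem''). The argument you give --- viewing $f\mapsto(\partial_t^l P_t*f)_{t>0}$ as a Hilbert-space-valued convolution operator, anchoring at $p=2$ via Plancherel and the identity $\int_0^\infty t^{2l-1}e^{-4\pi t|\xi|}\,dt=\Gamma(2l)(4\pi|\xi|)^{-2l}$, verifying the H\"ormander kernel conditions from the pointwise bounds on $\partial_t^l P_t$ and its gradient, invoking Lemma~\ref{Singular}, and then recovering the lower bound by polarization/duality through the Calder\'on reproducing formula --- is precisely the scheme the paper itself deploys to prove the mixed-norm generalization in Lemmas~\ref{PKK}, \ref{K} and \ref{Kformixed}, so it is a sound and entirely consistent way to supply the missing proof of the scalar case.
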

Next, we aim to have the same result but for mixed $L^{p}$ spaces. That is, assume $ \Omega \subset \mathbb{R}^{m}$ is open, and $ \alpha : \Omega \to \mathbb{R}$ is continuous positive function. Then for $ f \in L^{p,q}_{\alpha}(\mathbb{R}^{n} \times \Omega)$ we wish to have the following inequality for $ p,q \in (1,\infty)$ 
\begin{equation*}
\begin{split}
&\int_{\mathbb{R}^{n}} \left( \int_{\Omega} \vert f(x,r) \vert^{q} \alpha(r) dr \right)^{p/q} dx
\\
&
\approx_{n,p,q,l} \int_{\mathbb{R}^{n}} \left( \int_{\Omega} \left( \int_{0}^{\infty} t^{2l-1} \vert \partial^{l}_{t} P_{t} * f(x,r) \vert^{2} dt\right)^{q/2} \alpha(r) dr \right)^{p/q} dx.
\end{split}
\end{equation*}
Notice that if $ p= q$ then this follows from Lemma \ref{LP} above and Fubini's theorem. However, to prove this for when $ p \neq q$, we need to use the theory of singular integrals for Banach valued functions. It is worth mentioning that a Littelwood-Paley theorem for mixed $L^{p}$ spaces is probably known; however since we are unable to find a reference for this result, we include the proof of Lemma \ref{Kformixed} below in appendix B.

\begin{lemma}\label{Kformixed}
Let $ p,q \in (1,\infty)$ and $ \Omega \subset \mathbb{R}^{m}$ open, and $ \alpha : \Omega \to \mathbb{R}$ continuous positive function. Then for $ f \in L^{p,q}_{\alpha}(\mathbb{R}^{n} \times \Omega)$ we have the following 
\begin{equation*}
\begin{split}
&\int_{\mathbb{R}^{n}} \left( \int_{\Omega} \vert f(x,r) \vert^{q} \alpha(r) dr \right)^{p/q} dx
\\
&
\approx_{n,p,q,l} \int_{\mathbb{R}^{n}} \left( \int_{\Omega} \left( \int_{0}^{\infty} t^{2l-1} \vert \partial^{l}_{t} P_{t} * f(x,r) \vert^{2} dt\right)^{q/2} \alpha(r) dr \right)^{p/q} dx.
\end{split}
\end{equation*}
\end{lemma}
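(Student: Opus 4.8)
The plan is to derive the stated equivalence from the vector-valued Littlewood--Paley bound of Lemma \ref{K} together with a duality argument built on Lemma \ref{Lastdual}. Throughout one identifies $f\in C^\infty_0(\mathbb{R}^n\times\Omega)$ with the corresponding element of $L^p(B^q_{\Omega,m,\alpha})$ and of $L^{p,q}_\alpha(\mathbb{R}^n\times\Omega)$; it is enough to prove the equivalence for such $f$ and then pass to general $f\in L^{p,q}_\alpha$ by the density statement proved above, using that --- once the ``$\lesssim$'' direction is known --- $f\mapsto G_f$, with $G_f$ as below, is a sublinear operator bounded on $L^{p,q}_\alpha$.

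\emph{The inequality \textup{``}\textup{RHS}$\lesssim$\textup{LHS}\textup{''}.} For $f\in C^\infty_0(\mathbb{R}^n\times\Omega)$ and $\epsilon>0$, differentiating under the integral sign gives $T^\epsilon_l(f)(x)(r,t)=\partial^l_t P_{t+\epsilon}*f(x,r)$ (convolution in $x$), so unwinding the definitions of the norms on $A^{q,l}_{\Omega,\alpha,m}$ and $B^q_{\Omega,m,\alpha}$, Lemma \ref{K} reads
\[
\int_{\mathbb{R}^n}\Big(\int_\Omega\alpha(r)\Big(\int_0^\infty t^{2l-1}\vert\partial^l_t P_{t+\epsilon}*f(x,r)\vert^2\,dt\Big)^{q/2}dr\Big)^{p/q}dx\ \lesssim_{n,p,q,l}\ \int_{\mathbb{R}^n}\Big(\int_\Omega\vert f(x,r)\vert^q\alpha(r)\,dr\Big)^{p/q}dx,
\]
with implied constant independent of $\epsilon$. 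Since $\partial^l_t P_{t+\epsilon}*f(x,r)\to\partial^l_t P_t*f(x,r)$ as $\epsilon\to0$ for every $(x,r)$ and every $t>0$, three nested applications of Fatou's lemma (in $t$, then $r$, then $x$) bound the right-hand side of the Lemma by $\liminf_{\epsilon\to0}$ of the left-hand side of the display, hence by a constant times the left-hand side of the Lemma.

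\emph{The inequality \textup{``}\textup{LHS}$\lesssim$\textup{RHS}\textup{''}, by duality.} Set $G_f(x,r)=\big(\int_0^\infty t^{2l-1}\vert\partial^l_t P_t*f(x,r)\vert^2\,dt\big)^{1/2}$, so that the right-hand side of the Lemma equals $\Vert G_f\Vert_{L^{p,q}_\alpha}^p$. By Lemma \ref{Lastdual} and density,
\[
\Vert f\Vert_{L^{p,q}_\alpha}=\sup\Big\{\Big\vert\int_{\mathbb{R}^n}\int_\Omega f(x,r)h(x,r)\alpha(r)\,dr\,dx\Big\vert:\ h\in C^\infty_0(\mathbb{R}^n\times\Omega),\ \Vert h\Vert_{L^{p',q'}_\alpha}\leq1\Big\}.
\]
For fixed $r$, Plancherel in $x$ combined with the elementary identity $\int_0^\infty t^{2l-1}(2\pi\vert\xi\vert)^{2l}e^{-4\pi t\vert\xi\vert}\,dt=\kappa_l$ (a constant independent of $\xi\neq0$, by Lemma \ref{Psn}(4)) yields the reproducing formula $\int_{\mathbb{R}^n}f(x,r)h(x,r)\,dx=\kappa_l^{-1}\int_0^\infty t^{2l-1}\int_{\mathbb{R}^n}\partial^l_t P_t*f(x,r)\,\partial^l_t P_t*h(x,r)\,dx\,dt$. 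Multiplying by $\alpha(r)$, integrating in $r$, interchanging the order of integration (justified by Cauchy--Schwarz since $f,h$ are smooth with compact support), and then applying Cauchy--Schwarz in $t$, Hölder in $r$ with exponents $(q,q')$ against the measure $\alpha(r)\,dr$, and Hölder in $x$ with exponents $(p,p')$, gives
\[
\Big\vert\int_{\mathbb{R}^n}\int_\Omega f\,h\,\alpha\,dr\,dx\Big\vert\ \lesssim_l\ \Vert G_f\Vert_{L^{p,q}_\alpha}\,\Vert G_h\Vert_{L^{p',q'}_\alpha}.
\]
By the direction already proved, applied with the conjugate exponents $p',q'$ in place of $p,q$, one has $\Vert G_h\Vert_{L^{p',q'}_\alpha}\lesssim_{n,p,q,l}\Vert h\Vert_{L^{p',q'}_\alpha}\leq1$; taking the supremum over $h$ then gives $\Vert f\Vert_{L^{p,q}_\alpha}\lesssim_{n,p,q,l}\Vert G_f\Vert_{L^{p,q}_\alpha}$, i.e. ``LHS$\lesssim$RHS''. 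Finally, the extension to arbitrary $f\in L^{p,q}_\alpha$ follows by density, as noted above.

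\emph{Main obstacle.} The delicate part is the converse inequality: one must set up the duality on the mixed-norm space $L^{p,q}_\alpha$ correctly and, crucially, exploit that Lemma \ref{K} holds for \emph{every} pair of exponents in $(1,\infty)^2$, so that the forward estimate can be re-inserted at the conjugate exponents. The remaining technicalities --- the $\epsilon\to0$ passage, Fubini in the reproducing formula when $\Omega$ or $\alpha$ is unbounded, and the concluding density step --- are routine provided one keeps track of the compact support of the test functions.
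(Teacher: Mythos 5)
Your proposal is correct and follows essentially the same route as the paper: Lemma \ref{K} plus a limit in $\epsilon$ for the forward bound, and duality on $L^{p,q}_{\alpha}$ combined with three applications of H\"older and a re-insertion of the forward bound at the conjugate exponents for the converse. The only cosmetic difference is that you obtain the bilinear reproducing formula directly from Plancherel in $x$ for fixed $r$, whereas the paper derives the same identity by polarizing the $L^{2,2}_{\alpha}$ isometry \eqref{Sargu}.
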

Essentially, Lemma \ref{Kformixed} tells us that we can apply the classical Littlewood-Paley without using Fubini. To illustrate this, we include below two immediate consequences of Lemma \ref{Kformixed}.
\begin{cor}\label{NK1}
let $ p,q \in (1,\infty)$ and $ F \in C^{\infty}(\mathbb{R}^{n}\times (0,\infty))$ and assume further that we have 
$$
\int_{\mathbb{R}^{n}} \left( \int_{0}^{\infty} \vert F(x,r) \vert^{q} \frac{dr}{r} \right)^{p/q} dx < \infty.
$$
Then we have 
$$
\int_{\mathbb{R}^{n}} \left( \int_{0}^{\infty} \vert F(x,r) \vert^{q} \frac{dr}{r} \right)^{p/q} dx \approx_{n,p,q} \int_{\mathbb{R}^{n}} \left( \int_{0}^{\infty} \left(\int_{0}^{\infty} t \vert \partial_{t} P_{t} *F(x,r)\vert^{2} \right)^{q/2} \frac{dr}{r} \right)^{p/q} dx
$$
\end{cor}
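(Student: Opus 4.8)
The plan is simply to specialize Lemma~\ref{Kformixed}. I would take $\Omega = (0,\infty) \subset \mathbb{R}$ (so $m=1$), let $\alpha(r) = 1/r$, which is continuous and strictly positive on $\Omega$, and choose $l = 1$, so that $t^{2l-1} = t$ and $\partial^{l}_{t} P_{t} = \partial_{t} P_{t}$. With these choices the weighted mixed-norm space $L^{p,q}_{\alpha}(\mathbb{R}^{n} \times \Omega)$ of Definition~\ref{MixedLpS} is exactly the set of $F$ for which
$$
\int_{\mathbb{R}^{n}} \left( \int_{0}^{\infty} \vert F(x,r) \vert^{q} \, \frac{dr}{r} \right)^{p/q} dx < \infty,
$$
which is precisely the standing hypothesis on $F$. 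Hence $F \in L^{p,q}_{\alpha}(\mathbb{R}^{n} \times \Omega)$, and Lemma~\ref{Kformixed} applies and yields exactly the claimed two-sided bound, the inner $t$-integral being read as $\int_{0}^{\infty} t\, \vert \partial_{t} P_{t} * F(x,r) \vert^{2}\, dt$ (the $dt$ is suppressed in the display of the statement).

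The only remaining matters are bookkeeping points already settled in the proof of Lemma~\ref{Kformixed}: the Poisson convolution $P_{t} * F(x,r)$ is taken in the $x$-variable with $r$ frozen, and for $F$ merely in $L^{p,q}_{\alpha}$ (rather than in $C^{\infty}_{0}(\mathbb{R}^{n} \times \Omega)$) the convolution and the ensuing square function are exactly the objects produced there via the operators $T^{\epsilon}_{l}$ and the passage $\epsilon \to 0$. The hypothesis $F \in C^{\infty}(\mathbb{R}^{n} \times (0,\infty))$ is not used in the estimate itself --- membership in $L^{p,q}_{\alpha}$ is all that matters --- but it is harmless and convenient for the applications that follow.

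I anticipate no genuine obstacle here: the corollary is literally the instance $\Omega = (0,\infty)$, $\alpha(r) = 1/r$, $l = 1$ of the preceding lemma, so the ``proof'' amounts to checking that the integrability hypothesis matches the definition of $L^{p,q}_{\alpha}(\mathbb{R}^{n} \times \Omega)$ and invoking Lemma~\ref{Kformixed}.
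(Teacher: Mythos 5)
Your proposal is correct and coincides with the paper's own proof: Corollary \ref{NK1} is obtained there exactly by applying Lemma \ref{Kformixed} with $\Omega=(0,\infty)$, $\alpha(r)=1/r$, and $l=1$. The bookkeeping remarks you add (the suppressed $dt$, the convolution being in $x$, the density/limiting issues already handled in the lemma) are consistent with how the paper treats the matter.
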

\begin{proof}
We apply Lemma \ref{Kformixed} with $ l = 1 $ to the function $ F $ which is an element of $L_{\alpha}^{p,q}(\mathbb{R}^{n} \times \Omega)$ for $ \Omega = (0,\infty)$, and $ \alpha(r) = \frac{1}{r}$.
\end{proof}
In the same manner we have another consequence of Lemma \ref{Kformixed} 
\begin{cor}\label{NK2}
Let $ F : \mathbb{R}^{n} \times( \mathbb{R}^{n} \setminus \{ 0 \}) \to \mathbb{R}$ be smooth with the property that 
$$
\int_{\mathbb{R}^{n}} \left( \int_{\mathbb{R}^{n} \setminus \{0 \}} \vert F(x,z) \vert^{q} \frac{dz}{\vert z \vert^{n}} \right)^{p/q} dx < \infty.
$$
Then we have 
\begin{equation*}
\begin{split}
& \int_{\mathbb{R}^{n}} \left( \int_{\mathbb{R}^{n} } \vert F(x,z) \vert^{q} \frac{dz}{\vert z \vert^{n}} \right)^{p/q} dx
\\
& \approx_{n,p,q} \int_{\mathbb{R}^{n}} \left( \int_{\mathbb{R}^{n}} \left( \int_{0}^{\infty} t^{3} \vert \partial^{2}_{t} P_{t} * F(x,z) dt \right)^{q/2} \frac{dz}{\vert z \vert^{n}} \right)^{p/q} dx.
\end{split}
\end{equation*}
Where the convolution in the inner integral is in the variable in $ x$. 
\end{cor}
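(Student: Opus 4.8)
The plan is to obtain Corollary~\ref{NK2} as a direct specialization of Lemma~\ref{Kformixed}, in exactly the way Corollary~\ref{NK1} was obtained, only now with $l=2$ and with the inner domain taken to be $\mathbb{R}^{n}$ punctured at the origin rather than a half-line. So essentially no new work is required; the point is simply to match the hypotheses.

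First I would fix the parameters appearing in Lemma~\ref{Kformixed}: set $m=n$, take $\Omega=\mathbb{R}^{n}\setminus\{0\}\subset\mathbb{R}^{n}$, which is open, and let $\alpha(z)=|z|^{-n}$, which is continuous and strictly positive on $\Omega$. With these choices the weighted mixed-norm space $L^{p,q}_{\alpha}(\mathbb{R}^{n}\times\Omega)$ of Definition~\ref{MixedLpS} is precisely the space of locally integrable $F$ with
$$
\int_{\mathbb{R}^{n}}\Big(\int_{\mathbb{R}^{n}\setminus\{0\}}|F(x,z)|^{q}\,\frac{dz}{|z|^{n}}\Big)^{p/q}dx<\infty ,
$$
so the standing hypothesis of the corollary says exactly that $F\in L^{p,q}_{\alpha}(\mathbb{R}^{n}\times\Omega)$ (smoothness of $F$ is not actually needed, since Lemma~\ref{Kformixed} applies to every element of $L^{p,q}_{\alpha}$).

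Next I would invoke Lemma~\ref{Kformixed} with $l=2$. Since $2l-1=3$, the conclusion of that lemma reads
$$
\int_{\mathbb{R}^{n}}\Big(\int_{\Omega}|F(x,z)|^{q}\alpha(z)\,dz\Big)^{p/q}dx\;\approx_{n,p,q}\;\int_{\mathbb{R}^{n}}\Big(\int_{\Omega}\Big(\int_{0}^{\infty}t^{3}\,|\partial_{t}^{2}P_{t}*F(x,z)|^{2}\,dt\Big)^{q/2}\alpha(z)\,dz\Big)^{p/q}dx ,
$$
where throughout $P_{t}*F(x,z)$ denotes convolution in the $x$-variable only, the variable $z$ playing the role of the ``frozen'' parameter $r\in\Omega$ in Lemma~\ref{Kformixed}. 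Unwinding the notation ($\Omega=\mathbb{R}^{n}\setminus\{0\}$, $\alpha(z)\,dz=dz/|z|^{n}$, and the $z$-integral over $\mathbb{R}^{n}\setminus\{0\}$ agrees with that over $\mathbb{R}^{n}$ since $\{0\}$ is null) yields exactly the claimed equivalence.

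\textbf{Main obstacle.} There is essentially none: all the analytic content — the vector-valued Calder\'on--Zygmund argument of Lemma~\ref{K}, the $L^{2}$ Plancherel identity, and the duality step in Lemma~\ref{Kformixed} — has already been absorbed into the proof of Lemma~\ref{Kformixed}. The only thing to verify is that the triple $(\Omega,\alpha,l)=(\mathbb{R}^{n}\setminus\{0\},\,|z|^{-n},\,2)$ meets the hypotheses of that lemma, which it plainly does; the remaining steps are purely notational.
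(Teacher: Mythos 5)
Your proposal is correct and is exactly the paper's own proof: the paper likewise deduces Corollary \ref{NK2} by applying Lemma \ref{Kformixed} with $\Omega=\mathbb{R}^{n}\setminus\{0\}$, $\alpha(z)=|z|^{-n}$, and $l=2$ to $F\in L^{p,q}_{\alpha}(\mathbb{R}^{n}\times(\mathbb{R}^{n}\setminus\{0\}))$. Your verification that the triple $(\Omega,\alpha,l)$ meets the hypotheses is the only content needed, and you have it.
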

\begin{proof}
We apply Lemma \ref{Kformixed} with $ \Omega = \mathbb{R}^{n} \setminus \{ 0 \} $,  and $ \alpha(z) = \vert z \vert^{-n} $ and $ l =2$ to the function $ F $ which is an element of $L^{p,q}_{\alpha}(\mathbb{R}^{n} \times \mathbb{R}^{n} \setminus \{ 0 \})$ 
\end{proof}

\subsection{Maximal inequalities} In proving the results for $ W^{s}_{p,q}$ we will need two maximal operators

\begin{defn}\label{PMax}
Let $ f \in C^{\infty}_{0}$ then we define the Petree maximal function $ f^{*}_{j}$ as 
$$
f^{*}_{j}(x) = \sup_{z} \frac{ \vert \phi_{j} * f (x+z)\vert}{ \vert 1+2^{j}z \vert^{\lambda}}.
$$
Where $ \phi_{j}$ is as before, the function with Fourier transform given by $ \vert \xi 2^{-j}\vert^{2} e^{- 2 \pi 2^{-j} \vert \xi \vert}$. And $ \lambda$ is some free parameter, usually we require it to  satisfy $ \lambda > \max\{ n/p, n/q\}$ where $ p,q$ are the parameters in the Triebel-Lizorkin space.
\end{defn}
There is an analogous result of Lemma \ref{PT} for the above Petree maximal function. The following lemma is taken from \cite[Theorem 1.4]{CB}.
\begin{lemma}\label{PetreeM}
let $ p,q \in (1,\infty)$ and $ \lambda > \max\{ n/p, n/q\}$ and $ s\in (0,1)$. Then we have for any $ f \in L^{p}$
$$
\int_{\mathbb{R}^{n}} \left( \sum_{j \in \mathbb{Z}} \vert 2^{js} f^{*}_{j}(x) \vert^{q} \right)^{p/q} dx \lesssim_{n,p,q,\lambda} [f]^{p}_{\dot{F}^{s}_{p,q}}.
$$
\end{lemma}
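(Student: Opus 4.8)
The plan is to run the classical Peetre maximal function argument (Fefferman--Stein, Peetre; in the form used in \cite{CB}) together with the vector-valued maximal inequality. By a routine density argument we may take $f\in C^{\infty}_{0}$, and by Lemma~\ref{PT} it suffices to prove
\[
\int_{\mathbb{R}^{n}}\Big(\sum_{j\in\mathbb{Z}}|2^{js}f^{*}_{j}(x)|^{q}\Big)^{p/q}\,dx\;\lesssim_{n,p,q,\lambda}\;[f]^{p}_{\dot{F}^{s}_{p,q}},
\]
with a constant \emph{independent of} $s\in(0,1)$; this $s$-uniformity is exactly why one works with the Poisson-type kernels $\phi_{j}$, as Lemma~\ref{PT} carries no $s$-dependence on $[0,1]$.

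The core is the pointwise estimate
\[
f^{*}_{j}(x)\;\lesssim_{n,\lambda}\;\sum_{l\in\mathbb{Z}}\theta_{l-j}\,\big(\mathcal{M}(|\Delta_{l}f|^{n/\lambda})(x)\big)^{\lambda/n},\qquad\text{with}\quad\sum_{i\in\mathbb{Z}}2^{|i|}\,\theta_{i}<\infty,
\]
where $\mathcal{M}$ is the Hardy--Littlewood maximal operator. To prove it I would invoke the classical Peetre estimate for band-limited functions: if $\widehat{g}$ is supported in $\{|\xi|\le 2^{l}\}$ then $\sup_{z}|g(x+z)|/(1+2^{l}|z|)^{\lambda}\lesssim_{n,\lambda}(\mathcal{M}(|g|^{n/\lambda})(x))^{\lambda/n}$, valid for every $\lambda>0$. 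Since $\widehat{\phi_{j}}(\xi)=|2^{-j}\xi|^{2}e^{-2\pi 2^{-j}|\xi|}$ is \emph{not} compactly supported, I decompose $\phi_{j}*f=\sum_{l}\phi_{j}*\Delta_{l}f$ into Littlewood--Paley pieces; the piece at frequency $2^{l}$ carries the factor $\sup_{|\eta|\sim 2^{l-j}}|\eta|^{2}e^{-2\pi|\eta|}$, which is $\lesssim 2^{2(l-j)}$ for $l\le j$ (the order-$2$ zero of $\widehat{\phi_{j}}$ at the origin) and $\lesssim 2^{2(l-j)}e^{-2\pi 2^{l-j}}$ for $l>j$ (the exponential decay of the Poisson kernel). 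Applying the band-limited estimate to each piece --- effectively to $\Delta_{l}f$, after absorbing the smooth multiplier $\widehat{\phi_{j}}$ --- and paying the mismatch of Peetre weights $(1+2^{l}|z|)^{\lambda}/(1+2^{j}|z|)^{\lambda}\le 2^{(l-j)_{+}\lambda}$, one arrives at the displayed bound with $\theta_{i}\lesssim 2^{-2|i|}$ for $i\le 0$ and $\theta_{i}$ decaying super-geometrically for $i>0$; in particular $\sum_{i}2^{|i|}\theta_{i}<\infty$. (Equivalently, one may run this through a Calderón reproducing formula adapted to $\{\phi_{j}\}$, as in \cite{CB}.)

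Granting the pointwise bound, the rest is bookkeeping. Using $2^{js}=2^{(j-l)s}\,2^{ls}$ and $2^{(j-l)s}\le 2^{|j-l|}$ for $s\in(0,1)$, and setting $h_{l}:=|2^{ls}\Delta_{l}f|^{n/\lambda}$ (so that $2^{ls}(\mathcal{M}(|\Delta_{l}f|^{n/\lambda}))^{\lambda/n}=(\mathcal{M}h_{l})^{\lambda/n}$), the pointwise bound gives $2^{js}f^{*}_{j}(x)\lesssim\sum_{l}\widetilde{\theta}_{l-j}\,(\mathcal{M}h_{l}(x))^{\lambda/n}$ with $\widetilde{\theta}_{i}:=2^{|i|}\theta_{i}\in\ell^{1}$ uniformly in $s$. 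By Young's inequality for sequences in $l$ (pointwise in $x$),
\[
\Big(\sum_{j}|2^{js}f^{*}_{j}(x)|^{q}\Big)^{1/q}\;\lesssim\;\Big(\sum_{l}\big(\mathcal{M}h_{l}(x)\big)^{q\lambda/n}\Big)^{1/q}.
\]
Raising to the power $p$, integrating, and applying the Fefferman--Stein vector-valued maximal inequality with the exponents $Q:=q\lambda/n$ and $P:=p\lambda/n$ --- both lying in $(1,\infty)$ precisely because $\lambda>\max\{n/p,n/q\}$ --- we obtain
\[
\int_{\mathbb{R}^{n}}\Big(\sum_{j}|2^{js}f^{*}_{j}|^{q}\Big)^{p/q}\,dx\;\lesssim_{n,p,q,\lambda}\;\int_{\mathbb{R}^{n}}\Big(\sum_{l}(\mathcal{M}h_{l})^{Q}\Big)^{P/Q}\,dx\;\lesssim_{n,p,q,\lambda}\;\int_{\mathbb{R}^{n}}\Big(\sum_{l}|2^{ls}\Delta_{l}f|^{q}\Big)^{p/q}\,dx\;=\;[f]^{p}_{\dot{F}^{s}_{p,q}},
\]
the last step by Definition~\ref{DF}. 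Tracking constants, all implied constants depend only on $n,p,q,\lambda$, and none blow up as $s\to 1^{-}$.

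I expect the genuine obstacle to be the pointwise bound above: since $\phi_{j}*f$ is not band-limited, one cannot simply quote the textbook Peetre estimate but must carefully exploit the two structural features of the symbol $m(\eta)=|\eta|^{2}e^{-2\pi|\eta|}$ --- the order-$2$ zero at the origin and the exponential decay at infinity --- and control the frequency decomposition together with the Peetre-weight mismatch. Everything afterward is the standard Fefferman--Stein machinery. It is worth stressing that the order of vanishing being $2$, hence strictly larger than $1$, is precisely what makes the resulting series summable \emph{uniformly} in $s\in(0,1)$ against the weight $2^{js}$; this is the same phenomenon responsible for Lemma~\ref{PT} being $s$-free, and it is the reason the constant in the Lemma does not depend on $s$.
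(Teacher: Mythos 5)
Your argument is correct and is in substance the same as the paper's appendix proof (following Bui--Candy): both rest on the kernel-comparison estimate for $\phi_{j}*\varphi_{l}$, whose order-two vanishing at the origin and rapid decay at infinity yield coefficients summable against $2^{|j-l|}$ uniformly in $s\in(0,1)$, followed by the Fefferman--Stein vector-valued maximal inequality, which applies precisely because $p\lambda/n,\,q\lambda/n>1$. The only cosmetic difference is that the paper factors through the Peetre maximal functions $\varphi_{j}^{*}f$ and then cites Peetre's characterization, whereas you unwind that characterization inline by passing directly to $\bigl(\mathcal{M}(|\Delta_{l}f|^{n/\lambda})\bigr)^{\lambda/n}$.
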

Next, we will need the following result, which can be found in \cite{SC}. 
\begin{lemma}\label{SteinF}
Let $ p ,q \in (1,\infty)$. Then we we have for $ f \in L^{p}(l^{q})$
$$
\int_{\mathbb{R}^{n}} \left( \sum_{j} \mathcal{M}(f_{j})(x)^{q} \right)^{p/q} dx \lesssim_{p,q,n} \int_{\mathbb{R}^{n}} \left( \sum_{j} \vert f_{j}(x)\vert^{q} \right)^{p/q} dx.
$$
Where $ \mathcal{M}(f)$ is the Hardy-Littlewood maximal function of $f$.
\end{lemma}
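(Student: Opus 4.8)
The plan is to recognise this as the Fefferman--Stein vector-valued maximal inequality and to deduce it from the scalar Hardy--Littlewood maximal theorem (boundedness on $L^{r}$ for $1<r\le\infty$, together with the weak-type $(1,1)$ bound), treating separately the three regimes $p=q$, $1<p<q$, and $q<p<\infty$. Write $\mathbf{M}f(x)=\big(\sum_{j}\mathcal{M}(f_{j})(x)^{q}\big)^{1/q}$ for the sublinear operator in play and $g(x)=\big(\sum_{j}|f_{j}(x)|^{q}\big)^{1/q}$. The case $p=q$ is immediate from Tonelli's theorem and the scalar bound:
$$
\int_{\mathbb{R}^{n}}\Big(\sum_{j}\mathcal{M}(f_{j})^{q}\Big)^{p/q}\,dx=\sum_{j}\|\mathcal{M}(f_{j})\|_{L^{p}}^{p}\lesssim_{p,n}\sum_{j}\|f_{j}\|_{L^{p}}^{p}=\int_{\mathbb{R}^{n}}g^{p}\,dx .
$$

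\textbf{The range $1<p<q$.} Here the plan is to establish the $l^{q}$-valued weak-type $(1,1)$ estimate $\big|\{\mathbf{M}f>\lambda\}\big|\lesssim_{q,n}\lambda^{-1}\|g\|_{L^{1}}$ (for $g\in L^{1}$) and then to interpolate it, via Marcinkiewicz interpolation applied to the scalar sublinear map $f\mapsto\mathbf{M}f$, against the strong $(q,q)$ bound of the previous case; this gives the strong $(p,p)$ bound for every $1<p<q$ and, after a routine density argument, the claim. To prove the weak-type estimate I would run a Calderón--Zygmund decomposition of the \emph{scalar} function $g$ at height $\lambda$, producing disjoint cubes $\{Q_{i}\}$ with $\lambda<\dashint_{Q_{i}}g\le 2^{n}\lambda$ and $g\le\lambda$ a.e.\ off $\Omega=\bigcup_{i}Q_{i}$, and then split each component as $f_{j}=f_{j}\mathbf{1}_{\Omega^{c}}+f_{j}\mathbf{1}_{\Omega}=:G_{j}+B_{j}$. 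For the good part one has $\|(G_{j}(x))_{j}\|_{l^{q}}\le\lambda$ a.e.\ and $\int\|(G_{j})_{j}\|_{l^{q}}^{q}\lesssim\lambda^{q-1}\|g\|_{L^{1}}$, so the $p=q$ bound and Chebyshev control $\{\mathbf{M}G>\lambda/2\}$. For the bad part, off the dilated set $\Omega^{*}=\bigcup_{i}2Q_{i}$ (of measure $\lesssim\lambda^{-1}\|g\|_{L^{1}}$) I would prove the pointwise domination $\mathcal{M}(B_{j})(x)\lesssim_{n}\mathcal{M}(\widetilde{B}_{j})(x)$, where $\widetilde{B}_{j}=\sum_{i}\big(\dashint_{Q_{i}}|f_{j}|\big)\mathbf{1}_{Q_{i}}$; Minkowski's inequality in $l^{q}$ then gives $\|(\widetilde{B}_{j}(x))_{j}\|_{l^{q}}\le 2^{n}\lambda$ and hence $\int\|(\widetilde{B}_{j})_{j}\|_{l^{q}}^{q}\lesssim\lambda^{q-1}\|g\|_{L^{1}}$, so the $p=q$ bound and Chebyshev again finish the estimate on $\{\mathbf{M}B>\lambda/2\}\setminus\Omega^{*}$.

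\textbf{The range $q<p<\infty$.} Put $r=p/q>1$ and dualise the $L^{r}$ norm: it suffices to bound $\sum_{j}\int_{\mathbb{R}^{n}}\mathcal{M}(f_{j})^{q}\,w$ over nonnegative $w$ with $\|w\|_{L^{r'}}\le 1$. For this I would invoke the weighted Fefferman--Stein maximal inequality $\int(\mathcal{M}h)^{q}w\lesssim_{q,n}\int|h|^{q}\,\mathcal{M}w$, valid since $q>1$ (it follows from the weighted weak-type bound $w(\{\mathcal{M}h>\lambda\})\lesssim\lambda^{-1}\int|h|\,\mathcal{M}w$, a Vitali covering argument comparing $w(5B)$ with $\inf_{B}\mathcal{M}w$, interpolated against the trivial $L^{\infty}$ estimate). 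Summing in $j$,
$$
\sum_{j}\int\mathcal{M}(f_{j})^{q}w\lesssim\int\Big(\sum_{j}|f_{j}|^{q}\Big)\mathcal{M}w\le\Big\|\sum_{j}|f_{j}|^{q}\Big\|_{L^{r}}\|\mathcal{M}w\|_{L^{r'}}\lesssim\Big\|\sum_{j}|f_{j}|^{q}\Big\|_{L^{r}} ,
$$
the last step because $\mathcal{M}$ is bounded on $L^{r'}$ ($r'>1$); taking the supremum over $w$ gives the bound.

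The step I expect to be the main obstacle is the $l^{q}$-valued weak-$(1,1)$ estimate in the middle range, and within it the treatment of the bad part: one must run the Calderón--Zygmund splitting at the scalar level of $g=\|f\|_{l^{q}}$ while keeping control of the whole sequence $(f_{j})_{j}$ simultaneously, and it is precisely the pointwise domination $\mathcal{M}(B_{j})\lesssim\mathcal{M}(\widetilde{B}_{j})$ off $\Omega^{*}$ together with Minkowski's inequality in $l^{q}$ (where $q\ge1$ enters) that lets the scalar decomposition govern the vector of functions. The two outer regimes then follow essentially formally from the scalar theory.
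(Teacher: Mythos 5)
The paper does not prove this lemma at all: it is quoted as a known result (the Fefferman--Stein vector-valued maximal inequality) with a citation to \cite{SC}, so there is no in-paper argument to compare against. Your proposal is a correct and essentially complete reconstruction of the classical proof: Tonelli for $p=q$; for $q<p$ the dualization through $L^{p/q}$ combined with the weighted estimate $\int(\mathcal{M}h)^{q}w\lesssim\int|h|^{q}\mathcal{M}w$ and the $L^{r'}$-boundedness of $\mathcal{M}$; and for $p<q$ the $l^{q}$-valued weak-$(1,1)$ bound via a Calder\'on--Zygmund decomposition of the scalar majorant $g=\|f\|_{l^{q}}$, followed by Marcinkiewicz interpolation for the sublinear map $f\mapsto\mathbf{M}f$. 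The only stylistic deviation from the textbook treatment is your non-cancellative splitting $f_{j}=f_{j}\mathbf{1}_{\Omega^{c}}+f_{j}\mathbf{1}_{\Omega}$ with the averaged replacement $\widetilde{B}_{j}=\sum_{i}(\dashint_{Q_{i}}|f_{j}|)\mathbf{1}_{Q_{i}}$ rather than subtracting means on each cube; since the maximal operator exploits no cancellation, this is perfectly adequate, and the pointwise domination $\mathcal{M}(B_{j})\lesssim_{n}\mathcal{M}(\widetilde{B}_{j})$ off the dilated cubes is exactly what makes it work (with the dilation factor adjusted by a dimensional constant so that any ball meeting $Q_{i}$ from outside $\Omega^{*}$ has radius $\gtrsim\ell(Q_{i})$ and hence engulfs $Q_{i}$). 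You correctly identify that $q\ge 1$ enters through Minkowski's integral inequality in the bound $\|(\dashint_{Q_{i}}|f_{j}|)_{j}\|_{l^{q}}\le\dashint_{Q_{i}}g\le 2^{n}\lambda$. No gaps.
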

We will also need a generalized version Lemma \ref{SteinF}. The following result can be found in \cite[Theorem 2.12]{CMax}.
\begin{lemma}\label{CSF}
Let $ m \in \mathbb{N}$ and take $ q_{1},...,q_{m} \in (1,\infty)$ and $ f \in L^{1}_{loc}( \mathbb{R}^{n} \times \mathbb{R}^{m})$, then define 
$$
f^{*}(x,z) = \sup_{\sigma > 0} \dashint_{B(x,\sigma)} \vert f(y,z) \vert dy.
$$
Further, define 
$$
T_{\bar{q}} (f) (x) =  \left( \int_{-\infty}^{\infty}  \dots \big[ \int_{-\infty}^{\infty} \vert f(x,t_{1},...,t_{m}) \vert^{q_{1}} dt_{1} \big]^{\frac{q_{2}}{q_{1}}} \dots dt_{m} \right)^{1/q_{m}}.
$$
Then we have for any $ p \in (1,\infty)$ the following
$$
\Vert T_{\bar{q}} (f^{*}) \Vert_{L^{p}} \leq C(n,p,q_{1} \dots q_{m}) \Vert T_{\bar{q}} (f) \Vert_{L^{p}}.
$$
\end{lemma}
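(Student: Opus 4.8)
The plan is to recognise $f\mapsto f^{*}$ as the (centered) Hardy--Littlewood maximal operator acting only in the variable $x\in\mathbb{R}^{n}$, with values in the iterated mixed Lebesgue space
$$
X \;=\; L^{q_{m}}_{t_{m}}\!\big(L^{q_{m-1}}_{t_{m-1}}(\cdots L^{q_{1}}_{t_{1}})\big),
$$
so that, writing $\widetilde{\mathcal M}$ for this lattice maximal operator (i.e.\ $\widetilde{\mathcal M}F(x):=\sup_{\sigma>0}\dashint_{B(x,\sigma)}|F(y)|\,dy$, with the supremum and the modulus taken pointwise in the $(t_{1},\dots,t_{m})$ variables), the assertion becomes $\|\widetilde{\mathcal M}\|_{L^{p}(\mathbb{R}^{n};X)\to L^{p}(\mathbb{R}^{n};X)}<\infty$ for every $p\in(1,\infty)$. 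I would prove this by induction on $m$: it suffices to show that the class $\mathcal C$ of Banach function spaces $Y$ for which $\widetilde{\mathcal M}$ is bounded on $L^{p}(\mathbb{R}^{n};Y)$ for all $p\in(1,\infty)$ contains $L^{q}(\mathbb{R})$ for every $q\in(1,\infty)$ and is stable under the operation $Y\mapsto L^{q}(\mathbb{R};Y)$, $q\in(1,\infty)$; composing the base case with $m-1$ applications of stability yields $X\in\mathcal C$.

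For the base case $m=1$ the estimate is exactly the Fefferman--Stein inequality of Lemma~\ref{SteinF} with the sequence space $\ell^{q}$ replaced by $L^{q}(\mathbb{R},dt)$. It can be deduced from Lemma~\ref{SteinF} by discretising the $t$--integral (partition $\mathbb{R}$ into intervals of length $\delta$, apply Lemma~\ref{SteinF} to the corresponding piecewise averages, and let $\delta\to0$ using Lebesgue differentiation and Fatou), or proved directly by the usual three--case argument: for $p=q$ by Fubini and boundedness of $\mathcal M$ on $L^{q}(\mathbb{R}^{n})$; for $q<p$ by dualising the exponent $p/q$, the Fefferman--Stein weighted bound $\int(\mathcal Mg)^{q}w\lesssim\int|g|^{q}\mathcal Mw$, and boundedness of $\mathcal M$ on $L^{(p/q)'}$; and for $p<q$ from the weak--type $(1,1)$ estimate for the $L^{q}$--valued maximal operator (valid precisely because $q<\infty$) together with Marcinkiewicz interpolation against the already-established $(q,q)$ bound.

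For the inductive step I would peel off the outermost variable $t_{m}$: given $Y\in\mathcal C$ and $q\in(1,\infty)$, one must place $L^{q}(\mathbb{R};Y)$ in $\mathcal C$. The three cases are handled as above, with the scalar ingredients replaced by their $Y$--valued counterparts. The case $p=q$ follows from Fubini in $(x,t_{m})$ and the induction hypothesis. The case $q<p$ follows from dualising $p/q$ together with a $Y$--valued Fefferman--Stein weighted inequality, which one extracts from the unweighted $L^{p}(\mathbb{R}^{n};Y)$ bounds supplied by the induction hypothesis via the Rubio de Francia extrapolation algorithm. The case $p<q$ follows from the $Y$--valued weak--type $(1,1)$ endpoint---proved by a Calder\'{o}n--Zygmund decomposition of the scalar majorant $x\mapsto\|f(x)\|_{Y}$ at height $\lambda$, with the induction hypothesis controlling the ``good'' part and the lattice triangle inequality (Minkowski's inequality in $Y$) controlling the ``bad'' part---followed again by Marcinkiewicz interpolation with the $(q,q)$ bound. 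This closes the induction.

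The delicate point, which I expect to be the real obstacle, is the range $p<q$: the lattice maximal operator is genuinely \emph{unbounded} on $L^{p}(\mathbb{R}^{n};\ell^{\infty})$, so the argument must use that every $q_{i}$ is finite and strictly larger than $1$ --- this is the convexity/concavity input that makes the weak--type endpoint available, and it is the technical heart of \cite[Theorem~2.12]{CMax}. A cleaner route that sidesteps the bare-hands Calder\'{o}n--Zygmund estimate is to invoke the theorem of Rubio de Francia (see also Garc\'{i}a-Cuerva--Mac\'{i}as--Torrea) that $\widetilde{\mathcal M}$ is bounded on $L^{p}(\mathbb{R}^{n};X)$ for all $p\in(1,\infty)$ whenever $X$ is a Banach function space that is $r$--convex and $s$--concave for some $1<r\le s<\infty$; the iterated mixed space $X$ above enjoys this with $r=\min_{i}q_{i}$ and $s=\max_{i}q_{i}$, both in $(1,\infty)$, and the lemma follows at once.
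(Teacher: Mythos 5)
The paper does not prove this lemma at all: it is quoted verbatim as \cite[Theorem 2.12]{CMax}, so there is no internal argument to compare against. Your plan is the standard proof of that cited result — identify $f\mapsto f^{*}$ with the lattice Hardy--Littlewood maximal operator acting in $x$ with values in the iterated mixed space $X=L^{q_m}(\cdots L^{q_1})$, then induct on $m$ using the three-case Fefferman--Stein scheme ($p=q$ by Fubini; $q<p$ by dualising $p/q$ against a weighted Fefferman--Stein bound; $p<q$ by a Calder\'on--Zygmund weak $(1,1)$ endpoint plus Marcinkiewicz) — and you correctly isolate the genuinely delicate point, namely the endpoint needed when $p<q_i$ for some $i$, which is exactly where finiteness of the $q_i$ enters. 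Two caveats on the shortcuts you offer. First, in the inductive step for $q<p$ you cannot literally ``extract'' the two-weight inequality $\int\Vert\widetilde{\mathcal M}F\Vert_{Y}^{q}w\lesssim\int\Vert F\Vert_{Y}^{q}\mathcal Mw$ from unweighted bounds by Rubio de Francia extrapolation (extrapolation consumes weighted inputs, it does not manufacture them); the honest route is the $Y$-valued weak type $(1,1)$ with respect to the pair $(w\,dx,\mathcal Mw\,dx)$, again via Calder\'on--Zygmund, interpolated with the $L^{\infty}$ bound. Second, your closing claim that $r$-convexity plus $s$-concavity with $1<r\le s<\infty$ suffices for the Hardy--Littlewood property is not, to my knowledge, a theorem; the clean black box is Bourgain--Rubio de Francia for UMD Banach function spaces, which does apply here since iterated mixed-norm $L^{q}$ spaces with all exponents in $(1,\infty)$ are UMD lattices. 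With those two repairs your outline is a complete and correct proof of the lemma the paper merely cites.
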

\begin{rmk}
Before proceeding further, let us explain how Lemma \ref{CSF} is useful in our setting. A special case of it is the following inequality, which holds for $ f \in L^{1}_{loc} ( \mathbb{R}^{n} \times (0,\infty))$
$$
\int_{\mathbb{R}^{n}} \left( \int_{0}^{\infty} \vert \mathcal{M}( f(.,t))(x) \vert^{q} dt \right)^{p/q} dx \lesssim_{n,p,q}  \int_{\mathbb{R}^{n}} \left( \int_{0}^{\infty} \vert  f(x,t)\vert^{q} dt \right)^{p/q} dx.
$$
And more crucially, we have the same for functions in $ f \in L^{1}_{loc} ( \mathbb{R}^{n} \times ( 0,\infty)^{2})$. Namely,
\begin{equation*}
\begin{split}
& \int_{\mathbb{R}^{n}} \left( \int_{0}^{\infty}\big[ \int_{0}^{\infty}\vert \mathcal{M}( f(.,t,r))(x) \vert^{2} dr\big]^{q/2} dt  \right)^{p/q} dx 
\\
&
\lesssim_{n,p,q}  \int_{\mathbb{R}^{n}} \left(\int_{0}^{\infty} \big[\int_{0}^{\infty} \vert  f(x,t,r)\vert^{2} dr\big]^{q/2} dt \right)^{p/q} dx.
\end{split}
\end{equation*}
This together with Lemma \ref{K} will play a crucial part in relating the space $ W^{s}_{p,q} $ to $ E^{s}_{p,q}$.
\end{rmk}

Next, using Lemma \ref{CSF} and Lemma \ref{PT} we obtain the following, the importance of the next lemma is that the inequalities do not depend on $s$. 
\begin{lemma}\label{DTC}
Let $ p,q \in (1,\infty)$ then for $ f \in \SSS(\R^{n})$ we have 
$$
\int_{\mathbb{R}^{n}} \left( \int_{0}^{\infty} t^{q (2-s)} \vert \partial^{2}_{t} P_{t} *f (x) \vert^{q} dt/t \right)^{p/q}  dx \approx_{n,p,q} [f]^{p}_{\dot{F}^{s}_{p,q}}.
$$
\end{lemma}
\begin{proof}
Put $ f_{j} = \phi_{j} * f $ where $ \phi_{j} = 2^{-2j} \partial^{2}_{t} P_{2^{-j}}$. Then using Lemma \ref{Psn} we get
\begin{equation*}
\begin{split}
&\int_{0}^{\infty} t^{q(2-s)} \vert \partial^{2}_{t} P_{t} * f(x) \vert^{q} dt/t 
\lesssim \sum_{j \in \mathbb{Z}} 2^{qjs} \vert \mathcal{M}(f_{j})(x) \vert^{q}.
\end{split}
\end{equation*}
Therefore, raising to power $ p/q$ and integrating in $ x $ gives us 
\begin{equation*}
\begin{split}
& \int_{\mathbb{R}^{n}} \left( \int_{0}^{\infty} t^{q (2-s)} \vert \partial^{2}_{t} P_{t} *f (x) \vert^{q} dt/t \right)^{p/q}
\\
& \lesssim \int_{\mathbb{R}^{n}} \left( \sum_{j \in \mathbb{Z}} 2^{qjs} \vert \mathcal{M}(f_{j})(x) \vert^{q} \right)^{p/q} dx
\lesssim_{n,p,q} [f]^{p}_{\dot{F}^{s}_{p,q}}.
\end{split}
\end{equation*}
Where we used Lemma \ref{SteinF} and Lemma \ref{PT}.
Next, we prove the other direction. By Lemma \ref{PT} we have 
\begin{equation*}
\begin{split}
& [f]^{p}_{\dot{F}^{s}_{p,q}}  \lesssim_{n,p,q} \int_{\mathbb{R}^{n}} \left( \sum_{j} 2^{jsq} \vert f_{j}(x) \vert^{q} \right)^{p/q} dx
\\
& \approx \int_{\mathbb{R}^{n}} \left( \sum_{j} \int_{2^{-j}}^{2^{-j+1}} 2^{jsq} 2^{j} \vert f_{j}(x) \vert^{q} dt \right)^{p/q} dx
\\
& \approx \int_{\mathbb{R}^{n}} \left( \sum_{j} \int_{2^{-j}}^{2^{-j+1}} t^{-sq}  \vert f_{j}(x) \vert^{q} dt/t \right)^{p/q} dx
\\
& \lesssim \int_{\mathbb{R}^{n}} \left( \sum_{j} \int_{2^{-j}}^{2^{-j+1}} t^{-sq}  \vert \mathcal{M}( t^{2} \partial^{2}_{t} P_{t/2} *f )(x) \vert^{q} dt/t \right)^{p/q} dx
\\
& \lesssim_{n,p,q} \int_{\mathbb{R}^{n}} \left(  \int_{0}^{\infty} t^{q(2-s)}  \vert \mathcal{M}( \partial^{2}_{t} P_{t/2} *f )(x) \vert^{q} dt/t \right)^{p/q} dx.
\end{split}
\end{equation*}
Now we use Lemma \ref{CSF} to conclude. 

\end{proof}

\section{Results for $ E_{p,q}^{s}$}
\subsection{Upper bounds} Throughout this subsection for $ f \in \SSS ( \R^{n}) $ we define $ f_{j} = \phi_{j} * f $ where $ \phi_{j} $ is the function with Fourier transform given by $ \vert 2^{-j} \xi \vert^{2} e^{-2 \pi 2^{-j} \vert \xi \vert}.$ That is, $ f_{j} = 2^{-2j} \partial^{2}_{t} P_{2^{-j}} * f (x)$. We will also use the following obvious result; for $ \sigma < 0 $ and $ p > 0 $ one has
$$
\sum_{j\geq 0} 2^{j\sigma p} \approx_{p} \frac{1}{\sigma}.
$$
Similarly, if $ \sigma > 0 $ and $ p > 0 $ then
$$
\sum_{j \leq 0} 2^{j \sigma p } \approx_{p} \frac{1}{\sigma}.
$$
The main results of this subsection are the upper bounds in Theorem \ref{q12} and Theorem \ref{QA1}
\begin{prop}\label{U}
Let $ f \in \SSS(\R^{n}) $ and $ p,q \in (1,\infty)$ then one has 
\begin{enumerate}
    \item if $ q\in (1,2]$ then 
    $$
    [f]_{E_{p,q}^{s}} \lesssim_{n,q,p} (1-s)^{-1/q}[f]_{F^{s}_{p,q}};
    $$
    \item If $ q \in [2, \infty)$ then we have 
    $$
     [f]_{E_{p,q}^{s}} \lesssim_{n,q,p} (1-s)^{-1/2}[f]_{F^{s}_{p,q}};
     $$
     \item  If $ q \in [2. \infty) $ then we have 
     $$
     [f]_{E_{p,q}^{s}}  \lesssim_{n,p,q} (1-s)^{-1/q}[f]_{F^{s}_{p,2}}.
     $$
\end{enumerate}

\end{prop}
Also, in this subsection we prove Theorem \ref{SUU}.
The proof of the upper bounds will be a sequence of propositions
\begin{prop}\label{M}
Let $ p,q \in (1,\infty)$.
Then for any $ f \in \SSS(\R^{n}) $ we have 
$$
[f]^{p}_{E^{s}_{p,q}} \approx_{n,p,q} \int_{\mathbb{R}^{n}} \left( \int_{0}^{\infty} r^{q(1-s)-1} \big[ \int_{r}^{\infty} t \vert \partial^{2}_{t} P_{t} * f (x) \vert^{2} dt \big]^{q/2} dr \right)^{p/q} dx.
$$

\end{prop}
\begin{proof} 
We will use Corollary \ref{NK1} on the function $ G(x,r) = r^{(1-s)} \partial_{r}P_{r} * f(x)$. Notice that this function is smooth and 
$$
\int_{\mathbb{R}^{n}} \left( \int_{0}^{\infty} \vert G(x,r) \vert^{q} \frac{dr}{r} \right)^{p/q} dx = [f]^{p}_{E^{s}_{p,q}} <\infty.
$$
Therefore, the hypothesis of Corollary \ref{NK1} is satisfied. Hence, we get
\begin{equation*}
\begin{split}
&[f]^{p}_{E^{s}_{p,q}} 
\\
& = \int_{\mathbb{R}^{n}} \left( \int_{0}^{\infty} r^{q(1-s)-1} | \partial_{r} P_{r} *f (x) |^{q} dr \right)^{p/q} dx
\\
& = \int_{\mathbb{R}^{n}} \left( \int_{0}^{\infty} \vert G(x,r) \vert^{q} \frac{dr}{r} \right)^{p/q} dx
\\
& \approx_{n,p,q} 
\int_{\mathbb{R}^{n}} \left(\int_{0}^{\infty} \left( \int_{0}^{\infty} t\vert \partial_{t} P_{t} * G(x,r) \vert^{2} dt \right)^{q/2} \frac{dr}{r} \right)^{p/q} dx
\\
&
= \int_{\mathbb{R}^{n}} \left( r^{q(1-s)-1} \big[ \int_{0}^{\infty} t | \partial_{r} \partial_{t} P_{r+t} * f(x)|^{2} dy\big]^{q/2} dr \right)^{p/q}.
\end{split} 
\end{equation*}
Next, we make the change of variable on the inner integral to get 
 $$
 \approx_{n,p,q} \int_{\mathbb{R}^{n}} \left( \int_{0}^{\infty}  r^{q(1-s)} 
 \big[ \int_{r}^{\infty} (t-r) \vert  \partial^{2}_{t} P_{t}*f(x) \vert^{2}  dt \big]^{q/2}  dr/r \right)^{p/q} dx,
 $$
this is bounded by 
 $$
 \lesssim \int_{\mathbb{R}^{n}} \left( \int_{0}^{\infty}  r^{q(1-s)} 
 \big[ \int_{r}^{\infty} t \vert  \partial^{2}_{t} P_{t}*f(x) \vert^{2}  dt \big]^{q/2}  dr/r \right)^{p/q} dx.
 $$
 This establishes the upper bound. Now for the lower bound
\begin{equation*}
\begin{split}
& [f]^{p}_{E^{s}_{p,q}} \\
& \approx_{n,p,q}
  \int_{\mathbb{R}^{n}} \left( \int_{0}^{\infty}  r^{q(1-s)} 
 \big[ \int_{r}^{\infty} (t-r) \vert  \partial^{2}_{t} P_{t}*f(x) \vert^{2}  dt \big]^{q/2}  dr/r \right)^{p/q} dx  \\
 & \geq   \int_{\mathbb{R}^{n}} \left( \int_{0}^{\infty}  r^{q(1-s)} 
 \big[ \int_{2r}^{\infty} (t-r) \vert  \partial^{2}_{t} P_{t}*f(x) \vert^{2}  dt \big]^{q/2}  dr/r \right)^{p/q} dx.
\end{split}
\end{equation*}
Note that since $ t \in (2r,\infty)$, then we have that $ t-r \geq t/2$. Plugging this back in gives 
\begin{equation*}
\begin{split}
& [f]^{p}_{E^{s}_{p,q}} \\
& \geq  C \int_{\mathbb{R}^{n}} \left( \int_{0}^{\infty}  r^{q(1-s)} 
 \big[ \int_{2r}^{\infty} t \vert  \partial^{2}_{t} P_{t}*f(x) \vert^{2}  dt \big]^{q/2}  dr/r \right)^{p/q} dx 
 \\ & 
 \gtrsim \int_{\mathbb{R}^{n}} \left( \int_{0}^{\infty}  r^{q(1-s)} 
 \big[ \int_{r}^{\infty} t \vert  \partial^{2}_{t} P_{t}*f(x) \vert^{2}  dt \big]^{q/2}  dr/r \right)^{p/q} dx.
 \end{split}
\end{equation*}
\end{proof}

\begin{prop}\label{geq1}
Assume $ p \in (1, \infty)$ and $ q \in (1,2]$. Then we have for $ f \in \SSS(\R^{n})$ 
$$
  \int_{\mathbb{R}^{n}} \left( \int_{0}^{\infty} r^{q(1-s)-1} \big[ \int_{r}^{\infty} t \vert \partial^{2}_{t} P_{t} * f (x) \vert^{2} dt \big]^{q/2} dr \right)^{p/q} dx \lesssim_{n,p,q}  (1-s)^{-p/q} [f]^{p}_{F^{s}_{p,q}}
 $$
\end{prop}
\begin{proof}
Fix $ r > 0 $. Then note that 
$$
\big[ \int_{r}^{\infty} t \vert \partial^{2}_{t} P_{t} * f (x) \vert^{2} dt \big]^{q/2} =  \big[ \int_{r}^{\infty}  \vert t \partial^{2}_{t} P_{t} * f (x) \vert^{2} dt/t \big]^{q/2}.
$$
Then since $ q/2 \leq 1 $ we have 
$$
\big[ \int_{r}^{\infty} t \vert \partial^{2}_{t} P_{t} * f (x) \vert^{2} dt \big]^{q/2} \lesssim \int_{r}^{\infty}  \vert t \partial^{2}_{t} P_{t} * f (x) \vert^{q} dt/t.
$$
Using this we obtain
\begin{equation*}
\begin{split}
&  \int_{0}^{\infty} r^{q(1-s)-1} \big[ \int_{r}^{\infty} t \vert \partial^{2}_{t} P_{t} * f (x) \vert^{2} dt \big]^{q/2} dr
\\
& \lesssim \int_{0}^{\infty} r^{q(1-s)-1} \int_{r}^{\infty} t^{q-1} \vert \partial^{2}_{t} P_{t} * f (x) \vert^{q} dt  dr
\\
& = \int_{0}^{\infty} t^{q-1} \vert \partial^{2}_{t} P_{t} * f (x) \vert^{q} \int_{0}^{t} r^{q(1-s)-1} dr dt 
\\
& \lesssim_{q} (1-s)^{-1} \int_{0}^{\infty} t^{q(2-s)-1} \vert \partial^{2}_{t} P_{t} * f (x) \vert^{q}.
\end{split}
\end{equation*}
Raise to power $p/q$ and integrate in $ x $ to get
\begin{equation*}
\begin{split}
& \int_{\mathbb{R}^{n}} \left( \int_{0}^{\infty} r^{q(1-s)-1} \big[ \int_{r}^{\infty} t \vert \partial^{2}_{t} P_{t} * f (x) \vert^{2} dt \big]^{q/2} dr \right)^{p/q} dx
\\
& \lesssim_{q} (1-s)^{-p/q}\int_{\mathbb{R}^{n}} \left( \int_{0}^{\infty} t^{q (2-s)} \vert \partial^{2}_{t} P_{t} *f (x) \vert^{q} dt/t \right)^{p/q}  dx 
\\
& \lesssim_{n,p,q} (1-s)^{-p/q} [f]^{p}_{\dot{F}^{s}_{p,q}}.
\end{split}
\end{equation*}
Where we used Lemma \ref{DTC}. 
\end{proof}
 This gives the desired result for when $ q \in (1,2]$. Next we treat the case $ q \in [2, \infty)$. 

 \begin{prop}\label{geq2}
Let $ q \in [2, \infty)$ then we have for $ f \in C^{\infty}_{0}$
$$
 \int_{\mathbb{R}^{n}} \left( \int_{0}^{\infty} r^{q(1-s)-1} \big[ \int_{r}^{\infty} t \vert \partial^{2}_{t} P_{t} * f (x) \vert^{2} dt \big]^{q/2} dr \right)^{p/q} dx 
 \lesssim_{n,p,q}  {(1-s)^{-p/2}} [f]_{\dot{F}^{s}_{p,q}}
$$
 \end{prop}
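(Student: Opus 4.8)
The plan is to follow the scheme of the proof of Proposition~\ref{geq1}, replacing the elementary inclusion $\ell^{q/2}\hookrightarrow\ell^1$ (which is unavailable when $q\ge 2$) by the weighted Hardy inequality for tail integrals. Fix $x\in\R^n$ and abbreviate $h(t)=|\partial^2_t P_t*f(x)|$; since $f\in C^\infty_0$ all the integrals below converge. Put $g(t)=t\,h(t)^2\ge 0$, so that the inner factor in the statement equals $\int_r^\infty g(t)\,dt$.

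First I would invoke the classical Hardy inequality: for $P\ge 1$ and $\rho>0$,
\[
\int_0^\infty\Big(\int_r^\infty g(t)\,dt\Big)^{P}r^{\rho-1}\,dr\le\Big(\tfrac{P}{\rho}\Big)^{P}\int_0^\infty g(t)^{P}\,t^{P+\rho-1}\,dt ,
\]
which follows by integrating by parts and applying H\"older. I use it with $P=q/2$ (legitimate precisely because $q\ge 2$) and $\rho=q(1-s)>0$, so that the weight on the left is $r^{q(1-s)-1}$, the constant is $\big(\tfrac{q/2}{q(1-s)}\big)^{q/2}=\big(2(1-s)\big)^{-q/2}$, and, using the identity $g(t)^{q/2}t^{q/2+\rho-1}=t^{q/2}h(t)^q\cdot t^{q/2+q(1-s)-1}=t^{q(2-s)-1}h(t)^q$, this gives the pointwise-in-$x$ bound
\[
\int_0^\infty r^{q(1-s)-1}\Big(\int_r^\infty t\,|\partial^2_t P_t*f(x)|^2\,dt\Big)^{q/2}dr\le\big(2(1-s)\big)^{-q/2}\int_0^\infty t^{q(2-s)-1}|\partial^2_t P_t*f(x)|^q\,dt .
\]

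To conclude I would raise this inequality to the power $p/q$, integrate in $x$ over $\R^n$, rewrite $t^{q(2-s)-1}\,dt=t^{q(2-s)}\,dt/t$, and apply Lemma~\ref{DTC}, which identifies $\int_{\R^n}\big(\int_0^\infty t^{q(2-s)}|\partial^2_t P_t*f(x)|^q\,dt/t\big)^{p/q}dx$ with $[f]^p_{\dot F^s_{p,q}}$ up to a constant depending only on $n,p,q$. Combining the last two displays yields the asserted estimate with a factor $\lesssim_{n,p,q}(1-s)^{-p/2}$.

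The only point requiring care is the tracking of the power of $1-s$: it emerges sharply as $(1-s)^{-q/2}$ exactly because Hardy's constant $(P/\rho)^P$ specializes here to $(2(1-s))^{-q/2}$, and no further $s$-dependence enters, since raising to the power $p/q$ of a pointwise inequality and the application of Lemma~\ref{DTC} are both uniform for $s\in(0,1)$. (When $q=2$ the Hardy step degenerates to the Fubini computation $\int_0^\infty r^{2(1-s)-1}\int_r^\infty t\,h(t)^2\,dt\,dr=\tfrac{1}{2(1-s)}\int_0^\infty t^{2(2-s)-1}h(t)^2\,dt$, consistent with the $q=2$ endpoint of Proposition~\ref{geq1}.)
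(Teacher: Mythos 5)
Your proof is correct, and it reaches the paper's conclusion by a slightly different route. The paper first rescales the inner integral ($t\mapsto rt$) and then applies Minkowski's integral inequality in the exponent $q/2\ge 1$ to interchange the $r$- and $t$-integrations, after which the $r$-integral decouples and the $t$-integral over $(1,\infty)$ of $t^{-2(1-s)}\,dt/t$ produces the factor $(1-s)^{-1}$. You instead invoke the classical weighted Hardy inequality for tail integrals with exponent $P=q/2$ and weight parameter $\rho=q(1-s)$, whose sharp constant $(P/\rho)^P=(2(1-s))^{-q/2}$ delivers the same power of $(1-s)$ in one stroke. Both arguments hinge on exactly the same convexity ($q/2\ge1$) and both land on the identical pointwise-in-$x$ bound
\[
\int_0^\infty r^{q(1-s)-1}\Big(\int_r^\infty t\,|\partial^2_t P_t*f(x)|^2\,dt\Big)^{q/2}dr\lesssim_q (1-s)^{-q/2}\int_0^\infty t^{q(2-s)}|\partial^2_t P_t*f(x)|^q\,\frac{dt}{t},
\]
followed by the same appeal to Lemma~\ref{DTC}; your version packages the computation more compactly and makes the origin of the constant transparent (including the $q=2$ endpoint, where Hardy degenerates to Fubini), while the paper's version keeps the argument parallel to the one used for Proposition~\ref{geq3}. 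One cosmetic remark: since the left-hand side of the statement is the $p$-th power of the seminorm, the right-hand side should read $[f]^p_{\dot F^s_{p,q}}$, which is what your argument (correctly) produces.
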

 \begin{proof}
First note that we have 
\begin{equation}\label{EQS1}
\begin{split}
&  \int_{0}^{\infty} r^{q(1-s)-1} \big[ \int_{r}^{\infty} t \vert \partial^{2}_{t} P_{t} * f (x) \vert^{2} dt \big]^{q/2} dr
\\
& =  \int_{0}^{\infty} r^{-qs-1} \big[ \int_{r}^{\infty} (rt)^{2} \vert \partial^{2}_{t} P_{t} * f (x) \vert^{2} dt/t \big]^{q/2} dr
\\
& =  \int_{0}^{\infty} r^{-qs-1} \big[ \int_{1}^{\infty} \frac{1}{t^{2}}  \vert (rt)^{2} \partial^{2}_{t} P_{rt} * f (x) \vert^{2} dt/t \big]^{q/2} dr.
\end{split}
\end{equation}
Where for the last equality we used the change of variable $ t \to r t $ on the inner integral. 
Next, we use \eqref{EQS1} and Minkowski inequality with respect to measures $ dr/r$ and $ dt/t$ to get 
\begin{equation*}
\begin{split}
& \left( \int_{0}^{\infty} r^{q(1-s)-1} \big[ \int_{r}^{\infty} t \vert \partial^{2}_{t} P_{t} * f (x) \vert^{2} dt \big]^{q/2} dr \right)^{p/q} \\
& = \left(  \int_{0}^{\infty} r^{-qs-1} \big[ \int_{1}^{\infty} \frac{1}{t^{2}}  \vert (rt)^{2} \partial^{2}_{t} P_{rt} * f (x) \vert^{2} dt/t \big]^{q/2} dr \right)^{\frac{p}{2} \frac{2}{q}}
\\
& 
\leq \left(  \int_{1}^{\infty}  \big[ \int_{0}^{\infty} \frac{1}{t^{q}}r^{-qs} \vert (rt)^{2} \partial^{2}_{t} P_{rt} * f (x) \vert^{q} dr/r \big]^{2/q} dt/t \right)^{p/2}
\\
& 
= \left(  \int_{1}^{\infty}  \big[ \int_{0}^{\infty} \frac{1}{t^{q(1-s)}} \vert (rt)^{2-s} \partial^{2}_{t} P_{rt} * f (x) \vert^{q} dr/r \big]^{2/q} dt/t \right)^{p/2}
\\
& \approx_{q} (1-s)^{-p/2} \left( \int_{0}^{\infty} t^{q(2-s)} \vert \partial^{2}_{t} P_{t} * f(x) \vert^{q} dt/t \right)^{p/2}.
\end{split}
\end{equation*}
And we can conclude using Lemma \ref{DTC}.

 \end{proof}
Lastly, we prove the upper bound regarding $ \dot{F}^{s}_{p,2}$. 
\begin{prop}\label{geq3}
Let $ q \in [2,\infty)$ then we have 
$$
  \int_{\mathbb{R}^{n}} \left( \int_{0}^{\infty} r^{q(1-s)-1} \big[ \int_{r}^{\infty} t \vert \partial^{2}_{t} P_{t} * f (x) \vert^{2} dt \big]^{q/2} dr \right)^{p/q} dx 
 \lesssim_{n,p,q}  {(1-s)^{-p/q}} [ f ]_{F^{s}_{p,2}}
$$

\end{prop}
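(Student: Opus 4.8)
The plan is to reduce the inequality to a pointwise-in-$x$ estimate and then raise to the power $p/q$ and integrate. Fix $x$ and set $u(t):=t\,|\partial^2_t P_t\ast f(x)|^2\ge 0$ and $\Phi(r):=\int_r^\infty u(t)\,dt$, so the inner bracket of the left-hand side of Proposition \ref{geq3} is exactly $\Phi(r)^{q/2}$. For $f\in C^\infty_0$ the function $\Phi$ is $C^1$ and nonincreasing on $(0,\infty)$, with $\Phi(0^+)<\infty$ and $r^{q(1-s)}\Phi(r)^{q/2}\to 0$ as $r\to\infty$ (using the pointwise bounds on $\partial^2_t P_t$ established in the proof of Lemma \ref{PKK}, e.g. $|\partial^2_t P_t\ast f(x)|\lesssim_n t^{-n-2}\|f\|_{L^1}$, together with $\partial^2_t P_t\ast f=-P_t\ast\Delta f$ near $t=0$). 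Also put $G(x)^2:=\int_0^\infty t^{3-2s}|\partial^2_t P_t\ast f(x)|^2\,dt=\int_0^\infty t^{2-2s}u(t)\,dt$; since $t^{3-2s}=t^{2(2-s)}/t$, Lemma \ref{DTC} (applied with secondary exponent $2$ in place of $q$) gives $\int_{\R^n}G(x)^p\,dx\approx_{n,p,q}[f]^p_{\dot F^s_{p,2}}$. Hence it suffices to prove the pointwise estimate
$$
\int_0^\infty r^{q(1-s)-1}\,\Phi(r)^{q/2}\,dr\;\le\;\frac{1}{2(1-s)}\,G(x)^q .
$$

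To prove this I would integrate by parts in $r$. With the antiderivative $r^{q(1-s)}/(q(1-s))$ of $r^{q(1-s)-1}$ and $\frac{d}{dr}\Phi(r)^{q/2}=-\tfrac{q}{2}\,\Phi(r)^{q/2-1}u(r)$, the boundary terms vanish by the previous paragraph and one obtains
$$
\int_0^\infty r^{q(1-s)-1}\Phi(r)^{q/2}\,dr\;=\;\frac{1}{2(1-s)}\int_0^\infty r^{q(1-s)}\,\Phi(r)^{q/2-1}\,u(r)\,dr .
$$
This is the only place $q\ge 2$ is used: since $q/2-1\ge 0$ one may insert into the single factor $\Phi(r)^{q/2-1}$ the crude bound $\Phi(r)=\int_r^\infty t^{-(2-2s)}\,t^{2-2s}u(t)\,dt\le r^{2s-2}\int_0^\infty t^{2-2s}u(t)\,dt=r^{2s-2}G(x)^2$ (valid because $2-2s>0$). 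A bookkeeping of exponents shows $q(1-s)+(2s-2)(q/2-1)=2-2s$, so the right-hand side becomes $\frac{1}{2(1-s)}G(x)^{q-2}\int_0^\infty r^{2-2s}u(r)\,dr=\frac{1}{2(1-s)}G(x)^{q-2}G(x)^2=\frac{1}{2(1-s)}G(x)^q$, which is the claim. Raising to the power $p/q$, integrating in $x$, and invoking $\int_{\R^n}G^p\approx[f]^p_{\dot F^s_{p,2}}$ completes the proof of Proposition \ref{geq3}.

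The one subtlety to get right is the exponent $(1-s)^{-p/q}$, which is \emph{smaller} than the $(1-s)^{-p/2}$ of Proposition \ref{geq2} and so genuinely needs $q\ge 2$: applying $\Phi(r)\le r^{2s-2}G(x)^2$ to the whole integrand rather than to one factor would leave the divergent integral $\int_0^\infty dr/r$, so it is essential to peel off exactly one copy of $\Phi$ first, which is precisely what the integration by parts accomplishes. An equivalent, more hands-on route is to discretize $r\approx 2^{-k}$, $t\approx 2^{-j}$, use $\int_{2^{-j}}^{2^{-j+1}}t|\partial^2_t P_t\ast f|^2\,dt\approx 2^{2j(1-s)}\int_{2^{-j}}^{2^{-j+1}}t^{3-2s}|\partial^2_t P_t\ast f|^2\,dt$ (with absolute constants, since $t^{2-2s}\approx 2^{-2j(1-s)}$ on that dyadic block), and observe that $\sum_k 2^{-kq(1-s)}\big(\sum_{j\le k}2^{2j(1-s)}a_j\big)^{q/2}$ equals $\|w\ast a\|_{\ell^{q/2}}^{q/2}$ for $w_i=2^{-2i(1-s)}\mathbf{1}_{\{i\ge 0\}}$ and $a_j=\int_{2^{-j}}^{2^{-j+1}}t^{3-2s}|\partial^2_t P_t\ast f(x)|^2\,dt$; Young's inequality $\ell^1\ast\ell^{q/2}\hookrightarrow\ell^{q/2}$ (legitimate since $q/2\ge 1$) then produces exactly $\|w\|_{\ell^{q/2}}^{q/2}=(1-2^{-q(1-s)})^{-1}\lesssim_q(1-s)^{-1}$, with the rest absorbed into $G(x)^q=(\sum_j a_j)^{q/2}$. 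Either way, the main obstacle is purely the exponent bookkeeping needed to confirm the clean $(1-s)^{-1}$ and nothing worse.
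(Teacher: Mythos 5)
Your argument is correct and is essentially the paper's proof in a different packaging: both reduce to the pointwise-in-$x$ bound $\int_0^\infty r^{q(1-s)-1}\Phi(r)^{q/2}\,dr\lesssim (1-s)^{-1}G(x)^q$ by exploiting $q/2-1\ge 0$ to peel off $\Phi^{q/2-1}$ and bound it by $G(x)^{q-2}$ (with the appropriate $r$-weight), with the remaining linear-in-$\Phi$ integral supplying the single factor $\tfrac{1}{2(1-s)}$, and both conclude via Lemma \ref{DTC} with secondary exponent $2$. The only difference is cosmetic: you extract the $\tfrac{1}{2(1-s)}$ by integration by parts in $r$, whereas the paper does so by the rescaling $t\mapsto rt$ followed by Fubini.
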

\begin{proof}

Using the same argument in \eqref{EQS1} we get
\begin{equation}\label{EQS2}
\begin{split}
&   \int_{0}^{\infty} r^{q(1-s)-1} \big[ \int_{r}^{\infty} t \vert \partial^{2}_{t} P_{t} * f (x) \vert^{2} dt \big]^{q/2} dr 
\\
&
=\int_{0}^{\infty} r^{-qs-1} \big[ \int_{1}^{\infty} \frac{1}{t^{2}}  \vert (rt)^{2} \partial^{2}_{t} P_{rt} * f (x) \vert^{2} \frac{dt}{t} \big]^{q/2} dr
\\
& = \int_{0}^{\infty}  \big[ \int_{1}^{\infty} \frac{1}{(r^{s}t)^{2}}  \vert (rt)^{2} \partial^{2}_{t} P_{rt} * f (x) \vert^{2} \frac{dt}{t}\big]^{q/2} \frac{dr}{r}.
\end{split}
\end{equation}
Note that 
\begin{equation*}
\begin{split}
& \int_{1}^{\infty} \frac{1}{(r^{s}t)^{2}}  \vert (rt)^{2} \partial^{2}_{t} P_{rt} * f (x) \vert^{2} dt/t 
\\
&
\leq \int_{1}^{\infty} \frac{1}{(rt)^{2s}}  \vert (rt)^{2} \partial^{2}_{t} P_{rt} * f (x) \vert^{2} dt/t 
\\
& 
\leq
\int_{0}^{\infty} t^{2(2-s)} \vert \partial^{2}_{t} P_{t} * f(x) \vert^{2} dt/t.
\end{split}
\end{equation*}
Using this and \eqref{EQS2} we obtain 
\begin{equation}\label{EQS3}
\begin{split}
& \int_{0}^{\infty} r^{q(1-s)-1} \left( \int_{r}^{\infty} t \vert \partial^{2}_{t} P_{t} * f (x) \vert^{2} dt \right)^{q/2} dr
\\
& 
= \int_{0}^{\infty}  \left( \int_{1}^{\infty} \frac{1}{(r^{s}t)^{2}}  \vert (rt)^{2} \partial^{2}_{t} P_{rt} * f (x) \vert^{2} \frac{dt}{t} \right)^{q/2} \frac{dr}{r}
\\
& = \int_{0}^{\infty} \int_{1}^{\infty} \frac{1}{(r^{s}t)^{2}}  \vert (rt)^{2} \partial^{2}_{t} P_{rt} * f (x) \vert^{2} \frac{dt}{t}  \frac{dr}{r} \left(
 \int_{1}^{\infty} \frac{1}{(r^{s}t)^{2}}  \vert (rt)^{2} \partial^{2}_{t} P_{rt} * f (x) \vert^{2} \frac{dt}{t} \right)^{\frac{q}{2} -1}
 \\
 & 
 \leq  \int_{0}^{\infty} \int_{1}^{\infty} \frac{1}{(r^{s}t)^{2}}  \vert (rt)^{2} \partial^{2}_{t} P_{rt} * f (x) \vert^{2} \frac{dt}{t}  \frac{dr}{r} \left( \int_{0}^{\infty} t^{2(2-s)} \vert \partial^{2}_{t} P_{t} * f(x) \vert^{2} dt/t
  \right)^{\frac{q}{2} -1}.
\end{split}
\end{equation}
And note that we have
\begin{equation*}
\begin{split}
& \int_{0}^{\infty} \int_{1}^{\infty} \frac{1}{(r^{s}t)^{2}}  \vert (rt)^{2} \partial^{2}_{t} P_{rt} * f (x) \vert^{2} \frac{dt}{t}  \frac{dr}{r}
\\
& = \int_{0}^{\infty} \int_{1}^{\infty} \frac{t^{2(s-1)}}{(rt)^{2s}}  \vert (rt)^{2} \partial^{2}_{t} P_{rt} * f (x) \vert^{2} \frac{dt}{t}  \frac{dr}{r}
\\
& \lesssim (1-s)^{-1} \int_{0}^{\infty} t^{2(2-s)}  \vert \partial^{2}_{t} P_{t} * f(x) \vert^{2} dt/t.
\end{split}
\end{equation*}
Using this and \eqref{EQS3} we obtain 
\begin{equation*}
\begin{split}
& \int_{0}^{\infty} r^{q(1-s)-1} \left( \int_{r}^{\infty} t \vert \partial^{2}_{t} P_{t} * f (x) \vert^{2} dt \right)^{q/2} dr
\\
& \lesssim (1-s)^{-1} \left( \int_{0}^{\infty} t^{2(2-s)}  \vert \partial^{2}_{t} P_{t} * f(x) \vert^{2} dt/t \right)^{q/2}.
\end{split}
\end{equation*}
Raising both sides to power $ p/q $ and integrating in $ x $ gives the result upon using Lemma \ref{DTC}.
\end{proof}
Now Propisition \ref{U} follows immediately from Proposition \ref{geq1}, Proposition \ref{geq2}, and Proposition \ref{geq3}.

Next, we prove the Sobolev estimates in Theorem \ref{SUU}. We do so as follows; By Proposition \ref{M} it is enough to estimate 
$$
\int_{\mathbb{R}^{n}} \left( \int_{0}^{\infty} r^{q(1-s)-1} \big[ \int_{r}^{\infty} t \vert \partial^{2}_{t} P_{t} * f (x) \vert^{2} dt \big]^{q/2} dr \right)^{p/q} dx.
$$
And note if we partition the middle integral for when $2^{-k} \leq  r \leq 2^{-k +1 }$  and partition the inner integral for when $2^{-j} \leq t \leq 2^{-j+1}$ and use Lemma \ref{Psn} then we get 

\begin{equation}\label{EQS4}
\begin{split}
& \int_{\mathbb{R}^{n}} \left( \int_{0}^{\infty} r^{q(1-s)-1} \big[ \int_{r}^{\infty} t \vert \partial^{2}_{t} P_{t} * f (x) \vert^{2} dt \big]^{q/2} dr \right)^{p/q} dx
\\
& 
\lesssim  \int_{\mathbb{R}^{n}} \left( \sum_{k \in \mathbb{Z}} 2^{qk(s-1)} \big[ \sum_{j < k} 2^{2j} \vert \mathcal{M}(f_{j}) \vert^{2} \big]^{q/2} \right)^{p/q},
\end{split}
\end{equation}

where $ f_{j}(x) = 2^{-2j} \partial^{2}_{t} P_{2^{-j}} * f(x) $. We prove Theorem \ref{SUU} by estimating the right hand side of \eqref{EQS4}. We split the sum when $ k > 0$ and when $k \leq 0$ and estimate each term separately.
\begin{prop}
let $ p,q \in (1,\infty)$ and $ s \in (0,1)$ and $ \sigma \in (s,1)$. Then we have 
$$
\int_{\mathbb{R}^{n}} \left( \sum_{k >0} 2^{qk(s-1)} \big[ \sum_{j < k} 2^{2j} \vert \mathcal{M}(f_{j})(x) \vert^{2} \big]^{q/2} \right)^{p/q} \lesssim_{p,q,n} (\sigma -s )^{-p/q} [f]^{p}_{\dot{F}^{\sigma}_{p,2}}
$$

\end{prop}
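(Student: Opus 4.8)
The plan is to collapse the double sum to a single Littlewood--Paley square function by one crude but decisive pointwise estimate, and then invoke the vector-valued maximal inequality (Lemma \ref{SteinF}) together with the Poisson characterization of $\dot F^\sigma_{p,2}$ (Lemma \ref{PT}). The starting point is the algebraic splitting $2^{2j}=2^{2j\sigma}\,2^{2j(1-\sigma)}$. Since $\sigma<1$, i.e. $1-\sigma>0$, and $j<k$, we have $2^{2j(1-\sigma)}\le 2^{2k(1-\sigma)}$, so pointwise
\[
\sum_{j<k}2^{2j}|\mathcal{M}(f_j)(x)|^2 \;\le\; 2^{2k(1-\sigma)}\sum_{j<k}2^{2j\sigma}|\mathcal{M}(f_j)(x)|^2 \;\le\; 2^{2k(1-\sigma)}A(x)^2,
\]
where $A(x)^2:=\sum_{j\in\mathbb Z}2^{2j\sigma}|\mathcal{M}(f_j)(x)|^2$ is the full square function with $\sigma$ derivatives.

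Feeding this into the outer sum, the exponents add up favourably: $2^{qk(s-1)}\cdot 2^{qk(1-\sigma)}=2^{qk(s-\sigma)}$ with $s-\sigma<0$, so the outer sum is a genuine geometric series and the ``obvious result'' recalled at the start of the subsection gives
\[
\sum_{k>0}2^{qk(s-1)}\Big[\sum_{j<k}2^{2j}|\mathcal{M}(f_j)(x)|^2\Big]^{q/2} \;\le\; A(x)^q\sum_{k>0}2^{qk(s-\sigma)} \;\lesssim_{q}\; (\sigma-s)^{-1}A(x)^q.
\]
Raising to the power $p/q$, integrating in $x$, and applying Lemma \ref{SteinF} to the sequence $\{2^{j\sigma}f_j\}$ with inner exponent $2$, we bound $\int_{\mathbb R^n}A(x)^p\,dx$ by $\int_{\mathbb R^n}\big(\sum_j 2^{2j\sigma}|f_j(x)|^2\big)^{p/2}dx$. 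Since $f_j=\phi_j*f$ and $\sigma\in(s,1)\subset(0,1)$, Lemma \ref{PT} with second index $2$ and smoothness $\sigma$ identifies this with $[f]^p_{\dot F^\sigma_{p,2}}$ up to a constant depending only on $n,p$ (and crucially \emph{not} on $\sigma$, by the $s$-uniformity emphasized after Lemma \ref{PT}). Collecting the chain of estimates yields $(\sigma-s)^{-p/q}[f]^p_{\dot F^\sigma_{p,2}}$, as claimed.

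I do not expect a genuine obstacle in this step: the argument works precisely because the hypothesis $s<\sigma<1$ makes the inner sum summable ``upwards'' (dominated by its top frequency, which is the entire $\dot F^\sigma_{p,2}$ square function) and the outer sum summable ``downwards'' (geometric with ratio $2^{q(s-\sigma)}<1$). The only point requiring care is the bookkeeping of the $(s,\sigma)$-dependence of the constants: one must confirm that the passage through Lemma \ref{SteinF} and Lemma \ref{PT} introduces only $\sigma$-independent factors, so that the sole singular quantity is the explicit $(\sigma-s)^{-p/q}$ produced by the geometric sum in Step~2.
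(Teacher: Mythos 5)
Your argument is correct and is essentially the paper's own proof in slightly different clothing: the paper first reindexes $j\mapsto j+k$ and then uses $2^{j}\le 2^{j\sigma}$ for $j<0$, which is exactly your inequality $2^{2j(1-\sigma)}\le 2^{2k(1-\sigma)}$ for $j<k$, after which both arguments bound the inner sum by the full square function $A(x)^2$, sum the geometric series $\sum_{k>0}2^{qk(s-\sigma)}\approx_q(\sigma-s)^{-1}$, and finish with the Fefferman--Stein inequality (Lemma \ref{SteinF}) and the $\sigma$-uniform Poisson characterization (Lemma \ref{PT}). Your bookkeeping of where the sole singular factor $(\sigma-s)^{-p/q}$ arises matches the paper.
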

\begin{proof}
\begin{equation*}
\begin{split}
& \int_{\mathbb{R}^{n}} \left( \sum_{k >0} 2^{qk(s-1)} \big[ \sum_{j < k} 2^{2j} \vert \mathcal{M}(f_{j})(x) \vert^{2} \big]^{q/2} \right)^{p/q}
\\
& = \int_{\mathbb{R}^{n}} \left( \sum_{k >0} 2^{qks} \big[ \sum_{j < k} \vert 2^{j-k}  \mathcal{M}(f_{j})(x) \vert^{2} \big]^{q/2} \right)^{p/q}.
\end{split}
\end{equation*}
Change the index of the summation for the inner sum
\begin{equation*}
\begin{split}
& = \int_{\mathbb{R}^{n}} \left( \sum_{k >0} 2^{qks} \big[ \sum_{j < 0} \vert 2^{j}  \mathcal{M}(f_{j+k}) \vert^{2} \big]^{q/2} \right)^{p/q}   
\\
& \leq \int_{\mathbb{R}^{n}} \left( \sum_{k >0} 2^{qks} \big[ \sum_{j < 0} \vert 2^{j\sigma} \mathcal{M} (f_{j+k})(x) \vert^{2} \big]^{q/2} \right)^{p/q}  
\\
& 
= \int_{\mathbb{R}^{n}} \left( \sum_{k >0} 2^{qk(-\sigma +s)} \big[ \sum_{j < 0} \vert 2^{j\sigma} 2^{k \sigma}  \mathcal{M}(f_{j+k}) \vert^{2} \big]^{q/2} \right)^{p/q}  
\\
& 
\approx_{p,q} \frac{ 1}{ (\sigma - s)^{p/q}} \int_{\mathbb{R}^{n}}\big[ \sum_{j < 0} \vert 2^{j\sigma} 2^{k \sigma}  \mathcal{M}(f_{j+k}) \vert^{2} \big]^{p/2}
\\
& \lesssim_{n,p,q} (\sigma -s)^{-p/q} [f]_{\dot{F}^{\sigma}_{p,2}}^{p}.
\end{split}
\end{equation*}
\end{proof}
Next, the lower derivative will be obtained when we sum over negative $ k$. 
\begin{prop}
let $ p,q \in (1,\infty)$ and $ s \in (0,1)$ and $ v \in (0,s)$ then we have
$$
\int_{\mathbb{R}^{n}} \left( \sum_{k \leq 0} 2^{qk(s-1)} \big[ \sum_{j < k} 2^{2j} \vert \mathcal{M}(f_{j})(x) \vert^{2} \big]^{q/2} \right)^{p/q} \lesssim_{n,p,q} (s-v)^{-p/q} [f]_{\dot{F}^{v}_{p,2}}^{p}.
$$
\end{prop}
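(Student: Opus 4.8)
The idea is to extract all of the ``smoothness gain'' hidden in the factor $2^{2j}$ of the inner sum and to trade the excess against the decaying weight $2^{qk(s-1)}$. Since the outer sum runs over $k\le 0$ and the inner one over $j<k$, we always have $j<k\le 0$, so $1-v>0$ will let us estimate monotonically. Concretely, I would first write $2^{2j}=2^{2jv}\,2^{2j(1-v)}$ and use $j<k$ together with $1-v>0$ to obtain, pointwise in $x$,
\[
\sum_{j<k}2^{2j}\,|\mathcal{M}(f_j)(x)|^2\;\le\;2^{2k(1-v)}\sum_{j<k}2^{2jv}\,|\mathcal{M}(f_j)(x)|^2\;\le\;2^{2k(1-v)}\sum_{j\in\mathbb{Z}}2^{2jv}\,|\mathcal{M}(f_j)(x)|^2 .
\]
Raising to the power $q/2$ and multiplying by the weight gives, for each $k\le 0$,
\[
2^{qk(s-1)}\Big[\sum_{j<k}2^{2j}|\mathcal{M}(f_j)(x)|^2\Big]^{q/2}\;\le\;2^{qk(s-v)}\Big[\sum_{j\in\mathbb{Z}}2^{2jv}|\mathcal{M}(f_j)(x)|^2\Big]^{q/2}.
\]

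Next I would sum the geometric series in $k$: since $s-v>0$ and $k$ ranges over the non-positive integers, the exponents $qk(s-v)$ are non-positive, and by the elementary bound recalled at the beginning of Section 3 one has $\sum_{k\le 0}2^{qk(s-v)}\approx_q (s-v)^{-1}$. Therefore
\[
\sum_{k\le 0}2^{qk(s-1)}\Big[\sum_{j<k}2^{2j}|\mathcal{M}(f_j)(x)|^2\Big]^{q/2}\;\lesssim_q\;(s-v)^{-1}\Big[\sum_{j\in\mathbb{Z}}2^{2jv}|\mathcal{M}(f_j)(x)|^2\Big]^{q/2}.
\]
Raising to the power $p/q$, integrating in $x$, and noting $\mathcal{M}(2^{jv}f_j)=2^{jv}\mathcal{M}(f_j)$, the right-hand side becomes $(s-v)^{-p/q}\int_{\mathbb{R}^n}\big(\sum_j|\mathcal{M}(2^{jv}f_j)(x)|^2\big)^{p/2}dx$. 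I then apply the Fefferman--Stein vector-valued maximal inequality (Lemma \ref{SteinF}) to the sequence $\{2^{jv}f_j\}$ to replace $\mathcal{M}(2^{jv}f_j)$ by $2^{jv}f_j$ up to a constant $C(n,p)$, and finally invoke Lemma \ref{PT} with secondary exponent $2$ and smoothness parameter $v\in[0,1]$ (recall $f_j=\phi_j*f$), which gives $\int_{\mathbb{R}^n}\big(\sum_j 2^{2jv}|f_j(x)|^2\big)^{p/2}dx\approx_{n,p}[f]^p_{\dot F^v_{p,2}}$ with an $s$-independent constant. Chaining these bounds yields the claim with constant $C(n,p,q)$.

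There is no genuine analytic obstacle here; the only point demanding care is that the geometric sum in $k$ must be summed in the correct direction, namely over $k\le 0$ where $2^{qk(s-v)}$ decays --- this is precisely why the splitting $2^{2j}=2^{2jv}2^{2j(1-v)}$ was arranged so that the residual weight carries the positive exponent $s-v$. One should also record where the hypotheses are used: $v<1$ is needed for the monotonicity of the first display, and $v<s$ for the summability of the second, both of which hold since $v\in(0,s)\subset(0,1)$.
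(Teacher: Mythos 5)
Your argument is correct and is essentially the paper's own proof: the splitting $2^{2j}=2^{2jv}2^{2j(1-v)}$ with $j<k\le 0$ is exactly the paper's reindexing $j\mapsto j+k$ followed by the bound $2^{j}\le 2^{vj}$ for $j<0$, and both then sum the geometric series $\sum_{k\le 0}2^{qk(s-v)}\approx_q(s-v)^{-1}$ before invoking Fefferman--Stein and Lemma \ref{PT}. No issues.
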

\begin{proof}
Upon changing the index of summation, as in the previous proof, we have
\begin{equation*}
\begin{split}
& \int_{\mathbb{R}^{n}} \left( \sum_{k \leq 0} 2^{qks} \big[ \sum_{j < 0} \vert 2^{j}  \mathcal{M}(f_{j+k})(x) \vert^{2} \big]^{q/2} \right)^{p/q} 
\\
& \leq \int_{\mathbb{R}^{n}} \left( \sum_{k \leq 0} 2^{qks} \big[ \sum_{j < 0} \vert 2^{vj} \mathcal{M}( f_{j+k})(x) \vert^{2} \big]^{q/2} \right)^{p/q} 
\\
& = \int_{\mathbb{R}^{n}} \left( \sum_{k \leq 0} 2^{qk(s-v)} \big[ \sum_{j < 0} \vert 2^{vj+vk}  \mathcal{M}(f_{j+k})(x) \vert^{2} \big]^{q/2} \right)^{p/q} 
\\
& \lesssim (s-v)^{-p/q} \int_{\mathbb{R}^{n}} \left( \sum_{j \in \mathbb{Z}} \vert 2^{j v} \mathcal{M}(f_{j})(x) \vert^{2} \right)^{p/2} dx. 
\end{split}
\end{equation*}
\end{proof}
Now the proof of Theorem \ref{SUU} is a consequence of the previous two Propositions and Proposition \ref{M} and  \eqref{EQS4}.

\subsection{lower bounds}
Here we prove the lower bounds in Theorem \ref{q12} and \ref{QA1}
\begin{prop}\label{L}
let $ p ,q \in (1,\infty)$ then we have 
\begin{enumerate}
    \item if $ q \in (1,2]$ then 
    $$
    [f]_{E^{s}_{p,q}} \gtrsim_{n,p,q} (1-s)^{-1/2} [f]_{F^{s}_{p,q}}
    $$
    \item  if $ q \in [2, \infty)$ then 
    $$
    [f]_{E^{s}_{p,q}} \gtrsim_{n,p,q} (1-s)^{-1/q} [f]_{F^{s}_{p,q}} 
    $$
    \item  if $ q \in (1,2]$ then
    $$
    [f]_{E^{s}_{p,q}} \gtrsim_{n,p,q}  (1-s)^{-1/q} [f ]_{F^{s}_{p,2}}.
    $$
\end{enumerate}
\end{prop}
We also prove Theorem \ref{LowerS}.

We will use a duality argument to prove the above proposition. However, we would like to mention that one can use the lower bound proven in Proposition \ref{M} to get a more direct proof for Proposition \ref{L}. However, we prefer the duality argument since it is what gives the lower bounds for the Sobolev estimates in Theorem \ref{LowerS}, which we will use to conclude a global version of BBM2 theorem for $E^{s}_{p,q}.$

\begin{prop}\label{DualityE}
Fix $ q, p \in (1,\infty) $ and $ s \in (0,1)$. Then for $ f \in \SSS(\R^{n})$ . We have  
$$
[f]_{F^{s}_{p,q}} \lesssim_{n,p,q} \sup_{g} (1-s) [f]_{E^{s}_{p,q}}\mbox{  } \left( \int_{\mathbb{R^{n}}}\left( \int_{0}^{\infty} r^{q'(1-s)} \left( \int_{r}^{\infty} \vert \frac{g(x,t)}{t} \vert^{2} \frac{dt}{t} \right)^{q'/2} \frac{dr}{r} \right)^{p'/q'} dx \right)^{1/p'}.
$$
Where the sup is taken over all $g  \in L_{loc}^{1}((\mathbb{R}^{n} \times (0,\infty))$ with
$$
\int_{\mathbb{R}^{n}} \left( \int_{0}^{\infty} \vert t^{-s} g(x,t) \vert^{q'} dt/t \right)^{\frac{p'}{q'}} dx \leq 1.
$$

\end{prop}
\begin{proof}
First, we have by Lemma \ref{DTC} that for $ f \in C^{\infty}_{0}$
$$
[f]_{\dot{F}^{s}_{p,q}} \lesssim_{n,p,q} \left( \int_{\mathbb{R}^{n}} \left( \int_{0}^{\infty} t^{q (2-s)} \vert \partial^{2}_{t} P_{t} *f (x) \vert^{q} dt/t \right)^{p/q}  dx \right)^{1/p}.
$$
Therefore, it is enough to estimate the right hand side. 
Put $ F(x,t) = t^{2} \partial^{2}_{t} P_{t} * f (x)$. Then we think of $ t^{-s} F$ as an element of the Banach space $L^{p,q}$ given by 
$$
L^{p,q} = \{ G \in L^{1}_{loc}( \mathbb{R}^{n} \times (0,\infty) ): \left( \int_{\mathbb{R}^{n}} \left( \int_{0}^{\infty} \vert G(x,t) \vert^{q} \frac{dt}{t} \right)^{p/q} dx\right)^{1/p} < \infty  \}.
$$ 
Therefore, 
by duality, 
\begin{equation}\label{EQS5}
\begin{split}
& \left( \int_{\mathbb{R}^{n}} \left( \int_{0}^{\infty} t^{q (2-s)} \vert \partial^{2}_{t} P_{t} *f (x) \vert^{q} dt/t \right)^{p/q}  dx \right)^{1/p}
\\
& = \left( \int_{\mathbb{R}^{n}} \left( \int_{0}^{\infty} \vert t^{-s} F(x,t) \vert^{q} dt/t \right)^{p/q} dx \right)^{1/p}
\\
& = \sup_{h} \int_{\mathbb{R}^{n}} \int_{0}^{\infty} t^{-s} F(x,t) h(x,t) dt/t.
\end{split}
\end{equation}
Where the sup is taken over all $ h  \in L^{p',q'}$ with 
$$
\int_{\mathbb{R}^{n}} \left( \int_{0}^{\infty} \vert h(x,t) \vert^{q'} dt/t \right)^{\frac{p'}{q'}} \leq 1.
$$
Fix such $ h $. Then define $ g (x,t) = t^{s} h(x,t)$. Then we have 
\begin{equation}\label{EQS6}
\begin{split}
& \int_{\mathbb{R}^{n}} \int_{0}^{\infty} t^{-s} F(x,t) h(x,t) \frac{dt}{t} dx
\\
&
= \frac{2(1-s)}{2(1-s)} \int_{\mathbb{R}^{n}} \int_{0}^{\infty} t^{-2s} F(x,t) g(x,t) \frac{dt}{t} dx
\\
& = 2 (1-s) \int_{\mathbb{R}^{n}} \int_{0}^{\infty} F(x,t) g(x,t) t^{-2} \left( \int_{0}^{t} r^{2(1-s)} \frac{dr}{r} \right) \frac{dt}{t} dx
\\
& = 2 (1-s)\int_{\mathbb{R}^{n}} \int_{0}^{\infty} r^{2(1-s)} \int_{r}^{\infty} F(x,t) g(x,t) t^{-2} \frac{dt}{t} \frac{dr}{r} dx.
\end{split}
\end{equation}
Now we apply H\"{o}lder inequality with respect to the measure $ dt/t$ and then H\"{o}lder again with respect to $ dr/r$ and lastly H\"{o}lder with respect the measure $dx$, then we obtain 
\begin{equation}\label{EQS7}
    \begin{split}
    & 2 (1-s)\int_{\mathbb{R}^{n}} \int_{0}^{\infty} r^{2(1-s)} \int_{r}^{\infty} F(x,t) g(x,t) t^{-2} \frac{dt}{t} \frac{dr}{r} dx
   \\
   &
   \leq 2 (1-s) \left( \int_{\mathbb{R^{n}}}\left( \int_{0}^{\infty} r^{q(1-s)} \left( \int_{r}^{\infty} \vert \frac{F(x,t)}{t} \vert^{2} \frac{dt}{t} \right)^{q/2} \frac{dr}{r} \right)^{p/q} dx \right)^{1/p}
   \\
   & . \left( \int_{\mathbb{R^{n}}}\left( \int_{0}^{\infty} r^{q'(1-s)} \left( \int_{r}^{\infty} \vert \frac{g(x,t)}{t} \vert^{2} \frac{dt}{t} \right)^{q'/2} \frac{dr}{r} \right)^{p'/q'} dx \right)^{1/p'}.
   \end{split}
\end{equation}
But by definition $ F(x,t) = t^{2} \partial^{2}_{t} P_{t} * f(x)$. Therefore, the first factor on the right hand side of \eqref{EQS7} simplifies to 
$$
\left( \int_{\mathbb{R}^{n}} \left( \int_{0}^{\infty} r^{q(1-s)-1} \big[ \int_{r}^{\infty} t \vert \partial^{2}_{t} P_{t} * f (x) \vert^{2} dt \big]^{q/2} dr \right)^{p/q} dx\right)^{1/p}.
 $$
 But this is comparable to $ [f]_{E^{s}_{p,q}}$ by Proposition \ref{M}. Therefore, \eqref{EQS7} translates to 
 \begin{equation}\label{EQS8}
    \begin{split}
    & 2 (1-s)\int_{\mathbb{R}^{n}} \int_{0}^{\infty} r^{2(1-s)} \int_{r}^{\infty} F(x,t) g(x,t) t^{-2} \frac{dt}{t} \frac{dr}{r} dx
   \\
   &
   \lesssim_{n,p,q} (1-s) [f]_{E^{s}_{p,q}} \mbox{ .} \left( \int_{\mathbb{R^{n}}}\left( \int_{0}^{\infty} r^{q'(1-s)} \left( \int_{r}^{\infty} \vert \frac{g(x,t)}{t} \vert^{2} \frac{dt}{t} \right)^{q'/2} \frac{dr}{r} \right)^{p'/q'} dx \right)^{1/p'}.
   \end{split}
 \end{equation}
It follows from \eqref{EQS6} that we have for any $ h \in L^{p',q'}$ having norm $ 1 $ that 
\begin{equation*}
\begin{split}
& \int_{\mathbb{R}^{n}} \int_{0}^{\infty} t^{-s} F(x,t) h(x,t) \frac{dt}{t} dx
\\
& \lesssim_{n,p,q} (1-s) [f]_{E^{s}_{p,q}} \mbox{ .} \left( \int_{\mathbb{R^{n}}}\left( \int_{0}^{\infty} r^{q'(1-s)} \left( \int_{r}^{\infty} \vert \frac{g(x,t)}{t} \vert^{2} \frac{dt}{t} \right)^{q'/2} \frac{dr}{r} \right)^{p'/q'} dx \right)^{1/p'}.
\end{split}
\end{equation*}
Taking the sup over $ h $, and hence $ g $, and using \eqref{EQS5} above gives the result.

\end{proof}

The lower bounds in Proposition \ref{L} are a consequence of Proposition \ref{DualityE}.
\begin{proof}[ Proof of Proposition \ref{L}]
By Proposition \ref{DualityE} it is enough to estimate 
$$
(1-s) [f]_{E^{s}_{p,q}} \left( \int_{\mathbb{R^{n}}}\left( \int_{0}^{\infty} r^{q'(1-s)} \left( \int_{r}^{\infty} \vert \frac{g(x,t)}{t} \vert^{2} \frac{dt}{t} \right)^{q'/2} \frac{dr}{r} \right)^{p'/q'} dx \right)^{1/p'}
$$
for any $ g \in L^{1}_{loc} ( \mathbb{R}^{n} \times (0,\infty))$ with
$$
\int_{\mathbb{R}^{n}} \left( \int_{0}^{\infty} \vert t^{-s} g(x,t) \vert^{q'} dt/t \right)^{\frac{p'}{q'}} dx \leq 1.
$$
First we prove (1). To that end, let $ q \in (1,2] $ then $ q' \in [2,\infty)$. Then we claim that
$$
\left( \int_{\mathbb{R^{n}}}\left( \int_{0}^{\infty} r^{q'(1-s)} \left( \int_{r}^{\infty} \vert \frac{g(x,t)}{t} \vert^{2} \frac{dt}{t} \right)^{q'/2} \frac{dr}{r} \right)^{p'/q'} dx \right)^{1/p'} \lesssim_{n,p,q} (1-s)^{-1/q'},
$$
for any $ g \in L^{1}_{loc}(\mathbb{R}^{n} \times (0,\infty))$ satisfying 
$$
\int_{\mathbb{R}^{n}} \left( \int_{0}^{\infty} \vert t^{-s} g(x,t) \vert^{q'} dt/t \right)^{\frac{p'}{q'}} dx \leq 1.
$$
Fix such $ g $, then arguing as in Proposition \ref{geq2} we get 
\begin{equation*}
\begin{split}
&  \left( \int_{0}^{\infty} r^{q'(1-s)} \left( \int_{r}^{\infty} \vert \frac{g(x,t)}{t} \vert^{2} \frac{dt}{t} \right)^{q'/2} \frac{dr}{r} \right)^{p'/q'} 
\\
& 
= \left( \int_{0}^{\infty} \left( \int_{1}^{\infty} \vert \frac{g(x,rt)}{t} \vert^{2} r^{-2s} \frac{dt}{t} \right)^{q'/2} \frac{dr}{r} \right)^{\frac{p'}{q'}}
\\
& 
= \left( \int_{0}^{\infty} \left( \int_{1}^{\infty} \vert g(x,rt)(rt)^{-s} \vert^{2} (t)^{2(s-1)}\frac{dt}{t} \right)^{q'/2} \frac{dr}{r} \right)^{\frac{p'}{2} \frac{2}{q'}}.
\end{split}
\end{equation*}
Now we apply Minkowski with respect to measures induced by $ 1/t $ and $ 1/r $ to get that the above is controlled by 
\begin{equation*}
\begin{split}
& \left( \int_{1}^{\infty}t^{2(s-1)} \left( \int_{0}^{\infty}  \vert g(x,rt) (rt)^{-s} \vert^{q} \frac{dr}{r} \right)^{2/q'} \frac{dt}{dt} \right)^{\frac{p'}{2}}
\\
& \approx_{p} (1-s)^{-p'/2} \left( \int_{0}^{\infty} \vert g(x,t) t^{-s} \vert^{q'} \frac{dt}{t} \right)^{\frac{p'}{q'}}. 
\end{split}
\end{equation*}
Therefore, using this we obtain 
\begin{equation*}
\begin{split}
& \left( \int_{\mathbb{R}^{n}} \left( \int_{0}^{\infty} r^{q'(1-s)} \left( \int_{r}^{\infty} \vert \frac{g(x,t)}{t} \vert^{2} \frac{dt}{t} \right)^{q'/2} dr \right)^{p'/q'} dx \right)^{\frac{1}{p'}}
\\
& \lesssim_{n,p,q} (1-s)^{-1/2}  \left( \int_{\mathbb{R}^{n}} \left( \int_{0}^{\infty} \vert g(x,t) t^{-s} \vert^{q'} \frac{dt}{t} \right)^{\frac{p'}{q'}} dx \right)^{\frac{1}{p'}}
\leq
(1-s)^{-1/2}.
\end{split}
\end{equation*}
Taking the sup over $ g $ with the indicated constraints gives us (1). Next, we prove (2). Let $ q \in [2,\infty)$ then $ q' \in (1,2]$ and as before we use Proposition \ref{DualityE}. Therefore, it is enough to prove the following estimate 
\begin{equation}\label{EQS9}
\begin{split}
& \left( \int_{\mathbb{R^{n}}}\left( \int_{0}^{\infty} r^{q'(1-s)} \left( \int_{r}^{\infty} \vert \frac{g(x,t)}{t} \vert^{2} \frac{dt}{t} \right)^{q'/2} \frac{dr}{r} \right)^{p'/q'} dx \right)^{1/p'} \lesssim_{n,p,q} (1-s)^{-1/q'} ,
\end{split}
\end{equation}
for all $ g $ satisfying the constraints in Proposition \ref{DualityE}. Fix such $ g $, then since $ q' \in (1,2]$ we have 
\begin{equation*}
\begin{split}
& \int_{0}^{\infty} r^{q'(1-s)} \left( \int_{r}^{\infty} \vert \frac{g(x,t)}{t} \vert^{2} \frac{dt}{t} \right)^{q'/2} \frac{dr}{r}
\\
& \leq \int_{0}^{\infty} r^{q'(1-s)}  \int_{r}^{\infty} \vert \frac{g(x,t)}{t} \vert^{q'} \frac{dt}{t}  \frac{dr}{r}
\\
& 
= \int_{0}^{\infty} \vert \frac{g(x,t)}{t} \vert^{q'} \left( \int_{0}^{t} r^{q'(1-s)} \frac{dr}{r} \right) \frac{dt}{t}
\\
& \approx_{q} (1-s)^{-1} \int_{0}^{\infty} \vert t^{-s} g(x,t) \vert^{q'} \frac{dt}{t}.
\end{split}
\end{equation*}
Raising to power $ \frac{p'}{q'}$ and integrating in $ x $ gives us 
\begin{equation*}
\begin{split}
& \left( \int_{\mathbb{R}^{n}} \left( \int_{0}^{\infty} r^{q'(1-s)} \left( \int_{r}^{\infty} \vert \frac{g(x,t)}{t} \vert^{2} \frac{dt}{t} \right)^{q'/2} \frac{dr}{r} \right)^{p'/q'} dx \right)^{\frac{1}{p'}}
\\
& 
\lesssim_{n,p,q} (1-s)^{-1/q'} \left( \int_{\mathbb{R}^{n}} \left( \int_{0}^{\infty} \vert t^{-s} g(x,t) \vert^{q'} \frac{dt}{t} \right)^{p'/q'} dx \right)^{1/p'} \lesssim (1-s)^{-1/q'}.
\end{split}
\end{equation*}
This gives us \eqref{EQS9} and hence we get part (2) of the Proposition. Lastly, we prove (3). Let $ q \in (1,2]$ then $ q' \in [2,\infty)$
Then as in the argument of Proposition \ref{DualityE} we get
\begin{equation*}
\begin{split}
[f]_{\dot{F}^{s}_{p,2}} \lesssim_{n,p,q} \sup_{g} (1-s) \left( \int_{\mathbb{R^{n}}}\left( \int_{0}^{\infty} r^{q'(1-s)} \left( \int_{r}^{\infty} \vert \frac{g(x,t)}{t} \vert^{2} \frac{dt}{t} \right)^{q'/2} \frac{dr}{r} \right)^{p'/q'} dx \right)^{1/p'}
\end{split}
\end{equation*}
Where the sup is taken over all $ g \in L_{loc}^{1}( \mathbb{R}^{n} \times (0,\infty))$ satisfying 
$$
\int_{\mathbb{R}^{n}} \left( \int_{0}^{\infty} \vert t^{-s} g(x,t) \vert^{2} dt/t \right)^{\frac{p'}{2}} dx \leq 1.
$$
Fix such a $ g $. Then since $ q' \in [2,\infty)$ we argue in the same manner in the proof of Proposition \ref{geq3} to get 
\begin{equation*}
\begin{split}
& \int_{0}^{\infty} r^{q'(1-s)-1} \left( \int_{r}^{\infty} \vert \frac{g(x,t)}{t} \vert^{2} \frac{dt}{t} \right)^{q'/2} dr 
\\
& = \int_{0}^{\infty} \left( \int_{1}^{\infty} \vert g(x,rt) \vert^{2} t^{-2} r^{-2s} \frac{dt}{t} \right)^{q'/2} \frac{dr}{r}
\\
& =
 \int_{0}^{\infty}  \int_{1}^{\infty} \vert g(x,rt) \vert^{2} t^{-2} r^{-2s} \frac{dt}{t} 
 \left( \int_{1}^{\infty} \vert g(x,rt) \vert^{2} t^{-2} r^{-2s} \frac{dt}{t} \right)^{\frac{q'}{2} -1} \frac{dr}{r}
 \\
 &
 \leq \int_{0}^{\infty}  \int_{1}^{\infty} \vert g(x,rt) \vert^{2} t^{-2} r^{-2s} \frac{dt}{t} 
 \left( \int_{0}^{\infty} \vert g(x,t) t^{-s} \vert^{2} \frac{dt}{t} \right)^{\frac{q'}{2} -1} \frac{dr}{r}
 \\
 & = \int_{1}^{\infty} t^{2(s-1)} \left( \int_{0}^{\infty} \vert g(x,tr) (rt)^{-s} \vert^{2} \frac{dr}{r} \right) \frac{dt}{t} \left( \int_{0}^{\infty} \vert g(x,t) t^{-s} \vert^{2} \frac{dt}{t} \right)^{\frac{q'}{2} -1}
 \\
 & = \frac{1}{2} (1-s)^{-1} \left( \int_{0}^{\infty} \vert g(x,t) t^{-s} \vert^{2} \frac{dt}{t} \right)^{q'/2}. 
 \end{split}
\end{equation*}
Raising both sides to power $ p'/q'$ and integrating in $ x $ gives us (3).

\end{proof}
We close this subsection by proving Theorem \ref{LowerS}. We again use a duality argument but we modify it to get more derivatives hitting $f$.
\begin{prop}\label{Duality2}
Let $ p,q \in (1,\infty)$ and $ s \in (0,1)$ and $ \sigma \in (0,1)$ then we have the following
\begin{equation*}
\begin{split}
&[f]_{\dot{F}^{\sigma}_{p,2}} 
\lesssim_{p,q,n} \Vert f \Vert_{L^{p}} 
\\
&+(1-\sigma)[f]_{E^{s}_{p,q}} \mbox{ } \sup_{g}\mbox{  } \left( \int_{\mathbb{R^{n}}}\left( \int_{0}^{\infty} r^{q'(1-(2 \sigma -s))} \left( \int_{r}^{\infty} \vert \frac{g(x,t)}{t} \vert^{2} \frac{dt}{t} \right)^{q'/2} \frac{dr}{r} \right)^{p'/q'} dx \right)^{1/p'}.
\end{split}
\end{equation*}
Where the supremum is taken over all $ g \in L^{1}_{loc}(\mathbb{R}^{n} \times (0,\infty))$ so that $ g(x,t) = 0 $ if $ t > 1$ and 
$$
\int_{\mathbb{R}^{n}} \left( \int_{0}^{\infty} \vert t^{-\sigma} g(x,t) \vert^{2} \frac{dt}{t} \right)^{p'/2} dx \leq 1
$$
\end{prop}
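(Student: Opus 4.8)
The plan is to adapt the duality argument behind Proposition~\ref{DualityE}, with two changes dictated by the statement: a low--frequency part of $f$ must be split off and absorbed into $\Vert f\Vert_{L^p}$ with a constant independent of $\sigma$, and in the duality step the weight $t^{-2s}$ gets replaced by $t^{-2\sigma}$, which is what produces the exponent $1-(2\sigma-s)$ on the $g$--side while keeping the $f$--side equal to $[f]_{E^s_{p,q}}$. \emph{Step 1 (reduction to $t\in(0,1)$).} Write $f_j=2^{-2j}\partial_t^2P_{2^{-j}}*f$; by Lemma~\ref{PT} with $(q,s)=(2,\sigma)$, $[f]_{\dot F^\sigma_{p,2}}\approx_{n,p}\big(\int_{\R^n}(\sum_{j\in\mathbb{Z}}2^{2j\sigma}\vert f_j\vert^2)^{p/2}dx\big)^{1/p}$. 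Split the sum at $j=0$. Since $\sigma>0$ one has $2^{2j\sigma}\le1$ for $j\le0$, hence $\sum_{j\le0}2^{2j\sigma}\vert f_j\vert^2\le\sum_{j\in\mathbb{Z}}\vert f_j\vert^2$, so by Lemma~\ref{PT} with $s=0$ and $\Vert f\Vert_{L^p}\approx[f]_{\dot F^0_{p,2}}$ this part is $\lesssim_{n,p}\Vert f\Vert_{L^p}^p$, uniformly in $\sigma$. For $j>0$, note $2^{j\sigma}f_j=t^{2-\sigma}\partial_t^2P_t*f$ at $t=2^{-j}$; Lemma~\ref{Psn}(6) applied with $r\in(2^{-j-1},2^{-j})$ (so that $2^{-j}\in(r,2r)$) and averaged over such $r$ gives $\vert2^{j\sigma}f_j(x)\vert^2\lesssim_n\int_{2^{-j-1}}^{2^{-j}}t^{2(2-\sigma)}\vert\mathcal{M}(\partial_t^2P_t*f)(x)\vert^2\frac{dt}{t}$; summing over $j\ge1$ (these intervals tile $(0,1/2)$) and using the mixed--norm maximal inequality of Lemma~\ref{CSF} recorded in the remark after it (with $q=2$) yields
$$[f]_{\dot F^\sigma_{p,2}}\lesssim_{n,p}\Vert f\Vert_{L^p}+\Big(\int_{\R^n}\Big(\int_0^1 t^{2(2-\sigma)}\vert\partial_t^2P_t*f(x)\vert^2\,\tfrac{dt}{t}\Big)^{p/2}dx\Big)^{1/p}=:\Vert f\Vert_{L^p}+J.$$

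\emph{Step 2 (duality and three H\"older inequalities).} Put $F(x,t)=t^2\partial_t^2P_t*f(x)$, so $J=\Vert t^{-\sigma}F\Vert_{L^{p,2}_\alpha(\R^n\times(0,1))}$ with $\alpha(t)=1/t$, finite since $f\in C_0^\infty$. By the duality in Lemma~\ref{Lastdual}, $J=\sup_h\int_{\R^n}\int_0^1 t^{-\sigma}F(x,t)h(x,t)\frac{dt}{t}dx$ over $h$ with $\int_{\R^n}(\int_0^1\vert h(x,t)\vert^2\frac{dt}{t})^{p'/2}dx\le1$. Fix such $h$; then $g(x,t):=t^\sigma h(x,t)$ vanishes for $t>1$ and satisfies $\int_{\R^n}(\int_0^\infty\vert t^{-\sigma}g(x,t)\vert^2\frac{dt}{t})^{p'/2}dx\le1$, so $g$ is admissible in the supremum in the statement. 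Using $t^{-\sigma}h=t^{-2\sigma}g$, the identity $t^{-2\sigma}=2(1-\sigma)t^{-2}\int_0^t r^{2(1-\sigma)}\frac{dr}{r}$ (valid since $\sigma<1$), and Tonelli, exactly as in \eqref{EQS6}--\eqref{EQS7} one obtains $\int\int_0^1 t^{-\sigma}Fh\,\frac{dt}{t}dx\le2(1-\sigma)\int_{\R^n}\int_0^\infty r^{2(1-\sigma)}\int_r^\infty\frac{\vert F(x,t)\vert}{t}\frac{\vert g(x,t)\vert}{t}\frac{dt}{t}\frac{dr}{r}dx$. Then apply H\"older in $dt/t$ with exponents $(2,2)$; after writing $r^{2(1-\sigma)}=r^{1-s}r^{1-(2\sigma-s)}$, in $dr/r$ with exponents $(q,q')$; and finally in $dx$ with exponents $(p,p')$. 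Since $\int_r^\infty\vert F/t\vert^2\frac{dt}{t}=\int_r^\infty t\vert\partial_t^2P_t*f\vert^2dt$, the $f$--factor equals $\big(\int_{\R^n}(\int_0^\infty r^{q(1-s)-1}[\int_r^\infty t\vert\partial_t^2P_t*f(x)\vert^2dt]^{q/2}dr)^{p/q}dx\big)^{1/p}$, which is $\approx_{n,p,q}[f]_{E^s_{p,q}}$ by Proposition~\ref{M}, while the $g$--factor is precisely the expression inside the supremum of the statement. Thus $\int\int_0^1 t^{-\sigma}Fh\,\frac{dt}{t}dx\lesssim_{n,p,q}(1-\sigma)[f]_{E^s_{p,q}}\cdot(g\text{--factor})$; taking the supremum over $h$ (hence over $g$) bounds $J$, and combining with Step~1 gives the proposition.

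\emph{Main obstacle.} The hard part will be Step~1. A naive split $\int_0^\infty=\int_0^1+\int_1^\infty$ in the Poisson characterization of $\dot F^\sigma_{p,2}$ fails, because on $t\ge1$ the weight $t^{2(2-\sigma)}$ grows and the bound $\vert\partial_t^2P_t*f\vert\lesssim t^{-2}\mathcal{M}f$ only gives $\int_1^\infty t^{2(2-\sigma)}t^{-4}\frac{dt}{t}\approx\sigma^{-1}$, i.e. a forbidden factor $\sigma^{-1/2}$ in front of $\Vert f\Vert_{L^p}$. Routing the low part through the Littlewood--Paley square function instead, where $2^{2j\sigma}\le1$ on the negative frequencies, keeps the remainder genuinely $\sigma$--free; returning the surviving block $j>0$ to a Poisson integral over $(0,1)$ then costs one application each of Lemma~\ref{Psn}(6) and the vector--valued maximal inequality of Lemma~\ref{CSF}. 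Everything downstream (Step~2) is the bookkeeping already carried out in the proof of Proposition~\ref{DualityE}.
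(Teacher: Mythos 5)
Your proposal is correct and Step~2 is exactly the paper's argument: dualize $t^{-\sigma}F$ in $L^{p',2}$ over $t\in(0,1)$, set $g=t^{\sigma}h$ extended by zero, insert $t^{-2\sigma}=2(1-\sigma)t^{-2}\int_0^t r^{2(1-\sigma)}\frac{dr}{r}$, split $r^{2(1-\sigma)}=r^{1-s}r^{1-(2\sigma-s)}$, apply H\"older three times, and identify the $f$--factor with $[f]_{E^s_{p,q}}$ via Proposition~\ref{M}. The only genuine divergence is Step~1, where you are actually more careful than the paper. The paper's proof disposes of the reduction to $t\in(0,1)$ in one unexplained line, $[f]_{\dot F^{\sigma}_{p,2}}\lesssim_{n,p,q}\Vert f\Vert_{L^p}+\big(\int(\int_0^1 t^{2(2-\sigma)}\vert\partial_t^2P_t*f\vert^2\frac{dt}{t})^{p/2}\big)^{1/p}$; as you observe, the naive pointwise bound $\vert\partial_t^2P_t*f\vert\lesssim t^{-2}\mathcal{M}f$ on $t\ge1$ only yields $\sigma^{-1/2}\Vert f\Vert_{L^p}$, which is not uniform as $\sigma\to0$ even though the proposition is stated for all $\sigma\in(0,1)$ with a constant depending only on $n,p,q$. (In the paper's sole application, Theorem~\ref{LS}, one has $\sigma=\bar\sigma>s/2$, so the issue is invisible there.) Your detour through the discrete Littlewood--Paley square function, bounding $\sum_{j\le0}2^{2j\sigma}\vert f_j\vert^2\le\sum_j\vert f_j\vert^2$ and invoking $[f]_{\dot F^0_{p,2}}\approx\Vert f\Vert_{L^p}$, then returning the $j>0$ block to a Poisson integral over $(0,1)$ via Lemma~\ref{Psn}(6) and Lemma~\ref{CSF}, gives a constant genuinely independent of $\sigma$ and so proves the proposition in the full stated range. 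This is a worthwhile strengthening of the written proof; everything downstream is identical.
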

\begin{proof}
The proof is very similar to the proof of Proposition \ref{DualityE} except that we pass more derivatives to hit $ f $. We have by Lemma \ref{DTC} 
\begin{equation*}
\begin{split}
&[f]_{\dot{F}^{\sigma}_{p,2}} \approx_{n,p,q} \left( \int_{\mathbb{R}^{n}} \left( \int_{0}^{\infty} t^{2 (2-\sigma)} \vert \partial^{2}_{t} P_{t} *f (x) \vert^{2} dt/t \right)^{p/2}  dx \right)^{1/p} 
\\
& \lesssim_{n,p,q} \Vert f \Vert_{L^{p}} + \left( \int_{\mathbb{R}^{n}} \left( \int_{0}^{1} t^{2 (2-\sigma)} \vert \partial^{2}_{t} P_{t} *f (x) \vert^{2} dt/t \right)^{p/2}  dx \right)^{1/p}.
\end{split}
\end{equation*}
Put $ F(x,t) = t^{2} \partial^{2}_{t} P_{t} * f(x)$. Then as in the proof of Proposition \ref{DualityE} we think of $ t^{-\sigma} F $ as an element of the Banach $ L^{p,2} $ defined by 
$$
L^{p,2} = \{ G \in L^{1}( \mathbb{R}^{n} \times (0,1) )) : \int_{\mathbb{R}^{n}} \left( \int_{0}^{1} \vert G(x,t ) \vert^{2} \frac{dt}{t} \right)^{p/2} dx < \infty \}.
$$
Then, by duality we obtain
\begin{equation*}
\begin{split}
& 
\left( \int_{\mathbb{R}^{n}} \left( \int_{0}^{1} t^{2 (2-\sigma)} \vert \partial^{2}_{t} P_{t} *f (x) \vert^{2} dt/t \right)^{p/2}  dx \right)^{1/p}.
\\
& = \sup_{h} \int_{\mathbb{R}^{n}}  \int_{0}^{1} t^{-\sigma} F(x,t) h(x,t) \frac{dt}{t},
\end{split}
\end{equation*}
where the sup is taken over $ h \in L^{p',2}$ with norm $1$. Fix such $ h $ and put $ g(x,t)= t^{\sigma} h(x,t) $. And extend $ g $ to be $ 0$ when $ t > 1 $. Then we have
\begin{equation*}
\begin{split}
&\int_{\mathbb{R}^{n}}  \int_{0}^{1} t^{-\sigma} F(x,t) h(x,t) \frac{dt}{t} 
\\
& 
= \int_{\mathbb{R}^{n}}  \int_{0}^{\infty} t^{-2\sigma} F(x,t) g(x,t) \frac{dt}{t} 
\\
&
=
2(1-\sigma) \int_{\mathbb{R}^{n}} \int_{0}^{\infty} F(x,t) g(x,t) t^{-2} \left( \int_{0}^{t} r^{2(1-\sigma)} \frac{dr}{r} \right) \frac{dt}{t}
\\
&
\approx (1- \sigma) \int_{\mathbb{R}^{n}} \int_{0}^{\infty} r^{2(1-\sigma)} \left(\int_{r}^{\infty} F(x,t) g(x,t) t^{-2} \frac{dt}{t} \right) \frac{dr}{r}
\\
& = (1-\sigma) \int_{\mathbb{R}^{n}} \int_{0}^{\infty} r^{1-s} r^{1 - (2\sigma-s) }\left(\int_{r}^{\infty} F(x,t) g(x,t) t^{-2} \frac{dt}{t} \right) \frac{dr}{r},
\end{split}
\end{equation*}
now applying H\"{o}lder inequality three times and using Proposition \ref{M} we obtain
\begin{equation*}
\begin{split}
&\int_{\mathbb{R}^{n}}  \int_{0}^{1} t^{-\sigma} F(x,t) h(x,t) \frac{dt}{t} 
\\
&
\lesssim_{n,p,q} (1-\sigma) [f]_{E^{s}_{p,q}} \left( \int_{\mathbb{R^{n}}}\left( \int_{0}^{\infty} r^{q'(1-(2 \sigma -s))} \left( \int_{r}^{\infty} \vert \frac{g(x,t)}{t} \vert^{2} \frac{dt}{t} \right)^{q'/2} \frac{dr}{r} \right)^{p'/q'} dx \right)^{1/p'}.
\end{split}
\end{equation*}
Then taking the supremum over $ h $ with norm $ 1 $ give us the result.

\end{proof}

With the above we can proceed to prove Theorem \ref{LowerS}.
\begin{proof}[Proof of Theorem \ref{LowerS}]
Let $ \Theta> 1 $ and $ s \in (1-\frac{1}{2 \Theta},1)$ and $ \bar{\sigma} \in (0,s)$ with $ (1-\bar{\sigma}) = \Theta (1-s)$. In particular, $ \sigma \in (s/2,s)$. It suffices to prove the theorem for $ \bar{\sigma}$. Indeed, since we have the obvious embedding
$$
[f]_{\dot{F}^{\sigma}_{p,2}} \lesssim \Vert f \Vert_{L^{p}} + [f]_{\dot{F}^{\bar{\sigma}}_{{p,2}}}.
$$
Therefore, we prove the theorem for $ \bar{\sigma}$.
Using Proposition \ref{Duality2} we get
\begin{equation}\label{EQ1}
\begin{split}
& [f]_{\dot{F}^{\bar{\sigma}}_{p,2}}
\lesssim_{n,p,q} \Vert f \Vert_{L^{p}}
\\
&+(1-\bar{\sigma})[f]_{E^{s}_{p,q}} \sup_{g}  
\left( \int_{\mathbb{R^{n}}}\left( \int_{0}^{\infty} r^{q'(1-(2 \bar{\sigma} -s))} \left( \int_{r}^{\infty} \vert \frac{g(x,t)}{t} \vert^{2} \frac{dt}{t} \right)^{q'/2} \frac{dr}{r} \right)^{p'/q'} dx \right)^{1/p'}.
\end{split}
\end{equation}
Where $ g$ satisfies the assumptions written in Proposition \ref{Duality2}. 
Fix any such $g$, then if we prove
\begin{equation}\label{EQF}
\begin{split}
&\left( \int_{\mathbb{R^{n}}}\left( \int_{0}^{\infty} r^{q'(1-(2 \bar{\sigma} -s))} \left( \int_{r}^{\infty} \vert \frac{g(x,t)}{t} \vert^{2} \frac{dt}{t} \right)^{q'/2} \frac{dr}{r} \right)^{p'/q'} dx \right)^{1/p'}
\\
& \lesssim_{n,p,q} (s-\bar{\sigma})^{-1/q'}.
\end{split}
\end{equation}
Then the result follows. Indeed if we prove \eqref{EQF} then using it together with \eqref{EQ1} we get  
$$
[f]_{\dot{F}^{s}_{p,q}} \lesssim_{n,p,q} \Vert f \Vert_{L^{p}} + \frac{(1-\bar{\sigma})}{(s-\bar{\sigma})^{1/q'}} [f]_{E^{s}_{p,q}}.
$$
And the result follows since by assumption $ s - \bar{\sigma} = ( \Theta-1)(1-s)$. The proof of \eqref{EQF} is similar to that of Proposition 3.7. First note that by the support properties of $ g $ we have 
\begin{equation*}
\begin{split}
& \int_{0}^{\infty} r^{q'( 1- ( 2 \bar{\sigma} -s ))} \left( \int_{r}^{\infty} \vert \frac{g(x,t)}{t} \vert^{2} \frac{dt}{t} \right)^{q'/2} \frac{dr}{r} 
\\
& =\int_{0}^{1} r^{q'( 1- ( 2 \bar{\sigma} -s ))} \left( \int_{r}^{\infty} \vert \frac{g(x,t)}{t} \vert^{2} \frac{dt}{t} \right)^{q'/2} \frac{dr}{r}.
\end{split}
\end{equation*}
Next, note that by the choice of $ \bar{\sigma}$ we have that $ 0 < 2 \bar{\sigma} -s < \bar{\sigma}$.
Therefore,
\begin{equation*}
\begin{split}
&\int_{0}^{1} r^{q'( 1- ( 2 \bar{\sigma} -s ))} \left( \int_{r}^{\infty} \vert \frac{g(x,t)}{t} \vert^{2} \frac{dt}{t} \right)^{q'/2} \frac{dr}{r}
\\
& = \int_{0}^{1} r^{- q'( 2 \bar{\sigma} -s)} \left( \int_{1}^{\infty} \vert \frac{g(x,rt)}{t} \vert^{2} \frac{dt}{t} \right)^{q'/2} \frac{dr}{r}
\\
&
\leq \int_{0}^{1} r^{- q'( \bar{\sigma} -s)} \left( \int_{1}^{\infty} \vert g(x,rt)(rt)^{-\bar{\sigma}} \vert^{2} \frac{dt}{t} \right)^{q'/2} \frac{dr}{r} 
\\
&
\leq \int_{0}^{1} r^{- q'(  \bar{\sigma} -s)} \left( \int_{0}^{\infty} \vert g(x,t)(t)^{-\bar{\sigma}} \vert^{2} \frac{dt}{t} \right)^{q'/2} \frac{dr}{r} 
\\
& \approx_{q} (s- \bar{\sigma})^{-1} \left( \int_{0}^{\infty} \vert g(x,t)(t)^{-\bar{\sigma}} \vert^{2} \frac{dt}{t} \right)^{q'/2}.
\end{split}
\end{equation*}
Now \eqref{EQF} follows once we raise to power $ p'/q' $ and integrate in $x$ and use the fact that $ t^{-\sigma} g $ has norm $ 1 $ in $ L^{p',2}$.

\end{proof}
\subsection{BBM type theorems} In this subsection we apply the inequalities in the previous two subsections to conclude  two BBM theorems for the semi-norm $ E^{s}_{p,q}$. 

\begin{proof}[Proof of Theorem \ref{BBM1PR} ]
First take $ s \in (1/2,1)$ and put $ v = 2s-1$. Then clearly $ v \in (0,s)$. Then using Theorem \ref{SUU} with $ \sigma = 1$ and $ v = 2s-1$ we get 
$$
[f]_{E^{s}_{p,q}} \lesssim_{n,p,q} (1 -s)^{-1/q} [f]_{\dot{F}^{1}_{p,2}} + ( s- ( 2s-1))^{-1/q}[f]_{\dot{F}^{2s-1}_{p,2}}.
$$
Which translates to 
$$
(1-s)^{1/q} [f]_{E^{s}_{p,q}} \lesssim_{n,p,q} \Vert \nabla f \Vert_{L^{p}} + [f]_{\dot{F}^{2s-1}_{p,2}}.
$$
Letting $ s \to 1 $ gives us
$$
\lim_{s \to 1^{-}} (1-s)^{1/q}[f]_{E^{s}_{p,q}} \lesssim_{n,p,q} \Vert \nabla f \Vert_{L^{p}}.
$$
Now we need to prove the other inequality. If $ q \in (1,2]$ then from Theorem \ref{L} we get the other inequality. Namely we get 
$$
[f]_{\dot{F}^{s}_{p,2}} \lesssim_{n,p,q} (1-s)^{1/q}[f]_{E^{s}_{p,q}}.
$$
Letting $ s \to 1 $ gives 
$$
\Vert \nabla f \Vert_{L^{p}} \lesssim_{n,p,q} \lim_{s\to 1^{-}}(1-s)^{1/q} [f]_{E^{s}_{p,q}}.
$$
We prove the same for when $ q \in (2,\infty)$. Actually this argument works for any $ q \in (1,\infty)$. Take $ R > 0 $ arbitrary, then 
\begin{equation*}
\begin{split}
& (1-s)^{1/q} \left( \int_{\vert x \vert <R} \left( \int_{0}^{\infty} t^{q(1-s)-1} \vert \partial_{t} P_{t} * f (x) \vert^{q} dt \right)^{p/q} dx \right)^{1/p}
\\
& = \left( \int_{\vert x \vert < R} \left( (1-s) \int_{0}^{\infty} t^{q(1-s)-1} \vert \partial_{t} P_{t} * f (x) \vert^{q} dt \right)^{p/q} dx \right)^{1/p}
\\
& \geq \left( \int_{\vert x \vert < R} \left( (1-s) \int_{0}^{(1-s)} t^{q(1-s)-1} \vert \partial_{t} P_{t} * f (x) \vert^{q} dt \right)^{p/q} dx \right)^{1/p}
\\
& = \left( \int_{\vert x\vert < R } \left(  \frac{1}{q} \int_{0}^{(1-s)} \partial_{t} (t^{q(1-s)}) \vert \partial_{t} P_{t} * f (x) \vert^{q} dt \right)^{p/q} dx \right)^{1/p}.
\end{split}
\end{equation*}
Now we do integration by parts on the inner integral to obtain

 \begin{equation*}
\begin{split}
    & \int_{0}^{(1-s)} \partial_{t} (t^{q(1-s)}) \vert \partial_{t} P_{t} * f (x) \vert^{q} dt
    \\
    & = \vert (1-s)^{(1-s)} \partial_{t} P_{(1-s)} * f (x) \vert^{q} - \int_{0}^{(1-s)}  t^{q(1-s)} \partial_{t} ( \partial_{t} P_{t} * f (x) )^{q} dt.   
\end{split}
 \end{equation*}
Define
$$
G_{s}(x) = \vert (1-s)^{(1-s)} \partial_{t} P_{(1-s)} * f (x) \vert^{q} - \int_{0}^{(1-s)}  t^{q(1-s)} \partial_{t} ( \partial_{t} P_{t} * f (x) )^{q} dt.  
$$
We claim that $ G_{s}(x) \to \vert \Delta^{1/2} f (x) \vert^{q}$, as $ s \to 1$. Indeed, by Lemma \ref{PC} we get that 
$$
\lim_{ s \to 1} \vert (1-s)^{(1-s)} \partial_{t} P_{(1-s)} * f (x) \vert^{q} = C \vert  \Delta^{1/2} f(x) \vert^{q}.
$$
We just need to show that 
$$
\int_{0}^{(1-s)}  t^{q(1-s)} \partial_{t} ( \partial_{t} P_{t} * f (x) )^{q} dt \to 0
$$
as $ s \to 1 $. This is clear since the integrand is bounded, indeed
\begin{equation*}
\begin{split}
& \vert \partial_{t} ( \partial_{t} P_{t} * f (x) )^{q} \vert =\vert q ( \partial_{t} P_{t} * f (x) )^{q-1} \partial_{t}^{2} P_{t} * f(x) \vert 
\\
& \leq q \left( \int_{\mathbb{R}^{n}} \vert \xi \vert e^{- \vert \xi \vert t} \vert \hat{f} (\xi ) \vert d \xi \right)^{q-1} \left( \int_{\mathbb{R}^{n}} \vert \xi \vert^{2} e^{- \vert \xi \vert t} \vert \hat{f} (\xi ) \vert d \xi \right)
\\
& \leq C(q,f). 
\end{split}
\end{equation*}
Therefore, 
$$
\int_{0}^{(1-s)}  t^{q(1-s)} \partial_{t} ( \partial_{t} P_{t} * f (x) )^{q} dt \lesssim_{f,q} (1-s).
$$
Letting $ s \to 1$ shows that we have
$$
G_{s}(x) \to C \vert \Delta^{1/2} f(x) \vert^{q}.
$$
Hence, by Dominated convergence theorem we get 
$$
\int_{\vert x \vert < R} \vert G_{s}(x) \vert^{p/q} dx  \to \int_{\vert x \vert <R } \vert \Delta^{1/2} f (x) \vert^{p} dx.
$$
Thus, we proved the following 
\begin{equation*}
\begin{split}
& \lim_{s\to 1^{-}}(1-s)^{1/q}[f]_{E^{s}_{p,q}}
\\
& \geq
\lim_{s\to 1^{-}}(1-s)^{1/q} \left( \int_{\vert x \vert <R} \left( \int_{0}^{\infty} t^{q(1-s)-1} \vert \partial_{t} P_{t} * f (x) \vert^{q} dt \right)^{p/q} dx \right)^{1/p}
\\
& \gtrsim_{n,p,q} \left( \int_{\vert x \vert < R } \vert \Delta^{1/2} f(x) \vert^{p} dx \right)^{1/p}.
\end{split}
\end{equation*}
For any $ R> 0$. The result follows once we let $ R \to \infty$, and use the fact that 
$$
\Vert \Delta^{1/2} f \Vert_{L^{p}} \approx_{n,p} \Vert \nabla f \Vert_{L^{p}}.
$$

\end{proof}

Lastly, the last result we prove regarding the semi-norm $E^{s}_{p,q}$ is BBM2. 
 \begin{proof}[Proof of Theorem \ref{BBM2PR}]
This is an immediate consequence of Theorem \ref{LowerS}. Indeed, upon removing finitely many $k$'s we may assume that $ (1-s_{k}) < \frac{1}{4}$. Then Using Theorem \ref{LowerS} with $ \Theta =2 $ we obtain
$$
[f_{k}]_{\dot{F}^{\sigma}_{p,2}} \lesssim_{n,p,q}  \Vert f_{k} \Vert_{L^{p}} + ( 1-s_{k} )^{1/q}[f]_{E^{s_{k}}_{p,q}}.
$$
For $ \sigma < 1 - 2 ( 1-s_{k})$. Put $ \sigma_{k} = 1 - 2(1-s_{k})-\frac{1}{10k}$. 
Then we have 
$$
\sup_{k} [f_{k}]_{\dot{F}^{\sigma_{k}}_{p,2}} \lesssim_{n,p,q} \Lambda.
$$
Then the result follows from Lemma \ref{WTL}.

 \end{proof}

\section{estimates for $W^{s}_{p,q}$}
\subsection{ Upper bounds:} 

In this subsection we prove the upper bounds appearing in Theorem \ref{1.9} and Theorem \ref{1.10}  regarding controlling $ W^{s}_{p,q}$ with a Triebel-Lizorkin semi-norm when $ s \in ( \theta , 1)$, for some $ \theta >0$. Namely we have
\begin{prop}\label{UWs}
Let $ p, q \in (1,\infty)$ and $ p > \frac{nq}{n+\theta q}$ for some $ \theta \in (0,1)$. Then for $ s \in (\theta,1)$ one has for $ f \in \SSS(\R^{n})$
\begin{enumerate}
    \item if $ q \in (1,2]$ then
    $$
    [f]_{W^{s}_{p,q}} \lesssim_{n,p,q,\theta} (1-s)^{-1/q} [f]_{\dot{F}^{s}_{p,q}};
    $$
    \item If $ q \in [2, \infty)$ then
    $$
    [f]_{W^{s}_{p,q}} \lesssim_{n,p,q,\theta} (1-s)^{-1/2}[f]_{\dot{F}_{p,q}};
    $$
    \item If $ q \in [2,\infty) $ then
    $$
    [f]_{W^{s}_{p,q}} \lesssim_{n,p,q,\theta} (1-s)^{-1/q} [f]_{\dot{F}^{s}_{p,2}}.
    $$
\end{enumerate}
\end{prop}

It is worth mentioning that one can prove the above proposition  by running the same type of arguments presented in Section 3. However, to avoid repetition we prove the more general result which is Theorem \ref{A2}. Then Proposition \ref{UWs} will be a consequence of the  upper bounds obtained for $ E^{s}_{p,q}$. 

Before we proceed, throughout this section we write $ f_{j} = \phi_{j} *f $ where $ \phi_{j}$ is as before, the function with Fourier transform given by 
$$
\mathcal{F}(\phi_{j}) = \vert 2^{-j} \xi \vert^{2} e^{-2 \pi \vert 2^{-j} \xi \vert}.
$$
The proof of Theorem \ref{A2} will be a sequence of propositions.

\begin{lemma}\label{KLW}
let $ p,q \in (1,\infty)$ and $ s \in (0,1)$ then we have 
$$
[f]^{p}_{W^{s}_{p,q}} \lesssim_{n,p,q} I + II + III.
$$
Where 
$$
I = \int_{\mathbb{R}^{n} } \left( \int_{\mathbb{R}^{n}} \vert z \vert^{-n-sq} \left( \int_{10 \vert z \vert}^{\infty} t^{3} \vert \partial^{2}_{t} P_{t} * f(x+z) - \partial^{2}_{2} P_{t} *f(x) \vert^{2} (x) dt \right)^{q/2} dz \right)^{p/q} dx,
$$
and 
$$
II = \int_{\mathbb{R}^{n} } \left( \int_{\mathbb{R}^{n}} \vert z \vert^{-n-sq} \left( \int_{0}^{10 \vert z \vert } t^{3} \vert \partial^{2}_{t} P_{t} * f(x)\vert^{2} (x) dt \right)^{q/2} dz \right)^{p/q} dx,
$$
and 
$$
III = \int_{\mathbb{R}^{n} } \left( \int_{\mathbb{R}^{n}} \vert z \vert^{-n-sq} \left( \int_{0}^{10\vert z \vert} t^{3} \vert \partial^{2}_{t} P_{t} * f(x+z)\vert^{2} (x) dt \right)^{q/2} dz \right)^{p/q} dx. 
$$
\end{lemma}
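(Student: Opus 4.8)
The plan is to realise the Gagliardo energy as a mixed $L^{p,q}$-norm and then apply the Poisson square-function identity of Corollary \ref{NK2}. Set
$$
F(x,z) = |z|^{-s}\bigl(f(x+z) - f(x)\bigr), \qquad (x,z)\in\mathbb{R}^n\times(\mathbb{R}^n\setminus\{0\}),
$$
which is smooth and, by the very definition of $[\cdot]_{W^s_{p,q}}$, satisfies $\int_{\mathbb{R}^n}\bigl(\int_{\mathbb{R}^n}|F(x,z)|^q|z|^{-n}\,dz\bigr)^{p/q}dx=[f]^p_{W^s_{p,q}}$. Assuming this is finite (if not there is nothing to prove; alternatively one first runs the argument with $z$ restricted to an annulus $\varepsilon<|z|<1/\varepsilon$, applying Lemma \ref{Kformixed} with that annulus as $\Omega$ and $\alpha(z)=|z|^{-n}$, and then lets $\varepsilon\to0$ by monotone convergence), Corollary \ref{NK2} gives
$$
[f]^p_{W^s_{p,q}}\approx_{n,p,q}\int_{\mathbb{R}^n}\left(\int_{\mathbb{R}^n}\left(\int_0^\infty t^3\,|\partial_t^2 P_t * F(x,z)|^2\,dt\right)^{q/2}\frac{dz}{|z|^n}\right)^{p/q}dx,
$$
where the convolution is in the $x$-variable.

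Next I would unwind the convolution. Since $x$-translation commutes with convolution by $P_t$, one has $\partial_t^2 P_t * F(x,z)=|z|^{-s}\bigl(\partial_t^2 P_t * f(x+z)-\partial_t^2 P_t * f(x)\bigr)$, so the right-hand side above equals
$$
\int_{\mathbb{R}^n}\left(\int_{\mathbb{R}^n}|z|^{-n-sq}\left(\int_0^\infty t^3\,|\partial_t^2 P_t * f(x+z)-\partial_t^2 P_t * f(x)|^2\,dt\right)^{q/2}dz\right)^{p/q}dx.
$$
Now split the $t$-integral at $t=10|z|$. Using $(a+b)^{q/2}\lesssim_q a^{q/2}+b^{q/2}$, the tail $\int_{10|z|}^\infty$ contributes exactly the integrand of $I$. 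For the near-diagonal part $\int_0^{10|z|}$, I would use $|\partial_t^2 P_t * f(x+z)-\partial_t^2 P_t * f(x)|^2\le 2|\partial_t^2 P_t * f(x+z)|^2+2|\partial_t^2 P_t * f(x)|^2$ and split once more, obtaining the integrand of $III$ (the term with $f(x+z)$) and of $II$ (the term with $f(x)$). Integrating in $z$, applying $(a+b+c)^{p/q}\lesssim_{p,q}a^{p/q}+b^{p/q}+c^{p/q}$, and integrating in $x$ yields $[f]^p_{W^s_{p,q}}\lesssim_{n,p,q} I+II+III$.

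The argument is essentially bookkeeping once Corollary \ref{NK2} is in hand; the only genuinely delicate point is the a priori finiteness needed to invoke that corollary, handled by the truncation remark above (and automatic in every application of this lemma, since there $p>\tfrac{nq}{n+\theta q}$ with $s\in(\theta,1)$ forces $[f]_{W^s_{p,q}}<\infty$ for $f\in C^\infty_0$). No new ideas are required beyond the mixed Littlewood-Paley theory developed in Section 2.
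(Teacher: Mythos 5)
Your proposal is correct and follows exactly the paper's route: apply Corollary \ref{NK2} to $|z|^{-s}(f(x+z)-f(x))$, commute the Poisson convolution with translation, split the $t$-integral at $10|z|$, and use the triangle inequality to separate the near-diagonal part into the $II$ and $III$ terms. Your extra remarks on the a priori finiteness needed to invoke the corollary are a reasonable elaboration of a point the paper leaves implicit.
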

\begin{proof}
To simplify notation, write $ \delta_{z}(f)(x) = f(x+z) - f(x)$. Then we apply Corollary \ref{NK2} to the function $ \vert z\vert^{-s} \delta_{z} f(x) $ to get 
\begin{equation*}
\begin{split}
& [f]^{p}_{W^{s}_{p,q}} = \int_{\mathbb{R}^{n}}  \left( \int_{\R^{n}} \vert z \vert^{-n-sq} \vert \delta_{z}(f)(x) \vert^{q} dz \right)^{p/q} dx
\\
& \approx_{n,p,q} \int_{\mathbb{R}^{n}} \left( \int_{\R^{n}} \vert z \vert^{-n-sq} \big[ \int_{0}^{\infty} t^{4-1} \vert \partial^{2}_{t} \delta_{z} (P_{t} *f )(x) \vert^{2} dt  \big]^{q/2} dz \right)^{p/q} dx
\end{split}
\end{equation*}
Now we apply the triangle inequality to get the desired result.
\end{proof}
To get the conclusion of Theorem \ref{A2} we estimate $I, II$ and $III$ separately.  
\begin{prop}\label{P1}
Let $ p,q \in (1,\infty)$ and $ s \in (0,1)$. Then we have for $ f \in \SSS(\R^{n}) $ 
$$
I \lesssim_{n,p,q} [f]_{E^{s}_{p,q}} 
$$

\end{prop}
\begin{proof}
Denote for each fixed $ z $ the function $ G(x,t) = F(x,t) -F(x+z,t) $. Where $ F(x,t) = \partial_{t}^{2} P_{t} * f (x)$. Then note that $ F $ is harmonic in $ \mathbb{R}^{n+1}_{+}$. Further, note that 
$$
G(x,t) = \int_{0}^{1} \nabla F( x+ \sigma z,t) . z\mbox{ } d\sigma
$$
Denote by $ B^{n+1}$ to mean a ball in $ \mathbb{R}^{n+1}$. Then we have by Cauchy estimates for harmonic functions
\begin{equation}\label{Cauchy}
\begin{split}
& \vert G(x,t) \vert \leq \vert z \vert \int_{0}^{1} \vert \nabla F(x+\sigma  z  ,t) \vert d \sigma 
\\
& \lesssim  \vert z \vert \int_{0}^{1} \frac{1}{t^{n+2}} \int_{B^{n+1}( (x+\sigma z,t), t/4)} \vert F(y,r) \vert dydr d \sigma
\\
& \lesssim \vert z \vert \int_{0}^{1} \frac{1}{t^{n+2}} \int_{t/8}^{4t} \int_{B(x,t)} \vert F(y,r) \vert dy dr d\sigma
 \\
& 
\lesssim \vert z \vert \int_{0}^{1} \frac{1}{t^{n+2}}  \int_{B(x,t)} \int_{t/8}^{4t} \vert F(y,r) \vert dr dy d \sigma
\\
&
\lesssim \frac{\vert z \vert}{t^{n+1}} \int_{B(x,t)} \mathcal{M}(F(.,t))(y) dy
 \\
& 
\frac{\vert z \vert}{ t} \mathcal{M} \big[ \mathcal{M} (F(.,t)) \big] (x).
\end{split}
\end{equation}
To simplify notation we write $ F_{t}(x) = \mathcal{M}( F(.,t))(x)$.
Now we plug back to $I$ to get 
$$
I \lesssim \int_{\mathbb{R}^{n}} \left( \int_{\mathbb{R}^{n} } \vert z \vert^{-n+ (1-s)q} \left( \int_{10 \vert z \vert }^{\infty} t^{3}\vert \frac{\vert z\vert}{t}  \mathcal{M}(F_{t})\vert^{2} dt \right)^{q/2} dz \right)^{p/q} dx.
$$
Switching to polar coordinates and simplifying this becomes
$$
I \lesssim
\int_{\mathbb{R}^{n}} \left( \int_{0 }^{\infty}  r^{ (1-s)q-1} \left( \int_{10 r }^{\infty} t \vert \mathcal{M}(F_{t})\vert^{2} dt \right)^{q/2} dr \right)^{p/q} dx.
$$
Using Lemma \ref{CSF} and the remarks that follows it we get 
$$
I \lesssim_{n,p,q} \int_{\mathbb{R}^{n}} \left( \int_{0 }^{\infty}  r^{ (1-s)q-1} \left( \int_{10 r }^{\infty} t \vert F_{t} \vert^{2} dt \right)^{q/2} dr \right)^{p/q} dx.
$$
But by definition $ F_{t} (x)= \mathcal{M}( \partial^{2}_{t} P_{t} * f) (x)$. Hence using Lemma \ref{CSF} again, we obtain
$$
I \lesssim_{n,p,q} \int_{\mathbb{R}^{n}} \left( \int_{0 }^{\infty}  r^{ (1-s)q-1} \left( \int_{10 r }^{\infty} t \vert \partial^{2}_{t} P_{t} * f(x) \vert^{2} dt \right)^{q/2} dr \right)^{p/q} dx.
$$
Finally, the result follows by Proposition \ref{M}.

\end{proof}

Next, we estimate $II$.
\begin{prop}\label{P2}
Assume $ p,q \in (1,\infty)$ and $ s \in ( \theta ,1)$. Then we have
$$
[f]_{W^{s}_{p,q}} \lesssim_{n,p,q,\theta} [f]^{p}_{\dot{F}^{s}_{p,q}}
$$
\end{prop}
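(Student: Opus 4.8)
The strategy is to turn $II$ into a discrete Littlewood--Paley square function in the variable $t$ and then remove the extra integration in $|z|$ by a tail--sum (Hardy--type) inequality in the dyadic index; the hypothesis $s>\theta$ will enter only to keep every geometric--series constant uniform in $s$. First I would pass to polar coordinates in the $z$--integral. Since the inner $t$--integral in $II$ depends on $z$ only through the endpoint $10|z|$, this gives
$$
II\;\approx_n\;\int_{\mathbb{R}^{n}}\left(\int_0^\infty \rho^{-sq-1}\left(\int_0^{10\rho}t^{3}\,|\partial_t^2 P_t*f(x)|^{2}\,dt\right)^{q/2}d\rho\right)^{p/q}dx.
$$
Then I would discretize: for $\rho\in[2^{-k},2^{-k+1})$ one has $\int_{2^{-k}}^{2^{-k+1}}\rho^{-sq-1}\,d\rho=\frac{1-2^{-sq}}{sq}\,2^{ksq}\approx_{q,\theta}2^{ksq}$, while on $t\in[2^{-j},2^{-j+1})$ Lemma~\ref{Psn}(6) together with $\phi_j=c\,2^{-2j}\partial_t^2P_{2^{-j}}$ gives $|\partial_t^2P_t*f(x)|\lesssim_n 2^{2j}\mathcal{M}(f_j)(x)$, where $f_j=\phi_j*f$, hence $\int_{2^{-j}}^{2^{-j+1}}t^{3}|\partial_t^2P_t*f(x)|^{2}\,dt\lesssim_n|\mathcal{M}(f_j)(x)|^{2}$. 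Since the range $0<t<10\rho$ corresponds to $j\ge k-C$ for an absolute constant $C$, one obtains
$$
\int_0^\infty \rho^{-sq-1}\left(\int_0^{10\rho}t^{3}|\partial_t^2 P_t*f(x)|^{2}dt\right)^{q/2}d\rho\;\lesssim_{n,q,\theta}\;\sum_{k\in\mathbb{Z}}2^{ksq}\left(\sum_{j\ge k-C}|\mathcal{M}(f_j)(x)|^{2}\right)^{q/2}.
$$

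The heart of the matter is the sequence inequality: for every $s>\theta>0$ and every nonnegative sequence $(a_j)_{j\in\mathbb{Z}}$,
$$
\sum_{k}2^{ksq}\left(\sum_{j\ge k}a_j^{2}\right)^{q/2}\;\lesssim_{q,\theta}\;\sum_{j}2^{jsq}a_j^{q}.
$$
If $q\le 2$ this follows immediately from $(\sum_{j\ge k}a_j^{2})^{q/2}\le\sum_{j\ge k}a_j^{q}$ and Fubini, since then $\sum_{k\le j}2^{ksq}=2^{jsq}/(1-2^{-sq})\le C_\theta\,2^{jsq}$ because $s>\theta$. If $q>2$ one first applies H\"older with exponents $q/2,(q/2)'$ and the weight $2^{-sm}$ to get $(\sum_{m\ge0}a_{k+m}^{2})^{q/2}\lesssim_{q,\theta}\sum_{m\ge0}2^{smq/2}a_{k+m}^{q}$, the convergent series $\sum_{m\ge0}2^{-sm(q/2)'}$ being bounded by $C_\theta$ again because $s>\theta$; summing this in $k$ by Fubini and using $\sum_{k\le j}2^{ksq/2}\le C_\theta\,2^{jsq/2}$ gives the claim. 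Applying the inequality with $a_j=\mathcal{M}(f_j)(x)$ (the index shift $C$ costing only a harmless constant), then raising to the power $p/q$ and integrating in $x$, yields
$$
II\;\lesssim_{n,p,q,\theta}\;\int_{\mathbb{R}^{n}}\left(\sum_{j}|2^{js}\mathcal{M}(f_j)(x)|^{q}\right)^{p/q}dx.
$$

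To finish I would invoke the vector--valued Fefferman--Stein maximal inequality (Lemma~\ref{SteinF}), which applies because $p,q\in(1,\infty)$, to replace $\mathcal{M}(f_j)$ by $f_j$, and then Lemma~\ref{PT}, whose equivalence constant is independent of $s$ for $s\in[0,1]$, to identify the resulting quantity with $[f]^{p}_{\dot F^{s}_{p,q}}$. The only genuine obstacle is the tail--sum inequality: the crude bound $2^{ksq}\le 2^{jsq}$ for $j\ge k$ is useless, since summing it produces the divergent count $\sum_{k\le j}1$, so one must exploit the geometric decay honestly, and it is exactly there that $s>\theta$ is needed so that none of the geometric sums degenerate as $s\to 0$. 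Everything else is routine dyadic bookkeeping plus the two maximal function inputs already packaged in Lemmas~\ref{Psn} and~\ref{SteinF}.
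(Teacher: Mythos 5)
Your proof is correct and follows essentially the same route as the paper: reduce $II$ to the dyadic sum $\sum_k 2^{ksq}\bigl(\sum_{j\ge k-C}|\mathcal{M}(f_j)(x)|^2\bigr)^{q/2}$ via Lemma \ref{Psn}(6), pass from the $\ell^2$ tail to a weighted $\ell^q$ sum using a geometric series that stays bounded because $s>\theta$, and finish with Lemma \ref{SteinF} and Lemma \ref{PT}. The only (immaterial) difference is in the tail-sum step: the paper uses a single sup-plus-geometric-series trick with the auxiliary exponent $\sigma=s/2$, valid for all $q$ at once, whereas you split into $q\le 2$ (embedding $\ell^2\hookrightarrow$ no, rather $(\sum b_j)^{q/2}\le\sum b_j^{q/2}$) and $q>2$ (weighted H\"older), arriving at the same bound.
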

\begin{proof}
We have by Lemma \ref{Psn}
\begin{equation*}
\begin{split}
& II
= \int_{\mathbb{R}^{n} } \left( \int_{\mathbb{R}^{n}} \vert z \vert^{-n-sq} \left( \int_{0}^{10 \vert z \vert} t^{3} \vert \partial^{2}_{t} P_{t} * f(x)\vert^{2} (x) dt \right)^{q/2} dz \right)^{p/q} dx 
\\
& \lesssim \int_{\mathbb{R}^{n}} \left( \sum_{k \in \mathbb{Z}} 2^{ksq} \big[ \sum_{j > k-10} \vert \mathcal{M}(f_{j})(x) \vert^{2} \big]^{q/2} \right)^{p/q} dx
\end{split}
\end{equation*}
Take $ \sigma = s/2$, then note that we have
\begin{equation}\label{Peq}
\begin{split}
& \big[ \sum_{j > k-10 } \vert \mathcal{M}(f_{j})(x) \vert^{2} \big]^{q/2}
\\
& \leq \sup_{j > k-10} \vert 2^{\sigma j} \mathcal{M}(f_{j}) \vert^{q} \big[\sum_{j>k-10} 2^{-2\sigma j }\big]^{q/2}
\\
& \lesssim_{n,q,\theta} 2^{-k\sigma q} \sum_{j >k-10} 2^{\sigma j q} \vert \mathcal{M}(f_{j})(x) \vert^{q}.
\end{split}
\end{equation}
Using this, we obtain 
\begin{equation*}
\begin{split}
& II 
\lesssim_{n,p,q,\theta} \int_{\mathbb{R}^{n}} \left( \sum_{k} 2^{k(s-\sigma)q} \sum_{j> k-10} 2^{j\sigma q} \vert \mathcal{M}(f_{j})(x) \vert^{q} \right)^{p/q} dx
\\
& = \int_{\mathbb{R}^{n}} \left( \sum_{j\in \mathbb{Z}} 2^{j\sigma q} \vert \mathcal{M}( f_{j})(x) \vert^{q} \sum_{k<j+10} 2^{k(s-\sigma)q} \right)^{p/q} dx
\\
& \lesssim_{n,p,\theta,q} \int_{\mathbb{R}^{n}} \left( \sum_{j\in \mathbb{Z} } 2^{jsq} \vert \mathcal{M}(f_{j})(x) \vert^{q} \right)^{p/q} dx.
\end{split}
\end{equation*}
Now the result follows from Lemma \ref{SteinF} and Lemma \ref{PT}
\end{proof}
Finally, we estimate $III$. Notice that for $ I,II$ we did not need the assumption that $ p > \frac{nq}{n+\theta q}$. However, the situation for $III$ is different. The difficulty comes from the fact that we have a middle integral in the variable $z$. And to get rid of that integral we need to apply the Petree maximal function with the correct exponent.

\begin{prop}\label{P3}
Let $ p,q \in (1,\infty)$ and $ s \in (\theta,1)$ for  some $ \theta >0$. Assume further that $ p > \frac{nq}{n+\theta q}$. Then we have 
$$
III \lesssim_{n,p,q,\theta} [f]^{p}_{\dot{F}^{s}_{p,q}}
$$
\end{prop}
\begin{proof}
Since $ p > \frac{nq}{n+\theta q}$, then we can find $ \beta = \beta(p,q,n,\theta) \in (0,1)$ so that one has 
$$
p > \frac{nq}{n+\theta \beta q}.
$$
Then take $ \sigma = (1-\beta)s$. Using Lemma \ref{Psn} we get 
$$
III \lesssim \int_{\mathbb{R}^{n}} \left( \sum_{k \in \mathbb{Z}}2^{ksq} \dashint_{\vert z \vert \approx 2^{-k}} \big[ \sum_{j>k-10} \vert \mathcal{M} ( f_{j})(z+x) \vert^{2} \big]^{q/2} dz \right)^{p/q} dx. 
$$
Use the notation $ m_{j}(x+z) = \mathcal{M} ( f_{j})(z+x)$. Then
arguing as in \eqref{Peq} we have 
\begin{equation}
\begin{split}
& \big[ \sum_{j>k-10} \vert m_{j}(z+x) \vert^{2} \big]^{q/2}
\\
& \lesssim_{n,p,q,\theta} 2^{-k\sigma q} \sum_{j > k-10} 2^{j\sigma q} \vert m_{j}(x+z) \vert^{q} 
\end{split}
\end{equation}
Since we got rid of the power $ q/2$, we next estimate 
$$
\dashint_{\vert z \vert \approx 2^{-k}} \vert m_{j}(x+z) \vert^{q} dz.
$$
We do so as follows, take $ r \in (\frac{nq}{n+\beta \theta q} , \min\{p,q\})$. Then note the obvious pointwise bound for when $  z \approx 2^{-k}$ and $ j > k$. 
\begin{equation}
\begin{split}
& m_{j}(x+z) \leq(1+ 2^{(j-k)})^{\frac{n}{r}} \sup_{z \in \mathbb{R}^{n}} \frac{ m_{j}(x+z)}{ ( 1+ 2^{j} \vert z \vert)^{n/r}}
\\
& \lesssim 2^{(j-k)\frac{n}{r}} m^{*}_{j}(x).
\end{split}
\end{equation}
Where $ m_{j}^{*}$ is the Petree maximal function of the function $m_{j}$. Then using this pointwise bound we obtain
\begin{equation*}
\begin{split}
&\dashint_{\vert z \vert \approx 2^{-k}} \vert m_{j}(x+z) \vert^{q} dz 
\\
&\lesssim 2^{(j-k)\frac{n}{r}(q-r)} \vert m^{*}_{j}(x)\vert^{(q-r)} \dashint_{\vert z \vert \approx 2^{-k}} \vert m_{j}(x+z) \vert^{r} dz
\\
& \leq 2^{(j-k)\frac{n}{r}(q-r)} \big( \vert m^{*}_{j}(x)\vert^{(q-r)} \mathcal{M}(\vert m_{j} \vert^{r} )(x) \big)
\end{split}
\end{equation*}
Young product inequality gives us 
\begin{equation}
\dashint_{\vert z \vert \approx 2^{-k}} \vert m_{j}(x+z) \vert^{q} dz \lesssim_{q} 2^{(j-k)\frac{n}{r}(q-r)}\vert m_{j}^{*}(x) \vert^{q} + 2^{(j-k)\frac{n}{r}(q-r)} \mathcal{M}(\vert m_{j} \vert^{r})(x)^{q/r}.
\end{equation}
Using (4.5) we obtain
\begin{equation*}
\begin{split}
&III
\lesssim_{n,p,q,\theta} \int_{\mathbb{R}^{n}} \left( \sum_{k\in \mathbb{Z}}2^{k(s-\sigma)q} \sum_{j>k-10} 2^{\sigma j q} 2^{(j-k)\frac{n}{r}(q-r)} \vert m^{*}_{j}(x)\vert^{q} \right)^{p/q} dx
\\
&
+ \int_{\mathbb{R}^{n}} \left( \sum_{k\in \mathbb{Z}} 2^{k(s-\sigma)q} \sum_{j>k-10} 2^{\sigma j q} 2^{(j-k)\frac{n}{r}(q-r)}  \mathcal{M}(\vert m_{j} \vert^{r})(x)^{q/r} \right)^{p/q} dx
\\
&
= \Gamma_{1} + \Gamma_{2}
\end{split}
\end{equation*}
Now we estimate each term separately. 
\begin{equation*}
\begin{split}
& \Gamma_{1} =\int_{\mathbb{R}^{n}} \left( \sum_{k\in \mathbb{Z}} 2^{k(s-\sigma)q} \sum_{j>k-10} 2^{\sigma j q} 2^{(j-k)\frac{n}{r}(q-r)} \vert m^{*}_{j}(x)\vert^{q} \right)^{p/q} dx
\\
& = \int_{\mathbb{R}^{n}} \left( \sum_{j \in \mathbb{Z}} \vert m^{*}_{j}(x) \vert^{q} 2^{j\sigma q} 2^{j \frac{n}{r}(q-r)}  \sum_{k < j+10} 2^{k(s - \sigma)q} 2^{-k \frac{n}{r}(q-r)} \right)^{p/q} dx.
\end{split}
\end{equation*}
By the choice of $ \sigma $ we have that $ s- \sigma = \beta s$. Further, by our choice of $ r \in ( \frac{nq}{n+\theta \beta q}, \min\{p,q\})$ we see that 
$$
\beta sq - \frac{n}{r}(q-r) >\beta \theta q - \frac{n}{r}(q-r) > 0.
$$
Hence, evaluating the inner sum we conclude that 
$$
\Gamma_{1} \lesssim_{q,p,n,\theta} \int_{\mathbb{R}^{n}} \left( \sum_{j \in \mathbb{Z}} 2^{jsq} \vert m^{*}_{j}(x) \vert^{q} dx \right)^{p/q} dx. 
$$
Now the result follows by Lemma \ref{PetreeM} and then Lemma \ref{SteinF}. We proceed to estimate $ \Gamma_{2}.$  Using the same reasoning
\begin{equation*}
\begin{split}
& \Gamma_{2} =\int_{\mathbb{R}^{n}} \left( \sum_{k\in \mathbb{Z}} 2^{k(s-\sigma)q} \sum_{j>k-10} 2^{\sigma j q} 2^{(j-k)\frac{n}{r}(q-r)}  \mathcal{M}(\vert m_{j}\vert^{r})(x)^{q/r} \right)^{p/q} dx
\\
& = \int_{\mathbb{R}^{n}} \left( \sum_{j \in \mathbb{Z}} \mathcal{M}(\vert m_{j} \vert^{r})(x)^{q/r} 2^{j\sigma q} 2^{j \frac{n}{r}(q-r)}  \sum_{k < j+10} 2^{k(s - \sigma)q} 2^{-k \frac{n}{r}(q-r)} \right)^{p/q} dx
\\
&\lesssim_{p,q,\theta,n} \int_{\mathbb{R}^{n}} \left( \sum_{j \in \mathbb{Z}} \mathcal{M}(\vert m_{j} \vert^{r})(x)^{q/r} 2^{jsq}\right)^{p/q} dx.
\end{split}
\end{equation*}
And the result follows by applying Lemma \ref{SteinF} twice.

\end{proof}
The proof of Theorem \ref{A2} is clear now.
\begin{proof}[Proof of Theorem \ref{A2}]
We have from Proposition \ref{P1}, Proposition \ref{P2}, and Proposition \ref{P3} that for $ f \in \SSS(\R^{n})$ and $ p > \frac{nq}{n+ \theta q}$ the following inequality holds
$$
[f]_{W^{s}_{p,q}} \lesssim_{n,p,q,\theta} [f]_{E^{s}_{p,q}} + [f]_{\dot{F}^{s}_{p,q}}.
$$
The result then follows from Proposition \ref{L}. 
\end{proof}
As a corollary of Theorem \ref{A2} we get Theorem \ref{T1.11} and Proposition \ref{UWs}.

\subsection{Lower bounds} The main result in this subsection are the lower bounds in Theorem \ref{1.9} and Theorem \ref{1.10}.
\begin{prop}\label{UUU}
Let $ p,q \in (1, \infty)$ and $ s \in ( \theta, 1)$ for some $ \theta > 0$. Assume further, that $ p' > \frac{nq'}{n+\theta q'}$. Then we have 
\begin{enumerate}
    \item if $ q \in (1,2]$ then we have 
    $$
    [f]_{\dot{F}^{s}_{p,q}} \lesssim_{n,p,q,\theta} (1-s)^{1/2} [f]_{W^{s}_{p,q}}.
    $$
    \item If $ q \in [2,\infty)$ then we have 
    $$
    [f]_{\dot{F}^{s}_{p,q}} \lesssim_{n,p,q,\theta} (1-s)^{1/q} [f]_{W^{s}_{p,q}}.
    $$
    \item if $ q \in (1,2]$ then we have 
    $$
    [f]_{\dot{F}^{s}_{p,2}} \lesssim_{n,p,q,\theta} (1-s)^{1/q} [f]_{W^{s}_{p,q}}.
    $$
\end{enumerate}
\end{prop}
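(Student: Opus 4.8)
The plan is to deduce all three estimates from duality, using the duality characterisation of the Triebel--Lizorkin seminorm in Lemma \ref{DualityForW} together with the \emph{upper} bounds for $W^{s}_{p,q}$ from Proposition \ref{UWs} applied to the \emph{dual} pair of exponents $(p',q')$. The role of the hypothesis $p' > \frac{nq'}{n+\theta q'}$ is precisely to make Proposition \ref{UWs} applicable to $(p',q')$ on the range $s\in(\theta,1)$; no other constraint on $p,q$ is needed because the duality lemma carries $[f]_{W^{t_1}_{p,q}}$ on its right-hand side without ever requiring $W^{s}_{p,q}\approx\dot F^{s}_{p,q}$ for the pair $(p,q)$ itself.

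For parts (1) and (2), I would apply Lemma \ref{DualityForW} with the symmetric splitting $t_{1}=t_{2}=s$ (so $t_{1}+t_{2}=2s$), and, since $s\in(\theta,1)\subset(0,1)$, bound $\min\{s,1-s\}\le 1-s$ to get
\[
[f]_{\dot{F}^{s}_{p,q}} \lesssim_{n,p,q} (1-s)\,[f]_{W^{s}_{p,q}}\ \sup_{\substack{g\in C^{\infty}_{0}\\ [g]_{\dot{F}^{s}_{p',q'}}\le 1}} [g]_{W^{s}_{p',q'}}.
\]
It then remains to control the supremum. If $q\in(1,2]$ then $q'\in[2,\infty)$, so Proposition \ref{UWs}(2) for the pair $(p',q')$ gives $[g]_{W^{s}_{p',q'}}\lesssim (1-s)^{-1/2}[g]_{\dot{F}^{s}_{p',q'}}\le (1-s)^{-1/2}$, and feeding this back produces $[f]_{\dot{F}^{s}_{p,q}}\lesssim (1-s)^{1/2}[f]_{W^{s}_{p,q}}$, which is (1). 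If instead $q\in[2,\infty)$ then $q'\in(1,2]$, so Proposition \ref{UWs}(1) for $(p',q')$ gives $[g]_{W^{s}_{p',q'}}\lesssim (1-s)^{-1/q'}[g]_{\dot{F}^{s}_{p',q'}}\le (1-s)^{-1/q'}$, and since $1-\tfrac{1}{q'}=\tfrac{1}{q}$ this is exactly (2).

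For part (3), which concerns $\dot{F}^{s}_{p,2}$, I would use the $\dot{F}^{s}_{p,2}$-analogue of Lemma \ref{DualityForW}, dualising $\dot{F}^{s}_{p,2}$ against its natural dual $\dot{F}^{s}_{p',2}$ (the $q=2$ instance of Lemma \ref{2.10}) and running the same expansion through Lemma \ref{2.9} and H\"older; with $t_{1}=t_{2}=s$ this yields
\[
[f]_{\dot{F}^{s}_{p,2}} \lesssim_{n,p,q} (1-s)\,[f]_{W^{s}_{p,q}}\ \sup_{\substack{g\in C^{\infty}_{0}\\ [g]_{\dot{F}^{s}_{p',2}}\le 1}} [g]_{W^{s}_{p',q'}}.
\]
Since $q\in(1,2]$ we have $q'\in[2,\infty)$, so Proposition \ref{UWs}(3) for $(p',q')$ applies and gives $[g]_{W^{s}_{p',q'}}\lesssim (1-s)^{-1/q'}[g]_{\dot{F}^{s}_{p',2}}\le (1-s)^{-1/q'}$; multiplying by the factor $(1-s)$ gives the asserted $(1-s)^{1/q}$.

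The main obstacle is purely organisational: in each case one must select the correct one of the three estimates of Proposition \ref{UWs} for the dual pair $(p',q')$, and the secondary index on the test function $g$ ($q'$ versus $2$) must match what the duality lemma actually produces. In particular, part (3) forces the dualisation of $\dot{F}^{s}_{p,2}$ against $\dot{F}^{s}_{p',2}$ rather than against $\dot{F}^{s}_{p',q'}$: pairing against $\dot{F}^{s}_{p',q'}$ and invoking Proposition \ref{UWs}(2) would only yield $(1-s)^{1/2}$, which for $q<2$ is strictly weaker than the claimed $(1-s)^{1/q}$. Beyond Lemma \ref{DualityForW} and Proposition \ref{UWs} no new analytic ingredient is required.
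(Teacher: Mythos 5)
Your proposal is correct and follows essentially the same route as the paper: Lemma \ref{DualityForW} with $t_{1}=t_{2}=s$ and $\min\{s,1-s\}\le 1-s$, followed by Proposition \ref{UWs} applied to the dual pair $(p',q')$, with the exponent arithmetic $(1-s)\cdot(1-s)^{-1/\min\{2,q'\}}$ working out exactly as you compute. Your treatment of part (3) is in fact slightly more careful than the paper's terse one: you correctly note that the test function must be normalised in $\dot{F}^{s}_{p',2}$ (so that Proposition \ref{UWs}(3) applies and yields $(1-s)^{-1/q'}$ rather than the weaker $(1-s)^{-1/2}$), a point the paper leaves implicit.
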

\begin{proof}
Using Lemma \ref{DualityForW} we get 
\begin{equation}
\begin{split}
    [f]_{\dot{F}^{s}_{p,q}} \lesssim_{n,p,q} (1-s)[f]_{W^{s}_{p,q}} \sup_{g} [g]_{W^{s}_{p',q'}}
\end{split}
\end{equation}
Where the sup is taken over all $ g \in C^{\infty}_{0}$ with norm $[g]_{\dot{F}^{s}_{p',q'}} \leq 1$. Fix any such $g$, then using Proposition \ref{UWs}, which we can use since $ p' > \frac{nq'}{n+\theta q'}$,  we get 
$$
[g]_{W^{s}_{p',q'}} \lesssim_{n,p,q,\theta} (1-s)^{-1/\sigma}.
$$
Where $ \sigma =\min\{ 2,q'\}$. Then we plug this back into (4.6) above to get (1),(2). Also for (3) we have 
$$
[f]_{\dot{F}^{s}_{p,2}} \lesssim_{n,p,q} (1-s)[f]_{W^{s}_{p,q}} \sup_{g} [g]_{W^{s}_{p',q'}}.
$$
Where the supremum is taken with the indicated constraints. Then we use the upper bounds in Proposition \ref{UWs} to conclude. 
\end{proof}
Next, we prove an analogous result of Theorem \ref{LowerS}. 
\begin{prop}\label{LWS}
Let $ p,q \in (1,\infty)$, and let $ \Theta, \gamma >1  $, put $ \theta =1 -\frac{1}{\gamma}$ and $ s \in (1-\frac{1}{ 2\gamma \Theta}, 1)$. Assume further that $ p' > \frac{nq'}{n+\theta q'}$. Then for $ \sigma \in (0, \bar{\sigma})$, where $ \bar{\sigma}$ is taken so that $ (1- \bar{\sigma}) = \Theta (1-s)$, we have 
$$
[f]_{F^{\sigma}_{p,2}} \leq C(\Theta,\theta, n, p,q ) \left( \Vert f \Vert_{L^{p}} + {(1-s)^{1/q}} [f]_{W^{s}_{p,q}} \right).
$$
\end{prop}
\begin{proof}
Since for $ \sigma \in (0, \bar{\sigma})$ we have 
$$
[f]_{\dot{F}^{\sigma}_{p,2}} \lesssim_{n,p} \Vert f \Vert_{L^{p}} + [f]_{\dot{F}^{\bar{\sigma}}_{p,2}}.
$$
It suffices to prove the theorem for $ \sigma = \bar{\sigma}$. By Lemma \ref{DualityForW} we have 
$$
[f]_{\dot{F}^{\bar{\sigma}}_{p,2}} \lesssim_{p,q,n} [f]_{\dot{F}^{0}_{p,2}}+  (1-\bar{\sigma}) [f]_{W^{s}_{p,q}}  \sup_{\substack{g \in C^{\infty}_{0}\\ \text{supp}(\mathcal{F}(g) ) \subset \{ \vert \xi \vert >1/4 \} \\ [g]_{\dot{F}^{\bar{\sigma}}_{p',2} \leq 1}}}[g]_{W^{2\bar{\sigma}-s}_{p'.q'}}.
$$
Fix a $g$ with the indicated constraints, then note that
$$
 \bar{\sigma} > 2 \bar{\sigma}-s \geq 1- \frac{1}{\gamma}= \theta,
$$
and since by the hypothesis we have $ p' > \frac{nq'}{n+\theta q'}$ we get, by using Theorem \ref{T1.11}
$$
[g]_{W^{2\bar{\sigma}-s}_{p'.q'}}. \lesssim_{n,p,q,\theta} (2\bar{\sigma}-s)^{-1/q'} [g ]_{\dot{F}^{0}_{p',2}} + (s- \bar{\sigma})^{-1/q'} [g]_{\dot{F}^{\bar{\sigma}}_{p',2}}.
$$
Now the support condition on $ g $ ensures that we have 
$$
[g]_{\dot{F}^{0}_{p',2}} \lesssim_{n,p} [g]_{\dot{F}^{\bar{\sigma}}_{p',2}}.
$$
Using this we obtain,
\begin{equation*}
\begin{split}
& [f]_{\dot{F}^{\bar{\sigma}}_{p,2}} \\
& \lesssim_{n,p,q,\theta} \Vert f \Vert_{L^{p}} +(1-\bar{\sigma}) \left( (2\bar{\sigma}-s)^{-1/q'} + (s-\bar{\sigma})^{-1/q'} \right) [f]_{W^{s}_{p,q}}.
\end{split}
\end{equation*}
And this gives the result since
$$
(1-\bar{\sigma}) \left( (2\bar{\sigma}-s)^{-1/q'} + (s-\bar{\sigma})^{-1/q'} \right) \lesssim_{\Theta,n,q} (1-s)^{1/q}.
$$
Indeed, since 
$$
2 \bar{\sigma}-s = 2 - 2 \Theta (1-s) -s \geq ( 1- \frac{1}{\gamma})+ 1-s \geq 1-s.
$$
\end{proof}

From the above we can conclude Corollary \ref{BWs}. 
\begin{proof}[Proof of Corollary 1.13]
Since $ p ' > \frac{nq'}{n+q'}$ then we can find $ \theta \in (0,1) $ that depends only on $ p,q, n $ so that
$$
p ' > \frac{nq'}{n+\theta q'}.
$$
Then define $ \gamma = (1- \theta)^{-1}$. Then $ \gamma > 1 $, and $ \theta = 1- \frac{1}{ \gamma} $. Now upon removing finitely many $ s_{k}$'s we may assume that $ s_{k} \in ( 1- \frac{1}{4\gamma},1) $.
Therefore, we apply Proposition \ref{LWS} above with $ \Theta = 2 $  to get 
$$
[f]_{\dot{F}^{\sigma}_{p,2}} \lesssim_{n,p,q} \Vert f \Vert_{L^{p}} + ( 1- s_{k})^{1/q}[f]_{W^{s_{k}}_{p,q}} \leq \Lambda.
$$
For any $ \sigma < 1- 2 ( 1-s_{k})$. Hence the result follows by Lemma \ref{WTL}.

\end{proof}

\section{Relation between $W^{s}_{p,q}$ and $E^{s}_{p,q}$}
Here we prove Theorem \ref{A1} 
\begin{proof}[Proof of Theorem \ref{A1}]
From Theorem \ref{A2} we have 
$$
[f]_{W^{s}_{p,2}} \lesssim_{n,p,\theta} [f]_{E^{s}_{p,2}}.
$$
For the other direction we have 
\begin{equation*}
\begin{split}
& [f]_{E^{s}_{p,2}} \lesssim_{n,p,q} (1-s)^{-1/2} [f]_{\dot{F}^{s}_{p,2}} 
\\
& \lesssim_{n,p,\theta} (1-s)^{-1/2} (1-s) [f]_{W^{s}_{p,2}} \sup_{g} [g]_{W^{s}_{p',2}}.
\end{split}
\end{equation*}
Where the sup is taken over $ g \in C^{\infty}_{0}$ with norm $1$ in $\dot{F}^{s}_{p',2}$. Fix any such $g$ and using the upper bounds for $W^{s}_{p,q}$ we obtain
$$
[f]_{E^{s}_{p,2}} \lesssim_{n,p,q,\theta} (1-s)^{1/2}  [f]_{W^{s}_{p,2}} (1-s)^{-1/2}[g]_{\dot{F}^{s}_{p',2}}.
$$
But since $ g$ has norm $1$ as an element of $\dot{F}^{s}_{p',2}$ this gives the desired inequality.

\end{proof}

\section{Appendix A: Triebel-Lizorkin space via the Poisson kernel}
 In this section we sketch the proof of Lemma \ref{PT}. Of course nothing here is new, we just keep track of the dependency on $ s $ in the proof of Lemma \ref{PT} provided in \cite[Theorem 1.4]{CB}. We will not repeat the arguments given in \cite{CB}, we will merely indicate the dependency on $ s $ in their proof. In their proof, they work with an abstract kernel $ \psi$. In our case we are only concerned with the Poisson kernel; therefore, any lemma we reference we adapt it to the Poisson kernel.
Throughout this subsection $ f \in C^{\infty}_{0}$. 
\begin{lemma}\label{6.1}
Take any $ p,q \in (1,\infty)$ and let $ \phi(x) = \partial^{2}_{t} P_{1}(x).$ And denote by $ \varphi_{j}$ to mean the same function in Definition \ref{DF}. Take any $ m > \max \{ \frac{n}{p}, \frac{n}{q}\}$.
Then we have the pointwise inequality 
$$
\vert \varphi_{j}*\phi(x) \vert \lesssim_{n,p,q}\begin{cases}
2^{-jm}\frac{1}{(1+ \vert x \vert)^{[\Lambda]+n+1}}, & \text{ if } j \geq 0\\
2^{2j} 
\frac{2^{j}}{(1+ \vert x \vert)^{[\Lambda]+n+1}}
, & \text{if } j < 0.
\end{cases}
$$
Where $ \Lambda = \max \{ n/p, n/q \} $. 
\end{lemma}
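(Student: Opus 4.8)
The plan is to establish the pointwise convolution estimate in Lemma~\ref{6.1} by exploiting the smoothness and decay of the two kernels involved, together with the cancellation carried by $\varphi_j$. Recall that $\phi = \partial_t^2 P_1$ is a Schwartz-type function: from the explicit formulas for derivatives of the Poisson kernel written in the proof of Lemma~\ref{PKK}, one has $|\partial^\alpha \phi(x)| \lesssim_{\alpha,n} (1+|x|)^{-n-1}$ for every multi-index $\alpha$, and moreover $\hat\phi(\xi) = c|\xi|^2 e^{-2\pi|\xi|}$ vanishes to order $2$ at the origin. On the other side, $\widehat{\varphi_j}$ is supported in the annulus $\{2^{j}\cdot\tfrac14 \le |\xi| \le 2^{j}\cdot\tfrac12\}$ (after the $2^{-j}$ rescaling in Definition~\ref{DF}), so $\varphi_j$ has all moments vanishing and is well localized at scale $2^{-j}$.

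The proof then splits into the two regimes $j\ge 0$ and $j<0$, in each of which one side of the convolution is ``high frequency'' relative to the other. For $j\ge 0$, $\varphi_j$ lives at frequency $\sim 2^j \ge 1$ while $\hat\phi$ is smooth there; I would write $\varphi_j*\phi(x) = \int \varphi_j(y)\phi(x-y)\,dy$ and use the infinitely many vanishing moments of $\varphi_j$ to subtract a Taylor polynomial of $\phi$ of degree $M$ around $x$, gaining a factor $2^{-jM}$ from the scaling of $\varphi_j$ against the $M$-th order remainder; choosing $M = [\Lambda]+1 \ge m$ gives the claimed decay $2^{-jm}$ while the spatial decay $(1+|x|)^{-[\Lambda]+n+1}$ comes from the decay of $\phi$ and the localization of $\varphi_j$ (standard ``almost orthogonality'' bookkeeping, tracking where $y$ can be relative to $x$). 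For $j<0$, the roles reverse: now $\phi$ is the high-frequency piece since $\hat\phi(\xi)=c|\xi|^2e^{-2\pi|\xi|}$ vanishes to order $2$ at $\xi=0$ and $\widehat{\varphi_j}$ is supported near the origin at scale $2^j\to 0$; writing the convolution as $\int \phi(y)\varphi_j(x-y)\,dy$ and using that $\phi$ has (at least) two vanishing moments — or equivalently expanding $\varphi_j$ in a Taylor polynomial of degree $1$ and using $\int \phi = \int y\phi(y)\,dy = 0$ — produces the factor $2^{2j}$ (two derivatives of $\varphi_j$, each costing $2^{j}$), plus the extra $2^{j}$ from the $L^1$ normalization mismatch, yielding $2^{2j}\cdot 2^{j} = 2^{3j}$ as stated; the spatial factor again follows from the decay of the non-oscillating kernel.

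The main obstacle, and the only place requiring genuine care, is the simultaneous control of the spatial decay and the frequency gain: one must verify that the Taylor-remainder argument can be localized so that the bound $(1+|x|)^{-[\Lambda]+n+1}$ survives, which requires splitting the $y$-integral into the region $|y|\le |x|/2$ (where the remainder estimate and the decay of $\phi$ combine cleanly) and $|y|\ge |x|/2$ (where one instead uses brute-force decay of both factors and absorbs the loss). A secondary point is to make sure no hidden $s$-dependence enters: since $\phi$, $\varphi_j$, $m$, and $\Lambda$ depend only on $n,p,q$ and not on $s$, the constants produced are $s$-independent, which is exactly the content we need downstream. I would therefore carry out the two regimes as above, being explicit about the moment conditions used ($\varphi_j$: all moments; $\phi$: moments of order $0$ and $1$) and the scaling exponents, and leave the routine localization splitting to a one-line remark, as is standard in Littlewood--Paley estimates of this type.
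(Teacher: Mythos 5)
The paper does not actually prove this lemma: its entire ``proof'' is the citation to \cite[Lemma 2.4]{CB}, so there is no internal argument to compare against. Your strategy --- moment cancellation plus Taylor expansion, treating $j\ge 0$ (where $\varphi_j$ carries all the vanishing moments) and $j<0$ (where one uses the two vanishing moments of $\phi$, coming from $\hat\phi(\xi)=c|\xi|^2e^{-2\pi|\xi|}$ vanishing to second order at $\xi=0$) separately --- is indeed the standard route and the one underlying the cited result. As written, though, the sketch has concrete gaps. First, for $j\ge 0$ you choose the Taylor order $M=[\Lambda]+1$ and assert $[\Lambda]+1\ge m$; this is false in general, since $m$ is an \emph{arbitrary} number exceeding $\Lambda$ (it can be $100$ while $[\Lambda]+1=2$). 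You must take $M\ge m$ --- which is available, because $\varphi_j$ has all moments vanishing and every $x$-derivative of $\phi=\partial^2_tP_1$ exists and decays --- otherwise you only obtain $2^{-j([\Lambda]+1)}$, not $2^{-jm}$. Second, for $j<0$ the exponent bookkeeping is not a derivation: with the $L^1$-normalization $\varphi_j=2^{jn}\varphi(2^j\cdot)$, the second-order Taylor remainder is controlled by $\|\nabla^2\varphi_j\|_\infty\int|y|^2|\phi(y)|\,dy\lesssim 2^{j(n+2)}$, so the ``normalization mismatch'' contributes $2^{jn}$, not $2^{j}$; the stated $2^{3j}$ then follows only because $2^{j(n+2)}\le 2^{3j}$ for $j<0$ and $n\ge 1$, which should be said explicitly rather than attributed to a single extra factor of $2^j$.

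The more serious issue is the spatial factor $(1+|x|)^{-([\Lambda]+n+1)}$, which you defer to ``a one-line remark.'' It is exactly the non-routine part, and only for $j\ge0$ is it genuinely easy (there $\varphi_j$ is localized at scale $2^{-j}\le 1$ and has all moments vanishing, so each extra Taylor order buys both a factor $2^{-j}$ and a power of spatial decay). For $j<0$ the Taylor-remainder term is localized only at scale $2^{-j}\gg 1$, i.e.\ it is bounded by $2^{j(n+2)}(1+2^{j}|x|)^{-N}$, and $(1+2^{j}|x|)^{-N}$ is far weaker than $(1+|x|)^{-([\Lambda]+n+1)}$ throughout the range $1\ll|x|\ll 2^{-j}$; in the complementary region $|y|\gtrsim|x|$ the only decay available is that of $\phi$ itself, namely $(1+|x|)^{-(n+3)}$, which falls short of $(1+|x|)^{-([\Lambda]+n+1)}$ whenever $[\Lambda]>2$. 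So the splitting you propose does not obviously produce the stated localization for general $p,q,n$: what the computation naturally yields for $j<0$ is a bound of the shape $2^{2j}\,2^{jn}(1+2^{j}|x|)^{-N}$, and reconciling that with the form asserted in the lemma is precisely the step that must be written out rather than remarked away.
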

The proof of the above Lemma can be found in \cite[Lemma 2.4]{CB}.
Next, Bui and Candy prove a pointwise inequality that relates the Petree maximal function of $ \phi_{j}*f$ to that of $ \varphi_{j}*f$, see \cite[Page 36]{CB}. 
\begin{lemma}
Let $ p,q \in (1,\infty)$ and $ \Lambda = \max \{ n/p, n/q \}$ as above, and $ \lambda > \Lambda$. Then if we denote by $ \phi^{*}_{j}f $ to mean the Petree maximal function in Definition \ref{PMax}, and similarly $ \varphi^{*}_{j}f$. Then we get 
$$
2^{js} \phi^{*}_{j}f(x) \lesssim_{n,p,q} \sum_{j \in \mathbb{Z}} a_{j-k} \varphi^{*}_{j}f(x).
$$
Where $ s \in \mathbb{R}$, and 
$$
a_{j} = 2^{-js} \sup_{x} \int_{\mathbb{R}^{n}} \vert \phi * \varphi_{j} (x) \vert \frac{(1+ 2^{j} \vert x+ y \vert)^{\lambda} }{(1+ \vert x \vert)^{\lambda}} dy.
$$
\end{lemma}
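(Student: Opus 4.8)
The plan is to run the argument of \cite[Theorem~1.4, Page~36]{CB} essentially verbatim, the only new point being to keep track of the parameter $s$ and to verify that the resulting coefficients $(a_{m})_{m\in\mathbb{Z}}$ form a summable sequence uniformly for $s$ in the relevant range. Recall $\mathcal{F}(\phi_{j})(\xi)=\hat{\phi}(2^{-j}\xi)$ with $\hat{\phi}(\eta)=|\eta|^{2}e^{-2\pi|\eta|}$ (up to a harmless constant); the two features we exploit are that $\hat{\phi}$ vanishes to \emph{second} order at the origin and decays rapidly at infinity, which is exactly what Lemma~\ref{6.1} quantifies. Throughout, $\lambda>\Lambda=\max\{n/p,n/q\}$ is fixed.

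\emph{Step 1: a reproducing identity built from $\varphi$.} First I would fix a Schwartz function $\omega$ with $\hat{\omega}$ supported where $\hat{\varphi}\equiv1$ and with $\sum_{m\in\mathbb{Z}}\hat{\varphi}(2^{-m}\xi)\hat{\omega}(2^{-m}\xi)=1$ for $\xi\neq0$ (a standard Calder\'on reproducing identity, cf.\ \cite{CB}; if the plateau of $\hat\varphi$ as normalised is too thin for this one first enlarges it, which changes nothing). For $f\in C^{\infty}_{0}$ this gives $f=\sum_{k\in\mathbb{Z}}\varphi_{k}*\omega_{k}*f$ in the Schwartz topology, hence
\begin{equation*}
\phi_{j}*f=\sum_{k\in\mathbb{Z}}(\phi_{j}*\varphi_{k})*(\omega_{k}*f).
\end{equation*}
Since $\hat{\omega}$ is supported where $\hat{\varphi}\equiv1$ one has $\omega_{k}*f=\omega_{k}*(\varphi_{k}*f)$; the rapid decay of $\omega$ together with submultiplicativity of the Peetre weight, $1+|a+b|\le(1+|a|)(1+|b|)$, then yields the pointwise bound $|\omega_{k}*f(y)|\lesssim_{n,\lambda}\varphi_{k}^{*}f(x)\,|1+2^{k}(y-x)|^{\lambda}$ for all $x,y$.

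\emph{Step 2: one summand at a time.} Fixing $x,z$ and inserting this bound into each term,
\begin{equation*}
\frac{|(\phi_{j}*\varphi_{k})*(\omega_{k}*f)(x+z)|}{|1+2^{j}z|^{\lambda}}\lesssim\varphi_{k}^{*}f(x)\int_{\mathbb{R}^{n}}|(\phi_{j}*\varphi_{k})(x+z-y)|\,\frac{|1+2^{k}(y-x)|^{\lambda}}{|1+2^{j}z|^{\lambda}}\,dy .
\end{equation*}
Taking the supremum over $z$ and summing over $k$, the whole problem reduces to the kernel estimate that the $2^{js}$--weighted supremum of the integral above is $\lesssim_{n,p,q}a_{k-j}$ (this is the coefficient written $a_{j-k}$ in the statement, with $j$ there playing the role of the summation index). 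Since $\phi_{j}*\varphi_{k}$ is, up to the obvious dilation by $2^{j}$, a rescaled copy of $\phi*\varphi_{k-j}$, a change of variables in $y$ converts $|1+2^{k}(y-x)|^{\lambda}/|1+2^{j}z|^{\lambda}$ into the ratio $(1+2^{k-j}|\cdot|)^{\lambda}/(1+|\cdot|)^{\lambda}$ appearing in the definition of $a_{k-j}$, the homogeneity factors recombining into the prefactor $2^{-(k-j)s}$ there; the size and decay of $\phi*\varphi_{m}$ needed to carry this out are precisely those of Lemma~\ref{6.1}. This is bookkeeping, identical to \cite{CB}. (The lemma is then used by raising to the $q$th power, summing in $j$, raising to $p/q$, integrating in $x$, and applying discrete Young's inequality, which reduces everything to $\sum_{m}a_{m}<\infty$.)

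\emph{Where the work is.} The only step requiring attention — and the reason the lemma is revisited here — is that $\sum_{m}a_{m}$ must be controlled \emph{uniformly in $s$} as $s\uparrow1$. By Lemma~\ref{6.1}, for $m\le0$ the double zero of $\hat{\phi}$ at the origin forces the gain $\hat{\phi}(2^{-m}\cdot)\lesssim2^{2m}$ on the unit annulus, so $a_{m}\lesssim2^{-ms}2^{2m}=2^{m(2-s)}$, summable over $m\le0$ exactly when $s<2$; for $m\ge0$ the decay of $\hat{\phi}$ gives $a_{m}\lesssim2^{-ms}2^{-mM}$ with $M$ as large as we please, trivially summable. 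Hence $\sum_{m}a_{m}\lesssim_{n,p,q}1$ with a bound depending only on a lower bound for $2-s$, in particular uniformly for $s\in[0,1]$ — and, just as in the discussion following Lemma~\ref{PT}, replacing $\partial_{t}^{2}P$ by $\partial_{t}^{3}P$ (a triple zero at the origin) would push the threshold to $s<3$. Every other ingredient is exactly that of \cite{CB}.
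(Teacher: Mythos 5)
Your proposal is correct and takes essentially the same route as the paper, which for this lemma simply defers to the Calder\'on-reproducing-formula argument of \cite[p.~36]{CB}: your Steps 1--2 reproduce that argument, and your concluding summability check for $\sum_m a_m$ (the $2^{m(2-s)}$ gain for $m\le 0$ from the double zero of $\hat\phi$, rapid decay for $m\ge 0$) matches the display that follows the lemma in the paper's appendix. The points you flag (widening the plateau of $\hat\varphi$, the index clash $a_{j-k}$ versus $a_{k-j}$ in the statement) are genuine typographical issues in the source that you resolve correctly.
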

With the two results above we can proceed to prove Lemma \ref{PT}. 
Using first Lemma \ref{6.1} one get, see \cite[Page 37]{CB}
$$
a_{j} \lesssim_{n,p,q}  \begin{cases}
2^{j(\lambda -m - s)} \int_{\mathbb{R}^{n}} (1+ \vert y \vert)^{\lambda- n -1 - [\Lambda]}, & \text{ if } j \geq 0\\
2^{j(2-s)} \int_{\mathbb{R}^{n}} 2^{jn}( 1 + 2^{j} \vert y \vert)^{-n-1 - [\Lambda]} dy 
, & \text{if } j < 0.
\end{cases}
$$
Using this we can prove Lemma \ref{PetreeM}, which will imply one direction of Lemma \ref{PT}. First, since the Petree maximal function is a decreasing function in $ \lambda$, we may assume that $ \Lambda < \lambda < \Lambda +1 $. Then
 choosing $m > \lambda +1 $ and $ s \in [0,1]$ we get the following inequality 
$$
\sum_{j} \vert 2^{js} \phi^{*}_{j}f(x) \vert^{q} \lesssim_{n,p,q,\lambda} \left( \frac{1}{2-s} + \frac{1}{m-\lambda -s } \right) \sum_{j} \vert 2^{js} \varphi^{*}_{j}f(x) \vert^{q}.
$$
Since $ s \in [0,1]$ and $ m > \lambda +1 $ we get the inequality with no dependency on $ s $. Now the result follows once we raise both sides to power $ p/q $ and integrate in $ x $, then we use Petree characterization, \cite{Peetre}. This proves Lemma \ref{PetreeM}, and hence one direction of Lemma \ref{PT}. Finally, we prove the other direction. 
\begin{lemma}\label{6.3}
Let $ \lambda \in ( \Lambda , [\Lambda]+2 ) $, and $m > \lambda + 1 $. Then we have for $ r \in ( 0, \infty)$ the following inequality 
$$
\varphi^{*}_{j}f(x) \lesssim_{n,p,q,r} \sum_{k \geq j-c} 2^{(j-k)(m- \lambda)} \int_{\mathbb{R}^{n}} 2^{jn} \frac{ \vert \phi_{k} * f(x-y)\vert^{\lambda r}}{(1+ 2^{k} \vert y \vert)^{\lambda r}}.
$$
Where $ c $ depends only on $ n$.
\end{lemma}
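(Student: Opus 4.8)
The plan is to reproduce the classical Peetre maximal function argument while tracking the frequency supports carefully, so that the one-sided range $k\ge j-c$ appears.

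\textbf{Reproducing formula.} Since $\widehat{\phi}(\xi)=c\,|\xi|^{2}e^{-2\pi|\xi|}$ is strictly positive for $\xi\neq0$, the series $S(\xi)=\sum_{k\in\mathbb{Z}}\widehat{\phi}(2^{-k}\xi)^{2}$ converges (each term vanishes to order two at the origin and decays exponentially at infinity), satisfies $S(2\xi)=S(\xi)$, and is bounded above and below by positive constants on $\mathbb{R}^{n}\setminus\{0\}$. Setting $\widehat{\Theta}=\widehat{\phi}/S$ yields $\sum_{k}\widehat{\Theta}(2^{-k}\xi)\widehat{\phi}(2^{-k}\xi)=1$ for $\xi\neq0$, hence the Calder\'on identity $f=\sum_{k}\Theta_{k}*\phi_{k}*f$ for $f\in C_{0}^{\infty}$. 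Here $\Theta$ is \emph{not} band-limited: $\widehat{\Theta}$ inherits the order-two vanishing at $0$ and the exponential decay at infinity of $\widehat{\phi}$, the factor $1/S$ being a smooth bounded multiplier, so $\Theta$ satisfies (together with its relevant derivatives) pointwise bounds of the same type as $\phi$, and $\varphi_{j}*\Theta_{k}$ obeys an estimate as in Lemma~\ref{6.1}.

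\textbf{Kernel bound and the cutoff $k\ge j-c$.} Convolving the reproducing formula with $\varphi_{j}$ gives $\varphi_{j}*f=\sum_{k}\varphi_{j}*\Theta_{k}*\phi_{k}*f$. Since $\widehat{\varphi_{j}*\Theta_{k}}(\xi)=\widehat{\varphi}(2^{-j}\xi)\widehat{\Theta}(2^{-k}\xi)$ is supported in $|\xi|\sim2^{j}$, there $|2^{-k}\xi|\sim2^{j-k}$; thus for $k<j-c$ (with $c$ fixed by $\operatorname{supp}\widehat{\varphi}$) the factor $\widehat{\Theta}(2^{-k}\xi)$ is exponentially small and those terms are negligible, while for $k\ge j-c$ Lemma~\ref{6.1}, rescaled to the pair $\varphi_{j},\Theta_{k}$, gives a kernel bound $|\varphi_{j}*\Theta_{k}(u)|\lesssim_{n,p,q}2^{(j-k)(m-\lambda)}\omega_{j}(u)$, where $\omega_{j}$ is an $L^{1}$ bump adapted to scale $2^{-j}$ with tail $(1+2^{j}|u|)^{-[\Lambda]-n-1}$; the gain $2^{(j-k)(m-\lambda)}$ comes from combining the $m$ cancellation conditions of $\varphi$ (the factor $2^{-(j-k)m}$ of Lemma~\ref{6.1}) with the loss $\sim2^{2(j-k)}$ of the non-band-limited factor.

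\textbf{Peetre's power trick and conclusion.} The Fourier transform of $\phi_{k}*f$ is concentrated in $|\xi|\sim2^{k}$ with smooth, rapidly decaying cut-offs on both sides, so $\phi_{k}*f$ behaves like a band-limited function at scale $2^{-k}$; consequently, for every $t>0$ one has a Plancherel--Polya type bound $|\phi_{k}*f(w)|\lesssim_{t}\big(\dashint_{B(w,2^{-k})}|\phi_{k}*f|^{t}\big)^{1/t}$, uniformly in $k$. Writing $\varphi_{j}*f(x+z)=\sum_{k\ge j-c}\int\varphi_{j}*\Theta_{k}(x+z-w)\phi_{k}*f(w)\,dw$ up to a negligible tail, inserting this bound with $t=\lambda r$, enlarging the averaging balls via the rapid decay of $\varphi_{j}*\Theta_{k}$, dominating by the Hardy--Littlewood maximal function, and finally using $\lambda>\Lambda$ and $\lambda r$ large enough to absorb $(1+2^{j}|z|)^{-\lambda}$ after taking $\sup_{z}$, yields the asserted inequality (for the range $\lambda r>n$ in which the right side is finite). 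The step I expect to be the main obstacle is exactly this last one: proving the Plancherel--Polya bound for the non-band-limited $\phi_{k}*f$ with a constant independent of $k$ (either by estimating the reproducing kernel directly or by subordinating $\phi_{k}*f$ to a genuinely band-limited pair), and then summing in $k$ (which requires $m-\lambda>0$, given $m>\lambda+1$) so that every constant depends only on $n,p,q,\lambda,r$ and on no smoothness parameter --- this $s$-uniformity being precisely what the appendix is designed to secure.
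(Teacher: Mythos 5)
First, note that the paper does not actually prove this lemma: it is quoted verbatim as \cite[Corollary 4.3]{CB} (a Rychkov-type local-means estimate), so your reconstruction has to be measured against that standard argument. Your architecture (Calder\'on reproducing formula, kernel bounds for $\varphi_j*\Theta_k$, Peetre's power trick) is the right family of ideas, but the proposal has a genuine hole exactly where you flag it, and it is not a deferrable technicality. The sub-mean-value bound $|\phi_k*f(w)|\lesssim_t\bigl(\dashint_{B(w,2^{-k})}|\phi_k*f|^t\bigr)^{1/t}$ with $t<1$ is \emph{false} for general functions that are not band-limited; it is a genuine Plancherel--Polya phenomenon, and $\widehat{\phi_k*f}=\widehat{\phi_k}\widehat{f}$ is not compactly supported. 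The correct substitute is precisely the inequality you are trying to prove (Rychkov's Lemma~2 / \cite[Cor.~4.3]{CB}): it is obtained not by a pointwise sub-mean-value estimate at a single scale $k$, but by inserting $|\phi_k*f(w)|\le(\phi_k^{*}f(x))^{1-r}\,|\phi_k*f(w)|^{r}\,(1+2^{k}|x-w|)^{\lambda(1-r)}$ and then \emph{absorbing} the maximal-function factors through a bootstrap (first with an a priori finiteness assumption on $\sup_k 2^{-kM}\phi_k^*f$, valid for $f\in C^\infty_0$, then iterating in the decay parameter). Without that bootstrap your argument is circular: the sum over $k\ge j-c$ and the "Plancherel--Polya" step are one and the same mechanism, not two independent steps.

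Two further points would need repair even granting the above. (i) The decay factor $2^{(j-k)(m-\lambda)}$ is only needed, and only nontrivial, in the regime $k>j$; there $\Theta_k$ sits at the \emph{finer} scale, so the rate of decay of $\varphi_j*\Theta_k$ is governed by the order of vanishing of $\widehat{\Theta}$ at the origin (equivalently the moments of $\Theta$), not by the cancellation of $\varphi$ as you assert --- the factor $2^{-jm}$ of Lemma~\ref{6.1} lives in the opposite regime. With your choice $\widehat{\Theta}=\widehat{\phi}/S$ the vanishing order is only $2$, giving rate $2$, which suffices for the application in the appendix (only rate $>1$ uniformly in $s\in[0,1]$ is used) but not for the lemma as stated with arbitrary $m>\lambda+1$; one must take $\widehat{\Theta}=\widehat{\phi}^{\,2L-1}/\sum_k\widehat{\phi}(2^{-k}\cdot)^{2L}$ with $L$ large, or use Rychkov's construction of $\Theta$ with prescribed moments. (ii) Declaring the terms $k<j-c$ "negligible" is not enough: they are exponentially small in operator norm but still involve $\phi_k*f$ at scales absent from the right-hand side, so they must be re-routed, e.g.\ by writing $\widehat{\varphi}(2^{-j}\xi)\widehat{\Theta}(2^{-k}\xi)\widehat{\phi}(2^{-k}\xi)=m_{j,k}(\xi)\widehat{\phi}(2^{-j}\xi)$ with $m_{j,k}$ supported in the annulus $|\xi|\sim 2^{j}$ and exponentially small with all derivatives, or better by starting from the dilated one-sided Calder\'on formula $f=\sum_{k\ge j}\Theta^{(j)}_k*\phi_k*f$, which is how the one-sidedness is actually obtained in \cite{CB}. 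A cleaner opening move, by the way, is to use that $\widehat{\varphi}$ is supported where $\widehat{\phi}$ is bounded below, so $\varphi_j=\rho_j*\phi_j$ with $\widehat{\rho}=\widehat{\varphi}/\widehat{\phi}\in C^\infty_0$, whence $\varphi_j^*f\lesssim\phi_j^*f$ in one line; all the real work is then in the estimate for $\phi_j^*f$ described above.
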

The lemma above is \cite[Corollary 4.3]{CB}. The authors then use this pointwise inequality with $ r \in ( 0, \min \{ p,q \})$ so that $ \frac{n}{r} \in ( \Lambda, \lambda)$. In particular, $ r \lambda >n$. Then if we use the known inequality 
$$
 \int_{\mathbb{R}^{n}}2^{jn} \frac{ \vert g(x-y)}{(1+ 2^{k} \vert y \vert)^{N}} \lesssim_{N,n} \mathcal{M}(g)(x).
 $$
Which holds whenever $ N >n$, then we combine this with Lemma \ref{6.3} to get 
$$
( 2^{js} \varphi^{*}_{j}f(x) )^{r} \lesssim_{n,p,q} \sum_{k > 1-c} 2^{-k(m+s-\lambda)} \mathcal{M}(( 2^{(k+j)s} \vert \phi_{k+j}*f \vert^{r}))(x)
$$
Raising both sides to power $ q/r$ and summing over $j$ we get
$$
\sum_{j} \vert 2^{js} \varphi^{*}_{j}f(x)\vert^{q} \lesssim_{n,p,q} \sum_{j} \left( \sum_{k > 1-c} 2^{-k(m+s-\lambda)} \mathcal{M}(( 2^{(k+j)s} \vert \phi_{k+j}*f \vert^{r}))(x)
\right)^{q/r}.
$$
We raise both sides to power $ \frac{p}{q} $ and integrate in $ x $ to get, upon using Minkowki's inequality and Stein Fefferman inequality
$$
\int_{\mathbb{R}^{n}} \left(\sum_{j} \vert 2^{js} \varphi^{*}_{j}f(x)\vert^{q}\right)^{p/q} \lesssim_{n,p,q} \frac{1}{(m+s-\lambda)} \int_{\mathbb{R}^{n}} \left( \sum_{j} \vert \phi_{j}*f \vert^{q} \right)^{p/q}.
$$
Since $ m > \lambda +1$, then the inequality holds with no dependency on $ s$. 
This gives us the other direction.

\section{Appendix B: Bochner integrals and Littlewood Paley for mixed $L^{p}$ spaces}
In this appendix we provide the proof for Lemma \ref{Kformixed}. The proof is very similar to the Littlewood-Paley theorem for $L^{p}$ spaces. However, we need to use Bochner integrals and think of a function $ f \in L^{p,q}_{\alpha}( \R^{n} \times \Omega)$ as an element in the Bochner Lebesgue space $L^{p}( B)$ for an appropiate Banach space. 
\subsection{Bochner integrals}
Throughout this subsection let $B$ denote a separable reflexive Banach space, and denote by $B'$ its dual. Then for $ f: \mathbb{R}^{n} \to B$ we say that $ f $ is (strongly) measurable if for every linear functional $ b' \in B' $ we have that the scalar valued function $ b'(f(x)) $ is measurable as a function of $ x $. We define $ L^{p}(B)$ as the space of all (strongly) measurable functions $ f $ with 
$$
\int_{\mathbb{R}^{n}} | f(x) |^{p}_{B} dx < \infty.
$$
a more accurate notation would be $L^{p}(\mathbb{R}^{n}; B)$. However, since the domain will always be $\mathbb{R}^{n}$, we will always write $ L^{p}(B)$ instead of $L^{p}(\mathbb{R}^{n};B)$.
The following lemma can be found in \cite[Theorem 1.3.21]{H16}
\begin{lemma}\label{Bd}
Let $ p \in [1,\infty)$ then $L^{p}(B)$ is a Banach space with a dual given by $ L^{p'}(B')$ via the pairing 
$$
\langle g, f \rangle = \int_{\mathbb{R}^{n}} g(x) \cdot f(x) dx. 
$$
where $ g \in L^{p'}(B')$ and $ f \in L^{p}(B)$.
\end{lemma}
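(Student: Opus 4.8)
The plan is to prove the two assertions—completeness of $L^p(B)$ and the duality $L^p(B)^* = L^{p'}(B')$—separately, along the classical route for Bochner spaces. The only nonroutine ingredient is the Radon--Nikod\'ym property, which is available here for a good reason: since $B$ is separable and reflexive, $B'$ is again separable and reflexive, and reflexive spaces enjoy the Radon--Nikod\'ym property. For completeness I would first record that simple functions are dense in $L^p(B)$ (this is where $p<\infty$ enters), and then run the usual Riesz--Fischer argument: a Cauchy sequence $(f_k)$ has a subsequence for which $\sum_j |f_{k_{j+1}}-f_{k_j}|_B$ converges in $L^p(\mathbb{R}^n)$, hence $(f_{k_j})$ converges pointwise a.e. to a strongly measurable limit $f$, and a Fatou/dominated-convergence argument applied to the scalar functions $|f_k(x)-f(x)|_B^p$ yields $f\in L^p(B)$ and $f_k\to f$ in $L^p(B)$. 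Reflexivity is not needed for this part.

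Next I would establish the isometric embedding $L^{p'}(B')\hookrightarrow L^p(B)^*$. For $g\in L^{p'}(B')$ and $f\in L^p(B)$, the scalar function $x\mapsto g(x)\cdot f(x)$ is measurable, and the pointwise bound $|g(x)\cdot f(x)|\le |g(x)|_{B'}\,|f(x)|_B$ together with scalar H\"older gives $|\langle g,f\rangle|\le \|g\|_{L^{p'}(B')}\|f\|_{L^p(B)}$; so $g$ induces a functional of norm at most $\|g\|_{L^{p'}(B')}$. For the reverse inequality I would use separability of $B$ to perform a measurable selection $x\mapsto f(x)$ with $|f(x)|_B\le 1$ and $g(x)\cdot f(x)\ge (1-\varepsilon)|g(x)|_{B'}$ (first on a set where $|g|_{B'}$ is bounded and of finite measure, then exhaust), normalize so that $f\in L^p(B)$, and let $\varepsilon\to 0$. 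Thus the embedding preserves norms.

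The substance of the lemma is surjectivity. Given $\Phi\in L^p(B)^*$, I would define a $B'$-valued set function $\mu$ on the measurable subsets of finite Lebesgue measure by $\mu(A)[b] := \Phi(\mathbf 1_A\, b)$ for $b\in B$. One checks that $\mu$ is finitely additive into $B'$, vanishes on null sets, and has bounded variation—in fact finite $p'$-variation—using $|\mu(A)[b]|\le \|\Phi\|\,|b|_B\,|A|^{1/p}$; norm countable additivity in $B'$ follows by a uniform-integrability (Vitali--Hahn--Saks type) argument. Since $B'$ has the Radon--Nikod\'ym property, there is $g\in L^1(\mathbb{R}^n;B')$ with $\mu(A)=\int_A g\,dx$, i.e. $\Phi(\mathbf 1_A b)=\int_A g(x)\cdot b\,dx$; strong measurability of $g$ is built into the vector Radon--Nikod\'ym theorem (alternatively one invokes Pettis's theorem, using that $B'$ is separable). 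Testing the isometric embedding of the previous paragraph against simple functions upgrades $g$ from $L^1(B')$ to $L^{p'}(B')$ with $\|g\|_{L^{p'}(B')}\le\|\Phi\|$, and $\Phi$ agrees with $f\mapsto\langle g,f\rangle$ on simple functions, hence on all of $L^p(B)$ by density.

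The step I expect to be the genuine obstacle is this last one, and within it the passage from a merely $L^1(B')$ representing function to one in $L^{p'}(B')$ that is strongly measurable: this is precisely where separability and reflexivity of $B$ are indispensable (via the Radon--Nikod\'ym property of $B'$), and it explains the hypotheses in the statement. Since the result is classical, in the paper I would simply cite \cite[Theorem 1.3.21]{H16}; the sketch above records the argument one would reconstruct.
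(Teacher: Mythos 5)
Your sketch is correct and is the standard argument (Riesz--Fischer for completeness, H\"older plus measurable selection for the isometric embedding, and the Radon--Nikod\'ym property of $B'$ --- available since $B$ reflexive implies $B'$ reflexive --- for surjectivity). The paper gives no proof at all and simply cites \cite[Theorem 1.3.21]{H16}, which is exactly what you propose doing; your sketch faithfully records the argument behind that reference.
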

We will also need the following density result, which can be found in \cite[Proposition 5.5.6]{GC}
\begin{lemma}\label{density}
Let $ p \in [1,\infty)$. Denote by $L^{p}(\mathbb{R}^{n}) \otimes B$ to be the subspace of $ L^{p}(B)$ consisting of linear combinations of functions of the form $ F(x) = f(x) b $. Where $ b \in B$ and $ f $ is a real valued $L^{p}$ function. Then $L^{p}(\mathbb{R}^{n}) \otimes B$ is dense in $L^{p}(B)$.
\end{lemma}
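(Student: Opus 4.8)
The plan is to approximate $f$ by simple functions and then pass to the limit using dominated convergence in the vector-valued setting. Fix $f \in L^p(B)$ and $\varepsilon > 0$; the goal is to produce some $g \in L^p(\mathbb{R}^n) \otimes B$ with $\| f - g \|_{L^p(B)} < \varepsilon$.

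First I would exploit separability of $B$ to manufacture simple approximants carrying a pointwise norm bound. Fixing a sequence $\{ b_j \}_{j \ge 1}$ dense in $B$ with $b_1 = 0$, set $s_k(x)$ to be the element $b_j$, $j \le k$, of smallest index minimizing $\| f(x) - b_j \|_B$. Since $B$ is separable, the weak measurability in the definition coincides with strong measurability (Pettis), so in particular each $x \mapsto \| f(x) - b_l \|_B$ is measurable; hence the level sets $\{ s_k = b_j \}$ are measurable and $s_k = \sum_{j \le k} \mathbf{1}_{ \{ s_k = b_j \} }\, b_j$ is a genuine simple function. Density gives $\| s_k(x) - f(x) \|_B = \min_{l \le k} \| f(x) - b_l \|_B \to 0$ for every $x$, and since $b_1 = 0$ this minimum never exceeds $\| f(x) \|_B$, so $\| s_k(x) \|_B \le \| s_k(x) - f(x) \|_B + \| f(x) \|_B \le 2 \| f(x) \|_B$ for all $k$ and $x$.

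Next I would pass to the limit: for a.e.\ $x$ one has $\| s_k(x) - f(x) \|_B^p \to 0$ together with the majorant $\| s_k(x) - f(x) \|_B^p \le ( 3 \| f(x) \|_B )^p$, which is integrable because $f \in L^p(B)$, so dominated convergence yields $\| s_k - f \|_{L^p(B)} \to 0$; I then fix $k$ with $\| s_k - f \|_{L^p(B)} < \varepsilon$. It remains to see that this particular $s_k$ lies in $L^p(\mathbb{R}^n) \otimes B$. Writing $s_k = \sum_{j \le k} \mathbf{1}_{A_j}\, b_j$ with $A_j = \{ s_k = b_j \}$, the $j = 1$ term is zero, while for $j \ge 2$ the bound from the previous step gives $\| b_j \|_B\, \mathbf{1}_{A_j}(x) \le \| s_k(x) \|_B \le 2 \| f(x) \|_B$ a.e., so $A_j \subseteq \{ \| f \|_B \ge \| b_j \|_B / 2 \}$ has finite Lebesgue measure (this is where $p < \infty$ is used). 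Thus $\mathbf{1}_{A_j} \in L^p(\mathbb{R}^n)$ and $s_k = \sum_{2 \le j \le k} \mathbf{1}_{A_j}\, b_j$ is a finite linear combination of products of a scalar $L^p$ function and an element of $B$, i.e.\ $s_k \in L^p(\mathbb{R}^n) \otimes B$, proving density.

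The only genuinely delicate step is the construction of the $s_k$ in the second paragraph: one needs simple functions approximating $f$ pointwise \emph{while} retaining the pointwise control $\| s_k \|_B \lesssim \| f \|_B$, and it is precisely this control that makes the subsequent dominated-convergence and finite-measure arguments go through. This is where separability of $B$ and the definition of strong measurability are essential. Everything downstream is routine, though it should be noted that $p < \infty$ is not cosmetic — the statement genuinely fails for $p = \infty$.
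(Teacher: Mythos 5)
Your proof is correct. The paper does not prove this lemma at all --- it simply cites \cite[Proposition 5.5.6]{G} --- and your argument is the standard one for that cited result: Pettis measurability, simple-function approximation with the pointwise bound $\Vert s_k(x)\Vert_B \le 2\Vert f(x)\Vert_B$ obtained by including $b_1=0$ among the candidates, dominated convergence, and Chebyshev to see that the level sets have finite measure so their indicators lie in $L^p$ for $p<\infty$. The only (cosmetic) point to tidy is that the sets $A_j$ should be indexed by the chosen index rather than by the value $b_j$, so that the representation $s_k=\sum_j \mathbf{1}_{A_j} b_j$ does not double-count if the dense sequence has repetitions; assuming the $b_j$ distinct disposes of this.
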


For a comprehensive treatment of Bochner spaces we refer the reader to \cite{BI}, and \cite{H16}. In our case, we will only consider very specific Banach spaces. Namely, let $ m \in \mathbb{N}$, $ q \in (1,\infty)$ and let $ \Omega \subset \mathbb{R}^{m}$ be an open subset. Then consider a continuous, positive function 
$ \alpha: \Omega \to \mathbb{R} $. Then we define $B^{q}_{\Omega, m,\alpha}$ as follows 
$$
B^{q}_{\Omega,m,\alpha} = \{ f \in L_{loc}^{1}(\Omega) : \int_{\Omega} \vert f(r) \vert^{q} \alpha(r) dr < \infty \}. 
$$
Clearly $B^{q}_{\Omega,m,\alpha}$ is a Banach space under the norm 
$$
\vert f \vert_{B_{\Omega,m,\alpha}} = \left( \int_{\Omega} \alpha (r) \vert f(r) \vert^{q} dr\right)^{1/q}
$$ with dual $B^{q'}_{\Omega,m,\alpha} $. Indeed, $B_{\Omega,m,q}$ is nothing but the usual $L^{q}$ space with respect to the measure induced by $ \alpha$. Now for $p \in (1,\infty)$ we study the space $ L^{p}(B^{q}_{\Omega,m,\alpha})$. The next lemma is an immediate consequence of Lemma \ref{Bd}
\begin{lemma}
The dual of $L^{p}(B^{q}_{\Omega,m,\alpha})$ is the space $L^{p'}(B^{q'}_{\Omega,m,\alpha})$.
\end{lemma}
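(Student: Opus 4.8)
The plan is to recognise $B^{q}_{\Omega,m,\alpha}$ as a weighted Lebesgue space and then quote the Bochner duality Lemma \ref{Bd}; the only real content here is hidden in verifying its standing hypotheses for this particular Banach space.

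First I would note that $B^{q}_{\Omega,m,\alpha}$ coincides with $L^{q}(\Omega,\mu)$, where $\mu$ is the Borel measure on $\Omega$ with density $\alpha$ against Lebesgue measure, $d\mu(r)=\alpha(r)\,dr$; the local integrability constraint in the definition is automatic, since $\alpha$ is continuous and hence bounded away from $0$ on compact subsets of the open set $\Omega$. Exhausting $\Omega$ by an increasing sequence of compact sets shows that $\mu$ is $\sigma$-finite, and $\Omega$ is second countable, so for $q\in(1,\infty)$ the space $L^{q}(\Omega,\mu)$ is separable and reflexive, with dual $L^{q'}(\Omega,\mu)=B^{q'}_{\Omega,m,\alpha}$ under the pairing $\langle g,f\rangle_{B}=\int_{\Omega}g(r)f(r)\,\alpha(r)\,dr$, which is finite by H\"{o}lder's inequality against $\mu$. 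These are precisely the properties that Lemma \ref{Bd} requires of its target space.

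Having established that $B:=B^{q}_{\Omega,m,\alpha}$ is separable and reflexive with dual $B'=B^{q'}_{\Omega,m,\alpha}$, I would simply apply Lemma \ref{Bd} with this choice of $B$: for $p\in(1,\infty)$ it yields that the dual of $L^{p}(B)$ is $L^{p'}(B')$, i.e.\ the dual of $L^{p}(B^{q}_{\Omega,m,\alpha})$ is $L^{p'}(B^{q'}_{\Omega,m,\alpha})$, realised through
$$
\langle G,F\rangle=\int_{\mathbb{R}^{n}}\langle G(x),F(x)\rangle_{B}\,dx=\int_{\mathbb{R}^{n}}\int_{\Omega}G(x,r)\,F(x,r)\,\alpha(r)\,dr\,dx .
$$
The main (and essentially only) obstacle is the routine measure-theoretic check in the previous paragraph that $B^{q}_{\Omega,m,\alpha}$ meets the separability and reflexivity hypotheses of Lemma \ref{Bd}; once that is in hand nothing further is needed, which is why the statement is recorded as an immediate consequence of Lemma \ref{Bd}.
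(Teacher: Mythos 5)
Your proposal is correct and matches the paper's (implicit) argument exactly: the paper also identifies $B^{q}_{\Omega,m,\alpha}$ with $L^{q}(\Omega,\mu)$ for the measure induced by $\alpha$ and records the lemma as an immediate consequence of Lemma \ref{Bd}. Your extra verification that this weighted $L^q$ space is separable and reflexive with dual $B^{q'}_{\Omega,m,\alpha}$ is the routine check the paper leaves unstated, and it is carried out correctly.
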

Notice that elements of $ L^{p}(B^{q}_{\Omega,m,\alpha})$ can be identified with 
 real valued functions with domain on $\mathbb{R}^{n} \times \Omega$ such that 
 $$
 \Vert \vert f \vert_{B^{q}_{\Omega , m , \alpha)}} \Vert_{L^{p}(\mathbb{R}^{n})} < \infty.
 $$
Such an identification can be realized by writing for each $ f \in L^{p}(B_{\Omega,m,\alpha}^{q})$ the corresponding function 
$$
F(x,r) = f(x)(r).
$$
Of course this is not entirely correct, since $ f(x) = [f(x,.)]$. That is for each $x$, $ f$ evaluates to an equivalence class of functions $ f(x,\cdot)$. But if we choose any representative then we can think of $ f $ as a function on $ \mathbb{R}^{n} \times \Omega$. However,
the issue here is that $F$ might not be measurable as a function defined on $ \mathbb{R}^{n} \times \Omega$, if we make pathological choices for each $x$. But this can be easily overcame, since we will only work with $ f \in C^{\infty}_{0}(\mathbb{R}^{n} \times \Omega) $ and we will show below that such functions are dense in $L^{p}(B^{q}_{\Omega,m,\alpha})$.
\begin{lemma}
Let $ p, q \in (1,\infty)$, then
$ C^{\infty}_{0}( \mathbb{R}^{n} \times \Omega) $  is dense in $ L^{p}(B^{q}_{\Omega, m , \alpha}) $.
\end{lemma}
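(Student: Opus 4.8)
The plan is to reduce the statement to approximating elementary tensors and then to invoke the abstract density result \Cref{density}. Since $q\in(1,\infty)$ and $\Omega\subset\mathbb{R}^{m}$, the space $B^{q}_{\Omega,m,\alpha}$ is the space $L^{q}$ of the $\sigma$-finite measure $d\mu=\alpha(r)\,dr$ on $\Omega$ (the weight $\alpha$ being continuous is locally bounded, so $\mu$ is $\sigma$-finite); hence $B^{q}_{\Omega,m,\alpha}$ is a separable reflexive Banach space and \Cref{Bd} and \Cref{density} apply to $B=B^{q}_{\Omega,m,\alpha}$. By \Cref{density}, finite linear combinations of functions of the form $F(x,r)=f(x)\,b(r)$, with $f$ a real-valued $L^{p}(\mathbb{R}^{n})$ function and $b\in B^{q}_{\Omega,m,\alpha}$, are dense in $L^{p}(B^{q}_{\Omega,m,\alpha})$. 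Since a finite sum of functions in $C^{\infty}_{0}(\mathbb{R}^{n}\times\Omega)$ is again in $C^{\infty}_{0}(\mathbb{R}^{n}\times\Omega)$, it suffices to approximate a single product $f\otimes b$ by elements of $C^{\infty}_{0}(\mathbb{R}^{n}\times\Omega)$ in the norm of $L^{p}(B^{q}_{\Omega,m,\alpha})$.

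To do this I would first pick $f_{k}\in C^{\infty}_{0}(\mathbb{R}^{n})$ with $f_{k}\to f$ in $L^{p}(\mathbb{R}^{n})$ (classical), and $b_{k}\in C^{\infty}_{0}(\Omega)$ with $b_{k}\to b$ in $B^{q}_{\Omega,m,\alpha}$ (this is the one point needing the hypotheses on $\alpha$; see below). Set $F_{k}(x,r)=f_{k}(x)\,b_{k}(r)$. Then $F_{k}\in C^{\infty}_{0}(\mathbb{R}^{n}\times\Omega)$, its support being $\operatorname{supp}f_{k}\times\operatorname{supp}b_{k}$, a compact subset of $\mathbb{R}^{n}\times\Omega$; in particular $x\mapsto f_{k}(x)b_{k}$ is continuous (hence strongly measurable) from $\mathbb{R}^{n}$ into $B^{q}_{\Omega,m,\alpha}$, so the identification discussed above is unproblematic and $F_{k}\in L^{p}(B^{q}_{\Omega,m,\alpha})$. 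Since $|F_{k}(x,\cdot)|_{B^{q}_{\Omega,m,\alpha}}=|f_{k}(x)|\,|b_{k}|_{B^{q}_{\Omega,m,\alpha}}$, writing $F_{k}-f\otimes b=f_{k}\otimes(b_{k}-b)+(f_{k}-f)\otimes b$ and using the triangle inequality gives
$$
\|F_{k}-f\otimes b\|_{L^{p}(B^{q}_{\Omega,m,\alpha})}\le \|f_{k}\|_{L^{p}}\,|b_{k}-b|_{B^{q}_{\Omega,m,\alpha}}+\|f_{k}-f\|_{L^{p}}\,|b|_{B^{q}_{\Omega,m,\alpha}}\xrightarrow[k\to\infty]{}0,
$$
since $\|f_{k}\|_{L^{p}}$ remains bounded. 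This proves the desired density once the auxiliary density of $C^{\infty}_{0}(\Omega)$ in $B^{q}_{\Omega,m,\alpha}$ is established.

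For that auxiliary fact, recall $B^{q}_{\Omega,m,\alpha}=L^{q}(\Omega,\mu)$. Simple functions with $\mu$-finite support are dense in $L^{q}(\Omega,\mu)$, and by inner regularity of $\mu$ (a locally finite Borel measure on $\mathbb{R}^{m}$) each such function is approximated in $L^{q}(\Omega,\mu)$ by finite linear combinations of indicators $\mathbf{1}_{K}$ of compact sets $K\subset\Omega$; so it is enough to approximate $\mathbf{1}_{K}$. Fix a compact neighbourhood $K'$ of $K$ inside $\Omega$. On $K'$ the continuous positive weight satisfies $0<\min_{K'}\alpha\le\alpha\le\max_{K'}\alpha<\infty$, so on $K'$ the norm $|\cdot|_{B^{q}_{\Omega,m,\alpha}}$ is comparable to the unweighted $L^{q}(K')$ norm. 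Mollifying $\mathbf{1}_{K}$ at scale smaller than $\operatorname{dist}(K,\partial K')$ produces functions in $C^{\infty}_{0}(\Omega)$ supported in $K'$ converging to $\mathbf{1}_{K}$ in $L^{q}(K')$, hence in $B^{q}_{\Omega,m,\alpha}$.

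There is no deep obstacle here; the only point that genuinely requires attention is that $\alpha$ is merely continuous and positive rather than globally two-sided bounded, which is precisely why in the last paragraph one localizes to a compact $K'$ where $\alpha$ is comparable to $1$ before mollifying. Everything else is a routine combination of the abstract Bochner-space density lemma with standard approximation in $L^{p}$ and in weighted $L^{q}$.
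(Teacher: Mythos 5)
Your proof is correct and follows essentially the same route as the paper's: reduce to elementary tensors $f\otimes b$ via \Cref{density} and then approximate each factor separately in $L^{p}(\mathbb{R}^{n})$ and in $B^{q}_{\Omega,m,\alpha}$. The only difference is that you carefully justify the density of $C^{\infty}_{0}(\Omega)$ in the weighted $L^{q}(\Omega,\mu)$ (localizing to a compact set where $\alpha$ is comparable to $1$ before mollifying), a step the paper asserts without proof.
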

\begin{proof}
First assume that $ f \in L^{p}(B^{q}_{\Omega,m,\alpha})$ is given by $ f(x)(r) = g(x) h(r)$. then we approximate in the natural way, that is find $ g_{i} \in C^{\infty}_{0}(\mathbb{R}^{n})$ and $ h_{i} \in C^{\infty}_{0}(\Omega)$ so that $ g_{i} \to g $ in $L^{p}(\mathbb{R}^{n})$ and $ h_{i} \to h $ in $L^{q}(\Omega , \mu)$ where $ \mu$ is the measure induced by $ \alpha.$ then clearly $ g_{i}h_{i} \to g.h =f $ in $L^{p}(B_{\Omega, m , \alpha}^{q})$. Now for general $ f $ the result follows in view of Lemma \ref{density}.
\end{proof}
The next result is in \cite[Page 106]{J}, see also \cite[Theorem 1.3]{BanachS}. First for Banach spaces $B_{1}, B_{2}$ denote by $\mathcal{L}(B_{1},B_{2})$ to mean the space of all bounded linear operators between $B_{1},$ and $B_{2}$.
\begin{lemma}\label{Singular}
Let $B_{1}, B_{2}$ be any separable, reflexive Banach spaces, and $$K: \mathbb{R}^{n} \times \mathbb{R}^{n} \setminus \Delta \to \mathcal{L}(B_{1},B_{2}).
$$
a measurable function 
with the property that 
\begin{enumerate}
    \item $$
    \int_{\vert x-y \vert > 2 \vert y -z \vert } \vert K(x,y) - K(x,z) \vert_{\mathcal{L}(B_{1},B_{2})} dx \leq C $$
    \item $$
    \int_{\vert x-y \vert > 2 \vert x -z \vert } \vert K(x,y) - K(z,y) \vert_{\mathcal{L}(B_{1},B_{2})} dx \leq C.$$   
\end{enumerate}
Define the operator $ T$ on the space $L^{p}(B_{1})$ as the operator with $K$ as its kernel, namely 
$$
T(f)(x) = \int_{\mathbb{R}^{n}} K(x,y).f(y) dy
$$
for $ x \notin \text{Supp}(f)$. Then if $ T $ is bounded from $L^{r}(B_{1})$ to $L^{r}(B_{2})$ for some $ r \in (1,\infty)$, then $T$ is a bounded operator from $L^{p}(B_{1})$ to $L^{p}(B_{2})$ for any $ p \in (1,\infty)$.
\end{lemma}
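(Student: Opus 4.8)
The plan is to recognise this as the vector-valued Calder\'on--Zygmund theorem and to run the classical scalar proof with $\vert\cdot\vert$ replaced throughout by the Banach norms $\vert\cdot\vert_{B_1},\vert\cdot\vert_{B_2}$ and with $\vert K(x,y)\vert$ replaced by the operator norm $\vert K(x,y)\vert_{\mathcal{L}(B_1,B_2)}$. First I would restrict to a dense subspace of $L^{p}(B_1)$ -- namely the compactly supported functions lying in $L^{r}(B_1)\cap L^{p}(B_1)$ -- on which the off-diagonal representation $Tf(x)=\int K(x,y)f(y)\,dy$ is legitimate; all estimates will be proved a priori on this class and then extended by density using the given $L^{r}(B_1)\to L^{r}(B_2)$ bound.

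The heart of the argument is to show that $T$ is of weak type $(1,1)$. Fixing $\lambda>0$, I would apply the Calder\'on--Zygmund decomposition to the scalar function $x\mapsto\vert f(x)\vert_{B_1}$ at height $\lambda$, obtaining a disjoint family of dyadic cubes $\{Q\}$ with $\sum_Q\vert Q\vert\lesssim\lambda^{-1}\Vert f\Vert_{L^{1}(B_1)}$ and a splitting $f=g+\sum_Q b_Q$ where $\vert g\vert_{B_1}\lesssim\lambda$ a.e., $\Vert g\Vert_{L^{1}(B_1)}\lesssim\Vert f\Vert_{L^{1}(B_1)}$, each $b_Q$ is supported in $Q$, has Bochner mean zero $\int_Q b_Q=0$, and satisfies $\int_Q\vert b_Q\vert_{B_1}\lesssim\lambda\vert Q\vert$. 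For the good part I would use Chebyshev together with the $L^{r}$ bound and the elementary interpolation estimate $\Vert g\Vert_{L^{r}(B_1)}^{r}\lesssim\lambda^{r-1}\Vert f\Vert_{L^{1}(B_1)}$ to get $\vert\{\vert Tg\vert_{B_2}>\lambda/2\}\vert\lesssim\lambda^{-1}\Vert f\Vert_{L^{1}(B_1)}$. For the bad part, outside a suitable fixed dilate $\bigcup_Q Q^{*}$ (whose measure is $\lesssim\lambda^{-1}\Vert f\Vert_{L^{1}(B_1)}$) I would use the cancellation to write $Tb_Q(x)=\int_Q\bigl(K(x,y)-K(x,y_Q)\bigr)b_Q(y)\,dy$ with $y_Q$ the centre of $Q$, and bound
$$
\int_{(\bigcup_Q Q^{*})^{c}}\Bigl\vert\sum_Q Tb_Q(x)\Bigr\vert_{B_2}dx\le\sum_Q\int_Q\vert b_Q(y)\vert_{B_1}\Bigl(\int_{(Q^{*})^{c}}\vert K(x,y)-K(x,y_Q)\vert_{\mathcal{L}(B_1,B_2)}\,dx\Bigr)dy\lesssim\sum_Q\int_Q\vert b_Q\vert_{B_1}\lesssim\Vert f\Vert_{L^{1}(B_1)},
$$
since for $x\notin Q^{*}$ one has $\vert x-y\vert>2\vert y-y_Q\vert$, so hypothesis (1) makes the inner integral $O(1)$; a final application of Chebyshev completes the weak $(1,1)$ bound.

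Next I would invoke the Marcinkiewicz interpolation theorem, applied to the scalar profiles $\vert Tf\vert_{B_2}$ and $\vert f\vert_{B_1}$, to upgrade the weak $(1,1)$ bound and the strong $L^{r}$ bound into $T:L^{p}(B_1)\to L^{p}(B_2)$ for all $p\in(1,r]$. To reach the range $p\in[r,\infty)$ I would pass to the transpose $T^{*}$, whose kernel is $\widetilde K(x,y)=K(y,x)^{*}\in\mathcal{L}(B_2',B_1')$; since the Banach-space adjoint is norm-preserving, $\vert\widetilde K(x,y)\vert_{\mathcal{L}(B_2',B_1')}=\vert K(y,x)\vert_{\mathcal{L}(B_1,B_2)}$, and hypothesis (2) -- the smoothness-in-the-first-variable condition for $K$ -- becomes, after the harmless relabelling of variables, exactly hypothesis (1) for $\widetilde K$. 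Dualizing the $L^{r}$ bound via Lemma \ref{Bd} (this is precisely where separability and reflexivity of $B_1,B_2$, hence of $B_1',B_2'$, are used) gives $T^{*}:L^{r'}(B_2')\to L^{r'}(B_1')$; the first two steps applied to $T^{*}$ then give boundedness on $L^{p'}(B_2')$ for all $p'\in(1,r']$, and one more application of Lemma \ref{Bd} returns the claim for $p\in[r,\infty)$. Together with the previous step this yields boundedness for every $p\in(1,\infty)$.

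The main obstacle is not conceptual -- no idea beyond scalar Calder\'on--Zygmund theory is needed -- but rather the bookkeeping required to confirm that everything survives the passage to Banach-valued functions: one checks that the cancellation $\int_Q b_Q=0$ and the kernel-difference bounds make sense as Bochner integrals (they do, since the pointwise estimates being integrated are scalar), that the Hardy--Littlewood maximal function, the dyadic covering, and Marcinkiewicz interpolation are unaffected because they only see the scalar norm $\vert f(\cdot)\vert_{B}$, and that the duality $L^{p}(B)^{*}=L^{p'}(B')$ is available -- which is exactly Lemma \ref{Bd}, and the reason reflexivity is assumed.
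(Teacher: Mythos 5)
Your proof is correct and is precisely the standard vector-valued Calder\'on--Zygmund argument; the paper does not prove this lemma but cites it to the literature, where exactly this route is taken (Calder\'on--Zygmund decomposition of the scalar profile $\vert f\vert_{B_1}$ for the weak $(1,1)$ bound, Marcinkiewicz interpolation to cover $1<p\le r$, then duality via $L^{p}(B)'=L^{p'}(B')$ and the transposed kernel $K(y,x)^{*}$ for $p>r$). The only point worth flagging is that hypothesis (2) as printed integrates in $dx$ whereas the H\"ormander condition in the first variable should integrate over the second variable; your reading of (2) as the condition that relabels into hypothesis (1) for the transposed kernel is the intended (and correct) one.
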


\subsection{Proof of Littlewood-Paley} 
We need to adapt the standard proof of Littlewood-Paley to appropriate Banach valued functions. We begin by defining those Banach spaces.
\begin{defn}
Let $ \Omega \subset \mathbb{R}^{m}$ be open, and $ \alpha $ a continuous positive function on $ \Omega $. Then for $ q \in (1,\infty)$ and $ l \in \mathbb{N}$ we define the Banach space $ A_{\Omega,\alpha,m}^{q,l} $ as follows
$$
A_{\Omega,\alpha,m}^{q,l} = \{ F \in L^{1}_{loc}(\Omega \times (0,\infty)) : \vert F \vert^{q}_{A^{q,l}_{\Omega,\alpha,m}} = \int_{\Omega}  \alpha(r) \left( \int_{0}^{\infty} t^{2l-1} \vert F(r,t) \vert^{2} dt \right)^{q/2} dr < \infty \}.
$$
\end{defn}
Next, we define operators between the spaces $B^{q}_{\Omega,\alpha,m}$ and $ A^{q,l}_{\Omega,\alpha,m}$. For the rest of the section fix $ \epsilon > 0$. 
\begin{lemma}\label{PKK}
Let $ l \in \mathbb{N}$. Define $ K^{\epsilon}_{l}(x) = \partial^{l}_{t} P_{t+\epsilon}(x)$. Then we claim that for each fixed $x$, $K^{\epsilon}_{l}(x)$ acts as a linear operator between $B^{q}_{\Omega,\alpha,m}$ and $ A^{q,l}_{\Omega,\alpha,m}$ via the operation
$$
K^{\epsilon}_{l}(x)(f)(t,r) = f(r) \partial^{l}_{t} P_{t+\epsilon}(x)
$$, that is we have $ K^{\epsilon}_{l}(x) \in \mathcal{L}(B^{q}_{\Omega,\alpha,m} , A^{q,l}_{\Omega,\alpha,m})$ with norm, when $ x \neq 0$,
$$
\vert K^{\epsilon}_{l}(x) \vert_{\mathcal{L}(B^{q}_{\Omega,\alpha,m} , A^{q,l}_{\Omega,\alpha,m})} \lesssim_{n,q,l} \vert x \vert^{-n}.
$$
\end{lemma}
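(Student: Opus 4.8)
The plan is to reduce the statement to a one–variable estimate on the Poisson kernel. Fix $x \neq 0$ and write $g = g^{\epsilon}_{x}$ for the scalar function $g(t) = \partial^{l}_{t} P_{t+\epsilon}(x)$ on $(0,\infty)$ (smooth there, since the shift by $\epsilon$ is constant), which does \emph{not} depend on $r \in \Omega$. For $f \in B^{q}_{\Omega,\alpha,m}$ the output $K^{\epsilon}_{l}(x)(f)(t,r) = f(r)\, g(t)$ is a pure product, so by the definition of the $A^{q,l}_{\Omega,\alpha,m}$-norm,
\[
\big| K^{\epsilon}_{l}(x)(f) \big|^{q}_{A^{q,l}_{\Omega,\alpha,m}}
= \int_{\Omega} \alpha(r)\, |f(r)|^{q}\Big( \int_{0}^{\infty} t^{2l-1} |g(t)|^{2}\, dt \Big)^{q/2} dr
= \Big( \int_{0}^{\infty} t^{2l-1} |g(t)|^{2}\, dt \Big)^{q/2} |f|^{q}_{B^{q}_{\Omega,\alpha,m}} .
\]
Since $K^{\epsilon}_{l}(x)$ is obviously linear in $f$, this gives at once that it is bounded with
\[
\big| K^{\epsilon}_{l}(x) \big|_{\mathcal{L}(B^{q}_{\Omega,\alpha,m},A^{q,l}_{\Omega,\alpha,m})}
= \Big( \int_{0}^{\infty} t^{2l-1} \big| \partial^{l}_{t} P_{t+\epsilon}(x) \big|^{2}\, dt \Big)^{1/2},
\]
so the operator norm is independent of $q,\Omega,\alpha,m$ and it remains only to bound this integral by $C_{n,l}\,|x|^{-2n}$.

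For that I would exploit the scaling of the Poisson kernel. Since $P_{\lambda t}(\lambda x) = \lambda^{-n} P_{t}(x)$ for $\lambda > 0$, differentiating $l$ times in $t$ and using the chain rule shows that $(t,x) \mapsto \partial^{l}_{t} P_{t}(x)$ is jointly homogeneous of degree $-(n+l)$. Moreover $(t,x) \mapsto c_{n} t (t^{2}+|x|^{2})^{-(n+1)/2}$ is smooth on $\R^{n+1}\setminus\{0\}$, hence so are all of its $t$-derivatives, and in particular $\partial^{l}_{t} P_{t}(x)$ is bounded on the compact set $\{t \ge 0,\ t^{2}+|x|^{2}=1\}$. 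Writing a general point as a dilate of a point on this set yields the standard pointwise bound
\[
\big| \partial^{l}_{t} P_{\tau}(x) \big| \lesssim_{n,l} (\tau^{2}+|x|^{2})^{-(n+l)/2} \approx (\tau + |x|)^{-(n+l)}, \qquad \tau > 0 .
\]
Applying this with $\tau = t+\epsilon$ and discarding $\epsilon \ge 0$ gives $\big| \partial^{l}_{t} P_{t+\epsilon}(x) \big| \lesssim_{n,l} (t+|x|)^{-(n+l)}$, uniformly in $\epsilon > 0$.

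Finally, substituting $u = t/|x|$,
\[
\int_{0}^{\infty} t^{2l-1} (t+|x|)^{-2(n+l)}\, dt = |x|^{-2n} \int_{0}^{\infty} u^{2l-1} (1+u)^{-2(n+l)}\, du =: C_{n,l}\, |x|^{-2n},
\]
and the last integral is finite: near $u=0$ the integrand is $\approx u^{2l-1}$, integrable since $l \ge 1$, and near $u=\infty$ it is $\approx u^{-2n-1}$, integrable since $n \ge 1$. Taking square roots yields $\big| K^{\epsilon}_{l}(x) \big|_{\mathcal{L}} \lesssim_{n,l} |x|^{-n}$, which is in fact sharper than the claimed $\lesssim_{n,q,l}|x|^{-n}$. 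I do not expect a genuine obstacle here; the only points requiring mild care are that the bound be \emph{uniform in $\epsilon$} — handled by keeping $\tau = t+\epsilon$ and then dropping $\epsilon \ge 0$ — and the bookkeeping that the dependence on $q$ in the statement is vacuous, the constant depending only on $n$ and $l$.
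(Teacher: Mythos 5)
Your proposal is correct and follows essentially the same route as the paper: reduce the operator norm to the scalar quantity $\left(\int_{0}^{\infty} t^{2l-1}\vert \partial_{t}^{l}P_{t+\epsilon}(x)\vert^{2}\,dt\right)^{1/2}$ via the product structure of the $A^{q,l}$-norm, establish the pointwise bound $\vert \partial_{t}^{l}P_{\tau}(x)\vert \lesssim_{n,l}(\tau^{2}+\vert x\vert^{2})^{-(n+l)/2}$, and integrate to get $\vert x\vert^{-2n}$. The only (cosmetic) differences are that you derive the pointwise bound by joint homogeneity and compactness rather than the paper's explicit formulas for the $l$-th $t$-derivative of the Poisson kernel, and that you omit the paper's closing remark that $K^{\epsilon}_{l}(0)$ is still bounded (with norm $\lesssim \epsilon^{-n}$), a case the quantitative claim for $x\neq 0$ does not require.
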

\begin{proof}
Let $ f \in B^{q}_{\Omega,\alpha,m}$ then by definition
$$
K^{\epsilon}_{l}(x)(f)(r,t) = f(r) \partial^{l}_{t} P_{t+\epsilon}(x).
$$
Clearly this is linear. Taking the norm in $A^{q,l}_{\Omega,\alpha,m}$ gives us for $ x \neq 0$
\begin{equation*}
\begin{split}
&\vert K^{\epsilon}_{l}(x)(f) \vert_{A^{q,l}_{\Omega,\alpha,m}} 
\\
&= \left( \int_{\Omega} \alpha(r) \vert f(r) \vert^{q} \left( \int_{0}^{\infty} t^{2l-1} \vert \partial^{l}_{t} P_{t+\epsilon}(x) \vert^{2} dt \right)^{q/2} dr \right)^{1/q}
\\
& = \vert f\vert_{B^{q}_{\Omega,\alpha,m}} \left(\int_{0}^{\infty} t^{2l-1} \vert \partial^{l}_{t} P_{t+\epsilon}(x) \vert^{2} dt \right)^{1/2}
\\
& \lesssim_{n,l} \vert f \vert_{B^{q}_{\Omega,\alpha,m}} \vert x\vert^{-n}.
\end{split}
\end{equation*}
Where we have used the fact that for any $ l \in \mathbb{N}$ and $ x \neq 0 $ one has
\begin{equation}\label{ZZ}
\int_{0}^{\infty} t^{2l-1} \vert \partial^{l}_{t} P_{t+\epsilon}(x) \vert^{2} dt \lesssim_{n,l} \vert x \vert^{-2n}.
\end{equation}
Indeed, to see why this is true, notice that when differentiating the Poisson kernel $l$ times we have, when $ l$ is even
$$
\partial^{l}_{t} P_{t} (x) = c_{1} \frac{t}{(t^{2} + \vert x \vert^{2})^{\frac{n+1}{2} +\frac{l}{2}}} + c_{2} \frac{t^{3}}{(t^{2} + \vert x \vert^{2})^{\frac{n+1}{2}+\frac{l}{2}+1}}
+..+ c_{l/2} \frac{t^{l+1}}{(t^{2} + \vert x \vert^{2})^{\frac{n+1}{2}+l}}.
$$
And when $l$ is odd we get 
$$
\partial^{l}_{t} P_{t} (x) = c_{1}\frac{1}{(t^{2} + \vert x \vert^{2} )^{\frac{n+1}{2}+ \frac{l-1}{2}}} + c_{2}\frac{t^{2}}{(t^{2} + \vert x \vert^{2} )^{\frac{n+1}{2}+ \frac{l-1}{2}+1}}+...+c_{\frac{l-1}{2}}\frac{t^{l+1}}{(t^{2} + \vert x \vert^{2} )^{\frac{n+1}{2}+ l}}.
$$
In both cases, we can see the pointwise bound
$$
\vert \partial^{l}_{t} P_{t}(x) \vert \lesssim_{n,l} \frac{1}{(t^{2}+ \vert x \vert^{2} )^{\frac{n+1}{2} + \frac{l-1}{2}}}.
$$
From this pointwise bound we obtain 
\begin{equation*}
\begin{split}
& \int_{0}^{\infty} t^{2l-1} \vert \partial^{l}_{t} P_{t+\epsilon}(x) \vert^{2} dt 
\\
&
\leq \int_{0}^{\infty} t^{2l-1} \vert \partial^{l}_{t} P_{t}(x) \vert^{2} dt
\\
& \lesssim_{n,l} \int_{0}^{\infty} t^{2l-1} \frac{1}{( t^{2}+ \vert x \vert^{2})^{n+1+(l-1)}} dt
\\
& = \int_{0}^{\infty} \frac{t}{(t^{2}+ \vert x \vert^{2})^{n+1}} \frac{t^{2(l-1)}}{t^{2} + \vert x \vert^{2})^{l-1}} dt
\\
& \leq \int_{0}^{\infty} \frac{t}{(t^{2}+ \vert x \vert^{2})^{n+1}} 
\\
& \lesssim_{n} \vert x \vert^{-2n}.
\end{split}
\end{equation*}
This proves \eqref{ZZ}. If $ x = 0 $ then still we have $K^{\epsilon}_{l}(0) \in \mathcal{L}(B^{q}_{\Omega,\alpha,m} , A^{q,l}_{\Omega,\alpha,m})$ since
$$
\int_{0}^{\infty} t^{2l-1} \vert \partial^{l}_{t} P_{t+\epsilon}(0)\vert^{2} dt \lesssim_{n,l} \epsilon^{-2n}.
$$
This proves the lemma
\end{proof}
Next, we define operators between $L^{p}(B^{q}_{\Omega,\alpha,m})$ and $L^{p}(A^{q,l}_{\Omega,\alpha,m})$.
\begin{defn}
Fix $ l \in \mathbb{N}$. Then for $ f \in L^{p}(B^{q}_{\Omega,\alpha,m})$ we define $ T^{\epsilon}_{l}(f)$ as follows
$$
T_{l}(f)(x) = \int_{\mathbb{R}^{n}} K^{\epsilon}_{l}(x-y).f(y) dy.
$$
Where the integral is understood in the Bochner sense, that is for each fixed $ x $ and $f \in L^{p}(B^{q}_{\Omega,\alpha,m})$, we have that $ T^{\epsilon}_{l}(f)(x)$ is an element of $A^{q,l}_{\Omega,\alpha,m}$. Indeed, for each $y$ we have that $f(y) \in B^{q}_{\Omega,\alpha,m}$ by definition. Then $K^{\epsilon}_{l}(x-y)$ acts on $ f(y)$ by pointwise multiplication 
$$
(K^{\epsilon}_{l}(x-y).f(y))(t,r) = K^{\epsilon}_{l}(x-y)(f(y))(r,t) = f(y)(r) \partial^{l}_{t} P_{t+\epsilon}(x-y).
$$
\end{defn}
With this setup, we are ready to state the first part of the Littlewood-Paley theorem for these Banach valued functions

\begin{lemma}\label{K}
Let $ p, q \in (1,\infty)$ and $ B^{q}_{\Omega,m,\alpha}, A^{q,l}_{\Omega,\alpha,m}$ be as above, then for any $ f \in L^{p}(B^{q}_{\Omega, m , \alpha}) $ we have the following 
$$
\int_{\mathbb{R}^{n}} \vert T^{\epsilon}_{l}(f)(x) \vert_{A^{q,l}_{\Omega,\alpha,m}} dx \lesssim_{n,p,q,l}
\int_{\mathbb{R}^{n}} \vert f \vert^{p}_{B^{q}_{\Omega,m,\alpha}}.
$$

\end{lemma}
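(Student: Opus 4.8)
The plan is to obtain the estimate from the vector-valued Calder\'on--Zygmund theorem, Lemma \ref{Singular}, applied to $T^\epsilon_l$ regarded as a convolution operator with operator-valued kernel $K(x,y)=K^\epsilon_l(x-y)$, and with $B_1=B^q_{\Omega,m,\alpha}$, $B_2=A^{q,l}_{\Omega,\alpha,m}$. Unwinding the definitions, the asserted bound is the statement that $T^\epsilon_l$ maps $L^p(B^q_{\Omega,m,\alpha})$ boundedly into $L^p(A^{q,l}_{\Omega,\alpha,m})$ with norm controlled by a constant $C(n,p,q,l)$ independent of $\epsilon$ (the left-hand side being read with exponent $p$). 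Both $B_1$ and $B_2$ are separable and reflexive --- $B_1$ is an $L^q$ space over a $\sigma$-finite measure space, and $B_2$ is the Bochner space $L^q(\Omega,\alpha\,dr;\,L^2((0,\infty),t^{2l-1}\,dt))$, with $1<q,2<\infty$ --- so the structural hypotheses of Lemma \ref{Singular} hold, and Lemma \ref{PKK} already supplies the size bound $|K^\epsilon_l(x)|_{\mathcal{L}(B_1,B_2)}\lesssim_{n,l}|x|^{-n}$. It therefore remains to check (i) boundedness of $T^\epsilon_l$ on $L^r$ for a single exponent $r$, and (ii) the H\"ormander cancellation conditions (1)--(2) of Lemma \ref{Singular}, both uniformly in $\epsilon$.

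For (i) I would take $r=q$ and use Fubini together with the scalar Littlewood--Paley theorem, Lemma \ref{LP}. When the outer exponent is $q$, interchanging the $x$-integral with the $(r,t)$-integrals rewrites $\int_{\mathbb{R}^n}|T^\epsilon_l f(x)|^q_{A^{q,l}}\,dx$ as $\int_\Omega\alpha(r)\int_{\mathbb{R}^n}\big(\int_0^\infty t^{2l-1}|\partial^l_t P_{t+\epsilon}*f(\cdot,r)(x)|^2\,dt\big)^{q/2}\,dx\,dr$, where $f(\cdot,r)$ is the function $y\mapsto f(y)(r)$. Substituting $u=t+\epsilon$ and using $(u-\epsilon)^{2l-1}\le u^{2l-1}$ dominates the inner $t$-integral by $\int_0^\infty u^{2l-1}|\partial^l_u P_u*f(\cdot,r)(x)|^2\,du$, to which Lemma \ref{LP} applies in the $x$-variable (for a.e.\ $r$, since $f(\cdot,r)\in L^q(\mathbb{R}^n)$ by Fubini), bounding the $x$-integral by $C(n,q,l)\,\|f(\cdot,r)\|_{L^q}^q$. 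Integrating in $r$ against $\alpha$ recovers $\int_{\mathbb{R}^n}|f(y)|^q_{B^q_{\Omega,m,\alpha}}\,dy$, which is the $L^q(B_1)\to L^q(A^{q,l})$ bound, uniformly in $\epsilon$.

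For (ii), the point is that $K^\epsilon_l(x)$ acts by multiplying $f(r)$ by the $r$-independent function $t\mapsto\partial^l_t P_{t+\epsilon}(x)$, so the same factorization as in the proof of Lemma \ref{PKK} yields $|K^\epsilon_l(x-h)-K^\epsilon_l(x)|_{\mathcal{L}(B_1,B_2)}=\big(\int_0^\infty t^{2l-1}|\partial^l_t P_{t+\epsilon}(x-h)-\partial^l_t P_{t+\epsilon}(x)|^2\,dt\big)^{1/2}$; in particular conditions (1) and (2) coincide by the convolution structure and the whole matter reduces to a scalar Poisson-kernel estimate. Differentiating once more in $x$ the explicit expansions of $\partial^l_t P_t$ recalled in Lemma \ref{PKK}, and tracking the mixed $(x,t)$-homogeneity, I expect the pointwise bound $|\nabla_x\partial^l_t P_t(x)|\lesssim_{n,l}(t^2+|x|^2)^{-(n+l+1)/2}$; the same scaling computation that produced \eqref{ZZ} then gives $\int_0^\infty t^{2l-1}|\nabla_x\partial^l_t P_{t+\epsilon}(x)|^2\,dt\lesssim_{n,l}|x|^{-2n-2}$. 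Writing $\partial^l_t P_{t+\epsilon}(x-h)-\partial^l_t P_{t+\epsilon}(x)=-\int_0^1 h\cdot\nabla_x\partial^l_t P_{t+\epsilon}(x-\theta h)\,d\theta$ and applying Minkowski's integral inequality in $\theta$ relative to the measure $t^{2l-1}\,dt$, one obtains $|K^\epsilon_l(x-h)-K^\epsilon_l(x)|_{\mathcal{L}(B_1,B_2)}\lesssim_{n,l}|h|\,|x|^{-n-1}$ for $|x|>2|h|$ (since then $|x-\theta h|\ge|x|/2$), and integrating over $\{|x|>2|h|\}$ gives the uniform constant $C$. Feeding (i) and (ii) into Lemma \ref{Singular} promotes the bound to $L^p(B_1)\to L^p(A^{q,l})$ for every $p\in(1,\infty)$, which is the claim.

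I expect the main obstacle to be (ii): one has to differentiate the Poisson kernel $l+1$ times in all (one $x$-derivative on top of the $l$ derivatives in $t$), bookkeep the mixed $(x,t)$-homogeneity carefully so as to land precisely on the borderline decay $|x|^{-n-1}$ for the kernel difference, and make every estimate uniform in $\epsilon>0$ --- the latter being exactly what is needed to send $\epsilon\to0$ afterwards. The behaviour near the origin is benign: there the operator norm is merely $\lesssim\epsilon^{-n}$, but this affects only local integrability and does not interfere with the singular-integral machinery, which is applied off the support of $f$.
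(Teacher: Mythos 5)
Your proposal is correct and follows essentially the same route as the paper: both invoke the operator-valued Calder\'on--Zygmund theorem (Lemma \ref{Singular}), verify the single-exponent bound at $r=q$ via Fubini and the scalar Littlewood--Paley theorem (Lemma \ref{LP}), and derive the H\"ormander conditions from the size and gradient bounds on $t\mapsto\partial^l_tP_{t+\epsilon}(x)$ coming from the explicit expansions in Lemma \ref{PKK}. The only difference is that you spell out the gradient estimate $\bigl(\int_0^\infty t^{2l-1}|\nabla_x\partial^l_tP_{t+\epsilon}(x)|^2\,dt\bigr)^{1/2}\lesssim_{n,l}|x|^{-n-1}$ and the Minkowski step in detail, which the paper leaves implicit.
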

\begin{proof}
(To simplify notation we will omit $ \Omega, \alpha , m$ when writing the spaces $ B^{q}_{\Omega,m,\alpha}$ and $ A^{q,l}_{\Omega,m,\alpha}$). We aim to use Lemma \ref{Singular}. To do that, we need first to show that the kernel $K^{\epsilon}_{l}$ satisfy the estimates stated in Lemma \ref{Singular}. To that end, note that
from Lemma \ref{PKK} we have
$$
K^{\epsilon}_{l}: \mathbb{R}^{n} \setminus \{0 \} \to \mathcal{L}(B^{q} ,A^{q,l})
$$
with norm given by 
$$
\vert K^{\epsilon}_{l} \vert \lesssim_{n,l} \vert x \vert^{-n}.
$$
The calculations done in the proof of Lemma \ref{PKK} gives us a bound for the derivative, that is for any $ i \in \{ 1,....,n\}$ we have that 
$$
\vert \partial_{i} K^{\epsilon}_{l}(x) \vert_{\mathcal{L}(B^{q},A^{q})} \lesssim  \vert x \vert^{-n-1}.
$$
This shows that the kernel $K^{\epsilon}_{l}$ does satisfy the assumptions made in Lemma \ref{Singular}. Next, we need to show that the operator $T^{\epsilon}_{l}$ is bounded for at least one exponent. Notice that when $ q = p $ we have 
$$
\int_{\mathbb{R}^{n}} \vert T^{\epsilon}_{l}(f) \vert^{q}_{A^{q,l}} \lesssim_{n,q,l} \int_{\mathbb{R}^{n}} \vert f \vert^{q}_{B^{q}}.
$$
To see why this is true, start with $ f \in C^{\infty}_{0}(\mathbb{R}^{n} \times \Omega)$ then by definition
\begin{equation*}
\begin{split}
&\int_{\mathbb{R}^{n}} \vert T^{\epsilon}_{l}(f) \vert^{q}_{A^{q,l}}
\\
& = \int_{\mathbb{R}^{n}} \int_{\Omega} \alpha(r) \big[\int_{0}^{\infty} t^{2l-1} \vert \partial^{l}_{t} P_{t+\epsilon} * f(x,r) \vert^{2} dt \big]^{q/2} dr dx
\\
& \leq \int_{\mathbb{R}^{n}} \int_{\Omega} \alpha(r) \big[\int_{0}^{\infty} t^{2l-1} \vert \partial^{l}_{t} P_{t} * f(x,r) \vert^{2} dt \big]^{q/2} dr dx
\\
&
=\int_{\Omega}\alpha(r) \int_{\mathbb{R}^{n}}  \big[\int_{0}^{\infty} t^{2l-1} \vert \partial^{l}_{t} P_{t} * f(x,r) \vert^{2} dt \big]^{q/2} dx dr,
\end{split}
\end{equation*}
and by Lemma \ref{LP} 
\begin{equation*}
\begin{split}
& \approx_{n,q,l} \int_{\Omega} \alpha(r) \int_{\mathbb{R}^{n}} \vert f(x,r) \vert^{q} dx dr
\\ 
& = \int_{\mathbb{R}^{n}} \vert f \vert^{q}_{B^{q}}.
\end{split}
\end{equation*}
Then in light of Lemma \ref{density} we get the same for general $ f \in L^{p}(B^{q}) $. Now we conclude by Lemma \ref{Singular} the following inequality for $ p \in (1,\infty)$
$$
\int_{\mathbb{R}^{n}} \vert T^{\epsilon}_{l}(f) \vert^{p}_{A^{q,l}} \lesssim_{n,p,q,l} \int_{\mathbb{R}^{n}} \vert f \vert^{p}_{B^{q}}.
$$
Which is what we wanted to prove
\end{proof}

Now we can prove Lemma \ref{Kformixed}

\begin{proof}[proof of Lemma \ref{Kformixed}]
Take any $ f \in L^{p,q}_{\alpha}(\mathbb{R}^{n} \times \Omega)$. Then notice that $ f $ can be thought of as element of $L^{p}(B^{q}_{\Omega,\alpha,m})$, via the definition $ f(x)= [f(x,r)]$. Hence, by Lemma \ref{K} we get for any $ \epsilon > 0 $ the following 
$$
\int_{\mathbb{R}^{n}} \vert T_{l}^{\epsilon}(f)(x) \vert^{p}_{A^{q,l}_{\Omega,\alpha,m}} dx \lesssim_{n,p,q,l} \int_{\mathbb{R}^{n}} \vert f(x) \vert^{p}_{B^{q}_{\Omega,\alpha,m}} dx,
$$
which translates to 
\begin{equation*}
\begin{split}
&\int_{\mathbb{R}^{n}} \left( \int_{\Omega} \left( \int_{0}^{\infty} t^{2l-1} \vert \partial^{l}_{t} P_{t+\epsilon} * f(x,r) \vert^{2} dt\right)^{q/2} \alpha(r) dr \right)^{p/q} dx
\\
& 
\lesssim_{n,p,q,l} \int_{\mathbb{R}^{n}} \left( \int_{\Omega} \vert f(x,r) \vert^{q} \alpha(r) dr \right)^{p/q} dx,
\end{split}
\end{equation*}
for any $ \epsilon > 0$. Therefore, letting $ \epsilon \to 0$ gives us 
\begin{equation}\label{SSSS}
\begin{split}
&\int_{\mathbb{R}^{n}} \left( \int_{\Omega} \left( \int_{0}^{\infty} t^{2l-1} \vert \partial^{l}_{t} P_{t} * f(x,r) \vert^{2} dt\right)^{q/2} \alpha(r) dr \right)^{p/q} dx
\\
& 
\lesssim_{n,p,q,l} \int_{\mathbb{R}^{n}} \left( \int_{\Omega} \vert f(x,r) \vert^{q} \alpha(r) dr \right)^{p/q} dx.
\end{split}
\end{equation}
This proves the first direction. To get the other direction, notice that when $ p = 2 = q $ we have for any $ f \in L^{2,2}_{\alpha} ( \mathbb{R}^{n} \times \Omega)$ the following equality 
\begin{equation}\label{Sargu}
\begin{split}
&\int_{\mathbb{R}^{n}} \int_{\Omega} \left( \int_{0}^{\infty} t^{2l-1} \vert \partial^{l}_{t} P_{t} * f(x,r) \vert^{2} dt\right) \alpha(r) dr  dx
\\
& 
= C(n,l) \int_{\mathbb{R}^{n}}  \int_{\Omega} \vert f(x,r) \vert^{2} \alpha(r) dr dx.
\end{split}
\end{equation}
Indeed, this is a consequence of Fubini theorem and the classical Littlewood-Paley
\begin{equation}
\begin{split}
&\int_{\mathbb{R}^{n}} \left( \int_{\Omega} \left( \int_{0}^{\infty} t^{2l-1} \vert \partial^{l}_{t} P_{t} * f(x,r) \vert^{2} dt\right)^{q/2} \alpha(r) dr \right)^{p/q} dx
\\
& = \int_{\Omega} \alpha(r) \int_{0}^{\infty} t^{2l-1} \int_{\mathbb{R}^{n}} \vert \partial^{l}_{t} P_{t} * f(x,r) \vert^{2} dx dt dr 
\\
& = \int_{\Omega} \alpha(r) \int_{\mathbb{R}^{n}} \vert \xi \vert^{2l}  \vert \mathcal{F}_{x}(f) (\xi,r) \vert^{2} \int_{0}^{\infty} t^{2l-1} e^{- 4 \pi \vert \xi \vert t } dt d \xi dr
\\ 
& = \int_{\Omega} \alpha(r) \int_{\mathbb{R}^{n}} \vert \xi \vert^{2l} \vert \mathcal{F}_{x}(f) (\xi,r) \vert^{2} C \vert \xi \vert^{-2l} d \xi dr
\\
& = C \int_{\Omega} \alpha(r) \int_{\mathbb{R}^{n}} \vert f(x,r) \vert^{2} dx dr.
\end{split}
\end{equation}
Next, note that $L^{2,2}_{\alpha}(\mathbb{R}^{n} \times \Omega)$ is a Hilbert space with inner product given by
$$
\langle f,g \rangle_{L^{2,2}_{\alpha}} = \int_{\mathbb{R}^{n}} \int_{\Omega} f(x,r) g(x,r) \alpha(r) dr dx.
$$
Further, consider the space $L^{2,2}_{\alpha,l}$ defined by 
$$
L^{2,2}_{\alpha,l} = \{ F \in L_{loc}^{1}(\mathbb{R}^{n} \times \Omega \times ( 0 , \infty) ): \int_{\mathbb{R}^{n}} \int_{\Omega} \int_{0}^{\infty} t^{2l-1} \vert F(x,r,t) \vert^{2} dt \alpha(r) dr dx < \infty\}.
$$
Notice that this is also a Hilbert space under the inner product
$$
\langle F,G \rangle_{L^{2,2}_{\alpha,l}} = \int_{\mathbb{R}^{n}} \int_{\Omega} \int_{0}^{\infty} t^{2l-1} F(x,r,t) G(x,r,t) dt \alpha(r)dr dx.
$$
Therefore, using \eqref{Sargu} we get for $ f,g \in L^{2,2}_{\alpha}(\mathbb{R}^{n} \times \Omega)$ 
\begin{equation}\label{InnerP}
\begin{split}
& \int_{\mathbb{R}^{n}} \int_{\Omega} f(x,r) g(x,r) \alpha(r) dr dx
\\
& = \langle f,g \rangle_{L^{2,2}_{\alpha}} 
\\
& = c \left( \Vert f \Vert^{2}_{L_{\alpha}^{2,2}} + \Vert g \Vert^{2}_{L^{2,2}_{\alpha}} - \Vert g+f \Vert^{2}_{L^{2,2}_{\alpha}} \right)
\\
& = c \left( \Vert Tf \Vert_{L^{2,2}_{\alpha,l}} + \Vert g \Vert_{L^{2,2}_{\alpha,l}} - \Vert T(f+g) \Vert_{L^{2,2}_{\alpha,l}}\right)
\\
& = \langle Tf, Tg \rangle_{L^{2,2}_{\alpha,l}},
\end{split}
\end{equation}
where 
$$
T(f)(x,r,t) = \partial^{l}_{t} P_{t} *f(x,r),
$$
the convolution is in the $ x $ variable. Fix $ f \in L^{2,2}_{\alpha}(\mathbb{R}^{n} \times \Omega) \cap L^{p,q}_{\alpha}(\mathbb{R}^{n} \times \Omega)$. Then for any $ g \in L^{2,2}_{\alpha}(\mathbb{R}^{n} \times \Omega)\cap L^{p',q'}_{\alpha}(\mathbb{R}^{n} \times \Omega) $ we get from using \eqref{InnerP} 
\begin{equation*}
\begin{split}
& \int_{\mathbb{R}^{n}} \int_{\Omega} f(x,r) g(x,r) \alpha(r) dr dx
\\
& = \int_{\mathbb{R}^{n}} \int_{\Omega} \alpha(r) \int_{0}^{\infty} t^{2l-1} (\partial^{l}_{t}P_{t} * f(x,r) ).( \partial^{l}_{t}P_{t}*g(x,r)) dt dr dx.
\end{split}
\end{equation*}
Applying H\"older inequality three times we obtain 
\begin{equation*}
\begin{split}
&\int_{\mathbb{R}^{n}} \int_{\Omega} f(x,r) g(x,r) \alpha(r) dr dx
\\
& \leq \left( \int_{\mathbb{R}^{n}} \left( \int_{\Omega} \left( \int_{0}^{\infty} t^{2l-1} \vert \partial^{l}_{t} P_{t} * f(x,r) \vert^{2} dt\right)^{q/2} \alpha(r) dr \right)^{p/q} dx\right)^{1/p}
\\
&. \left( \int_{\mathbb{R}^{n}} \left( \int_{\Omega} \left( \int_{0}^{\infty} t^{2l-1} \vert \partial^{l}_{t} P_{t} * g(x,r) \vert^{2} dt\right)^{q'/2} \alpha(r) dr \right)^{p'/q'} dx\right)^{1/q'}.
\end{split}
\end{equation*}
Using \eqref{SSSS} we obtain
\begin{equation}\label{ABA}
\begin{split}
& \int_{\mathbb{R}^{n}} \int_{\Omega} f(x,r) g(x,r) \alpha(r) dr dx
\\
& \lesssim_{n,p,q,l} \Vert g \Vert_{L^{p',q'}_{\alpha}} .\left( \int_{\mathbb{R}^{n}} \left( \int_{\Omega} \left( \int_{0}^{\infty} t^{2l-1} \vert \partial^{l}_{t} P_{t} * f(x,r) \vert^{2} dt\right)^{q/2} \alpha(r) dr \right)^{p/q} dx\right)^{1/p}.
\end{split}
\end{equation}
Taking the supremum over all $g \in L^{2,2}_{\alpha}(\mathbb{R}^{n} \times \Omega) \cap L^{p',q'}_{\alpha}(\mathbb{R}^{n} \times \Omega)$ with norm $1 $ in $ L^{p',q'}_{\alpha}(\mathbb{R}^{n} \times \Omega)$ gives us the result for when $ f \in L^{2,2}_{\alpha}(\mathbb{R}^{n} \times \Omega) \cap L^{p,q}_{\alpha}(\mathbb{R}^{n} \times \Omega)$. Indeed, by duality, Lemma \ref{Lastdual}, we have that 
$$
\Vert f \Vert_{L^{2,2}_{\alpha}} = \sup_{g \in L^{p',q'}_{\alpha}, \Vert g \Vert_{L^{q',p'}_{\alpha} \leq 1}} \int_{\mathbb{R}^{n}} \int_{\Omega} f(x,r) g(x,r) \alpha(r) dr dx.
$$
And since we have from Lemma \ref{density} that $C^{\infty}_{0}(\mathbb{R}^{n} \times \Omega)$ is dense in $L^{p'}(B^{q'}_{\Omega,\alpha,m})$ then we get that $C^{\infty}_{0}(\mathbb{R}^{n} \times \Omega)$ is also dense in $L^{p',q'}_{\alpha}(\mathbb{R}^{n} \times \Omega)$ and hence $ L^{2,2}_{\alpha}(\mathbb{R}^{n} \times \Omega)$ is dense in $L_{\alpha}^{p',q'}(\mathbb{R}^{n} \times \Omega)$. Thus, it suffices to take the supremum over $g \in L^{2,2}_{\alpha} \cap L^{p',q'}_{\alpha}$. Now we can conclude from \eqref{ABA} that 
\begin{equation*}
\begin{split}
&\int_{\mathbb{R}^{n}} \left( \int_{\Omega} \vert f(x,r) \vert^{q} \alpha(r) dr \right)^{p/q} dx
\\
&
\lesssim_{n,p,q,l} \int_{\mathbb{R}^{n}} \left( \int_{\Omega} \left( \int_{0}^{\infty} t^{2l-1} \vert \partial^{l}_{t} P_{t} * f(x,r) \vert^{2} dt\right)^{q/2} \alpha(r) dr \right)^{p/q} dx.
\end{split}
\end{equation*}
This holds for any $ f \in L^{2,2}_{\alpha}(\mathbb{R}^{n} \times \Omega) \cap L^{p,q}_{\alpha}(\mathbb{R}^{n} \times \Omega)
$, the general result follows by density.

\end{proof}

\bibliographystyle{abbrv}
\bibliography{bib}

\begin{thebibliography}{10}

\bibitem{FKM}
A.~Agarwal, P.~Koskela, and K.~Mohanta.
\newblock Limits at infinity for functions in fractional sobolev spaces, 2025.

\bibitem{BBM}
J.~Bourgain, H.~Brezis, and P.~Mironescu.
\newblock Another look at {S}obolev spaces.
\newblock In {\em Optimal control and partial differential equations}, pages 439--455. IOS, Amsterdam, 2001.

\bibitem{P}
D.~Brazke, A.~Schikorra, and P.-L. Yung.
\newblock Bourgain-{B}rezis-{M}ironescu convergence via {T}riebel-{L}izorkin spaces.
\newblock {\em Calc. Var. Partial Differential Equations}, 62(2):Paper No. 41, 33, 2023.

\bibitem{CB}
H.-Q. Bui and T.~Candy.
\newblock A characterisation of the besov-lipschitz and triebel-lizorkin spaces using poisson like kernels, 2016.

\bibitem{Caf}
L.~Caffarelli and L.~Silvestre.
\newblock An extension problem related to the fractional laplacian.
\newblock {\em Communications in Partial Differential Equations}, 32(8):1245–1260, Aug. 2007.

\bibitem{BanachS}
J.~{de Francia}, F.~J. Ruiz, and J.~Torrea.
\newblock Calderón-zygmund theory for operator-valued kernels.
\newblock {\em Advances in Mathematics}, 1986.

\bibitem{J}
J.~Duoandikoetxea.
\newblock {\em Fourier Analysis}.
\newblock American Mathematical society, 1990.

\bibitem{SC}
C.~Fefferman and E.~M. Stein.
\newblock Some maximal inequalities.
\newblock {\em American Journal of Mathematics}, 93(1):107--115, 1971.

\bibitem{MLP}
A.~R. Galmarino and R.~Panzone.
\newblock Lp-spaces with mixed norm, for p a sequence.
\newblock {\em Journal of Mathematical Analysis and Applications}, 1965.

\bibitem{GMF}
L.~Grafakos.
\newblock {\em Modern Fourier analysis}.
\newblock Springer New York, NY, 2009.

\bibitem{GC}
L.~Grafakos.
\newblock {\em Classical {F}ourier analysis}, volume 249 of {\em Graduate Texts in Mathematics}.
\newblock Springer, New York, third edition, 2014.

\bibitem{CMax}
L.~Huang and D.~Yang.
\newblock On function spaces with mixed norms --- a survey, 2019.

\bibitem{H16}
T.~Hyt{\"o}nen, J.~van Neerven, M.~Veraar, and L.~Weis.
\newblock {\em Bochner spaces}.
\newblock Springer International Publishing, Cham, 2016.

\bibitem{C}
E.~Lenzmann and A.~Schikorra.
\newblock Sharp commutator estimates via harmonic extensions.
\newblock {\em Nonlinear Analysis}, 193:111375, Apr. 2020.

\bibitem{BI}
J.~Mikusi{\'{n}}ski.
\newblock {\em The Bochner Integral}.
\newblock Birkh{\"a}user Basel, Basel, 1978.

\bibitem{Mo}
K.~Mohanta.
\newblock Bourgain-brezis-mironescu formula for $w^{s,p}_q$-spaces in arbitrary domains.
\newblock {\em Calculus of Variations and Partial Differential Equations}, 63(2), Jan. 2024.

\bibitem{Peetre}
J.~Peetre.
\newblock On spaces of triebel—lizorkin type.
\newblock {\em Arkiv för Matematik}, 1975.

\bibitem{Prats}
M.~Prats.
\newblock Measuring triebel–lizorkin fractional smoothness on domains in terms of first‐order differences.
\newblock {\em Journal of the London Mathematical Society}, 100(2):692–716, May 2019.

\bibitem{StS}
E.~Stein.
\newblock {\em Singular integrals and differentiability properties of functions}.
\newblock Princeton Mathematical, 1970.

\bibitem{HT}
H.~Triebel.
\newblock {\em Theory of function spaces}.
\newblock Springer Nature, 1983.

\bibitem{wang}
L.~Wang.
\newblock Inequalities in homogeneous triebel-lizorkin and besov-lipschitz spaces, 2023.

\end{thebibliography}

\end{document}